\renewcommand{\mathbb}[1]{\mathbbm{#1}}
\newcommand{\refitem}[1] {~\textit{\ref{#1}.)}}
\numberwithin{equation}{section}
\renewcommand{\arraystretch}{1.2}
\let\originalleft\left
\let\originalright\right
\renewcommand{\left}{\mathopen{}\mathclose\bgroup\originalleft}
\renewcommand{\right}{\aftergroup\egroup\originalright}
\newtheorem{lemma}{Lemma}[section]
\newtheorem{proposition}[lemma]{Proposition}
\newtheorem{theorem}[lemma]{Theorem}
\newtheorem{corollary}[lemma]{Corollary}
\newtheorem{definition}[lemma]{Definition}
\newtheorem{example}[lemma]{Example}
\newtheorem{remark}[lemma]{Remark}
\def\theorem@checkbold{}
\theoremstyle{nonumberplain}
\newtheorem{proof}{Proof}
\newenvironment{lemmalist}{\begin{compactenum}[\itshape i.)]}{\end{compactenum}}
\newenvironment{theoremlist}{\begin{compactenum}[\itshape i.)]}{\end{compactenum}}
\newenvironment{propositionlist}{\begin{compactenum}[\itshape i.)]}{\end{compactenum}}
\newenvironment{corollarylist}{\begin{compactenum}[\itshape i.)]}{\end{compactenum}}
\newcommand{\I}              {\mathrm{i}}
\newcommand{\E}              {\mathrm{e}}
\newcommand{\D}              {\mathop{}\!\mathrm{d}}
\newcommand{\cc}[1]          {\overline{{#1}}}
\newcommand{\sign}           {\operatorname{\mathrm{sign}}}
\newcommand{\RE}             {\mathsf{Re}}
\newcommand{\IM}             {\mathsf{Im}}
\newcommand{\Unit}           {\mathbb{1}}
\newcommand{\at}[1]          {\big|_{#1}}
\newcommand{\At}[1]          {\Big|_{#1}}
\newcommand{\argument}       {\,\cdot\,}
\newcommand{\id}             {\operatorname{\mathsf{id}}}
\newcommand{\image}          {\operatorname{{\mathrm{im}}}}
\newcommand{\Trans}          {{\mathrm{\scriptscriptstyle{T}}}}
\newcommand{\opp}            {\mathrm{opp}}
\newcommand{\tensor}[1][{}]  {\mathbin{\otimes_{\scriptscriptstyle{#1}}}}
\newcommand{\Tensor}         {\operatorname{\mathrm{T}}}
\newcommand{\Anti}           {\Lambda}
\newcommand{\Sym}            {\mathrm{S}}
\newcommand{\Symmetrizer}    {\operatorname{\mathcal{S}}}
\DeclarePairedDelimiter{\abs}{\lvert}{\rvert}
\newcommand{\norm}[1]        {\left\|{#1}\right\|}
\newcommand{\Fun}[1][k]      {\mathscr{C}^{#1}}
\newcommand{\Cinfty}         {\Fun[\infty]}
\newcommand{\halbnorm}[1]    {\operatorname{\mathrm{#1}}}
\newcommand{\Schwartz}       {\mathscr{S}}
\newcommand{\Pol}            {\operatorname{\mathrm{Pol}}}
\newcommand{\group}[1]       {\mathrm{#1}}
\newcommand{\algebra}[1]     {\mathcal{#1}}
\newcommand{\Diffop}         {\operatorname{\mathrm{DiffOp}}}
\DeclareMathSymbol\dAlembert {\mathord}{AMSa}{"03}
\newcommand{\Sec}[1][k]      {\Gamma^{#1}}
\newcommand{\Secinfty}       {\Sec[\infty]}
\newcommand{\End}            {\operatorname{\mathsf{End}}}
\newcommand{\Aut}            {\operatorname{\mathsf{Aut}}}
\newcommand{\cl}             {\mathrm{cl}}
\newcommand{\supp}           {\operatorname{\mathrm{supp}}}
\newcommand{\racts}          {\mathbin{\triangleleft}}
\newcommand{\cTensor}        {\hat{\mathrm{T}}}
\newcommand{\pitensor}       {\mathbin{\otimes_\pi}}
\newcommand{\cpitensor}      {\mathbin{\hat{\otimes}_\pi}}
\newcommand{\piSym}          {\mathrm{S}_\pi}
\newcommand{\cpiSym}         {\hat{\mathrm{S}}_\pi}
\newcommand{\cSym}          {\hat{\mathrm{S}}}
\newcommand{\formalstar}   {\mathbin{\star_{\nu\Lambda}}}
\newcommand{\Bracket}[1]    {\left\{#1\right\}_{\Lambda}}
\newcommand{\starzLambda}   {\mathbin{\star_{z\Lambda}}}
\newcommand{\starhLambda}   {\mathbin{\star_{\frac{\I\hbar}{2}\Lambda}}}
\newcommand{\WeylR}         {\mathcal{W}_{\mathit R}}
\newcommand{\cWeylR}        {\hat{\mathcal{W}}_{\mathit R}}
\newcommand{\WeylRMinus}    {\mathcal{W}_{{\mathit R}^-}}
\newcommand{\cWeylRMinus}   {\hat{\mathcal{W}}_{{\mathit R}^-}}
\newcommand{\cWeylInfty}    {\hat{\mathcal{W}}_\infty}
\newcommand{\Exp}           {\operatorname{\mathrm{Exp}}}
\newcommand{\starwick}      {\mathbin{\star_{\scriptscriptstyle \mathrm{Wick}}}}
\newcommand{\BracketSigma}[1]{\left\{#1\right\}_{\Lambda_\Sigma}}
\newcommand{\starSigma}      {\mathbin{\star_{\Sigma}}}
\newcommand{\Bracketcov}[1]  {\left\{#1\right\}_{\mathrm{cov}}}
\newcommand{\starcov}        {\mathbin{\star_\cov}}
\newcommand{\Secinftysc}    {\Secinfty_{\mathrm{sc}}}
\newcommand{\cov}           {\mathrm{cov}}
\title{A Nuclear Weyl Algebra}
\author{\textbf{Stefan Waldmann}\\[0.5cm]
  Institut für Mathematik \\
  Lehrstuhl für Mathematik X \\
  Universität Würzburg \\
  Campus Hubland Nord \\
  Emil-Fischer-Straße 31 \\
  97074 Würzburg \\
  \\[0.3cm]
  {\small Contact: \texttt{Stefan.Waldmann@mathematik.uni-wuerzburg.de}}
}
\date{
  Dedicated to Hartmann Römer for his 70th birthday\\[0.5cm]
  \small
%
%
  Updated Version: June 2013
}
\begin{document}

%
%

\maketitle

%
%

\begin{abstract}
    A bilinear form on a possibly graded vector space $V$ defines a
    graded Poisson structure on its graded symmetric algebra together
    with a star product quantizing it. This gives a model for the Weyl
    algebra in an algebraic framework, only requiring a field of
    characteristic zero. When passing to $\mathbb{R}$ or $\mathbb{C}$
    one wants to add more: the convergence of the star product should
    be controlled for a large completion of the symmetric
    algebra. Assuming that the underlying vector space carries a
    locally convex topology and the bilinear form is continuous, we
    establish a locally convex topology on the Weyl algebra such that
    the star product becomes continuous. We show that the completion
    contains many interesting functions like exponentials. The star
    product is shown to converge absolutely and provides an entire
    deformation. We show that the completion has an absolute Schauder
    basis whenever $V$ has an absolute Schauder basis. Moreover, the
    Weyl algebra is nuclear iff $V$ is nuclear. We discuss
    functoriality, translational symmetries, and equivalences of the
    construction. As an example, we show how the Peierls bracket in
    classical field theory on a globally hyperbolic spacetime can be
    used to obtain a local net of Weyl algebras.
\end{abstract}

\newpage

%
%

\tableofcontents
\newpage

%
%

\section{Introduction}
\label{sec:Introduction}

The Weyl algebra as the mathematical habitat of the canonical
commutation relations has many incarnations and variants: in a purely
algebraic definition it is the universal unital associative algebra
generated by a vector space $V$ subject to the commutation relations
$vw - wv = \Lambda(v, w) \Unit$ where $\Lambda$ is a symplectic form
on $V$ and $v, w \in V$. For $V = \mathbb{k}^2$ with basis $q$, $p$
and the standard symplectic form, the canonical commutation relations
take the familiar form
\begin{equation}
    \label{eq:CCR}
    qp - pq = \Unit.
\end{equation}
Typically, some scalar prefactor in front of $\Unit$ is incorporated.
When working over the complex numbers, a $C^*$-algebraic version of
the canonical commutation relations is defined by formally
exponentiating the generators from $V$ and using the resulting
commutation relations from \eqref{eq:CCR} with $\I\hbar$ in front of
$\Unit$ for the exponentials. It results in a universal $C^*$-algebra
generated by the exponentials subject to the commutation
relations. This version of the Weyl algebra is most common in the
axiomatic approaches to quantum field theory and quantum mechanics.
More recently, an alternative construction of a Weyl algebra in a
$C^*$-algebraic framework has been proposed and studied in
\cite{buchholz.grundling:2008a} based on resolvents instead of
exponentials. In \cite{binz.honegger.rieckers:2004b} the
$C^*$-algebraic Weyl algebra was shown to be a strict deformation
quantization of a certain Poisson algebra which consists of certain
``bounded'' elements in contrast to the ``unbounded'' generators from
$V$ itself, ultimately leading to a continuous field of
$C^*$-algebras.

The two principle versions of the Weyl algebra differ very much in
their behaviour. While the $C^*$-algebraic formulation has a strong
analytic structure, the algebraic version based on the canonical
commutation relations does not allow for an obvious topology: in fact,
it can easily be proven that any submultiplicative seminorm on the
Weyl algebra generated by $q$ and $p$ with commutation relations
\eqref{eq:CCR} necessarily vanishes. In particular, there will be no
structure of a normed algebra possible.

It is now our main objective of this paper to fill this gap and
provide a reasonable topology for the algebraic Weyl algebra making
the product continuous. Starting point will be a general locally
convex topology on $V$, where we also allow for a graded vector space
and a graded version of the canonical commutation relations, i.e. we
treat the Weyl algebra and the Clifford algebra on the same footing.
The algebraic version of the Weyl algebra will be realized by means of
a deformation quantization \cite{bayen.et.al:1978a} of the symmetric
algebra $\Sym^\bullet(V)$ encoded in a star product. The idea is to
treat the Poisson bracket arising from the bilinear form $\Lambda$ as
a \emph{constant} Poisson bracket and use the Weyl-Moyal star product
quantizing it. From a deformation quantization point of view this is a
very trivial situation, though we of course allow for an
infinite-dimensional vector space $V$, see \cite{waldmann:2007a} for a
gentle introduction to deformation quantization. The bilinear form
$\Lambda$ will not be required to be antisymmetric or
non-degenerate. However, we need some analytic properties. For
convenience, we require $\Lambda$ to be \emph{continuous}, a quite
strong assumption in infinite dimensions. Nevertheless, many
interesting examples fulfill this requirement. In particular, for a
finite-dimensional space $V$ this is always the case.

On the tensor algebra $\Tensor^\bullet(V)$ and hence on the symmetric
algebra $\Sym^\bullet(V)$ there is of course an abundance of locally
convex topologies which all induce the projective topology on each
$V^{\tensor n}$. The two extreme cases are the direct sum topology and
the Cartesian product topology. The direct sum topology for the Weyl
algebra with finitely many generators was used in \cite{cuntz:2005a}
to study bivariant $K$-theory.  In \cite{borchers.yngvason:1976a} a
slightly coarser topology on $\Sym^\bullet(\Schwartz(\mathbb{R}^d))$
than the direct sum topology was studied in the context of quantum
field theories, where $\Schwartz(\mathbb{R}^d)$ is the usual Schwartz
space. It turns out that this topology makes
$\Sym^\bullet(\Schwartz(\mathbb{R}^d))$ a topological algebra,
too. However, for our purposes, this topology is still too fine.
Interesting new phenomena are found in \cite{dito:2005a} for formal
star products in the case the underlying locally convex space $V$ is a
Hilbert space. For the class of functions considered in this paper,
the classification program shows much richer behaviour than in the
well-known finite-dimensional case. However, the required
Hilbert-Schmidt property will differ from the requirements we state in
the sequel.

The first main result is that we can define a new locally convex
topology on the tensor algebra $\Tensor^\bullet(V)$ and hence also on
the symmetric algebra $\Sym^\bullet(V)$, quite explicitly by means of
seminorms controlling the growth of the coefficients $a_n \in
\Sym^n(V)$, in such a way that the star product is continuous. The
completion of $\Sym^\bullet(V)$ with respect to this locally convex
topology will contain many interesting entire functions like
exponentials of elements in $V$.  It turns out that even more is true:
the star product converges absolutely and provides an entire
deformation in the sense of \cite{pflaum.schottenloher:1998a}.  In
fact, we have two versions of this construction depending on a real
parameter $R \ge \frac{1}{2}$ leading to a Weyl algebra $\WeylR(V)$
and a projective limit $\WeylRMinus(V)$ for $R > \frac{1}{2}$. Both
share many properties but differ in others.

If the underlying vector space $V$ has an absolute Schauder basis we
prove that the corresponding Weyl algebra also has an absolute
Schauder basis. The second main result is that the Weyl algebra
$\WeylR(V)$ is nuclear whenever we started with a nuclear $V$. This is
of course a very desirable property and shows that the Weyl algebra
enjoys some good properties. If $V$ is even strongly nuclear then the
second version $\WeylRMinus(V)$ gives a strongly nuclear Weyl
algebra. In the case where $V$ is finite-dimensional, we have both for
trivial reasons: an absolute Schauder basis of $V$ and strong
nuclearity. Thus in this case the corresponding Weyl algebra turns out
to be a (strongly) nuclear algebra with an absolute Schauder basis. In
fact, we can show even more: the underlying locally convex space is a
particular Köthe space which can explicitly be described.

Our construction depends functorially on the data $V$ and $\Lambda$ as
well as on a parameter $R$ which controls the coarseness of the
topology. The particular value $R = \frac{1}{2}$ seems to be
distinguished as it is the limit case for which the product is
continuous. For the second variant of our construction, the case $R =
1$ is distinguished as this is the limit case where the exponentials
are part of the completion.

We show that the topological dual $V'$ acts on the Weyl algebra by
translations. These automorphisms are even \emph{inner} if the element
in $V'$ is in the image of the canonical map $V \longrightarrow V'$
induced by the antisymmetric part of $\Lambda$: here we show that the
exponential series of elements in $V$ are contained in the Weyl
algebra $\WeylR(V)$, provided $R < 1$, and in $\WeylRMinus(V)$ for $R
\le 1$. Since the Weyl algebra does not allow for a general
holomorphic calculus, this is a nontrivial statement and puts
heuristic formulas for the star-exponential on a solid ground. In
particular, these exponentials are also the generators of the
$C^*$-algebraic version of the Weyl algebra, showing that there is
still a close relation. However, it does not seem to be easy to make
the transition to the $C^*$-algebraic world more explicitly.

For a finite-dimensional even vector space $V$, we relate our general
construction to the following two earlier version of the Weyl algebra:
first we show that for a suitable choice of the parameters and the
Poisson structure the Weyl algebra discussed in
\cite{omori.maeda.miyazaki.yoshioka:2000a} coincides with $\WeylR(V)$.
Second, we show that the results from \cite{beiser.waldmann:2011a:pre,
  beiser.roemer.waldmann:2007a} yield the second version
$\WeylRMinus(V)$ for the particular value $R = 1$. This way, we have
now a clear picture on the relation between the two approaches.

Finally, we apply our general construction to an example from
(quantum) field theory. We consider a linear field equation on a
globally hyperbolic spacetime manifold. The Green operators of the
normally hyperbolic differential operator encoding the field equation
define a Poisson bracket, the so-called Peierls bracket. We show the
relevant continuity properties in order to apply the construction of
the Weyl algebra to this particular Poisson bracket. It is shown that
the resulting Poisson algebra and Weyl algebra relate to the canonical
Poisson algebra and Weyl algebra on the initial data of the field
equation. The result will be a local net of Poisson algebras or Weyl
algebras obeying a version of the Haag-Kastler axioms including the
time-slice axiom. On one hand this is a very particular case of the
Peierls bracket discussed in
\cite{brunetti.fredenhagen.ribeiro:2012a:pre}, on the other hand, we
provide a simple quantum theory with honestly converging star product
in this situation thereby going beyond the formal star products as
discussed in \cite{duetsch.fredenhagen:2001a,
  duetsch.fredenhagen:2001b}. It would be very interesting to see how
the much more general (and non-constant) Poisson structures in
\cite{brunetti.fredenhagen.ribeiro:2012a:pre} can be deformation
quantized with a convergent star product.

To conclude this introduction we take the opportunity to point out
some possible further questions arising with our approach to the Weyl
algebra:
\begin{itemize}
\item In finite dimensions it is always possible to choose a
    compatible almost complex structure for a given symplectic Poisson
    structure. Such a choice gives a star product of Wick type where
    the symmetric part of $\Lambda$ now consists of a suitable
    multiple of the compatible positive definite inner product. The
    Wick product enjoys the additional feature of being a
    \emph{positive} deformation \cite{bursztyn.waldmann:2000a}. In
    particular, the evaluation functionals at the points of the dual
    will become positive linear functionals on the Wick algebra. In
    \cite{beiser.roemer.waldmann:2007a} the corresponding GNS
    construction was investigate in detail and yields the usual
    Bargmann-Fock space representation for the canonical commutation
    relations. The case of a Hilbert space of arbitrary dimension will
    be the natural generalization for this. In general, the existence
    of a compatible almost complex structure having good continuity
    properties is far from being obvious. We will address these
    questions in a future project.
\item Closely related will be the question what the states of the
    locally convex Weyl algebra will be in general. While this
    question might be quite hard to attack in full generality, the
    more particular case of the Weyl algebra arising from the Peierls
    bracket will be already very interesting: here one has certain
    candidates of so-called Hadamard states from (quantum) field
    theory. It would be interesting to see whether and how they can be
    matched with compatible almost complex structures and evaluations
    at points in the dual.
\item In infinite dimensions there are important examples of bilinear
    forms which are not continuous but only separately continuous. It
    would be interesting to extend our analysis to this situation as
    well in such a way that one obtains a separately continuous star
    product. Yet another scenario would be to investigate a
    \emph{bornological} version of the Weyl algebra construction: many
    bilinear forms turn out to be compatible with naturally defined
    bornologies rather than locally convex topologies. Thus a
    bornological star product would be very desirable and has the
    potential to cover many more examples not yet available with our
    present construction. A good starting point might be
    \cite{voigt:2008a, meyer:2004a}. These questions will be addressed
    in a future project.
\item Finally, already in finite dimensions it will be very
    challenging to go beyond the geometrically trivial case of
    constant Poisson structures. One possible strategy is to use the
    completed nuclear Weyl algebra build on each tangent space of a
    symplectic manifold. This leads to a Weyl algebra bundle, now in
    our convergent setting. In a second step one should try to
    understand how the Fedosov construction \cite{fedosov:1996a} of a
    formal star product can be transferred to this convergent setting
    provided the curvature and its covariant derivatives of a suitably
    chosen symplectic connection satisfy certain (still to be found)
    bounds.
\end{itemize}

The paper is organized as follows: In Section~\ref{sec:Preliminaries}
we fix our notation and recall some well-known algebraic facts on
constant Poisson structures and their deformation quantizations. The
next section contains the core results of the paper. We first
construct several systems of seminorms on the tensor algebra and
investigate the continuity properties of the tensor product with
respect to them. The continuity of the Poisson bracket is then
established but the continuity of the star product requires a suitable
projective limit construction in addition. Nevertheless, the resulting
systems of seminorms are still described explicitly. This way, we
arrive at our definition of the Weyl algebra in
Definition~\ref{definition:WeylAlgebraWR} and show that it yields a
locally convex algebra in Theorem~\ref{theorem:WeylAlgebra}. We prove
that the star product converges absolutely, provides an entire
deformation, and enjoys good reality properties. In
Section~\ref{sec:BasesNuclearity} we show two main results: first that
if $V$ has an absolute Schauder basis the Weyl algebra also has an
absolute Schauder basis. Second, we prove in
Theorem~\ref{theorem:NuclearWeylAlgebra} that the Weyl algebra is
(strongly) nuclear iff $V$ is (strongly) nuclear.
Section~\ref{sec:SymmetriesEquivalences} is devoted to various
symmetries and equivalences. We prove that the algebraic symmetries
can be cast into the realm of the locally convex Weyl algebra, too,
and yield a good functoriality of the construction. If the convergence
parameter $R$ is less than $1$ then translations are shown to act by
inner automorphisms. Moreover, we show that the isomorphism class of
the Weyl algebra only depends on the antisymmetric part of the
bilinear form $\Lambda$. Finally, we relate our construction to the
one from \cite{omori.maeda.miyazaki.yoshioka:2000a} in
Proposition~\ref{proposition:OMMYcoincides} as well as to the version
from \cite{beiser.roemer.waldmann:2007a, beiser.waldmann:2011a:pre} in
Proposition~\ref{proposition:WickIsWeylForREins}.  The final and quite
large Section~\ref{sec:ExamplePeierlsFreeQFT} contains a first
nontrivial example: the canonical and the covariant Poisson structures
arising in non-interacting field theories on globally hyperbolic
spacetimes.  We recall the necessary preliminaries to define and
compare the two Poisson structures in detail. The continuity
properties of both allow to apply our general construction of the Weyl
algebra, leading to a detailed description in
Theorem~\ref{theorem:CovariantVSCanonical}.  As a first application we
show that both on the classical side as well as on the quantum side
the construction leads to a local net of observables satisfying the
time-slice axiom.

%
%

\noindent
\textbf{Acknowledgements:} It is a pleasure to thank many colleagues
for their comments at various stages when preparing this
manuscript. In particular, I would like to thank Christian Bär, Klaus
Fredenhagen, and Frank Pfäffle for valuable discussions on the Peierls
bracket, Gandalf Lechner for pointing out
\cite{buchholz.grundling:2008a}, Ralf Meyer for the suggestion to
investigate the bornological versions of the Weyl algebra,
Karl-Hermann Neeb for many remarks and for drawing my attention to
\cite{borchers.yngvason:1976a}, and Pedro Lauridsen Ribeiro for
clarifying remarks.

%
%

\section{Algebraic Preliminaries}
\label{sec:Preliminaries}

In this section we collect some algebraic preliminaries on constant
Poisson brackets and their deformation quantization to fix our
conventions. In the following, let $\mathbb{k}$ be a field of
characteristic $0$ and let $V$ be a $\mathbb{k}$-vector space.

%
%

\subsection{The Symmetric Algebra $\Sym^\bullet(V)$ and Sign Conventions}
\label{subsec:SymmetricAlgebraSignConventions}

In order to treat the symmetric and the Grassmann algebra on the same
footing, we assume that $V = V_{\mathbf{0}} \oplus V_{\mathbf{1}}$ is
$\mathbb{Z}_2$-graded. In many applications, $V$ is even
$\mathbb{Z}$-graded and the induced $\mathbb{Z}_2$-grading is then
given by the even and odd part of $V$. A vector $v \in V_{\mathbf{0}}$
is called homogeneous of parity $\mathbf{0}$ while a vector in
$V_{\mathbf{1}}$ is called homogeneous of parity
$\mathbf{1}$. Occasionally, we denote the parity of the vector $v$
with the same symbol $v \in \mathbb{Z}_2$ and we also shall refer to
even and odd parity.  In all what follows, we will make use of the
\emph{Koszul sign rule}, i.e.  if two things with parities $a, b \in
\mathbb{Z}_2$ are exchanged this gives an extra sign $(-1)^{ab}$.  The
homogeneous components of $v \in V$ will be denoted by $v =
v_{\mathbf{0}} + v_{\mathbf{1}}$.

In more detail, we will need to following signs for
\emph{symmetrization}. For homogeneous vectors $v_1, \ldots, v_n \in
V$ and a permutation $\sigma \in S_n$ one defines the sign
\begin{equation}
    \label{eq:GradedSign}
    \sign(v_1, \ldots, v_n; \sigma)
    =
    \prod_{i < j}
    \frac{\sigma(i) + (-1)^{v_{\sigma(i)}v_{\sigma(j)}} \sigma(j)}
    {i + (-1)^{v_iv_j}j}.
\end{equation}
Then $\sign(v_1, \ldots, v_n; \sigma) = 1$ if all the $v_1,
\ldots, v_n$ are even and $\sign(v_1, \ldots, v_n; \sigma) =
\sign(\sigma)$ is the usual signum of the permutation for all $v_1,
\ldots, v_n$ odd. It is then straightforward to check that
\begin{equation}
    \label{eq:SnAction}
    (v_1 \tensor \cdots \tensor v_n) \racts \sigma
    =
    \sign(v_1, \ldots, v_n; \sigma)
    v_{\sigma(1)} \tensor \cdots \tensor v_{\sigma(n)}
\end{equation}
extends to a well-defined right action of $S_n$ on $V^{\tensor n}$. We
use this right action to define the symmetrization operator
\begin{equation}
    \label{eq:Symmetrization}
    \Symmetrizer_n\colon V^{\tensor n}
    \ni v
    \; \mapsto \;
    \Symmetrizer_n(v) = \frac{1}{n!} \sum_{\sigma \in S_n}
    v \racts \sigma
    \in V^{\tensor n}.
\end{equation}
One has $\Symmetrizer_n \circ \Symmetrizer_n = \Symmetrizer_n$ since
\eqref{eq:SnAction} is an action. In the case where $V =
V_{\mathbf{0}}$, the operator $\Symmetrizer_n$ is the usual total
symmetrization, if $V = V_{\mathbf{1}}$ we get the total
antisymmetrization operator.

For later use it will be advantageous to define the symmetric algebra
not as a quotient algebra of the tensor algebra but as a subspace with
a new product. Thus we set
\begin{equation}
    \label{eq:SymnV}
    \Sym^n(V) = \image \Symmetrizer_n \subseteq V^{\tensor n}
    \quad
    \textrm{and}
    \quad
    \Sym^0(V) = \mathbb{k}.
\end{equation}
The elements in $\Sym^n(V)$ consist of the symmetric, i.e.  invariant
tensors with respect to the action \eqref{eq:SnAction}. Moreover, we
set
\begin{equation}
    \label{eq:SymmetricAlgebra}
    \Sym^\bullet(V) = \bigoplus_{n = 0}^\infty \Sym^n(V)
    \subseteq
    \Tensor^\bullet(V) = \bigoplus_{n=0}^\infty V^{\tensor n}.
\end{equation}
Alternatively, we can use the idempotent $\Symmetrizer =
\bigoplus_{n=0}^\infty \Symmetrizer_n$ where $\Symmetrizer_0 = \id$
and get $\Sym^\bullet(V) = \image \Symmetrizer$.

Next we define the symmetric tensor product $\mu\colon \Sym^\bullet(V)
\tensor \Sym^\bullet(V) \longrightarrow \Sym^\bullet(V)$ of $v, w \in
\Sym^\bullet(V)$ as usual by
\begin{equation}
    \label{eq:SymmetricTensorProduct}
    vw = \mu(v \tensor w) = \Symmetrizer(v \tensor w).
\end{equation}
Most of the time we shall omit the symbol $\mu$ for products. Then the
following statement is well-known:
\begin{lemma}
    \label{lemma:SymmetricAlgebra}%
    The symmetric tensor product turns $\Sym^\bullet(V)$ into an
    associative commutative unital algebra freely generated by $V$.
\end{lemma}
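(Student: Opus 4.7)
The plan is to reduce all four claims to a single \emph{absorption identity} for the symmetrizer:
\[
\Symmetrizer_{m+n}\bigl(\Symmetrizer_m(a) \tensor b\bigr)
= \Symmetrizer_{m+n}(a \tensor b)
= \Symmetrizer_{m+n}\bigl(a \tensor \Symmetrizer_n(b)\bigr)
\]
for $a \in V^{\tensor m}$ and $b \in V^{\tensor n}$. To verify this I would work on homogeneous elementary tensors $a = v_1 \tensor \cdots \tensor v_m$ and $b = w_1 \tensor \cdots \tensor w_n$ and use the standard embedding $\iota\colon S_m \hookrightarrow S_{m+n}$ into the subgroup fixing the last $n$ letters. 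Since $\iota(\pi)$ only moves the first $m$ slots, the Koszul sign from \eqref{eq:GradedSign} collapses to $\sign(v_1,\ldots,v_m,w_1,\ldots,w_n;\iota(\pi)) = \sign(v_1,\ldots,v_m;\pi)$, which gives $\Symmetrizer_m(a) \tensor b = \frac{1}{m!}\sum_{\pi \in S_m} (a \tensor b) \racts \iota(\pi)$. Applying $\Symmetrizer_{m+n}$ and reindexing the sum via $\tau \mapsto \iota(\pi)\,\tau$ in $S_{m+n}$ yields the left identity; the right one follows by the symmetric argument with $S_n$ embedded into the complementary subgroup.

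Associativity is then a one-line consequence, $(uv)w = \Symmetrizer(\Symmetrizer(u \tensor v) \tensor w) = \Symmetrizer(u \tensor v \tensor w) = \Symmetrizer(u \tensor \Symmetrizer(v \tensor w)) = u(vw)$, and unitality is immediate from $\Symmetrizer_0 = \id$ together with $\Symmetrizer_n(1 \tensor a) = a$ for $a \in \Sym^n(V)$. For graded commutativity, I would take homogeneous $v \in \Sym^m(V)$ and $w \in \Sym^n(V)$ of total parities $|v|,|w| \in \mathbb{Z}_2$ and consider the $(m,n)$-shuffle $\tau_{m,n} \in S_{m+n}$ which moves the first block past the second. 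Writing $\tau_{m,n}$ as a composition of $mn$ adjacent transpositions and tracking the Koszul signs from \eqref{eq:SnAction} crossing by crossing, the $mn$ contributions collect to the single global sign $(-1)^{|v||w|}$, so $(v \tensor w) \racts \tau_{m,n} = (-1)^{|v||w|}\, w \tensor v$. Applying $\Symmetrizer_{m+n}$ and using its $S_{m+n}$-invariance then gives $vw = (-1)^{|v||w|}\, wv$.

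For freeness, suppose $\phi \colon V \longrightarrow A$ is $\mathbb{k}$-linear with $A$ a graded commutative unital associative $\mathbb{k}$-algebra. I would define $\Phi_n \colon V^{\tensor n} \longrightarrow A$ on elementary tensors by $\Phi_n(v_1 \tensor \cdots \tensor v_n) = \phi(v_1)\cdots\phi(v_n)$; graded commutativity of $A$ together with the Koszul sign rule forces $\Phi_n(x \racts \sigma) = \Phi_n(x)$ for every $\sigma \in S_n$, so $\Phi_n = \Phi_n \circ \Symmetrizer_n$. Hence $\Phi = \bigoplus_n \Phi_n|_{\Sym^n(V)}$ is a well-defined linear extension of $\phi$, and multiplicativity is $\Phi(vw) = \Phi(\Symmetrizer(v \tensor w)) = \Phi(v \tensor w) = \Phi(v)\Phi(w)$; uniqueness follows from a short induction showing that iterated symmetric products of elements of $V$ span each $\Sym^n(V)$. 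The main obstacle throughout is the careful Koszul-sign bookkeeping, both in the absorption step and in the shuffle step; once the compatibility $\sign(v_1,\ldots,v_m,w_1,\ldots,w_n;\iota(\pi)) = \sign(v_1,\ldots,v_m;\pi)$ and the multiplicativity of Koszul signs under composition of adjacent transpositions are established, everything else reduces to the familiar ungraded calculation.
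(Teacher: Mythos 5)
The paper states this lemma as well-known and supplies no proof, so there is nothing to compare against. Your argument is a correct, standard verification: the absorption identity $\Symmetrizer_{m+n}(\Symmetrizer_m(a)\tensor b) = \Symmetrizer_{m+n}(a\tensor b)$ via the embedding $S_m \hookrightarrow S_{m+n}$ and the coset reindexing $\tau \mapsto \iota(\pi)\tau$ does yield associativity in one line; the shuffle computation of the Koszul sign $(-1)^{|v||w|}$ gives graded commutativity; and the $S_n$-invariance $\Phi_n\circ\Symmetrizer_n = \Phi_n$ (which relies on $\phi$ being even, as the paper assumes just below the lemma) gives the universal extension and its multiplicativity. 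The only minor bookkeeping worth making explicit is the compatibility $\sign(v_1,\ldots,v_m,w_1,\ldots,w_n;\iota(\pi)) = \sign(v_1,\ldots,v_m;\pi)$, which you assert and which indeed follows from \eqref{eq:GradedSign}: the factors with both indices $> m$ are trivially $1$, and for each fixed $j > m$ the factors with $i \le m$ are a permutation of one another between numerator and denominator, hence cancel. With that checked, the proposal is complete and sound.
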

Moreover, it is $\mathbb{Z}$-graded with respect to the tensor
degree. Note however, that we do not use this degree for sign purposes
at all.  Freely generated means that a homogeneous map $\phi\colon V
\longrightarrow \algebra{A}$ of parity $\mathbf{0}$ into another
associative $\mathbb{Z}_2$-graded commutative unital algebra
$\algebra{A}$ has a unique extension $\Phi\colon \Sym^\bullet(V)
\longrightarrow \algebra{A}$ as unital algebra homomorphism.

Beside the symmetric algebra we will also need tensor products of
algebras. Thus let $\algebra{A}$ and $\algebra{B}$ be two associative
$\mathbb{Z}_2$-graded algebras. On their tensor product $\algebra{A}
\tensor \algebra{B}$ a new product is defined by linear extension of
\begin{equation}
    \label{eq:TensorProductAlgebras}
    (a \tensor b) (a' \tensor b')
    =
    (-1)^{ba'} aa' \tensor bb',
\end{equation}
where $a, a' \in \algebra{A}$ and $b, b' \in \algebra{B}$ are
homogeneous elements. This turns $\algebra{A} \tensor \algebra{B}$
again into an associative $\mathbb{Z}_2$-graded algebra.

Finally, we recall that the Koszul sign rule also applies to tensor
products of maps and evaluations, i.e.  for homogeneous maps
$\phi\colon V \longrightarrow W$ and $\psi\colon \tilde{V}
\longrightarrow \tilde{W}$ we define their tensor product $\phi
\tensor \psi\colon V \tensor \tilde{V} \longrightarrow W \tensor
\tilde{W}$ by
\begin{equation}
    \label{eq:phitensorpsiMap}
    (\phi \tensor \psi)(v \tensor w)
    =
    (-1)^{\psi v} \phi(v) \tensor \psi(w)
\end{equation}
on homogeneous vectors and extend linearly.

%
%

\subsection{Constant Poisson Structures and Formal Star Products}
\label{subsec:ConstantPoissonStructuresFormalStarProducts}

Recall that a homogeneous multilinear map of parity $\mathbf{0}$
\begin{equation}
    \label{eq:MultiDiffop}
    P\colon
    \Sym^\bullet(V) \times \cdots \times \Sym^\bullet(V)
    \longrightarrow
    \Sym^\bullet(V)
\end{equation}
is called a \emph{multiderivation} if for each argument it satisfies
the Leibniz rule
\begin{equation}
    \label{eq:LeibnizRule}
    \begin{split}
        &P(v_1, \ldots, v_{k-1}, v_k v_k', v_{k+1}, \ldots, v_n) \\
        &\quad=
        (-1)^{(v_1 + \cdots + v_{k-1})v_k}
        v_k P(v_1, \ldots, v_k', \ldots, v_n)
        +
        (-1)^{v_k'(v_{k+1} + \cdots + v_n)}
        P(v_1, \ldots, v_k, \ldots, v_n) v_k',
    \end{split}
\end{equation}
where we follow again the Koszul sign rule. Note that if one $v_i$ is
a constant, i.e. $v_i \in \Sym^0(V)$, then $P(v_1, \ldots, v_n) =
0$. Since $V$ generates $\Sym^\bullet(V)$, a multiderivation is
uniquely determined by its values on $V \times \cdots \times
V$. Conversely, any multilinear homogeneous map $V \times \cdots
\times V \longrightarrow \Sym^\bullet(V)$ of parity $\mathbf{0}$
extends to a multiderivation since $V$ generates $\Sym^\bullet(V)$
\emph{freely}. Though one can also consider odd multiderivations, we
shall not need them in the sequel.

For later estimates, we will need the following more explicit form of
this extension for the particular case of a \emph{bilinear}
homogeneous map
\begin{equation}
    \label{eq:Lambda}
    \Lambda\colon V \times V \longrightarrow \mathbb{k} = \Sym^0(V)
\end{equation}
of parity $\mathbf{0}$. To this end we first define the linear map
\begin{equation}
    \label{eq:PLambda}
    P_\Lambda\colon
    \Sym^\bullet(V) \tensor \Sym^\bullet(V)
    \longrightarrow
    \Sym^{\bullet-1}(V) \tensor \Sym^{\bullet-1}(V)
\end{equation}
to be the linear extension of
\begin{equation}
    \label{eq:PLambdaDef}
    \begin{split}
        &P_\Lambda(v_1 \dots v_n \tensor w_1 \cdots w_m) \\
        &\quad=
        \sum_{k=1}^n \sum_{\ell=1}^m
        (-1)^{
          v_k(v_{k+1} + \cdots + v_n)
          + w_\ell(w_1 + \cdots + w_{\ell-1})
        }
        \Lambda(v_k, w_\ell)
        v_1 \cdots \stackrel{k}{\wedge} \cdots v_n
        \tensor
        w_1 \cdots \stackrel{\ell}{\wedge} \cdots w_m.
    \end{split}
\end{equation}
The requirement on the $\mathbb{Z}$-grading implies that $P_\Lambda$
vanishes on tensors of the form $\Unit \tensor w$ or $v \tensor
\Unit$.

First note that $\Lambda$ is of parity $\mathbf{0}$ and thus
$\Lambda(v_k, w_\ell)$ is only nontrivial if $v_k$ and $w_\ell$ have
the same parity. Hence we can exchange the parities $v_k$ and $w_\ell$
in the above sign. Second, note that the map $P_\Lambda$ is indeed
well-defined since the right hand side is totally symmetric in $v_1,
\ldots, v_n$ and in $w_1, \ldots, w_m$. With respect to the algebra
structure \eqref{eq:TensorProductAlgebras} of $\Sym^\bullet(V) \tensor
\Sym^\bullet(V)$ we can characterize the map $P_\Lambda$ now as
follows:
\begin{lemma}
    \label{lemma:PLambdaMapProperties}%
    The map $P_\Lambda$ is the unique map with
    \begin{lemmalist}
    \item \label{item:PLambdaOnGenerators} $P_\Lambda(v \tensor w) =
        \Lambda(v, w) \Unit \tensor \Unit$ for all $v, w \in V$,
    \item \label{item:PLambdaLeibnizEins} $P_\Lambda(v \tensor wu) =
        P_\Lambda(v \tensor w) (\Unit \tensor u) + (-1)^{wv}(\Unit
        \tensor w) P_\Lambda(v \tensor u)$ for all $v, w, u \in
        \Sym^\bullet(V)$,
    \item \label{item:PLambdaLeibnizZwei} $P_\Lambda(vw \tensor u) =
        (v \tensor \Unit) P_\Lambda(w \tensor u) + (-1)^{wu}
        P_\Lambda(v \tensor u) (\Unit \tensor w)$ for all $v, w, u \in
        \Sym^\bullet(V)$.
    \end{lemmalist}
\end{lemma}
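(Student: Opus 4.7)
The plan is in two stages: first verify that the explicit formula \eqref{eq:PLambdaDef} satisfies the three properties, then deduce uniqueness from the fact that $V$ freely generates $\Sym^\bullet(V)$.

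For property \refitem{item:PLambdaOnGenerators}, I would simply specialize \eqref{eq:PLambdaDef} to $n = m = 1$; the double sum collapses to a single term with trivial sign, giving $\Lambda(v,w)\Unit \tensor \Unit$. For properties \refitem{item:PLambdaLeibnizEins} and \refitem{item:PLambdaLeibnizZwei} I would, using bilinearity of both sides in the entries, reduce to the case where $v = v_1 \cdots v_n$, $w = w_1 \cdots w_p$, and $u = u_1 \cdots u_q$ are pure symmetric products of homogeneous generators from $V$. Then property \refitem{item:PLambdaLeibnizEins} is proved by splitting the defining sum for $P_\Lambda(v \tensor wu)$ according to whether the index $\ell$ on the second tensor factor lies in the $w$-block $\{1, \ldots, p\}$ or in the $u$-block $\{p+1, \ldots, p+q\}$: the first block reproduces $P_\Lambda(v \tensor w)(\Unit \tensor u)$ after using \eqref{eq:TensorProductAlgebras}, and the second block reproduces $(-1)^{wv}(\Unit \tensor w) P_\Lambda(v \tensor u)$, the sign $(-1)^{wv} = (-1)^{(v_1 + \cdots + v_n)(w_1 + \cdots + w_p)}$ arising because every $u_\ell$-hit must be moved past the entire $w$-block in the second tensor factor while the corresponding $v_k$-hit picks up no such extra permutation. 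Property \refitem{item:PLambdaLeibnizZwei} is analogous, now splitting according to whether $k$ lies in the $v$-block or in the $w$-block on the first tensor factor; the Koszul sign $(-1)^{wu}$ comes from commuting the $w$-block past the $u$-entry in the second tensor factor in the algebra $\Sym^\bullet(V) \tensor \Sym^\bullet(V)$.

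For uniqueness, suppose $P$ is any map satisfying \refitem{item:PLambdaOnGenerators}, \refitem{item:PLambdaLeibnizEins}, \refitem{item:PLambdaLeibnizZwei}. I would argue by induction on the total degree $n + m$ that $P$ agrees with $P_\Lambda$ on $\Sym^n(V) \tensor \Sym^m(V)$. Since $P_\Lambda$ and the above Leibniz rules force both maps to vanish when either argument lies in $\Sym^0(V)$ (take $v = \Unit$ in \refitem{item:PLambdaLeibnizZwei} with $w$ arbitrary, or use that the right-hand side of \refitem{item:PLambdaLeibnizEins} makes no sense unless the degree reduction is consistent), the base case $n = m = 1$ is covered by \refitem{item:PLambdaOnGenerators}. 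For the inductive step, using \refitem{item:PLambdaLeibnizZwei} reduces $P(v_1 \cdots v_n \tensor u)$ to expressions of the form $P(v_i \tensor u)$ with $v_i \in V$, and then \refitem{item:PLambdaLeibnizEins} reduces each $P(v_i \tensor w_1 \cdots w_m)$ to terms $P(v_i \tensor w_j)$ with $w_j \in V$, all determined by \refitem{item:PLambdaOnGenerators}.

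The main obstacle is purely bookkeeping: keeping the Koszul signs in \eqref{eq:PLambdaDef} and in \eqref{eq:TensorProductAlgebras} consistent when splitting the sum. Concretely, one must verify that the sign $(-1)^{v_k(v_{k+1} + \cdots + v_n) + w_\ell(w_1 + \cdots + w_{\ell-1})}$ produced by the explicit formula, when restricted to $\ell$ lying in the $u$-block (with index shifted to $\ell' = \ell - p$), equals the product of the sign produced by $P_\Lambda(v \tensor u)$ with the parity sign $(-1)^{wv}$ required by \refitem{item:PLambdaLeibnizEins}, and dually for \refitem{item:PLambdaLeibnizZwei}. This is a routine but careful verification using that $\Lambda(v_k, w_\ell)$ only contributes when $v_k$ and $w_\ell$ carry the same parity, so parities may be freely interchanged in the sign.
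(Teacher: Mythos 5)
The paper states this lemma without proof, so there is no printed argument to compare against; your two-stage plan (verify the explicit formula \eqref{eq:PLambdaDef} against the three conditions, then deduce uniqueness from the fact that $V$ generates) is precisely the implicit standard argument, and your approach to the Koszul bookkeeping is the right one in spirit.

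However, as printed, condition \refitem{item:PLambdaLeibnizZwei} cannot be verified, and you should catch this rather than reproduce it. Already on generators $v, w, u \in V$ the explicit formula gives $P_\Lambda(vw \tensor u) = \Lambda(w,u)\, v \tensor \Unit + (-1)^{vw}\Lambda(v,u)\, w \tensor \Unit$, which lies entirely in $\Sym^1(V) \tensor \Sym^0(V)$, whereas $P_\Lambda(v \tensor u)(\Unit \tensor w) = \Lambda(v,u)\,\Unit \tensor w$ lands in $\Sym^0(V) \tensor \Sym^1(V)$ --- the wrong tensor slot. The correct right-factor must be $(w \tensor \Unit)$, as one also reads off from \eqref{eq:LeibnizPLambdaEins} applied to an elementary triple tensor: unwinding $P_\Lambda^{13}$ gives precisely $(-1)^{wu} P_\Lambda(v \tensor u)(w \tensor \Unit)$. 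Your own description of where the sign comes from --- ``commuting the $w$-block past the $u$-entry in the second tensor factor'' --- is exactly the Koszul sign $(-1)^{Bw}$ produced by right-multiplication with $w \tensor \Unit$ in \eqref{eq:TensorProductAlgebras}; multiplying by $\Unit \tensor w$ produces no sign at all. So your heuristic is consistent with the corrected rule but inconsistent with the formula you quote, and the block-splitting verification you propose will not close unless you make the correction. Once that is fixed, the rest of the outline goes through; note also that the base vanishing $P(\Unit \tensor u) = 0 = P(u \tensor \Unit)$ needed for the uniqueness induction is obtained most cleanly by setting $v = w = \Unit$ in the (corrected) rule \refitem{item:PLambdaLeibnizZwei}, which forces $P(\Unit \tensor u) = 2P(\Unit \tensor u)$ --- this is exactly the observation the paper records in the sentence after the lemma --- rather than the vaguer degree-mismatch appeal you make.
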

Note that the two Leibniz rules imply $P_\Lambda(v \tensor \Unit) = 0
= P_\Lambda(\Unit \tensor v)$ for all $v \in \Sym^\bullet(V)$.

In order to rewrite the Leibniz rules for $P_\Lambda$ in a more
conceptual way, we have to introduce the canonical flip operator
$\tau_{VW}\colon V \tensor W \longrightarrow W \tensor V$ for
$\mathbb{Z}_2$-graded vector spaces $V$ and $W$ by
\begin{equation}
    \label{eq:tauVW}
    \tau_{VW}(v \tensor w) = (-1)^{vw} w \tensor v
\end{equation}
on homogeneous elements and linear extension to all tensors. We
usually write $\tau$ if the reference to the underlying vector spaces
is clear. Using $\tau$ we define the operators
\begin{equation}
    \label{eq:PLambdaOperatorsEinsZweiDrei}
    P_\Lambda^{12}, P_\Lambda^{23}, P_\Lambda^{13}\colon
    \Sym^\bullet(V) \tensor \Sym^\bullet(V) \tensor \Sym^\bullet(V)
    \longrightarrow
    \Sym^\bullet(V) \tensor \Sym^\bullet(V) \tensor \Sym^\bullet(V)
\end{equation}
on the triple tensor product by
\begin{equation}
    \label{eq:PLambdaOperatorsEinsZweiDreiDef}
    P_\Lambda^{12} = P_\Lambda \tensor \id,
    \quad
    P_\Lambda^{23} = \id \tensor P_\Lambda,
    \quad
    \textrm{and}
    \quad
    P_\Lambda^{13}
    =
    (\id \tensor \tau)
    \circ (P_\Lambda \tensor \id)
    \circ (\id \tensor \tau).
\end{equation}
These operators have again parity $\mathbf{0}$ and change the tensor
degrees by $(-1, -1, 0)$, $(0, -1, -1)$, and by $(-1, 0, -1)$,
respectively.
\begin{lemma}
    \label{lemma:LeibnizRuleForPLambda}%
    The Leibniz rules for $P_\Lambda$ can be written as
    \begin{equation}
        \label{eq:LeibnizPLambdaEins}
        P_\Lambda \circ (\mu \tensor \id)
        =
        (\mu \tensor \id)
        \circ
        \left(P_\Lambda^{13} + P_\Lambda^{23}\right)
    \end{equation}
    and
    \begin{equation}
        \label{eq:LeibnizPLambdaZwei}
        P_\Lambda \circ (\id \tensor \mu)
        =
        (\id \tensor \mu)
        \circ
        \left(P_\Lambda^{12} + P_\Lambda^{13}\right).
    \end{equation}
\end{lemma}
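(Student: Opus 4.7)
The plan is to verify both identities by applying them to an elementary tensor $v \tensor w \tensor u$ with $v, w, u \in \Sym^\bullet(V)$ homogeneous. Since all maps involved are linear, this suffices, and the identities then follow essentially as a re-packaging of the Leibniz rules \refitem{item:PLambdaLeibnizEins} and \refitem{item:PLambdaLeibnizZwei} of Lemma~\ref{lemma:PLambdaMapProperties}. My main worry is the bookkeeping of Koszul signs, particularly for the contribution from $P_\Lambda^{13}$, which involves conjugation by the flip $\tau$ and therefore produces two extra signs that must conspire with the $(-1)^{wu}$ appearing in \refitem{item:PLambdaLeibnizZwei}.

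For the first identity \eqref{eq:LeibnizPLambdaEins}, applying the left-hand side to $v \tensor w \tensor u$ yields $P_\Lambda(vw \tensor u)$, which by Leibniz rule \refitem{item:PLambdaLeibnizZwei} equals
\begin{equation*}
    (v \tensor \Unit)\, P_\Lambda(w \tensor u)
    +
    (-1)^{wu}\, P_\Lambda(v \tensor u)\,(\Unit \tensor w).
\end{equation*}
On the other hand, writing $P_\Lambda(w \tensor u) = \sum_i w_i' \tensor u_i'$, the term $(\mu \tensor \id) \circ P_\Lambda^{23}$ sends $v \tensor w \tensor u$ to $\sum_i v w_i' \tensor u_i'$, which matches the first summand because $(v \tensor \Unit)(w_i' \tensor u_i') = v w_i' \tensor u_i'$ by the graded product \eqref{eq:TensorProductAlgebras}. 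For $P_\Lambda^{13}$, writing $P_\Lambda(v \tensor u) = \sum_j v_j' \tensor u_j'$ and tracing through $(\id \tensor \tau) \circ (P_\Lambda \tensor \id) \circ (\id \tensor \tau)$ on $v \tensor w \tensor u$ produces the sign $(-1)^{wu + u_j' w}$; postcomposing with $\mu \tensor \id$ yields $\sum_j (-1)^{wu+u_j' w}(v_j' w) \tensor u_j'$. Comparing with $(-1)^{wu} P_\Lambda(v \tensor u)(\Unit \tensor w) = \sum_j (-1)^{wu+u_j' w}(v_j' w) \tensor u_j'$ via \eqref{eq:TensorProductAlgebras} gives a perfect match.

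For the second identity \eqref{eq:LeibnizPLambdaZwei}, the same strategy works: evaluate the left-hand side using Leibniz rule \refitem{item:PLambdaLeibnizEins} as
\begin{equation*}
    P_\Lambda(v \tensor wu)
    =
    P_\Lambda(v \tensor w)\,(\Unit \tensor u)
    +
    (-1)^{wv}(\Unit \tensor w)\, P_\Lambda(v \tensor u),
\end{equation*}
and match it with the right-hand side. Here the term $(\id \tensor \mu) \circ P_\Lambda^{12}$ reproduces the first summand directly, while $(\id \tensor \mu) \circ P_\Lambda^{13}$ reproduces the second: the two applications of $\tau$ produce a sign $(-1)^{wu + u_j' w}$ on $v_j' \tensor w u_j'$, which after using $(\Unit \tensor w)(v_j' \tensor u_j') = (-1)^{w v_j'} v_j' \tensor w u_j'$ and exploiting that $P_\Lambda$ is even (so $v_j' + u_j' = v + u$ in $\mathbb{Z}_2$) collapses to the desired $(-1)^{wv}$.

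The sign check is the only nontrivial point; everything else is bookkeeping and the previous lemma. Once the two Leibniz rules are rewritten in the operator language above, both claimed identities hold on arbitrary homogeneous elementary tensors and hence, by linearity, on all of $\Sym^\bullet(V)^{\tensor 3}$.
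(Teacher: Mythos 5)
Your proof is correct and is exactly the direct verification one would give: evaluate both sides on a homogeneous elementary tensor $v \tensor w \tensor u$, unwind $P_\Lambda^{12}$, $P_\Lambda^{23}$, $P_\Lambda^{13}$ together with the Koszul signs from the two flips, and match against the Leibniz rules of Lemma~\ref{lemma:PLambdaMapProperties}; the paper offers no proof, treating the lemma as an immediate repackaging, so your argument supplies precisely what is implicit there. One bookkeeping remark: in Lemma~\ref{lemma:PLambdaMapProperties}, \refitem{item:PLambdaLeibnizZwei}, the factor $(\Unit \tensor w)$ must be read as $(w \tensor \Unit)$ for the statement to be consistent with the tensor degrees (the left-hand side lies in $\Sym^{a+b-1}(V) \tensor \Sym^{c-1}(V)$), and your computation $(v_j' \tensor u_j')(w \tensor \Unit) = (-1)^{u_j' w} v_j' w \tensor u_j'$ silently uses this corrected form even though your displayed intermediate formula copies the literal $(\Unit \tensor w)$; with that reading, both sign checks — including the final identification $(-1)^{wv + wv_j'} = (-1)^{wu + wu_j'}$ via $v_j' + u_j' = v + u$ — are right.
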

Analogously, we have similar Leibniz rules for the operators
$P_\Lambda^{12}$, $P_\Lambda^{23}$, and $P_\Lambda^{13}$ which show
that they will be uniquely determined by their values on generators of
$\Sym^\bullet(V) \tensor \Sym^\bullet(V) \tensor
\Sym^\bullet(V)$. Hence the products $P_\Lambda^{12} \circ
P_\Lambda^{23}$ etc.\ will be uniquely determined by their values on
quadratic expressions in the generators. This will allow for a rather
straightforward computation leading to the following observation:
\begin{lemma}
    \label{lemma:PLambdasAllCommute}%
    The operators $P_\Lambda^{12}$, $P_\Lambda^{23}$, and
    $P_\Lambda^{13}$ commute pairwise.
\end{lemma}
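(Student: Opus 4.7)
The plan is to reduce the claim to a finite check on low-degree generators. Each $P_\Lambda^{ij}$ has parity $\mathbf{0}$ and, by analogues of Lemma~\ref{lemma:LeibnizRuleForPLambda} applied slot-wise, satisfies Leibniz rules in each of the three tensor factors: $P_\Lambda^{12}$ is a graded first-order multiderivation in slots 1 and 2 and acts by the identity in slot 3; $P_\Lambda^{23}$ is analogous with slots shifted; and $P_\Lambda^{13}$ becomes a multiderivation via the conjugation by $\tau$ built into its definition in \eqref{eq:PLambdaOperatorsEinsZweiDreiDef}. Consequently each composition $P_\Lambda^{ij}\circ P_\Lambda^{kl}$ is a second-order graded multidifferential operator on the triple symmetric algebra, which by freeness of $\Sym^\bullet(V)$ as a graded commutative algebra is uniquely determined by its values on those trivectors $v_1\tensor v_2\tensor v_3$ whose slotwise degrees match exactly the degrees lowered by the composition.

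First I would formulate and verify these slotwise Leibniz rules for $P_\Lambda^{12}$, $P_\Lambda^{23}$ and $P_\Lambda^{13}$ (the latter after carefully pushing the flip $\tau$ through the operator). From this I would conclude the general uniqueness statement alluded to in the paragraph preceding the lemma: the commutator $[P_\Lambda^{ij},P_\Lambda^{kl}]$ is a fourth-order multiderivation and hence is zero as soon as it vanishes on the minimal quadratic-in-generators elements.

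Next I would compute each commutator explicitly on the appropriate minimal case. For $(P_\Lambda^{12},P_\Lambda^{23})$ the degrees drop by $(1,2,1)$, so one tests on $v_1\tensor v_2v_3\tensor v_4$ with $v_i\in V$; expanding both compositions via \eqref{eq:PLambdaDef} yields in each order a sum of two terms of the shape
\[
\Lambda(v_1,v_2)\Lambda(v_3,v_4)\,\Unit\tensor\Unit\tensor\Unit
\quad\text{and}\quad
\Lambda(v_1,v_3)\Lambda(v_2,v_4)\,\Unit\tensor\Unit\tensor\Unit,
\]
decorated with Koszul signs that agree in the two orders because the graded symmetry of the product $v_2v_3\in\Sym^2(V)$ absorbs the discrepancy. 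The case $(P_\Lambda^{12},P_\Lambda^{13})$ is tested on $v_1v_2\tensor v_3\tensor v_4$ and $(P_\Lambda^{23},P_\Lambda^{13})$ on $v_1\tensor v_2\tensor v_3v_4$, with the flips $\tau$ appearing in $P_\Lambda^{13}$ producing only the expected parity signs.

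The main obstacle will be the consistent bookkeeping of the Koszul signs, especially in tracing the flip $\tau$ through $P_\Lambda$. A convenient way to streamline this is to record the equivalent formula $P_\Lambda^{13}=(\tau\tensor\id)\circ(\id\tensor P_\Lambda)\circ(\tau\tensor\id)$ and, using the naturality of $\tau$ on graded tensor products, to reduce the commutators involving $P_\Lambda^{13}$ to the first case by a relabelling of arguments. Once the sign conventions of Section~\ref{subsec:SymmetricAlgebraSignConventions} are applied uniformly, the two orderings in each commutator yield formally identical expressions and the pairwise commutativity follows.
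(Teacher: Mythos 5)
Your proposal matches the paper's own (sketched) argument exactly: the paper reduces the claim, via the Leibniz rules for the $P_\Lambda^{ij}$, to a check on quadratic expressions in the generators and then performs the direct computation, which is precisely the strategy you lay out. The identification of the minimal test elements ($v_1\tensor v_2v_3\tensor v_4$, $v_1v_2\tensor v_3\tensor v_4$, $v_1\tensor v_2\tensor v_3v_4$) and the observation that each commutator reduces to two terms of the form $\Lambda(\cdot,\cdot)\Lambda(\cdot,\cdot)$ with matching Koszul signs is exactly what the paper's ``rather straightforward computation'' refers to.
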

This lemma together with the Leibniz rule in form of
Lemma~\ref{lemma:LeibnizRuleForPLambda} gives immediately the
following result, see e.g.  \cite[Sect.~6.2.4]{waldmann:2007a} for a
detailed proof:
\begin{proposition}
    \label{proposition:FormalStarProduct}%
    On $\Sym^\bullet(V)[[\nu]]$ one obtains a $\mathbb{Z}_2$-graded
    associative $\mathbb{k}[[\nu]]$-bilinear multiplication by
    \begin{equation}
        \label{eq:TheFormalStarProduct}
        v \formalstar w
        =
        \mu \circ \E^{\nu P_\Lambda} (v \tensor w),
    \end{equation}
    where all $\mathbb{k}$-multilinear maps are extended to be
    $\mathbb{k}[[\nu]]$-multilinear as usual.
\end{proposition}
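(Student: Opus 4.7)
The plan is to verify associativity of $\formalstar$ by a direct bookkeeping argument on the triple tensor product $\Sym^\bullet(V) \tensor \Sym^\bullet(V) \tensor \Sym^\bullet(V)$, combining the pairwise commutativity of the three operators $P_\Lambda^{ij}$ with the two Leibniz rules already in hand. Well-definedness, $\mathbb{Z}_2$-gradedness, and $\mathbb{k}[[\nu]]$-bilinearity are essentially built into the construction and will only require a line of justification each.

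First I would dispose of the convergence issue. Since $P_\Lambda$ strictly decreases the tensor degree in each of its two arguments by one, for homogeneous $v \in \Sym^p(V)$ and $w \in \Sym^q(V)$ the operator $P_\Lambda^k$ annihilates $v \tensor w$ as soon as $k > \min(p,q)$. Hence $\E^{\nu P_\Lambda}(v \tensor w)$ is already a polynomial in $\nu$ of degree at most $\min(p,q)$, so the sum $v \formalstar w = \mu \circ \E^{\nu P_\Lambda}(v \tensor w)$ is a perfectly well-defined element of $\Sym^\bullet(V)[[\nu]]$. Linear extension and passage to formal power series in $\nu$ produces a $\mathbb{k}[[\nu]]$-bilinear map. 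Both $\mu$ and $P_\Lambda$ have parity $\mathbf{0}$, so $\formalstar$ is $\mathbb{Z}_2$-graded.

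For associativity I would compute $(v \formalstar w) \formalstar u$ and $v \formalstar (w \formalstar u)$ on the triple tensor product. Iterating the first Leibniz rule of Lemma~\ref{lemma:LeibnizRuleForPLambda} gives $P_\Lambda^n \circ (\mu \tensor \id) = (\mu \tensor \id) \circ (P_\Lambda^{13} + P_\Lambda^{23})^n$ for all $n$, whence
\begin{equation*}
    \E^{\nu P_\Lambda} \circ (\mu \tensor \id)
    =
    (\mu \tensor \id) \circ \E^{\nu (P_\Lambda^{13} + P_\Lambda^{23})}.
\end{equation*}
By Lemma~\ref{lemma:PLambdasAllCommute} the operators $P_\Lambda^{13}$ and $P_\Lambda^{23}$ commute, so the exponential factorizes and substituting into the definition yields
\begin{equation*}
    (v \formalstar w) \formalstar u
    =
    \mu \circ (\mu \tensor \id) \circ \E^{\nu P_\Lambda^{13}} \E^{\nu P_\Lambda^{23}} \E^{\nu P_\Lambda^{12}} (v \tensor w \tensor u).
\end{equation*}
Symmetrically, the second Leibniz rule combined with the commutation of $P_\Lambda^{12}$ and $P_\Lambda^{13}$ yields
\begin{equation*}
    v \formalstar (w \formalstar u)
    =
    \mu \circ (\id \tensor \mu) \circ \E^{\nu P_\Lambda^{12}} \E^{\nu P_\Lambda^{13}} \E^{\nu P_\Lambda^{23}} (v \tensor w \tensor u).
\end{equation*}
The ordinary associativity of $\mu$ gives $\mu \circ (\mu \tensor \id) = \mu \circ (\id \tensor \mu)$, and the full pairwise commutativity of $P_\Lambda^{12}$, $P_\Lambda^{13}$, $P_\Lambda^{23}$ lets me reorder the three exponentials freely. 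Both expressions therefore equal
\begin{equation*}
    \mu \circ (\mu \tensor \id) \circ \E^{\nu (P_\Lambda^{12} + P_\Lambda^{13} + P_\Lambda^{23})} (v \tensor w \tensor u),
\end{equation*}
which proves associativity.

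The hard part of the argument has really already been done upstream: the content of the Jacobi identity for the constant Poisson bracket is packaged into Lemma~\ref{lemma:PLambdasAllCommute}, and the compatibility of $P_\Lambda$ with the algebra structure is packaged into Lemma~\ref{lemma:LeibnizRuleForPLambda}. Once these are available, the proof reduces to the elementary exponential manipulation above, and no genuine obstacle remains.
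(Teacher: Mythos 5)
Your proposal is correct and follows exactly the route the paper indicates: it combines the two Leibniz rules of Lemma~\ref{lemma:LeibnizRuleForPLambda} with the pairwise commutativity from Lemma~\ref{lemma:PLambdasAllCommute} to write both associations as $\mu \circ (\mu \tensor \id) \circ \E^{\nu(P_\Lambda^{12}+P_\Lambda^{13}+P_\Lambda^{23})}$, and it disposes of well-definedness via the degree-dropping property of $P_\Lambda$. This is precisely the computation the paper defers to \cite[Sect.~6.2.4]{waldmann:2007a}, so you have simply filled in the details of the intended argument.
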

\begin{remark}[Commuting derivations]
    \label{remark:CommutingDerivations}%
    A particularly simple case is obtained for $\Lambda = \varphi
    \tensor \psi$ with $\varphi, \psi \in V^*$ of equal parity. In
    this case, we denote by $X_\varphi, X_\psi\colon \Sym^\bullet(V)
    \longrightarrow \Sym^{\bullet -1}(V)$ the corresponding
    derivations of the same parity. It is then easy to see that
    $[X_\varphi, X_\psi] = 0$ and that $P_\Lambda = X_\varphi \tensor
    X_\psi$. This example (without $\mathbb{Z}_2$-grading) of a formal
    associative deformation via commuting derivations was first
    considered by Gerstenhaber in \cite[Thm.~8]{gerstenhaber:1968a}
    and was generalized in various ways ever since.
\end{remark}
Also the next proposition is folklore and easily verified:
\begin{proposition}
    \label{proposition:InducedPoissonBracket}%
    Let $\algebra{A}$ be an associative $\mathbb{Z}_2$-graded
    commutative algebra and let $\star$ be a formal associative
    deformation of it such that $(\algebra{A}[[\nu]], \star)$ is still
    $\mathbb{Z}_2$-graded. Then the first order of the
    $\star$-commutator defines a Poisson bracket on $\algebra{A}$.
\end{proposition}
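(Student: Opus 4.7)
The plan is to compute the graded $\star$-commutator as a formal power series in $\nu$ and exploit the general algebraic identities that the graded commutator of any associative $\mathbb{Z}_2$-graded algebra automatically satisfies. Writing $a \star b = \sum_{r=0}^\infty \nu^r C_r(a,b)$, the assumption that $\star$ deforms the commutative product forces $C_0 = \mu$, and the assumption that $(\algebra{A}[[\nu]], \star)$ is $\mathbb{Z}_2$-graded forces each $C_r$ to be of parity $\mathbf{0}$. Graded commutativity of $\mu$ then implies that $[a,b]_\star := a \star b - (-1)^{ab} b \star a$ has no $\nu^0$ term, so its leading coefficient
\begin{equation*}
    \{a,b\} := C_1(a,b) - (-1)^{ab} C_1(b,a)
\end{equation*}
is well-defined as a $\mathbb{k}$-bilinear map $\algebra{A} \times \algebra{A} \longrightarrow \algebra{A}$ of parity $\mathbf{0}$. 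Graded antisymmetry $\{a,b\} = -(-1)^{ab}\{b,a\}$ is immediate from this definition.

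For the graded Leibniz rule, in any associative $\mathbb{Z}_2$-graded algebra the graded commutator satisfies
\begin{equation*}
    [x, y \star z]_\star = [x,y]_\star \star z + (-1)^{xy}\, y \star [x,z]_\star,
\end{equation*}
a direct consequence of associativity. I would apply this identity in $(\algebra{A}[[\nu]], \star)$ and extract the coefficient of $\nu^1$. The left-hand side expands as $\nu \{x, yz\} + O(\nu^2)$, since $y \star z = yz + O(\nu)$ and the bracket $[x, \cdot]_\star$ kills the zeroth-order contribution. The right-hand side, using $[x,y]_\star = \nu \{x,y\} + O(\nu^2)$ and $\star = \mu + O(\nu)$, expands as $\nu\bigl(\{x,y\} z + (-1)^{xy} y \{x,z\}\bigr) + O(\nu^2)$. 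Comparing coefficients yields the graded Leibniz rule.

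For the graded Jacobi identity, I invoke the same strategy: in any associative $\mathbb{Z}_2$-graded algebra the graded commutator satisfies
\begin{equation*}
    (-1)^{xz}\bigl[[x,y]_\star, z\bigr]_\star + (-1)^{yx}\bigl[[y,z]_\star, x\bigr]_\star + (-1)^{zy}\bigl[[z,x]_\star, y\bigr]_\star = 0.
\end{equation*}
Since $[\cdot,\cdot]_\star$ starts at order $\nu$, each of the iterated brackets starts at order $\nu^2$, and a short computation shows that the $\nu^2$-coefficient of $[[x,y]_\star, z]_\star$ equals $\{\{x,y\}, z\}$. Extracting the coefficient of $\nu^2$ in the above identity then gives the graded Jacobi identity for $\{\cdot,\cdot\}$.

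The only real obstacle here is careful sign bookkeeping in the $\mathbb{Z}_2$-graded setting, and this is precisely what is tamed by deriving everything from the abstract graded-commutator identities rather than from direct manipulation of the Hochschild cocycle equation $C_1(ab,c) + C_1(a,b) c = C_1(a,bc) + a C_1(b,c)$ implied by the order-$\nu$ part of associativity of $\star$. Since both auxiliary identities hold in any associative $\mathbb{Z}_2$-graded algebra, no further hypothesis on $\algebra{A}$ or $\star$ beyond those stated is needed.
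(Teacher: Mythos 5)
Your proof is correct. The paper declares this proposition ``folklore'' and gives no proof, so there is nothing to compare against; your argument is the standard one and is carried out cleanly. The key observations — that graded commutativity of $\mu$ makes the $\star$-commutator start at order $\nu$, that the Leibniz rule and graded Jacobi identity for $[\cdot,\cdot]_\star$ hold verbatim in any associative $\mathbb{Z}_2$-graded algebra (here $\algebra{A}[[\nu]]$), and that extracting the $\nu^1$ and $\nu^2$ coefficients respectively of these identities yields the corresponding identities for $\{\cdot,\cdot\}$ — are all valid, and your coefficient extractions (in particular that $[[x,y]_\star,z]_\star = \nu^2\{\{x,y\},z\} + O(\nu^3)$) check out. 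Your closing remark that this route avoids direct sign gymnastics with the Hochschild cocycle identity for $C_1$ is apt: that is precisely why one derives everything from the abstract commutator identities.
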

Of course, we always take the $\mathbb{Z}_2$-graded commutators and
Poisson brackets. In our example, this leads to the following Poisson
bracket:
\begin{corollary}
    \label{corollary:ThePoissonBracket}%
    Let $\Lambda$ be as above and set $P^\opp_\Lambda = \tau \circ
    P_\Lambda \circ \tau$. Then
    \begin{equation}
        \label{eq:ThePoissonBracket}
        \Bracket{v, w}
        =
        \mu \circ (P_\Lambda - P^\opp_\Lambda) (a \tensor b)
    \end{equation}
    defines a Poisson bracket for $\Sym^\bullet(V)$.
\end{corollary}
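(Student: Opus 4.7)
The strategy is simply to invoke Proposition~\ref{proposition:InducedPoissonBracket} applied to the star product $\formalstar$ from Proposition~\ref{proposition:FormalStarProduct}. Since $(\Sym^\bullet(V)[[\nu]], \formalstar)$ is a $\mathbb{Z}_2$-graded associative $\mathbb{k}[[\nu]]$-bilinear deformation of the graded commutative algebra $(\Sym^\bullet(V), \mu)$, the first-order term of the graded $\formalstar$-commutator automatically yields a Poisson bracket. What needs to be verified is only that this first-order term coincides with $\mu \circ (P_\Lambda - P^\opp_\Lambda)$.

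Concretely, from $v \formalstar w = \mu(v \tensor w) + \nu\, \mu \circ P_\Lambda(v \tensor w) + O(\nu^2)$ one reads off that the degree-one coefficient of $[v,w]_\formalstar = v \formalstar w - (-1)^{vw} w \formalstar v$ equals
\begin{equation*}
    \mu \circ P_\Lambda(v \tensor w)
    - (-1)^{vw}\, \mu \circ P_\Lambda(w \tensor v).
\end{equation*}
For the second term, one rewrites $(-1)^{vw}(w \tensor v) = \tau(v \tensor w)$, so this term becomes $\mu \circ P_\Lambda \circ \tau(v \tensor w)$. Since $\mu$ is graded commutative, $\mu = \mu \circ \tau$, and inserting this gives $\mu \circ \tau \circ P_\Lambda \circ \tau(v \tensor w) = \mu \circ P^\opp_\Lambda(v \tensor w)$. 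Subtracting yields exactly the right-hand side of \eqref{eq:ThePoissonBracket}.

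With that identification in hand, Proposition~\ref{proposition:InducedPoissonBracket} immediately gives that $\Bracket{\argument, \argument}$ is a graded Poisson bracket: graded antisymmetry and the graded Leibniz rule follow from the corresponding properties of the graded commutator of $\formalstar$ together with the fact that $\mu$ is the zeroth-order term, while the graded Jacobi identity follows from the associativity of $\formalstar$ by extracting the $\nu^2$ coefficient of the graded $\formalstar$-Jacobiator.

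The only point that requires any care is the sign bookkeeping when converting between $P_\Lambda(w \tensor v)$ and $P_\Lambda \circ \tau(v \tensor w)$; beyond that, the statement is a routine consequence of the two preceding propositions and does not require any independent verification of the Jacobi identity from the explicit formula \eqref{eq:PLambdaDef}.
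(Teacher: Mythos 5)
Your proposal is correct and follows exactly the route the paper intends: the corollary is stated as an immediate consequence of Proposition~\ref{proposition:InducedPoissonBracket} applied to the star product of Proposition~\ref{proposition:FormalStarProduct}, and your sign computation identifying the first-order graded commutator with $\mu \circ (P_\Lambda - P^\opp_\Lambda)$ via $\tau(v \tensor w) = (-1)^{vw} w \tensor v$ and $\mu = \mu \circ \tau$ is exactly the (unwritten) verification the paper relies on. No gaps.
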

Alternatively, we can also consider the \emph{symmetric} and
\emph{antisymmetric part}
\begin{equation}
    \label{eq:AntiAndSymmetricPartOfLambda}
    \Lambda_\pm
    =
    \frac{1}{2}\left(\Lambda \pm \Lambda \circ \tau\right)
\end{equation}
of $\Lambda$ such that $\Lambda = \Lambda_+ + \Lambda_-$. Then we note
that with $\Lambda^\opp = \Lambda \circ \tau$ we have
\begin{equation}
    \label{eq:PLambdaOpp}
    P_{\Lambda^\opp} = P_\Lambda^\opp
\end{equation}
and thus
\begin{equation}
    \label{eq:PLambdaAndPLambdaMinus}
    P_\Lambda - P^\opp_\Lambda = 2 P_{\Lambda_-}.
\end{equation}
Thus $\{v, w\}_{\Lambda} = 2 \mu \circ P_{\Lambda_-}(v \tensor w)$
depends only on the antisymmetric part. Nevertheless, the star product
$\star$ in \eqref{eq:TheFormalStarProduct} depends on $\Lambda$ and
not just on $\Lambda_-$.  It is this Poisson bracket for which
$\formalstar$ provides a formal deformation quantization.

In general, one requires only a formal star product but since our
Poisson bracket is rather particular, we can sharpen the deformation
result as follows:
\begin{corollary}
    \label{corollary:NonFormalDeformation}%
    The product $\formalstar$ restricts to $\Sym^\bullet(V)[\nu]$
    which becomes an associative $\mathbb{Z}_2$-graded algebra over
    $\mathbb{k}[\nu]$.
\end{corollary}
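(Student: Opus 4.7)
The plan is to exploit the degree-lowering property of $P_\Lambda$ to show that the exponential series in \eqref{eq:TheFormalStarProduct} terminates after finitely many steps on elements of bounded symmetric degree, so that $v \formalstar w$ produces only a polynomial in $\nu$ whenever $v$ and $w$ are polynomials in $\nu$ with coefficients of bounded symmetric degree.

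First I would recall from the definition \eqref{eq:PLambdaDef} that $P_\Lambda$ maps $\Sym^n(V) \tensor \Sym^m(V)$ into $\Sym^{n-1}(V) \tensor \Sym^{m-1}(V)$, lowering each tensor factor's symmetric degree by one (with the convention that $\Sym^{-1}(V) = \{0\}$). Iterating, $P_\Lambda^k$ sends $\Sym^n(V) \tensor \Sym^m(V)$ into $\Sym^{n-k}(V) \tensor \Sym^{m-k}(V)$, and in particular $P_\Lambda^k(v \tensor w) = 0$ as soon as $k > \min(n,m)$ for homogeneous $v \in \Sym^n(V)$, $w \in \Sym^m(V)$.

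Consequently, for such homogeneous $v,w$ the defining formula
\begin{equation*}
    v \formalstar w
    =
    \mu \circ \E^{\nu P_\Lambda}(v \tensor w)
    =
    \sum_{k=0}^{\min(n,m)} \frac{\nu^k}{k!}\, \mu \circ P_\Lambda^k (v \tensor w)
\end{equation*}
is a finite sum and thus lies in $\Sym^\bullet(V)[\nu]$. Since every element of $\Sym^\bullet(V)[\nu]$ is a finite $\mathbb{k}[\nu]$-linear combination of homogeneous elements, $\mathbb{k}[\nu]$-bilinearity of $\formalstar$ shows that $\formalstar$ restricts to a well-defined product on $\Sym^\bullet(V)[\nu] \subseteq \Sym^\bullet(V)[[\nu]]$.

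Finally, associativity and $\mathbb{Z}_2$-gradedness of the restricted product are inherited directly from Proposition~\ref{proposition:FormalStarProduct}, since $\Sym^\bullet(V)[\nu]$ is a $\mathbb{k}[\nu]$-subalgebra of $\Sym^\bullet(V)[[\nu]]$. There is no real obstacle here: the only substantive point is the degree-lowering observation for $P_\Lambda$, which is immediate from \eqref{eq:PLambdaDef}. The mild bookkeeping is extending from homogeneous $v,w$ to arbitrary polynomial elements, but this follows at once from the direct sum decomposition \eqref{eq:SymmetricAlgebra} and the finiteness of the sums involved.
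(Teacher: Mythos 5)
Your proposal is correct and matches the paper's reasoning: the paper likewise observes immediately after the corollary that $P_\Lambda^n(v\tensor w)=0$ once $n$ exceeds the relevant symmetric degrees, so the exponential series terminates. Your formulation with $k>\min(n,m)$ is a slight sharpening of the paper's phrasing, but the argument is essentially identical.
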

More precisely, for $v, w \in \Sym^\bullet(V)$ we have $P_\Lambda^n(v
\tensor w) = 0$ as soon as $n \in \mathbb{N}_0$ is larger than the
maximal symmetric degree in $v$ or $w$. It follows that in
$\Sym^\bullet(V)[\nu]$ we can replace the formal parameter $\nu$ by
any element of $\mathbb{k}$ and get a well-defined associative
multiplication from $\formalstar$.

Also the following result is well-known and obtained from an easy
induction: the elements of $V$ generate $\Sym^\bullet(V)[\nu]$ with
respect to $\formalstar$:
\begin{corollary}
    \label{corollary:VGeneratesWeylAlgebra}%
    The $\mathbb{k}[\nu]$-algebra $\Sym^\bullet(V)[\nu]$ is generated
    by $V$.
\end{corollary}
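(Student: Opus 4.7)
The plan is to argue by induction on the symmetric tensor degree $n$ that every element of $\Sym^n(V)$ lies in the $\mathbb{k}[\nu]$-subalgebra of $(\Sym^\bullet(V)[\nu], \formalstar)$ generated by $V$. Recall that $\Sym^n(V)$ is linearly spanned (over $\mathbb{k}$) by symmetric products $v_1 \cdots v_n$ with $v_1, \ldots, v_n \in V$, so it suffices to handle such monomials. The cases $n = 0$ and $n = 1$ are immediate: $\Sym^0(V) = \mathbb{k} \cdot \Unit$ is contained in $\mathbb{k}[\nu]$, and $\Sym^1(V) = V$ consists of generators by hypothesis.

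For the inductive step, fix $n \ge 2$ and assume the claim for all degrees strictly less than $n$. The crucial observation is that for $v_1 \in V = \Sym^1(V)$ and any $a \in \Sym^{n-1}(V)$, the map $P_\Lambda$ lowers the first tensor factor by one, so $P_\Lambda(v_1 \tensor a) \in \Sym^0(V) \tensor \Sym^{n-2}(V)$. Consequently $P_\Lambda^k(v_1 \tensor a) = 0$ for every $k \ge 2$, because iterating $P_\Lambda$ would require a nonzero first factor of positive symmetric degree. Hence the exponential series in \eqref{eq:TheFormalStarProduct} truncates after the first-order term:
\begin{equation*}
    v_1 \formalstar a
    =
    \mu(v_1 \tensor a) + \nu \cdot \mu \circ P_\Lambda(v_1 \tensor a)
    =
    v_1 a + \nu b,
\end{equation*}
where $b \in \Sym^{n-2}(V)$ is a $\mathbb{k}$-linear combination of products $\Lambda(v_1, v_j) v_2 \cdots \hat v_j \cdots v_n$ as read off from \eqref{eq:PLambdaDef}.

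Applying the inductive hypothesis to $a = v_2 \cdots v_n \in \Sym^{n-1}(V)$ and to $b \in \Sym^{n-2}(V)$, both $a$ and $b$ lie in the $\mathbb{k}[\nu]$-subalgebra generated by $V$ under $\formalstar$. Since $v_1 \in V$ is itself a generator, the identity
\begin{equation*}
    v_1 v_2 \cdots v_n
    =
    v_1 \formalstar (v_2 \cdots v_n) - \nu b
\end{equation*}
places $v_1 \cdots v_n$ in that same subalgebra, completing the induction. The argument is essentially routine; the only point requiring care is the graded sign bookkeeping in \eqref{eq:PLambdaDef}, but this is harmless here because only the first iteration of $P_\Lambda$ contributes, and we never need to compare $v_1 \formalstar a$ with $a \formalstar v_1$. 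Thus there is no real obstacle beyond verifying the degree truncation of $\E^{\nu P_\Lambda}$ on tensors with a factor of symmetric degree one.
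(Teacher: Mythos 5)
Your proof is correct. The paper deliberately omits the argument, remarking only that the statement is ``well-known and obtained from an easy induction''; your induction on the symmetric degree $n$, using the key truncation $P_\Lambda^k(v_1 \tensor a) = 0$ for $k \ge 2$ when $v_1 \in V$ (which follows because $P_\Lambda$ annihilates anything of the form $\Unit \tensor w$), to rewrite $v_1 \cdots v_n = v_1 \formalstar (v_2 \cdots v_n) - \nu b$ with $b \in \Sym^{n-2}(V)$, is exactly that easy induction.
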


%
%

\subsection{Symmetries and Equivalences}
\label{subsec:Symmetries}

The symmetric algebra $\Sym^\bullet(V)$ can be interpreted as the
polynomials on the ``predual'' of $V$, which, of course, needs not to
exist in infinite dimensions. Alternatively, $\Sym^\bullet(V)$ injects
as a subalgebra into the polynomials $\Pol^\bullet(V^*)$ on the dual
of $V$. We use this heuristic point of view now to establish some
symmetries of $\Bracket{\argument, \argument}$ and $\formalstar$ which
justify the term ``constant'' Poisson structure.

Let $\varphi \in V^*$ be an even linear functional, i.e.  $\varphi \in
V_{\mathbf{0}}^*$, then the linear map $v \mapsto v + \varphi(v)\Unit$
is even, too, and thus it extends uniquely to a unital algebra
homomorphism $\tau_\varphi^*\colon \Tensor^\bullet(V) \longrightarrow
\Tensor^\bullet(V)$. Clearly, the symmetry properties of the tensors
in $\Tensor^\bullet(V)$ are preserved by $\tau_\varphi^*$ and thus it
restricts to a unital algebra homomorphism
\begin{equation}
    \label{eq:Translation}
    \tau_\varphi^*\colon
    \Sym^\bullet(V) \longrightarrow \Sym^\bullet(V),
\end{equation}
now with respect to the symmetric tensor product.  One has $\tau_0^* =
\id$ and $\tau_\varphi^* \tau_\psi^* = \tau_{\varphi + \psi}^*$ for
all $\varphi, \psi \in V_{\mathbf{0}}^*$. Thus we get an action of the
abelian group $V_{\mathbf{0}}^*$ on $\Sym^\bullet(V)$ by
automorphisms. In the interpretation of polynomials these
automorphisms correspond to pull-backs with \emph{translations} via
$\varphi$, hence the above notation.

The other important symmetry emerges from the endomorphisms of $V$
itself. Let $A\colon V \longrightarrow V$ be an even linear map and
denote the extension as unital algebra homomorphism again by $A\colon
\Sym^\bullet(V) \longrightarrow \Sym^\bullet(V)$. This yields an
embedding of $\End_{\mathbf{0}}(V)$ into the unital algebra
endomorphisms of $\Sym^\bullet(V)$. In particular, we get a group
homomorphism of $\group{GL}_{\mathbf{0}}(V)$ into
$\Aut_{\mathbf{0}}(\Sym^\bullet(V))$. For $A \in
\group{GL}_{\mathbf{0}}(V)$ and $\varphi \in V_{\mathbf{0}}^*$ we have
the relation $A^{-1} \tau_\varphi^* A v = \tau_{A^*\varphi}^* v$ for
the generators $v \in V$ and hence also in general
\begin{equation}
    \label{eq:AtauA}
    A^{-1} \tau_\varphi^* A = \tau_{A^*\varphi}.
\end{equation}
This gives an action of the semidirect product
$\group{GL}_{\mathbf{0}}(V) \ltimes V^*$ on $\Sym^\bullet(V)$ via
unital algebra automorphisms.

For the bilinear map $\Lambda$ we consider the group of invertible
even endomorphisms of $V$ preserving it and denote this group by
\begin{equation}
    \label{eq:AutVLambda}
    \Aut(V, \Lambda)
    =
    \left\{
        A \in \group{GL}_{\mathbf{0}}(V)
        \; \big| \;
        \Lambda(Av, Aw) = \Lambda(v, w)
        \;
        \textrm{for all}
        \;
        v, w \in V
    \right\}.
\end{equation}
Note that such an automorphism preserves $\Lambda_+$ and $\Lambda_-$
separately. However, $\Lambda_-$ and $\Lambda_+$ might have a larger
invariance group than $\Aut(V, \Lambda)$.
\begin{remark}
    \label{remark:SymplecticGroup}%
    Suppose that $\Lambda = - \Lambda^\opp = \Lambda_-$ is already
    antisymmetric and non-degenerate. In the case where $V =
    V_{\mathbf{0}}$ consists of even vectors only, $\Lambda$ is a
    symplectic form and $\Aut(V, \Lambda)$ is the corresponding
    symplectic group. In the case where $V = V_{\mathbf{1}}$ is odd,
    $\Lambda$ corresponds to an inner product and $\Aut(V, \Lambda)$
    is the corresponding pseudo-orthogonal group. Note however, that
    we will also be interested in the case where $\Lambda$ is not
    necessarily antisymmetric and not necessarily non-degenerate.
\end{remark}
\begin{lemma}
    \label{lemma:SymmetriesOfPoissonStructure}%
    The subgroup $\Aut(V, \Lambda) \ltimes V^*$ acts on
    $\Sym^\bullet(V)$ as automorphisms of $\Bracket{\argument,
      \argument}$ and $\star$.
\end{lemma}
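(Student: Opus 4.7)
The plan is to verify invariance separately for the two generating subgroups, translations $\tau_\varphi^*$ with $\varphi \in V_{\mathbf{0}}^*$ and $A \in \Aut(V,\Lambda)$, and then combine via the semidirect product structure. Since both kinds of maps are already known to be unital algebra automorphisms of the symmetric algebra with respect to $\mu$, it suffices in each case to show that the bidifferential operator $P_\Lambda$ is intertwined with itself under conjugation by $\phi \tensor \phi$, where $\phi$ stands for either $\tau_\varphi^*$ or $A$. Because $\Bracket{\argument,\argument} = 2\mu \circ P_{\Lambda_-}(\argument \tensor \argument)$ and $\formalstar = \mu \circ \E^{\nu P_\Lambda}$, invariance of $P_\Lambda$ under the relevant conjugation immediately implies invariance of all powers $P_\Lambda^n$, hence of $\exp(\nu P_\Lambda)$ order by order, and finally of both the bracket and the star product (after applying $\mu$, which is itself intertwined because $\phi$ is an algebra homomorphism).

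The central step is the following uniqueness argument. Given an algebra automorphism $\phi$ of $\Sym^\bullet(V)$ that maps $V$ into $V$ and satisfies $\Lambda(\phi v, \phi w) = \Lambda(v,w)$ on generators (which is trivially true for $A \in \Aut(V,\Lambda)$, and holds in the extended sense $\Lambda(\tau_\varphi^* v, \tau_\varphi^* w) = \Lambda(v,w)$ for translations because the added $\Unit$-components lie in $\Sym^0(V)$ and $P_\Lambda$ vanishes on such tensors), define
\begin{equation*}
    Q = (\phi \tensor \phi)^{-1} \circ P_\Lambda \circ (\phi \tensor \phi).
\end{equation*}
Because $\phi \tensor \phi$ is an algebra automorphism of $\Sym^\bullet(V) \tensor \Sym^\bullet(V)$ with respect to the product \eqref{eq:TensorProductAlgebras}, the operator $Q$ inherits the two Leibniz rules \textit{ii.)}\ and \textit{iii.)}\ of Lemma~\ref{lemma:PLambdaMapProperties}. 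A direct computation on generators then gives $Q(v\tensor w) = \Lambda(\phi v, \phi w)\,\Unit \tensor \Unit = \Lambda(v,w)\,\Unit \tensor \Unit = P_\Lambda(v\tensor w)$, where for translations the contributions from $\varphi(v)\Unit$ are killed by $P_\Lambda$ vanishing on $\Unit \tensor (\cdot)$ and $(\cdot) \tensor \Unit$. The uniqueness part of Lemma~\ref{lemma:PLambdaMapProperties} therefore forces $Q = P_\Lambda$, i.e.\ $P_\Lambda \circ (\phi \tensor \phi) = (\phi \tensor \phi) \circ P_\Lambda$.

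From here the rest is routine: by induction on $n$, $P_\Lambda^n \circ (\phi \tensor \phi) = (\phi \tensor \phi) \circ P_\Lambda^n$, hence $\E^{\nu P_\Lambda}$ intertwines the two sides, and combining with $\phi \circ \mu = \mu \circ (\phi \tensor \phi)$ one obtains $\phi(v \formalstar w) = \phi(v) \formalstar \phi(w)$ as well as $\phi\Bracket{v,w} = \Bracket{\phi(v), \phi(w)}$ using \eqref{eq:PLambdaAndPLambdaMinus} (note that $A \in \Aut(V,\Lambda)$ preserves $\Lambda_-$ automatically, and translations leave $\Lambda_-$ untouched because only $V$-components contribute). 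Finally, the relation \eqref{eq:AtauA} shows that these two families of automorphisms assemble into a genuine action of the semidirect product $\Aut(V,\Lambda) \ltimes V_{\mathbf{0}}^*$, completing the proof.

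The only mildly delicate point—and the place I would be most careful—is the bookkeeping for translations: one must check that the naive computation on generators is not spoiled by the added $\Unit$-summands, which is precisely guaranteed by the vanishing of $P_\Lambda$ on tensors involving $\Unit$, a fact following from the $\mathbb{Z}$-grading convention built into \eqref{eq:PLambdaDef}.
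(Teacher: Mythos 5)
Your proposal is correct and follows essentially the same route as the paper: verify the intertwining relation $P_\Lambda \circ (\phi \tensor \phi) = (\phi \tensor \phi) \circ P_\Lambda$ on generators and extend it to all of $\Sym^\bullet(V) \tensor \Sym^\bullet(V)$ via the Leibniz-rule uniqueness of Lemma~\ref{lemma:PLambdaMapProperties}, then pass to the bracket and to $\formalstar$ order by order. The only difference is that you make explicit (via the conjugated operator $Q$) the step the paper compresses into ``which therefore holds in general,'' and your handling of the $\Unit$-summands for translations is exactly the right bookkeeping.
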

\begin{proof}
    First consider $A \in \Aut(V, \Lambda)$. Then on generators one
    sees that $P_\Lambda \circ (A \tensor A) = (A \tensor A) \circ
    P_\Lambda$, which therefore holds in general. From this we see
    that $A$ is an automorphism of both, the Poisson bracket and the
    star product. Analogously, for $\varphi \in V_{\mathbf{0}}^*$ one
    checks first on generators and then in general that $P_\Lambda
    \circ (\tau_\varphi^* \tensor \tau_\varphi^*) = (\tau_\varphi^*
    \tensor \tau_\varphi^*) \circ P_\Lambda$.
\end{proof}
In this sense, both the Poisson bracket and the star product are
\emph{constant}, i.e.  translation-invariant.

In a next step we discuss to what extend the automorphisms are
inner. We consider only the infinitesimal picture as the integrated
version will require analytic tools. The bilinear form $\Lambda$
induces a linear map into the dual $V^*$. More precisely, we need the
antisymmetric part $\Lambda_-$ of $\Lambda$ as it appears also in the
Poisson bracket \eqref{eq:ThePoissonBracket}. This defines an even
linear map
\begin{equation}
    \label{eq:sharpMap}
    \sharp\colon
    V \ni
    v \; \mapsto \; v^\sharp = \Lambda_-(v, \argument)
    \in V^*.
\end{equation}
\begin{lemma}
    \label{lemma:InnernessDerivations}%
    Let $\varphi \in V^*$ be homogeneous and denote by
    $X_\varphi\colon \Sym^\bullet(V) \longrightarrow \Sym^\bullet(V)$
    the homogeneous derivation extending $\varphi\colon V
    \longrightarrow \mathbb{k}$.
    \begin{lemmalist}
    \item \label{item:XvarphiPoissonDerivation}$X_\varphi$ is a
        Poisson derivation of parity $\varphi$, i.e.  we have
        \begin{equation}
            \label{eq:XvarphiPoissonDerivation}
            X_\varphi \Bracket{a, b}
            =
            \Bracket{X_\varphi(a), b}
            +
            (-1)^{\varphi a} \Bracket{a, X_\varphi(b)}
        \end{equation}
        for all homogeneous $a, b \in \Sym^\bullet(V)$.
    \item \label{item:XvarphiInnerPoissonDer} $X_\varphi$ is inner iff
        $\varphi \in \image \sharp$. In this case $X_\varphi =
        \Bracket{v, \argument}$ for any $v \in V$ with $2 v^\sharp =
        \varphi$.
    \item \label{item:XvarphiDerivationStar} $X_\varphi$ is a
        derivation of $\formalstar$, i.e.  we have
        \begin{equation}
            \label{eq:XvarphiDerivationStar}
            X_\varphi(a \formalstar b)
            =
            X_\varphi(a) \formalstar b
            +
            (-1)^{\varphi a} a \formalstar X_\varphi(b)
        \end{equation}
        for all homogeneous $a, b \in \Sym^\bullet(V)$.
    \item \label{item:XvarphiQuasiInnerDer} $X_\varphi$ is a
        quasi-inner derivation of $\formalstar$, i.e.  $X_\varphi =
        \frac{1}{\nu} [a, \argument]_{\formalstar}$ for some $a \in
        \Sym^\bullet(V)[[\nu]]$, iff $\varphi \in \image \sharp$. In
        this case $a = v \in V$ with $2 v^\sharp = \varphi$ will do
        the job.
    \end{lemmalist}
\end{lemma}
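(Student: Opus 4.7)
The plan is to reduce everything to checks on the generators $V$ and extend using the universal property of $\Sym^\bullet(V)$ together with the Leibniz rules of Lemma~\ref{lemma:PLambdaMapProperties}. I would handle the four parts in the order $(iii), (i), (iv), (ii)$, since $(iii)$ is the structural identity from which the others can be harvested.

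For $(iii)$, consider the graded derivation $D$ on $\Sym^\bullet(V) \tensor \Sym^\bullet(V)$ given by $D(a \tensor b) = X_\varphi(a) \tensor b + (-1)^{\varphi a}\, a \tensor X_\varphi(b)$. The crucial algebraic identity is $[D, P_\Lambda] = 0$. On generators $v \tensor w \in V \tensor V$ both compositions vanish trivially: $P_\Lambda(v \tensor w) = \Lambda(v, w)\,\Unit \tensor \Unit$ is killed by $D$ because $X_\varphi$ annihilates constants, while $D(v \tensor w)$ lies in $\Sym^0(V) \tensor V \oplus V \tensor \Sym^0(V)$, on which $P_\Lambda$ vanishes. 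The identity extends inductively to all of $\Sym^\bullet(V) \tensor \Sym^\bullet(V)$ by combining the Leibniz rules of Lemma~\ref{lemma:PLambdaMapProperties} with the derivation property of $D$. Consequently $D$ commutes with $\E^{\nu P_\Lambda}$, and together with the fact that $X_\varphi$ is a graded derivation of $\mu$ on the commutative algebra $\Sym^\bullet(V)$ this gives $X_\varphi \circ \mu \circ \E^{\nu P_\Lambda} = \mu \circ \E^{\nu P_\Lambda} \circ D$, which is the Leibniz rule for $\formalstar$. Part $(i)$ now follows by extracting the $\nu^1$-coefficient, since the Poisson bracket is by construction the leading order of the $\formalstar$-commutator (Corollary~\ref{corollary:ThePoissonBracket}).

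For the ``if'' direction of $(iv)$, assume $\varphi = 2v^\sharp$ for some homogeneous $v \in V$. Since $P_\Lambda^k(v \tensor w) = 0$ for $k \ge 2$ and $w \in V$, a direct computation yields $v \formalstar w = vw + \nu\Lambda(v, w)\,\Unit$, hence
\[
[v, w]_\formalstar = \nu\bigl(\Lambda(v, w) - (-1)^{vw}\Lambda(w, v)\bigr) = 2\nu\,\Lambda_-(v, w) = \nu\,\varphi(w) = \nu\, X_\varphi(w).
\]
Thus $\frac{1}{\nu}[v, \argument]_\formalstar$ is well-defined and agrees with $X_\varphi$ on $V$; both are $\mathbb{k}[[\nu]]$-linear graded derivations of $\formalstar$ (the former tautologically, the latter by $(iii)$), so they coincide on all of $\Sym^\bullet(V)[[\nu]]$ by Corollary~\ref{corollary:VGeneratesWeylAlgebra}. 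The ``if'' direction of $(ii)$ is the $\nu^0$-limit of this calculation. Conversely, if $X_\varphi = \frac{1}{\nu}[a, \argument]_\formalstar$ with $a = \sum_k \nu^k a_k$, extracting the $\nu^0$-part gives $X_\varphi = \Bracket{a_0, \argument}$. For $w \in V$ the left-hand side lies in $\Sym^0(V)$, whereas $\Bracket{\argument, w}$ strictly decreases symmetric degree by one; a degree comparison forces all components of $a_0$ of symmetric degree different from $1$ to contribute trivially, so $a_0 = v + c$ with $v \in V$ and $c \in \mathbb{k}$. Since the constant $c$ is Poisson-central, $\varphi(w) = \Bracket{v, w} = 2\Lambda_-(v, w) = 2v^\sharp(w)$ for all $w \in V$, giving $\varphi = 2v^\sharp \in \image\sharp$; this settles the converse of both $(ii)$ and $(iv)$.

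The main obstacle is the inductive proof of $[D, P_\Lambda] = 0$ in part $(iii)$: it needs no idea beyond the two Leibniz rules of Lemma~\ref{lemma:PLambdaMapProperties}, but the $\mathbb{Z}_2$-graded sign bookkeeping must be carried out carefully, and one must ensure the induction step works uniformly with respect to both tensor slots.
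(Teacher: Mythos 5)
Your proposal is correct and follows essentially the same route as the paper: commute $P_\Lambda$ with $X_\varphi \tensor \id + \id \tensor X_\varphi$ by checking on generators and extending via the Leibniz rules, then treat the inner/quasi-inner question by a degree comparison on $\Sym^\bullet(V)$. The only cosmetic difference is the order in which you take the four parts and that you specialize to $P_\Lambda$ immediately rather than first proving the auxiliary Leibniz-rule claim for $D$ with a general bidifferential $P$; the substance is identical.
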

\begin{proof}
    Consider an even linear map $P\colon \Sym^\bullet(V) \tensor
    \Sym^\bullet(V) \longrightarrow \Sym^\bullet(V) \tensor
    \Sym^\bullet(V)$ satisfying the Leibniz rules from
    Lemma~\ref{lemma:PLambdaMapProperties},
    \refitem{item:PLambdaLeibnizEins} and
    \refitem{item:PLambdaLeibnizZwei}, and let $X$ be any homogeneous
    derivation of either even or odd parity. Then we claim that the
    operator
    \[
    D =
    P \circ (X \tensor \id + \id \tensor X)
    -
    (X \tensor \id + \id \tensor X) \circ P
    \]
    satisfies the Leibniz rules
    \[
    D(ab \tensor c)
    =
    (-1)^{bc} D(a \tensor c) (b \tensor \Unit)
    +
    (-1)^{Xa} (a \tensor \Unit) D(b \tensor c)
    \]
    and
    \[
    D(a \tensor bc)
    =
    D(a \tensor b) (\Unit \tensor c)
    +
    (-1)^{(X + a)b} (\Unit \tensor b) D(a \tensor c)
    \]
    for all homogeneous $a, b, c \in \Sym^\bullet(V)$. This is a
    simple verification and does not use that $P$ is (anti-)
    symmetric. In our case, we conclude that $D$ is uniquely
    determined by its values on the generators of $\Sym^\bullet(V)
    \tensor \Sym^\bullet(V)$. For $P = P_\Lambda$ and $X = X_\varphi$
    it is easy to check that $D = 0$ on generators and thus
    $P_\Lambda$ and $(X_\varphi \tensor \id + \id \tensor X_\varphi)$
    commute. But this implies the first as well as the third part. Now
    consider $\varphi \in \image \sharp$, i.e.  there is a $v \in V$
    with $\varphi = 2\Lambda_-(v, \argument)$. In this case we get for
    $w \in V$
    \[
    \Bracket{v, w}
    =
    \Lambda(v, w) \Unit - (-1)^{vw} \Lambda(w, v) \Unit
    =
    2 \Lambda_-(v, w) \Unit
    =
    \varphi(w) \Unit
    =
    X_\varphi(w).
    \]
    Since the derivation $X_\varphi$ is determined by its values on
    generators this implies $X_\varphi = \Bracket{v, \argument}$. For
    the converse, assume that $X_\varphi = \Bracket{v, \argument}$ for
    some $v \in \Sym^\bullet(V)$ which we write as $v = \sum_n v_n$
    with $v_n \in \Sym^n(V)$. Then for $w \in V$ we have
    $\Bracket{v_n, w} \in \Sym^{n-1}(V)$ while $X_\varphi(w) \in
    \Sym^0(V)$. Thus we necessarily have $X_\varphi(w) = \Bracket{v_1,
      w}$, i.e.  the higher order terms in $v$ are not necessary. But
    then $X_\varphi = \Bracket{v_1, \argument}$ follows, proving
    $\varphi = 2v_1^\sharp$. The fourth part is similar, since for $v
    \in V$ we have $v \formalstar a = va + \nu \mu \circ P_\Lambda(v
    \tensor a)$ without higher order terms. Thus $[v, a]_\star = \nu
    \Bracket{v, a}$ and we can argue as in the second part.
\end{proof}
\begin{remark}
    \label{remark:FormalStarExpSucks}%
    We see here a notorious difficulty of formal star products: the
    derivation generating a symmetry is only quasi-inner and not
    inner. Thus a naive exponentiation of the generating element would
    lead us outside the formal power series. In fact, algebraically it
    can not be well-defined at all. Thus the symmetry $\tau_\varphi^*$
    is an \emph{outer} automorphism. We shall come back to this when
    we have some more analytic framework.
\end{remark}

We can extend the results of
Lemma~\ref{lemma:SymmetriesOfPoissonStructure} in the following way:
suppose we have two vector spaces $V$ and $W$ with two bilinear forms
$\Lambda_V$ and $\Lambda_W$ on them. Then an even linear map $A\colon
V \longrightarrow W$ is called a \emph{Poisson map} if
\begin{equation}
    \label{eq:varphiPoissonMap}
    \Lambda_W(A(v), A(v')) = \Lambda_V(v, v')
\end{equation}
for all $v, v' \in V$. The induced map $A\colon \Sym^\bullet(V)
\longrightarrow \Sym^\bullet(W)$ is then easily shown to satisfy
$P_{\Lambda_W} \circ (A \tensor A) = (A \tensor A) \circ
P_{\Lambda_V}$, generalizing the computation in the proof of
Lemma~\ref{lemma:SymmetriesOfPoissonStructure} slightly. From this we
see that $A$ is a homomorphism of Poisson algebras and star product
algebras. Thus we arrive at the following simple functoriality
statement:
\begin{proposition}
    \label{proposition:Functorial}%
    The construction of $\Bracket{\argument, \argument}$ and
    $\formalstar$ is functorial with respect to Poisson maps.
\end{proposition}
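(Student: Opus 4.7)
The plan is to verify the intertwining identity
\[
P_{\Lambda_W} \circ (A \tensor A) = (A \tensor A) \circ P_{\Lambda_V}
\]
on $\Sym^\bullet(V) \tensor \Sym^\bullet(V)$, since once this is known, both claims follow almost mechanically. Because $A\colon \Sym^\bullet(V) \longrightarrow \Sym^\bullet(W)$ is a unital algebra homomorphism of the symmetric tensor products, we have $A \circ \mu_V = \mu_W \circ (A \tensor A)$. Iterating the intertwining identity then gives $P_{\Lambda_W}^n \circ (A \tensor A) = (A \tensor A) \circ P_{\Lambda_V}^n$ for all $n$, and applying $\mu_W$ yields $A(v \formalstar w) = A(v) \formalstar A(w)$ directly from the definition \eqref{eq:TheFormalStarProduct}. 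The Poisson bracket statement then falls out from Corollary~\ref{corollary:ThePoissonBracket} once we observe that the flip $\tau$ is natural in $V$ and $W$ so that $(A \tensor A) \circ \tau_V = \tau_W \circ (A \tensor A)$, giving $P_{\Lambda_W}^\opp \circ (A \tensor A) = (A \tensor A) \circ P_{\Lambda_V}^\opp$ as well.

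To prove the intertwining identity itself, I would use the uniqueness statement in Lemma~\ref{lemma:PLambdaMapProperties}. First I would check it on generators $v \tensor v' \in V \tensor V$: here the left-hand side is $P_{\Lambda_W}(A(v) \tensor A(v')) = \Lambda_W(A(v), A(v')) \Unit \tensor \Unit$, while the right-hand side is $(A \tensor A)(\Lambda_V(v,v') \Unit \tensor \Unit) = \Lambda_V(v,v') \Unit \tensor \Unit$, and these coincide by the Poisson map condition \eqref{eq:varphiPoissonMap}. Then I would show that both sides satisfy the same two Leibniz rules \refitem{item:PLambdaLeibnizEins} and \refitem{item:PLambdaLeibnizZwei}: this uses only that $A$ respects the symmetric tensor product and preserves parity, so the Koszul signs match. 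By the uniqueness part of Lemma~\ref{lemma:PLambdaMapProperties}, the two maps must agree everywhere.

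The only mildly delicate step is keeping the Koszul signs consistent when transporting the Leibniz rules across $A \tensor A$; since $A$ is even (parity $\mathbf{0}$), no extra signs are produced by \eqref{eq:phitensorpsiMap}, so the calculation is really straightforward. I expect no genuine obstacle: the substance of the proposition is entirely captured by the intertwining identity on generators, and everything else is a routine application of the universal property of $\Sym^\bullet(V)$ combined with the Leibniz characterization of $P_\Lambda$. A one-line alternative would be to observe that $A$ is a morphism in the category of graded bilinear spaces, and that the whole construction of $P_\Lambda$, $\formalstar$, and $\{\cdot,\cdot\}_\Lambda$ was defined purely from the data $(V, \Lambda)$ via the free commutative algebra functor, so functoriality is automatic; but the direct verification above is short enough to give explicitly.
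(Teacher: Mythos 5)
Your proposal is correct and follows essentially the same route as the paper: the paper likewise reduces everything to the intertwining identity $P_{\Lambda_W} \circ (A \tensor A) = (A \tensor A) \circ P_{\Lambda_V}$, verified on generators and extended by the Leibniz/derivation characterization (this is exactly what the paper means by ``generalizing the computation in the proof of Lemma~\ref{lemma:SymmetriesOfPoissonStructure} slightly''), and then observes that $A \circ \mu_V = \mu_W \circ (A \tensor A)$ together with iterating the intertwining gives the homomorphism property for $\formalstar$ and $\Bracket{\argument,\argument}$. The only cosmetic point is that the uniqueness clause of Lemma~\ref{lemma:PLambdaMapProperties} is stated for maps landing in $\Sym^\bullet(V) \tensor \Sym^\bullet(V)$, so strictly one invokes the analogous uniqueness-from-Leibniz argument for maps with target $\Sym^\bullet(W) \tensor \Sym^\bullet(W)$; since $A$ is even, the Koszul signs match as you note, and the argument goes through verbatim.
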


Let us now discuss how we can change the star product by changing the
\emph{symmetric} part $\Lambda_+$ of $\Lambda$ as in
\eqref{eq:AntiAndSymmetricPartOfLambda}. Symmetry means that
$\Lambda_+(v, w) = (-1)^{vw} \Lambda_+(w, v)$ for homogeneous elements
in $V$.

Let $g\colon V \times V \longrightarrow \mathbb{k}$ be another
symmetric and even bilinear form, which we can think of as a
$\mathbb{Z}_2$-graded version of an inner product. We define now a
second order ``Laplacian'' associated to $g$ as follows. For
homogeneous vectors $v_1, \ldots, v_n \in V$ we set
\begin{equation}
    \label{eq:gLaplacian}
    \Delta_g(v_1 \cdots v_n)
    =
    \sum_{i < j}
    (-1)^{v_i(v_1 + \cdots + v_{i-1})}
    (-1)^{v_j(v_1 + \cdots + v_{i-1} + v_{i+1} + \cdots v_{j-1})}
    g(v_i, v_j)
    v_1 \cdots
    \stackrel{i}{\wedge} \cdots \stackrel{j}{\wedge}
    \cdots v_n,
\end{equation}
and extend this again by linearity to an operator
\begin{equation}
    \label{eq:DeltagOperator}
    \Delta_g\colon
    \Sym^\bullet(V) \longrightarrow \Sym^{\bullet-2}(V).
\end{equation}
Note that $\Delta_g$ has even parity since $g$ vanishes on vectors of
different parities. This is no longer a derivation but a second order
differential operator. More precisely, we have the following
``Leibniz rule'' for $\Delta_g$:
\begin{lemma}
    \label{lemma:LeibnizRuleLaplacian}%
    The operator $\Delta_g$ satisfies
    \begin{equation}
        \label{eq:LeibnizRuleLaplacian}
        \Delta_g \circ \mu
        =
        \mu \circ \left(
            \Delta_g \tensor \id + P_g + \id \tensor \Delta_g
        \right).
    \end{equation}
\end{lemma}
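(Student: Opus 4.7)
The plan is to verify the identity by direct evaluation on elements of the form $v_1 \cdots v_n \tensor w_1 \cdots w_m$ with homogeneous $v_i, w_\ell \in V$, since both sides are even linear maps $\Sym^\bullet(V) \tensor \Sym^\bullet(V) \to \Sym^\bullet(V)$ and $V$ generates $\Sym^\bullet(V)$. Applying the left side produces $\Delta_g(v_1 \cdots v_n w_1 \cdots w_m)$, which by \eqref{eq:gLaplacian} is a sum over all unordered pairs of indices from the concatenated sequence of length $n+m$, each contribution weighted by a Koszul sign and the scalar $g(\cdot,\cdot)$ of the two selected factors, times the symmetric product of the remaining $n+m-2$ factors.

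I would then split this sum into three disjoint blocks according to whether the two paired indices lie (i) both in $\{1,\ldots,n\}$, (ii) both in $\{n+1,\ldots,n+m\}$, or (iii) one in each. Block (i) should reproduce $\mu \circ (\Delta_g \tensor \id)$ applied to $v_1 \cdots v_n \tensor w_1 \cdots w_m$: the surviving $w$-factors remain to the right of the surviving $v$-factors, and because $g$ is even and vanishes on vectors of mismatched parity, the $w_\ell$'s that are skipped in the Koszul exponent contribute trivially, so the residual sign coincides with the one coming from $\Delta_g(v_1 \cdots v_n)$ alone. Block (ii) is symmetric and yields $\mu \circ (\id \tensor \Delta_g)$. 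Block (iii) contributes $g(v_k, w_\ell)$ times the product of the surviving factors, which is expected to match $\mu \circ P_g$ after direct comparison with the explicit formula \eqref{eq:PLambdaDef} for $P_g$ (with $\Lambda$ replaced by $g$).

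The main obstacle is pure sign bookkeeping in block (iii). The Koszul sign from \eqref{eq:gLaplacian} involves pulling $v_k$ past $v_1, \ldots, v_{k-1}$ and $w_\ell$ past $v_1, \ldots, v_n, w_1, \ldots, w_{\ell-1}$ with the gap at position $k$ removed, whereas the sign in \eqref{eq:PLambdaDef} for $P_g(v_1 \cdots v_n \tensor w_1 \cdots w_m)$ involves only pulling $v_k$ past $v_{k+1}, \ldots, v_n$ and $w_\ell$ past $w_1, \ldots, w_{\ell-1}$. These two sign conventions must be reconciled by accounting for the further Koszul sign that arises when the image of $P_g$, which lives in $\Sym^\bullet(V) \tensor \Sym^\bullet(V)$, is multiplied out via $\mu$ in the graded-commutative algebra $\Sym^\bullet(V)$; the reconciliation hinges on the parity constraint $v_k \equiv w_\ell \pmod 2$ forced by $g(v_k, w_\ell) \neq 0$, together with a symmetric regrouping of the intervening factors into even-parity pairs. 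Once the signs are shown to match in all three blocks, the right hand side of \eqref{eq:LeibnizRuleLaplacian} is reproduced term by term and the lemma follows.
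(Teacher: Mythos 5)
Your proof is correct and takes essentially the same route as the paper: the paper's own proof consists of the single remark that the identity is a straightforward computation on $v_1 \cdots v_n \tensor w_1 \cdots w_m$ with homogeneous factors, and your three-block decomposition (both paired indices among the $v$'s, both among the $w$'s, or one from each) with the parity observation $v_k \equiv w_\ell \pmod 2$ whenever $g(v_k, w_\ell) \neq 0$ is precisely the bookkeeping that makes the computation go through. One small misattribution: the parity argument is not needed for block (i), where no $w$-factors appear in the Koszul exponent at all; it is block (ii) that genuinely requires it, since there the surviving $v_1, \ldots, v_n$ all sit to the left of the paired $w_{\ell_1}, w_{\ell_2}$ and contribute a factor $(-1)^{(w_{\ell_1} + w_{\ell_2})(v_1 + \cdots + v_n)}$ which vanishes exactly because $w_{\ell_1}$ and $w_{\ell_2}$ have equal parity.
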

\begin{proof}
    On $v_1 \cdots v_n \tensor w_1 \cdots w_m$ with homogeneous
    vectors $v_1, \ldots, v_n$, $w_1, \ldots, w_m \in V$ this is just
    a straightforward computation.
\end{proof}
\begin{lemma}
    \label{lemma:DeltasAndPLambdasCommute}%
    Let $\Lambda, \Lambda', g\colon V \times V \longrightarrow
    \mathbb{k}$ be even bilinear maps and let $g$ be symmetric. Then
    the operators $\Delta_g \tensor \id$, $\id \tensor \Delta_g$,
    $P_\Lambda$, and $P_{\Lambda'}$ commute pairwise.
\end{lemma}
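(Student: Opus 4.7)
The overall strategy is to reduce each of the four (really six) pairwise commutators to a verification on low-degree tensors, exploiting the fact that all four operators are of ``constant coefficient'' type: they are determined, via Leibniz-type rules analogous to those of Lemma~\ref{lemma:PLambdaMapProperties} and Lemma~\ref{lemma:LeibnizRuleLaplacian}, by their values on generators of $\Sym^\bullet(V) \tensor \Sym^\bullet(V)$. The key point is that a \emph{commutator} of two such operators inherits Leibniz rules, because the ``non-derivative'' terms present in the individual (higher-order) Leibniz rules get cancelled by the subtraction. Once this is in place, each commutator is forced to be determined by its action on tensors of total symmetric degree bounded by the sum of the orders of the two operators involved, where an explicit check finishes the argument.

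First, $[\Delta_g \tensor \id, \id \tensor \Delta_g] = 0$ is immediate: the two operators act on disjoint tensor factors and both have even parity, so no Koszul signs intervene. Second, for $[P_\Lambda, P_{\Lambda'}]$, the argument is literally that of Lemma~\ref{lemma:PLambdasAllCommute}: combining the Leibniz rules from Lemma~\ref{lemma:PLambdaMapProperties} for $P_\Lambda$ and $P_{\Lambda'}$, one shows that the commutator satisfies derivation-type Leibniz rules in each tensor argument, hence is determined by its values on $V \tensor V$. There one computes directly
\[
P_{\Lambda'}\bigl(P_\Lambda(v \tensor w)\bigr)
= P_{\Lambda'}\bigl(\Lambda(v,w)\,\Unit \tensor \Unit\bigr) = 0,
\]
and symmetrically for $P_\Lambda P_{\Lambda'}$, since $P_\Lambda$ and $P_{\Lambda'}$ vanish on tensors containing~$\Unit$.

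Third, for the mixed commutators $[\Delta_g \tensor \id, P_\Lambda]$ (and, by a symmetric argument, $[\id \tensor \Delta_g, P_\Lambda]$, as well as the analogues with $\Lambda'$ in place of $\Lambda$), one combines Lemma~\ref{lemma:LeibnizRuleLaplacian} with Lemma~\ref{lemma:PLambdaMapProperties}: the commutator with $P_\Lambda$ kills the $\Delta_g \tensor \id + \id \tensor \Delta_g$ terms of Lemma~\ref{lemma:LeibnizRuleLaplacian} (since $P_\Lambda$ already commutes past derivation-type pieces, by the same trick as in the previous step with $\varphi$ replaced by $\Delta_g$ in each leg separately), and also kills the extra $P_g$ correction because $P_g$ commutes with $P_\Lambda$ on generators by the $[P_\Lambda, P_{\Lambda'}]$ case just established. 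Thus the commutator satisfies a clean Leibniz-type rule and is determined by its values on $\Sym^{\leq 3}(V) \tensor \Sym^{\leq 1}(V)$ and $\Sym^{\leq 1}(V) \tensor \Sym^{\leq 3}(V)$. On $v_1 v_2 v_3 \tensor w$ with $v_1, v_2, v_3, w \in V$, both compositions produce, after a short calculation, the same symmetric sum
\[
\sum_{\{i,j,k\} = \{1,2,3\}} (\pm) \, g(v_i, v_j)\,\Lambda(v_k, w)\,\Unit \tensor \Unit,
\]
where the Koszul signs on either side match because $g$ is symmetric and $\Lambda$ is even; on tensors of lower total degree, either $P_\Lambda$ or $\Delta_g$ already vanishes, so commutation is trivial.

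The main obstacle is the sign bookkeeping in the third step: both $\Delta_g$ (being second order) and $P_\Lambda$ (being bilinear of mixed symmetry type) carry Koszul signs, and one needs to verify that the Leibniz rule for the commutator really is of ``first-order'' type with no surviving non-derivation remainder. Granted this, the remaining checks on low-degree generators are mechanical and use only the symmetry of $g$ together with the even parity of $\Lambda, \Lambda'$.
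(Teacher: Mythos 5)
Your overall plan---reduce each commutator to a check on low-degree tensors and then verify there---is sound, and the final checks you describe (the vanishing on $V \tensor V$ for $[P_\Lambda, P_{\Lambda'}]$ and the matching symmetric sum $\sum g(v_i,v_j)\Lambda(v_k,w)$ on $v_1v_2v_3 \tensor w$) are indeed correct. The paper itself simply declares this a direct computation on $v_1 \cdots v_n \tensor w_1 \cdots w_m$ of arbitrary degree with no reduction step, so your route is not the same as the paper's, but it is a legitimate and arguably cleaner way to organize the bookkeeping.

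However, the justification of the reduction in your third step is muddled in a way that would not survive a careful reading. First, Lemma~\ref{lemma:LeibnizRuleLaplacian} is a statement about $\Delta_g \circ \mu$ on a \emph{single} copy of $\Sym^\bullet(V)$; it is not a Leibniz rule for the operator $\Delta_g \tensor \id$ on $\Sym^\bullet(V) \tensor \Sym^\bullet(V)$, and there is no $\Delta_g \tensor \id + \id \tensor \Delta_g$ that $[\argument, P_\Lambda]$ could ``kill'' when you compute $[\Delta_g \tensor \id, P_\Lambda]$. Second, the ``same trick as with $\varphi$'' (i.e.\ the argument in the proof of Lemma~\ref{lemma:InnernessDerivations}) relies on the inserted operator being a \emph{derivation}; $\Delta_g$ is second order, so that trick does not transfer. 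Third, and most telling, you assert that the commutator ``satisfies a clean Leibniz-type rule'' (i.e.\ is first order, hence determined by values on $V \tensor V$) while simultaneously claiming you must check it on $\Sym^{\leq 3}(V) \tensor \Sym^{\leq 1}(V)$: these two statements are inconsistent. The argument you actually want is simpler and does not involve any magic cancellation in Leibniz rules: both compositions $(\Delta_g \tensor \id)\circ P_\Lambda$ and $P_\Lambda\circ(\Delta_g \tensor \id)$ are multidifferential operators of bi-order at most $(3,1)$, because $\Delta_g$ has order two in the first slot and $P_\Lambda$ has order one in each slot; any such operator is uniquely determined by its values on $\Sym^{\leq 3}(V) \tensor \Sym^{\leq 1}(V)$, exactly as the paper itself argues for the order-$(2,2,2)$ compositions in Lemma~\ref{lemma:PLambdasAllCommute}. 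With that replacement the reduction to low degree is clean, and your low-degree verification then finishes the proof.
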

\begin{proof}
    Again, one just checks this on $v_1 \cdots v_n \tensor w_1 \cdots
    w_m$ for homogeneous vectors $v_1, \ldots, v_n$, $w_1, \ldots, w_m
    \in V$ which is a lengthy but straightforward computation, the
    details of which we shall omit.
\end{proof}
We use these commutation relations now to prove the following
equivalence statement: the isomorphism class of the deformation
depends only on the \emph{antisymmetric} part of $\Lambda$.
\begin{proposition}
    \label{proposition:FormalEquivalence}%
    Let $\Lambda, \Lambda'\colon V \times V \longrightarrow
    \mathbb{k}$ be two even bilinear forms on $V$ such that their
    antisymmetric parts $\Lambda_- = \Lambda'_-$ coincide. Then the
    corresponding star products $\formalstar$ and
    $\star_{\nu\Lambda'}$ are equivalent via the equivalence
    transformation
    \begin{equation}
        \label{eq:FormalEqiuvalenceBetweenStarStarPrime}
        \E^{\nu \Delta_g} (a \formalstar b)
        =
        \left(\E^{\nu \Delta_g} a\right)
        \star_{\nu\Lambda'}
        \left(\E^{\nu \Delta_g} b\right)
    \end{equation}
    for all $a, b \in \Sym^\bullet(V)[[\nu]]$ where $g = \Lambda' -
    \Lambda = \Lambda_+' - \Lambda_+$.
\end{proposition}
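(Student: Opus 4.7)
The plan is to unravel the definition of both star products in terms of the operators $P_\Lambda$, $P_{\Lambda'}$, and $\Delta_g$, and then reduce everything to the commutation relations of Lemma~\ref{lemma:DeltasAndPLambdasCommute} together with the Leibniz rule of Lemma~\ref{lemma:LeibnizRuleLaplacian}. First I would observe that since $\Lambda_- = \Lambda'_-$, we have $g = \Lambda' - \Lambda = \Lambda'_+ - \Lambda_+$, which is indeed symmetric and even, so $\Delta_g$ is well-defined. Moreover $P$ is linear in its bilinear argument, giving the crucial identity $P_{\Lambda'} = P_\Lambda + P_g$.

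The key intermediate step is to lift the Leibniz rule of Lemma~\ref{lemma:LeibnizRuleLaplacian} to the exponential, namely
\begin{equation*}
    \E^{\nu \Delta_g} \circ \mu
    =
    \mu \circ \E^{\nu P_g} \circ
    \bigl(\E^{\nu\Delta_g} \tensor \E^{\nu\Delta_g}\bigr).
\end{equation*}
To see this, note that $\Delta_g \tensor \id$, $\id \tensor \Delta_g$, and $P_g$ commute pairwise: the first two commute trivially, and both commute with $P_g$ by Lemma~\ref{lemma:DeltasAndPLambdasCommute}. Hence Lemma~\ref{lemma:LeibnizRuleLaplacian} gives $\Delta_g \circ \mu = \mu \circ Q$ with $Q = \Delta_g \tensor \id + P_g + \id \tensor \Delta_g$, which iterates cleanly to $\Delta_g^n \circ \mu = \mu \circ Q^n$, and the three summands in $Q$ can be separated inside the exponential because they commute.

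With this identity in hand, the computation is immediate. Applying it to $a \tensor b$ yields
\begin{equation*}
    \E^{\nu\Delta_g}(a \formalstar b)
    =
    \E^{\nu\Delta_g} \circ \mu \circ \E^{\nu P_\Lambda}(a \tensor b)
    =
    \mu \circ \E^{\nu P_g}
    \circ \bigl(\E^{\nu\Delta_g} \tensor \E^{\nu\Delta_g}\bigr)
    \circ \E^{\nu P_\Lambda}(a \tensor b).
\end{equation*}
By Lemma~\ref{lemma:DeltasAndPLambdasCommute}, $P_\Lambda$ commutes with $\E^{\nu\Delta_g} \tensor \E^{\nu\Delta_g}$, and $P_g$ commutes with $P_\Lambda$, so the right-hand side equals
\begin{equation*}
    \mu \circ \E^{\nu(P_g + P_\Lambda)}
    \circ \bigl(\E^{\nu\Delta_g} \tensor \E^{\nu\Delta_g}\bigr)(a \tensor b)
    =
    \mu \circ \E^{\nu P_{\Lambda'}}
    \bigl(\E^{\nu\Delta_g}a \tensor \E^{\nu\Delta_g}b\bigr)
    =
    \bigl(\E^{\nu\Delta_g}a\bigr) \star_{\nu\Lambda'}
    \bigl(\E^{\nu\Delta_g}b\bigr),
\end{equation*}
which is the desired equivalence. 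Finally, $\E^{\nu\Delta_g} = \id + \nu \Delta_g + \cdots$ is invertible in $\Sym^\bullet(V)[[\nu]]$ because the leading term is the identity, so it is genuinely an equivalence transformation.

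The only non-routine input is the lifted Leibniz rule for $\E^{\nu\Delta_g}$; this is the one place where one must check carefully that all operators involved commute, and it is the natural main obstacle. Once it is in place, the rest reduces to bookkeeping using that $P_\bullet$ is additive in the bilinear form and that everything in sight commutes by Lemma~\ref{lemma:DeltasAndPLambdasCommute}.
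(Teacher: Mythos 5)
Your proof is correct and follows essentially the same route as the paper. The paper's proof goes through $\E^{\nu \Delta_g} \circ \mu = \mu \circ \E^{\nu(\Delta_g \tensor \id + P_g + \id \tensor \Delta_g)}$ directly (keeping the three commuting operators inside one exponential) and then recombines $P_\Lambda + P_g = P_{\Lambda + g} = P_{\Lambda'}$, whereas you first factor the exponential into $\E^{\nu P_g} \circ (\E^{\nu\Delta_g} \tensor \E^{\nu\Delta_g})$ and then recombine — a cosmetic difference only, justified by the same pairwise commutativity from Lemma~\ref{lemma:DeltasAndPLambdasCommute} and the Leibniz rule of Lemma~\ref{lemma:LeibnizRuleLaplacian}.
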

\begin{proof}
    The proof is now fairly easy. Analogously to
    \cite[Exercise~5.7]{waldmann:2007a} we have
    \begin{align*}
        \E^{\nu \Delta_g}(a \formalstar b)
        &=
        \E^{\nu \Delta_g}
        \circ \mu
        \circ \E^{\nu P_\Lambda} (a \tensor b) \\
        &=
        \mu
        \circ
        \E^{
          \nu (\Delta_g \tensor \id + P_g + \id \tensor \Delta_g)
        }
        \circ
        \E^{\nu P_\Lambda}
        (a \tensor b) \\
        &=
        \mu \circ
        \E^{\nu (P_\Lambda + P_g)}
        \circ
        \left(
            \E^{\nu \Delta_g} \tensor \E^{\nu \Delta_g}
        \right)(a \tensor b) \\
        &=
        \mu \circ
        \E^{\nu P_{\Lambda'}}
        \circ
        \left(
            \E^{\nu \Delta_g}a \tensor \E^{\nu \Delta_g} b
        \right),
    \end{align*}
    since $P_\Lambda + P_g = P_{\Lambda + g}$ and since $\Lambda + g =
    \Lambda'$. Note that $g$ is indeed symmetric.
\end{proof}

%
%

\section{Continuity of the Star Product}
\label{sec:Continuity}

In this section we establish a locally convex topology on
$\Sym^\bullet(V)$ for which the formal star product, after
substituting the formal parameter by a real or complex number $z$,
will be continuous. Starting point is a locally convex topology on
$V$, which we will assume to be Hausdorff, and a continuity assumption
on $\Lambda$. From now on the field of scalars $\mathbb{K}$ is either
$\mathbb{R}$ or $\mathbb{C}$.

%
%

\subsection{The Topology for $\Sym^\bullet(V)$}
\label{subsec:TopologyForSymV}

Let $V$ be now a real or complex $\mathbb{Z}_2$-graded Hausdorff
locally convex vector space. We require that the grading is
\emph{compatible} with the topological structure, i.e.  the
projections onto the even and odd parts in $V = V_{\mathbf{0}} \oplus
V_{\mathbf{1}}$ are continuous. Thus we have for every continuous
seminorm $\halbnorm{p}$ on $V$ another continuous seminorm
$\halbnorm{q}$ with $\halbnorm{p}(v_{\mathbf{0}}),
\halbnorm{p}(v_{\mathbf{1}}) \le \halbnorm{q}(v)$ for all $v \in V$.
This implies that the even and odd part of $V$ constitute
complementary closed subspaces.

In principle, there are many interesting locally convex topologies on
$\Sym^\bullet(V)$ induced by the one on $V$. We shall construct now a
rather particular one.

First we will endow the tensor products $V^{\tensor n}$ with the
$\pi$-topology. Recall that for seminorms $\halbnorm{p}_1, \ldots,
\halbnorm{p}_n$ on $V$ one defines the seminorm $\halbnorm{p}_1
\tensor \cdots \tensor \halbnorm{p}_n$ on $V^{\tensor n}$ by
\begin{equation}
    \label{eq:pEinscdotspnDef}
    (\halbnorm{p}_1 \tensor \cdots \tensor \halbnorm{p}_n)(v)
    =
    \inf
    \left\{
        \sum\nolimits_i
        \halbnorm{p}_1\left(v^{(1)}_i\right)
        \cdots
        \halbnorm{p}_n\left(v^{(n)}_i\right)
        \; \Big| \;
        v
        =
        \sum\nolimits_i v^{(1)}_i \tensor \cdots \tensor v^{(n)}_i
    \right\},
\end{equation}
where the infimum is taken over all possibilities to write the tensor
$v$ as a linear combination of elementary (i.e.  factorizing)
tensors. One has $(\halbnorm{p}_1 \tensor \cdots \tensor
\halbnorm{p}_n) \tensor (\halbnorm{q}_1 \tensor \cdots \tensor
\halbnorm{q}_m) = \halbnorm{p}_1 \tensor \cdots \tensor \halbnorm{p}_n
\tensor \halbnorm{q}_1 \tensor \cdots \tensor \halbnorm{q}_m$ and on
factorizing tensors one gets $(\halbnorm{p}_1 \tensor \cdots \tensor
\halbnorm{p}_n)(v_1 \tensor \cdots \tensor v_n) = \halbnorm{p}_1(v_1)
\cdots \halbnorm{p}_n(v_n)$.  We shall use the abbreviation
$\halbnorm{p}^n = \halbnorm{p} \tensor \cdots \tensor \halbnorm{p}$
for $n$ copies of the same seminorm $\halbnorm{p}$, where by
definition $\halbnorm{p}^0$ is the usual absolute value on
$\mathbb{K}$.

The $\pi$-topology on $V^{\tensor n}$ is obtained by taking all
seminorms of the form $\halbnorm{p}_1 \tensor \cdots \tensor
\halbnorm{p}_n$ with $\halbnorm{p}_1, \ldots, \halbnorm{p}_n$ being
continuous seminorms on $V$. Equivalently, one can take all
$\halbnorm{p}^n$ with $\halbnorm{p}$ being a continuous seminorm on
$V$. Analogously, one defines the $\pi$-topology for the tensor
products of different locally convex spaces. We denote the tensor
product endowed with the $\pi$-topology by $\pitensor$. It is clear
that the induced $\mathbb{Z}_2$-grading of $V^{\tensor n}$ is again
compatible with the $\pi$-topology. More explicitly, we have the
following statement:
\begin{lemma}
    \label{lemma:ZzweiGrading}
    Let $\halbnorm{p}$ be a continuous seminorm on $V$ and choose a
    continuous seminorm $\halbnorm{q}$ such that
    $\halbnorm{p}(v_{\mathbf{0}}), \halbnorm{p}(v_{\mathbf{1}}) \le
    \halbnorm{q}(v)$. Then for all $v \in V^{\tensor n}$ one has
    \begin{equation}
        \label{eq:ContinuityOfZzweiGradingPiTop}
        \halbnorm{p}^n(v_{\mathbf{0}}),
        \halbnorm{p}^n(v_{\mathbf{1}})
        \le
        \halbnorm{q}^n(v).
    \end{equation}
\end{lemma}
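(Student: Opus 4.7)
The plan is to unfold the definition of the $\pi$-tensor seminorm $\halbnorm{p}^n$ as an infimum over representations of $v$ by elementary tensors, and to exploit the compatibility of the $\mathbb{Z}_2$-grading with the tensor product structure. Given any representation $v = \sum_{i} v^{(1)}_{i} \tensor \cdots \tensor v^{(n)}_{i}$ of $v \in V^{\tensor n}$, I would first split each factor into its homogeneous components $v^{(k)}_{i} = v^{(k)}_{i,\mathbf{0}} + v^{(k)}_{i,\mathbf{1}}$ and distribute, yielding
\[
v = \sum_{i} \sum_{\epsilon \in \mathbb{Z}_2^n} v^{(1)}_{i,\epsilon_1} \tensor \cdots \tensor v^{(n)}_{i,\epsilon_n},
\]
where each summand is homogeneous of total parity $\epsilon_1 + \cdots + \epsilon_n$. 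Because the $\mathbb{Z}_2$-grading on $V^{\tensor n}$ adds parities across tensor factors, the homogeneous part $v_{\mathbf{a}}$ is precisely the partial sum restricted to those $\epsilon \in \mathbb{Z}_2^n$ with $\epsilon_1 + \cdots + \epsilon_n = \mathbf{a}$.

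Next I would apply subadditivity of $\halbnorm{p}^n$ together with the product estimate $\halbnorm{p}^n(w_1 \tensor \cdots \tensor w_n) \le \halbnorm{p}(w_1) \cdots \halbnorm{p}(w_n)$ on elementary tensors, giving
\[
\halbnorm{p}^n(v_{\mathbf{a}}) \le \sum_{i} \sum_{\epsilon_1 + \cdots + \epsilon_n = \mathbf{a}} \halbnorm{p}(v^{(1)}_{i,\epsilon_1}) \cdots \halbnorm{p}(v^{(n)}_{i,\epsilon_n}).
\]
Since every summand is non-negative, enlarging the inner sum to range over all $\epsilon \in \mathbb{Z}_2^n$ majorizes this expression, and the result factorizes as $\sum_i \prod_{k=1}^n \bigl(\halbnorm{p}(v^{(k)}_{i,\mathbf{0}}) + \halbnorm{p}(v^{(k)}_{i,\mathbf{1}})\bigr)$ by distributivity.

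Finally, I would invoke the chosen control $\halbnorm{p}(v_{\mathbf{0}}), \halbnorm{p}(v_{\mathbf{1}}) \le \halbnorm{q}(v)$ on $V$ to bound each of the $n$ bracketed factors above by $\halbnorm{q}(v^{(k)}_{i})$, so that the product is dominated by $\prod_{k} \halbnorm{q}(v^{(k)}_i)$. Summing over $i$ and then passing to the infimum over all representations of $v$ yields the desired bound $\halbnorm{p}^n(v_{\mathbf{a}}) \le \halbnorm{q}^n(v)$ simultaneously for $\mathbf{a} = \mathbf{0}$ and $\mathbf{a} = \mathbf{1}$. The step that requires the most care is the transition from the partial sum indexed by $\epsilon$-patterns of fixed parity to the fully factored product and its estimate by $\halbnorm{q}^n$: once the identification of $v_{\mathbf{a}}$ as the parity-restricted subsum is in place, the remainder of the argument is the standard bookkeeping of the $\pi$-tensor seminorm definition combined with the per-factor estimate provided by $\halbnorm{q}$.
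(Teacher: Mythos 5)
Your reduction of $v_{\mathbf{a}}$ to the parity-restricted subsum of a given representation, and the subadditivity plus cross-norm estimate, are fine; the gap is in the very last step. After enlarging the inner sum to all $\epsilon \in \mathbb{Z}_2^n$ you need to bound the bracket $\halbnorm{p}\big(v^{(k)}_{i,\mathbf{0}}\big) + \halbnorm{p}\big(v^{(k)}_{i,\mathbf{1}}\big)$ by $\halbnorm{q}\big(v^{(k)}_i\big)$, but the hypothesis only controls each summand \emph{separately} by $\halbnorm{q}\big(v^{(k)}_i\big)$, so the bracket is only $\le 2\halbnorm{q}\big(v^{(k)}_i\big)$. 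Your argument therefore proves $\halbnorm{p}^n(v_{\mathbf{a}}) \le 2^n\,\halbnorm{q}^n(v)$, not the stated bound, and keeping the parity restriction instead of enlarging only improves $2^n$ to $2^{n-1}$. This loss is not mere bookkeeping: from the separate bounds alone the constant-one estimate cannot be recovered. Take $V = \mathbb{R}e_{\mathbf{0}} \oplus \mathbb{R}e_{\mathbf{1}}$ with $e_{\mathbf{0}}$ even, $e_{\mathbf{1}}$ odd, $\halbnorm{p}(x e_{\mathbf{0}} + y e_{\mathbf{1}}) = |x| + |y|$ and $\halbnorm{q}(x e_{\mathbf{0}} + y e_{\mathbf{1}}) = \max(|x|, |y|)$, so that the separate hypothesis holds; for $v = (e_{\mathbf{0}} + e_{\mathbf{1}}) \tensor (e_{\mathbf{0}} + e_{\mathbf{1}})$ one has $\halbnorm{q}^2(v) \le 1$, while $v_{\mathbf{0}} = e_{\mathbf{0}} \tensor e_{\mathbf{0}} + e_{\mathbf{1}} \tensor e_{\mathbf{1}}$ satisfies $\halbnorm{p}^2(v_{\mathbf{0}}) = 2$: testing with the bilinear form $\beta(x, y) = x_{\mathbf{0}} y_{\mathbf{0}} + x_{\mathbf{1}} y_{\mathbf{1}}$, which obeys $|\beta(x,y)| \le \halbnorm{p}(x)\,\halbnorm{p}(y)$, and invoking Lemma~\ref{lemma:TestOnFactorizingTensors} gives $\halbnorm{p}^2(v_{\mathbf{0}}) \ge 2$.

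The statement becomes correct (and your proof goes through verbatim) if $\halbnorm{q}$ is chosen to dominate the \emph{sum}, $\halbnorm{p}(w_{\mathbf{0}}) + \halbnorm{p}(w_{\mathbf{1}}) \le \halbnorm{q}(w)$, which is always possible because the projections are continuous, e.g.\ $\halbnorm{q}(w) = \halbnorm{p}(w_{\mathbf{0}}) + \halbnorm{p}(w_{\mathbf{1}})$. Under that hypothesis a cleaner route is available: the grading involution $\sigma(w) = w_{\mathbf{0}} - w_{\mathbf{1}}$ satisfies $\halbnorm{p}(\sigma w) \le \halbnorm{q}(w)$ and $\halbnorm{p}(w) \le \halbnorm{q}(w)$, the operator $\sigma^{\tensor n}$ maps elementary tensors to elementary tensors so that $\halbnorm{p}^n(\sigma^{\tensor n} u) \le \halbnorm{q}^n(u)$ and $\halbnorm{p}^n(u) \le \halbnorm{q}^n(u)$ for all $u \in V^{\tensor n}$, and since $u_{\mathbf{0}} = \frac{1}{2}(u + \sigma^{\tensor n} u)$ and $u_{\mathbf{1}} = \frac{1}{2}(u - \sigma^{\tensor n} u)$ the claim follows from the triangle inequality without any combinatorial loss. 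Note finally that for the purposes the lemma serves later (the continuity of the $\mathbb{Z}_2$-grading in Lemma~\ref{lemma:TensorProductContinuous} and Lemma~\ref{lemma:ComponentsAreContinuous}) the weaker bound $2^n \halbnorm{q}^n(v) = (2\halbnorm{q})^n(v)$ that you actually establish would already suffice, since it just amounts to replacing $\halbnorm{q}$ by the rescaled seminorm $2\halbnorm{q}$, in the same spirit as the rescalings used throughout the paper.
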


For concrete estimates the following simple lemma is useful: it
suffices to check estimates on factorizing tensors only:
\begin{lemma}
    \label{lemma:TestOnFactorizingTensors}%
    Let $V_1, \ldots, V_n$, $W$ be vector spaces and let $\phi\colon
    V_1 \times \cdots \times V_n \longrightarrow W$ be an $n$-linear
    map, identified with a linear map $\phi\colon V_1 \tensor \cdots
    \tensor V_n \longrightarrow W$ as usual. If $\halbnorm{p}_1$,
    \ldots, $\halbnorm{p}_n$, $\halbnorm{q}$ are seminorms on $V_1$,
    \ldots, $V_n$, $W$, respectively, such that for all $v_1 \in V_1$,
    \ldots, $v_n \in V_n$ one has
    \begin{equation}
        \label{eq:qphiEstimatep}
        \halbnorm{q}\left(\phi(v_1, \ldots, v_n)\right)
        \le
        \halbnorm{p}_1(v_1) \cdots \halbnorm{p}_n(v_n),
    \end{equation}
    then one has for all $v \in V_1 \tensor \cdots \tensor V_n$
    \begin{equation}
        \label{eq:qphiTensorpEstimate}
        \halbnorm{q}\left(\phi(v)\right)
        \le
        (\halbnorm{p}_1 \tensor \cdots \tensor \halbnorm{p}_n)(v).
    \end{equation}
\end{lemma}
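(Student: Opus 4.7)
The plan is to exploit the very definition of the projective tensor seminorm \eqref{eq:pEinscdotspnDef} as an infimum over decompositions into elementary (factorizing) tensors, and then use $n$-linearity of $\phi$ together with the hypothesis \eqref{eq:qphiEstimatep}.

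Concretely, I would fix $v \in V_1 \tensor \cdots \tensor V_n$ and an auxiliary $\epsilon > 0$. By the definition \eqref{eq:pEinscdotspnDef} of $\halbnorm{p}_1 \tensor \cdots \tensor \halbnorm{p}_n$ as an infimum, I can choose a finite decomposition
\begin{equation*}
    v = \sum_i v_i^{(1)} \tensor \cdots \tensor v_i^{(n)}
\end{equation*}
into elementary tensors such that
\begin{equation*}
    \sum_i \halbnorm{p}_1\bigl(v_i^{(1)}\bigr) \cdots \halbnorm{p}_n\bigl(v_i^{(n)}\bigr)
    \le
    (\halbnorm{p}_1 \tensor \cdots \tensor \halbnorm{p}_n)(v) + \epsilon.
\end{equation*}
By linearity of $\phi$, I have $\phi(v) = \sum_i \phi\bigl(v_i^{(1)}, \ldots, v_i^{(n)}\bigr)$. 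Applying the triangle inequality for $\halbnorm{q}$ followed by the hypothesis \eqref{eq:qphiEstimatep} to each term yields
\begin{equation*}
    \halbnorm{q}\bigl(\phi(v)\bigr)
    \le
    \sum_i \halbnorm{q}\bigl(\phi\bigl(v_i^{(1)}, \ldots, v_i^{(n)}\bigr)\bigr)
    \le
    \sum_i \halbnorm{p}_1\bigl(v_i^{(1)}\bigr) \cdots \halbnorm{p}_n\bigl(v_i^{(n)}\bigr)
    \le
    (\halbnorm{p}_1 \tensor \cdots \tensor \halbnorm{p}_n)(v) + \epsilon.
\end{equation*}
Since $\epsilon > 0$ was arbitrary, letting $\epsilon \to 0$ gives the desired estimate \eqref{eq:qphiTensorpEstimate}.

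Honestly, there is no real obstacle in this proof; it is a routine consequence of the universal property of the projective tensor seminorm. The only mild subtlety is that one must pass through an infimum via an $\epsilon$-argument rather than a single optimal decomposition, but this is completely standard. This lemma merely formalizes the useful principle that, when checking $\pi$-continuity of a multilinear map, one is free to restrict attention to factorizing tensors.
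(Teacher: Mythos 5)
Your argument is correct and complete: passing to a near-optimal decomposition via the infimum, using linearity of $\phi$ and the triangle inequality, applying the hypothesis termwise, and then letting $\epsilon \to 0$ is exactly the standard route. The paper itself states this lemma without proof, so there is nothing to compare against; your write-up is precisely the argument the author had in mind.
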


Since we view the symmetric powers $\Sym^n(V)$ as subspace of
$V^{\tensor n}$ we can inherit the $\pi$-topology also for
$\Sym^n(V)$, indicated by $\piSym^n(V)$. Then we get the following
simple properties of the symmetric tensor product:
\begin{lemma}
    \label{lemma:SymmetrizerpiContinuous}%
    Let $n, m \in \mathbb{N}_0$ and let $\halbnorm{p}$ be a continuous
    seminorm on $V$.
    \begin{lemmalist}
    \item \label{item:SymmetrizerContinuous} The symmetrizer
        $\Symmetrizer_n\colon V^{\pitensor n} \longrightarrow
        V^{\pitensor n}$ is continuous and for all $v \in V^{\pitensor
          n}$ one has
        \begin{equation}
            \label{eq:pSymp}
            \halbnorm{p}^n(\Symmetrizer_n(v)) \le \halbnorm{p}^n(v).
        \end{equation}
    \item \label{item:SymClosed} $\piSym^n(V) \subseteq V^{\pitensor
          n}$ is a closed subspace.
    \item \label{item:pnPlusmSymTensor} For $v \in \Sym^n(V)$ and $w
        \in \Sym^m(V)$ one has
        \begin{equation}
            \label{eq:pnPlusmSymTensor}
            \halbnorm{p}^{n+m}(vw)
            \le
            \halbnorm{p}^n(v)\halbnorm{p}^m(w).
        \end{equation}
    \end{lemmalist}
\end{lemma}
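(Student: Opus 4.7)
The plan is to prove part \emph{i.)} first, since parts \emph{ii.)} and \emph{iii.)} will follow almost formally from it. For part \emph{i.)}, the key observation is that on a homogeneous factorizing tensor $v_1 \tensor \cdots \tensor v_n$, the right action produces $\sign(v_1,\ldots,v_n;\sigma)\, v_{\sigma(1)} \tensor \cdots \tensor v_{\sigma(n)}$, which is just $\pm 1$ times a permutation of the factors. Since the $\pi$-seminorm of a factorizing tensor depends only on the unordered multiset of factors (and is invariant under scaling by $\pm 1$), we get $\halbnorm{p}^n\bigl((v_1 \tensor \cdots \tensor v_n) \racts \sigma\bigr) = \halbnorm{p}(v_1) \cdots \halbnorm{p}(v_n)$. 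I would then use Lemma~\ref{lemma:TestOnFactorizingTensors} applied to the $n$-multilinear map $(v_1, \ldots, v_n) \mapsto (v_1 \tensor \cdots \tensor v_n) \racts \sigma$ (well-defined by first splitting each $v_j$ into its parity components and using the Koszul sign on each homogeneous piece) to conclude $\halbnorm{p}^n(v \racts \sigma) \le \halbnorm{p}^n(v)$ for arbitrary $v \in V^{\pitensor n}$. The claim for $\Symmetrizer_n = \frac{1}{n!}\sum_{\sigma \in S_n} (\argument \racts \sigma)$ then follows from the triangle inequality and the normalization by $n!$.

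Part \emph{ii.)} is a direct consequence of \emph{i.)}: the continuity of $\Symmetrizer_n$ together with $\Symmetrizer_n \circ \Symmetrizer_n = \Symmetrizer_n$ means $\image \Symmetrizer_n = \ker(\id - \Symmetrizer_n)$, which is the preimage of $\{0\}$ under a continuous linear map, hence closed. For part \emph{iii.)}, I would simply note that for $v \in \Sym^n(V)$ and $w \in \Sym^m(V)$ one has $vw = \Symmetrizer_{n+m}(v \tensor w)$ by \eqref{eq:SymmetricTensorProduct}. Applying \emph{i.)} at level $n+m$ and using the factorization property $\halbnorm{p}^{n+m} = \halbnorm{p}^n \tensor \halbnorm{p}^m$ of $\pi$-seminorms on factorizing inputs gives
\[
    \halbnorm{p}^{n+m}(vw)
    \le \halbnorm{p}^{n+m}(v \tensor w)
    = \halbnorm{p}^n(v) \halbnorm{p}^m(w),
\]
which is the desired estimate.

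The main obstacle sits in part \emph{i.)}: when the input tensor is not homogeneous with respect to the $\mathbb{Z}_2^n$-grading of $V^{\tensor n}$, the Koszul sign rule forces one to decompose each factor into even and odd parts before applying $\racts \sigma$, and a naive bookkeeping produces a spurious factor of $2^n$ coming from the $2^n$ parity configurations $\mathbf{a} \in \mathbb{Z}_2^n$. To avoid this, one has to work with a grading-adapted seminorm (which the compatibility assumption on $V = V_{\mathbf{0}} \oplus V_{\mathbf{1}}$ guarantees exists, e.g.\ by replacing $\halbnorm{p}$ with $\halbnorm{p}(\argument_{\mathbf{0}}) + \halbnorm{p}(\argument_{\mathbf{1}})$), so that the sum over $\mathbf{a}$ refactors as $\prod_j (\halbnorm{p}(v_{j,\mathbf{0}}) + \halbnorm{p}(v_{j,\mathbf{1}}))$ and collapses back to $\prod_j \halbnorm{p}(v_j)$. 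In the purely even case $V = V_{\mathbf{0}}$ the signs are trivial and this issue disappears entirely.
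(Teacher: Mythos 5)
Your proof is correct and follows essentially the same route as the paper's (very terse) argument: verify the estimate on factorizing tensors, invoke Lemma~\ref{lemma:TestOnFactorizingTensors}, obtain \emph{ii.)} from idempotency via $\Sym^n(V) = \ker(\id - \Symmetrizer_n)$, and \emph{iii.)} from $vw = \Symmetrizer_{n+m}(v \tensor w)$ together with the cross-seminorm property. The subtlety you flag in your last paragraph is genuine and is glossed over by the paper: for a seminorm that does not dominate the parity projections, the estimate \eqref{eq:pSymp} as literally stated can in fact \emph{fail}. For instance, take $V = \mathbb{R}e \oplus \mathbb{R}f$ with $e$ even, $f$ odd, and $\halbnorm{p}(ae + bf) = |a - b|$; then $\halbnorm{p}^2\bigl((e+f)\tensor(e+f)\bigr) = \halbnorm{p}(e+f)^2 = 0$, while $\Symmetrizer_2\bigl((e+f)\tensor(e+f)\bigr) = e \tensor e + e \tensor f + f \tensor e$ has $\halbnorm{p}^2$-seminorm equal to $1$ (pass to the quotient by $\ker\halbnorm{p} = \mathrm{span}(e+f)$, where this tensor becomes $-[e]\tensor[e]$ with $[\halbnorm{p}]([e]) = 1$). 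Your fix is the right one: restrict to the grading-adapted seminorms $v \mapsto \halbnorm{p}(v_{\mathbf{0}}) + \halbnorm{p}(v_{\mathbf{1}})$, for which the sum over the $2^n$ parity configurations refactors into $\prod_j\bigl(\halbnorm{p}(v_{j,\mathbf{0}}) + \halbnorm{p}(v_{j,\mathbf{1}})\bigr)$ and the hypothesis of Lemma~\ref{lemma:TestOnFactorizingTensors} holds on \emph{all} factorizing tensors. Since these seminorms are cofinal by the compatibility assumption on the grading, continuity of $\Symmetrizer_n$ and parts \emph{ii.)} and \emph{iii.)} (and everything downstream in the paper) are unaffected; in the purely even case the issue disappears and \eqref{eq:pSymp} holds verbatim for every seminorm.
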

\begin{proof}
    The first part is clear for factorizing tensors and hence
    Lemma~\ref{lemma:TestOnFactorizingTensors} applies. The second
    follows as $\Sym^n(V) = \ker(\id - \Symmetrizer_n)$ by
    definition. The third is clear from the definition and from
    \eqref{eq:pSymp}.
\end{proof}

On the tensor algebra $\Tensor^\bullet(V)$ there are at least two
canonical locally convex topologies: the Cartesian product topology
inherited from $\prod_{n=0}^\infty V^{\pitensor n}$ and the direct sum
topology which is the inductive limit topology of the finite direct
sums. While the first is very coarse, the second is very
fine. Nevertheless, both of them induce the $\pi$-topology on each
subspace $V^{\tensor n}$. We are now searching for something in
between.

We fix a parameter $R \in \mathbb{R}$ and consider for a given
continuous seminorm $\halbnorm{p}$ on $V$ the new seminorm
\begin{equation}
    \label{eq:pREinsDef}
    \halbnorm{p}_R(v)
    =
    \sum_{n=0}^\infty \halbnorm{p}^n(v_n) n!^R
\end{equation}
on the tensor algebra $\Tensor^\bullet(V)$, where we write $v =
\sum_{n=0}^\infty v_n$ as the sum of its components with fixed tensor
degree $v_n \in V^{\tensor n}$. Analogously, we define
\begin{equation}
    \label{eq:pRinftyDef}
    \halbnorm{p}_{R, \infty}(v)
    =
    \sup_{n \in \mathbb{N}_0}\left\{\halbnorm{p}^n(v_n) n!^R\right\}.
\end{equation}
In principle, we have also $\ell^p$-versions for all $p \in [1,
\infty)$, but the above two extreme cases will suffice for the
following.

The seminorms control the growth of the contributions
$\halbnorm{p}^n(v_n)$ for $n \longrightarrow \infty$ compared to a
power of $n!$ which we can view as weights from a weighted counting
measure. The choice of the factorials as weights will become clear
later. We list some first elementary properties of these seminorms.
\begin{lemma}
    \label{lemma:pREinspRinftyEstimates}%
    Let $\halbnorm{p}$ and $\halbnorm{q}$ be seminorms on $V$ and $R,
    R' \in \mathbb{R}$.
    \begin{lemmalist}
    \item \label{item:pRinftyLEpREins} One has for all $v \in
        \Tensor^\bullet(V)$
        \begin{equation}
            \label{eq:pRinftyLEpREins}
            \halbnorm{p}_{R, \infty}(v)
            \le \halbnorm{p}_R(v)
            \le 2 (2\halbnorm{p})_{R, \infty}(v).
        \end{equation}
    \item \label{item:pRRestrictToVn} Both seminorms $\halbnorm{p}_R$
        and $\halbnorm{p}_{R, \infty}$ restrict to
        $n!^R\halbnorm{p}^n$ on $V^{\tensor n}$.
    \item \label{item:pLEqPRLEqR} If $\halbnorm{q} \le \halbnorm{p}$
        then $\halbnorm{q}_R \le \halbnorm{p}_R$.
    \item \label{item:SeminormsForDifferentRs} If $R' > R$ then
        $\halbnorm{p}_R(v) \le \halbnorm{p}_{R'}(v)$ for all $v \in
        \Tensor^\bullet(V)$.
    \end{lemmalist}
\end{lemma}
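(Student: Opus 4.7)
The proof breaks into four essentially routine verifications, and I would simply work through each part by direct manipulation of the defining series and supremum together with the monotonicity properties of the $\pi$-tensor seminorms on $V^{\tensor n}$ already established in Lemma~\ref{lemma:SymmetrizerpiContinuous} and surrounding remarks.

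For part \textit{i.)} the first inequality $\halbnorm{p}_{R, \infty}(v) \le \halbnorm{p}_R(v)$ is immediate, since all summands in the definition of $\halbnorm{p}_R(v)$ are non-negative so the supremum over $n$ is dominated by the sum. For the upper bound $\halbnorm{p}_R(v) \le 2(2\halbnorm{p})_{R,\infty}(v)$ I would use that $(2\halbnorm{p})^n = 2^n \halbnorm{p}^n$ on factorizing tensors (and hence on all tensors by the infimum definition of the $\pi$-seminorm, cf.~Lemma~\ref{lemma:TestOnFactorizingTensors}) to get $\halbnorm{p}^n(v_n) n!^R \le 2^{-n} (2\halbnorm{p})_{R,\infty}(v)$ for every $n$, and then sum the geometric series $\sum_{n=0}^\infty 2^{-n} = 2$.

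Part \textit{ii.)} is clear from the definitions, since for $v \in V^{\tensor n}$ only the $n$-th component contributes to the sum and to the supremum, leaving exactly $n!^R \halbnorm{p}^n(v)$. For part \textit{iii.)} I would first observe that $\halbnorm{q} \le \halbnorm{p}$ on $V$ implies $\halbnorm{q}^n \le \halbnorm{p}^n$ on $V^{\pitensor n}$: indeed, for any decomposition $v = \sum_i v_i^{(1)} \tensor \cdots \tensor v_i^{(n)}$ one has $\sum_i \halbnorm{q}(v_i^{(1)}) \cdots \halbnorm{q}(v_i^{(n)}) \le \sum_i \halbnorm{p}(v_i^{(1)}) \cdots \halbnorm{p}(v_i^{(n)})$, and taking the infimum over all such decompositions gives the claim; the bound $\halbnorm{q}_R \le \halbnorm{p}_R$ then follows termwise. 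Finally, for part \textit{iv.)} the inequality $R < R'$ together with $n! \ge 1$ for all $n \in \mathbb{N}_0$ gives $n!^R \le n!^{R'}$, whence $\halbnorm{p}_R(v) \le \halbnorm{p}_{R'}(v)$ by termwise comparison.

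There is no real obstacle here; the only point requiring a small observation is the second inequality of part \textit{i.)}, where the explicit factor $2$ on the right-hand side and the doubled seminorm inside $(2\halbnorm{p})_{R,\infty}$ are precisely what make the bound by an infinite sum collapse to a geometric series. All other parts reduce to elementary monotonicity of the $\pi$-seminorms in their arguments and of $n!^R$ in $R$.
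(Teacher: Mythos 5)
Your proof is correct and matches the paper's argument: parts \textit{ii.)}--\textit{iv.)} are treated as routine (with the termwise monotonicity you spell out), and the second inequality in part \textit{i.)} is obtained by exactly the same trick of writing $\halbnorm{p}^n(v_n)n!^R = 2^{-n}\cdot 2^n\halbnorm{p}^n(v_n)n!^R$, bounding the second factor by the supremum defining $(2\halbnorm{p})_{R,\infty}(v)$, and summing the geometric series $\sum_n 2^{-n}=2$.
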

\begin{proof}
    The parts \refitem{item:pRRestrictToVn},
    \refitem{item:pLEqPRLEqR}, and
    \refitem{item:SeminormsForDifferentRs} are clear. Also the first
    estimate in \refitem{item:pRinftyLEpREins} is obvious. For the
    second, we note
    \[
    \halbnorm{p}_R(v)
    =
    \sum\nolimits_{n=0}^\infty \halbnorm{p}^n(v_n) n!^R
    =
    \sum\nolimits_{n=0}^\infty
    2^n \halbnorm{p}^n(v_n) n!^R \frac{1}{2^n}
    \le
    \sup_{n \in \mathbb{N}_0} 2^n \halbnorm{p}^n(v_n) n!^{R'}
    \sum_{n=0}^\infty \frac{1}{2^n}
    \]
    which is the second estimate in \eqref{eq:pRinftyLEpREins}.
\end{proof}
The seemingly trivial first part will have an important consequence
later when we discuss the nuclearity properties of the Weyl algebra.

We can use now all the seminorms $\halbnorm{p}_R$ for a fixed $R$ to
define a new locally convex topology on the tensor algebra. In
particular, the lemma shows that we can safely restrict to the
seminorms of the type $\halbnorm{p}_R$ as long as we take \emph{all}
continuous seminorms on $V$. It is clear from an analogous estimate
that also the $\ell^p$-versions would not yield anything new.
\begin{definition}
    \label{definition:TREinsTopology}%
    Let $R \in \mathbb{R}$. Then $\Tensor^\bullet_R(V)$ is the tensor
    algebra of $V$ equipped with the locally convex topology
    determined by all the seminorms $\halbnorm{p}_R$ with
    $\halbnorm{p}$ running through all continuous seminorms on $V$.
\end{definition}
In the following, we will mainly be interested in the case of positive
$R$ where we have a \emph{decay} of the numbers $\halbnorm{p}^n(v_n)$.
\begin{lemma}
    \label{lemma:TensorProductContinuous}%
    Let $R' > R \ge 0$.
    \begin{lemmalist}
    \item \label{item:TensorProductContinuous} The tensor product is
        continuous on $\Tensor^\bullet_R(V)$. More precisely, one
        has
        \begin{equation}
            \label{eq:TensorProductContinuous}
            \halbnorm{p}_R(v \tensor w)
            \le
            (2^R\halbnorm{p})_R(v)
            (2^R\halbnorm{p})_R(w)
        \end{equation}
        for all $v, w \in \Tensor^\bullet_R(V)$.
    \item \label{item:ComponentsContinuous} For all $n \in
        \mathbb{N}_0$ the projections and the inclusions
        \begin{equation}
            \label{eq:ProjectionsInclusionContinuous}
            \Tensor^\bullet_R(V)
            \longrightarrow
            V^{\pitensor n}
            \longrightarrow
            \Tensor^\bullet_R(V)
        \end{equation}
        are continuous.
    \item \label{item:CompletionTREins} The completion
        $\cTensor^\bullet_R(V)$ of $\Tensor^\bullet_R(V)$ can
        explicitly be described by
        \begin{equation}
            \label{eq:CompletionTREins}
            \cTensor^\bullet_R(V)
            =
            \left\{
                v = \sum\nolimits_{n=0}^\infty v_n
                \; \Big| \;
                \halbnorm{p}_R(v) < \infty
                \;
                \textrm{for all}
                \;
                \halbnorm{p}
            \right\}
            \subseteq \prod_{n=0}^\infty V^{\cpitensor n},
        \end{equation}
        where $\halbnorm{p}$ runs through all continuous seminorms of
        $V$ and we extend $\halbnorm{p}_R$ to the Cartesian product by
        allowing the value $+\infty$ as usual.
    \item \label{item:TReinsInclusions} We have a continuous
        inclusion
        \begin{equation}
            \label{eq:TREinsTRInftyInclusions}
            \cTensor^\bullet_{R'}(V)
            \longrightarrow
            \cTensor^\bullet_R(V).
        \end{equation}
    \item \label{item:ZzweiGradingContinuous} The
        $\mathbb{Z}_2$-grading of $\Tensor^\bullet_R(V)$ is
        continuous. More explicitly, if $\halbnorm{p}$ and
        $\halbnorm{q}$ are continuous seminorms with
        $\halbnorm{p}(v_{\mathbf{0}}), \halbnorm{p}(v_{\mathbf{1}})
        \le \halbnorm{q}(v)$ for all $v \in V$ then we have
        \begin{equation}
            \label{eq:EstimateZzweiGradingTReins}
            \halbnorm{p}_R(v_{\mathbf{0}}),
            \halbnorm{p}_R(v_{\mathbf{1}})
            \le
            \halbnorm{q}_R(v)
        \end{equation}
        for all $v \in \Tensor^\bullet_R(V)$.
    \end{lemmalist}
\end{lemma}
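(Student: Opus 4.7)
All five parts reduce to controlling the weighted sums $\halbnorm{p}_R(v) = \sum_n n!^R \halbnorm{p}^n(v_n)$. My strategy is to handle (i) via the binomial estimate $\binom{n+m}{n} \le 2^{n+m}$, to dispatch (ii), (iv), (v) by direct term-wise inspection, and to concentrate the real work on (iii).

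For part (i), I would expand $v \tensor w = \sum_k \sum_{n+m=k} v_n \tensor w_m$, apply the cross-norm property $\halbnorm{p}^{n+m}(v_n \tensor w_m) \le \halbnorm{p}^n(v_n) \halbnorm{p}^m(w_m)$ of the $\pi$-topology (which is immediate from Lemma~\ref{lemma:TestOnFactorizingTensors} applied to the tensor-product bilinear map $V^{\pitensor n} \times V^{\pitensor m} \to V^{\pitensor(n+m)}$), and then use $(n+m)!^R \le 2^{R(n+m)} n!^R m!^R$, which follows from $\binom{n+m}{n} \le 2^{n+m}$, to reorganize $\halbnorm{p}_R(v \tensor w)$ as a product of two series. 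The result is exactly $(2^R \halbnorm{p})_R(v)\,(2^R \halbnorm{p})_R(w)$. Parts (ii), (iv), and (v) then follow almost by inspection: the restriction formula of Lemma~\ref{lemma:pREinspRinftyEstimates}\refitem{item:pRRestrictToVn} gives $n!^R \halbnorm{p}^n(v_n) \le \halbnorm{p}_R(v)$, proving continuity of both the projection and the inclusion in (ii); the monotonicity $n!^R \le n!^{R'}$ for $R \le R'$ gives the continuous inclusion (iv); and (v) is Lemma~\ref{lemma:ZzweiGrading} applied degreewise and summed against the weights $n!^R$.

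The core of the lemma is part (iii), which I would address as follows. Using (ii), each projection $\Tensor^\bullet_R(V) \to V^{\pitensor n}$ extends by continuity to $\cTensor^\bullet_R(V) \to V^{\cpitensor n}$, assembling into a continuous injection $\cTensor^\bullet_R(V) \hookrightarrow \prod_n V^{\cpitensor n}$. Extending $\halbnorm{p}_R$ to the product with values in $[0,+\infty]$ and using its lower semicontinuity along Cauchy nets forces the image to lie in the set described by \eqref{eq:CompletionTREins}. For the converse inclusion, given $v = (v_n)$ with $\halbnorm{p}_R(v) < \infty$ for every continuous $\halbnorm{p}$, I would prove directly that $v$ lies in the closure of $\Tensor^\bullet_R(V)$: for a prescribed tolerance $\epsilon$ and a finite family of continuous seminorms on $V$, choose $N$ large enough that the series tails $\sum_{n > N} n!^R \halbnorm{p}^n(v_n)$ are all below $\epsilon/2$, and then approximate each $v_n \in V^{\cpitensor n}$ for $n \le N$ by an element $u_n \in V^{\pitensor n}$ uniformly over the chosen finite family (possible by the very definition of $V^{\cpitensor n}$). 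The main obstacle I expect is precisely this simultaneous approximation: since $V$ need not be metrizable, one cannot fall back on a single sequence, but the observation that only finitely many seminorms and finitely many tensor degrees intervene at each stage of the approximation lets the Cauchy-filter argument for the completion go through cleanly.
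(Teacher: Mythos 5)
Your proof is correct and, where the paper gives details (part (i)), matches the paper's argument exactly: the cross-norm property of the $\pi$-seminorms together with $(n+m)! \le 2^{n+m}\,n!\,m!$, and the same appeals to Lemma~\ref{lemma:pREinspRinftyEstimates} and Lemma~\ref{lemma:ZzweiGrading} for parts (ii), (iv), (v). The paper dismisses parts (ii) and (iii) as ``standard''; your lower-semicontinuity-plus-density argument for (iii) is the correct standard argument filled in properly, and your closing observation---that one only ever needs to approximate finitely many components against finitely many seminorms at a time---is exactly the point that lets the Cauchy-filter reasoning work when $V$ is not metrizable.
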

\begin{proof}
    The first part is a simple estimate; we have
    \begin{align*}
        \halbnorm{p}_R(v \tensor w)
        &=
        \sum\nolimits_{k=0}^\infty
        \halbnorm{p}^k\left(
            \sum\nolimits_{n+m=k}
            v_n \tensor w_m
        \right) k!^R \\
        &\le
        \sum\nolimits_{k=0}^\infty
        \sum\nolimits_{n+m=k}
        \halbnorm{p}^n(v_n) \halbnorm{p}^m(w_m)
        (n + m)!^R \\
        &\le
        \sum\nolimits_{n=0}^\infty
        \sum\nolimits_{m=0}^\infty
        \halbnorm{p}^n(v_n) \halbnorm{p}^m(w_m)
        2^{Rn} 2^{Rm} n!^R m!^R \\
        &=
        (2^R\halbnorm{p})_R(v)
        (2^R\halbnorm{p})_R(w),
    \end{align*}
    where we used $(n+m)! \le 2^{n+m} n!m!$. Since with $\halbnorm{p}$
    also $2^R\halbnorm{p}$ is a continuous seminorm on $V$, the
    continuity of $\tensor$ follows. The second and third part are
    standard, here we use the completed $\pi$-tensor product
    $V^{\cpitensor n}$ to achieve completeness at every fixed $n \in
    \mathbb{N}_0$. The fourth part is a consequence of
    Lemma~\ref{lemma:pREinspRinftyEstimates},
    \refitem{item:SeminormsForDifferentRs}. The last part follows from
    Lemma~\ref{lemma:ZzweiGrading}.
\end{proof}
\begin{remark}
    \label{remark:FreeLMCAlgebra}%
    The case $R = 0$ gives a well-known topology on
    $\Tensor^\bullet(V)$ which becomes the \emph{free locally
      multiplicatively convex} unital algebra generated by $V$ as
    discussed e.g.  by Cuntz in \cite{cuntz:1997a}. For $R > 0$ the
    completion $\cTensor^\bullet_R(V)$ behaves differently: it
    does not even have an entire holomorphic calculus. To see this
    take the entire function
    \begin{equation}
        \label{eq:fEpsilonFunction}
        f_\epsilon(z)
        =
        \sum_{n=0}^\infty \frac{z^n}{n!^\epsilon}
    \end{equation}
    for a parameter $\epsilon > 0$. If $R > \epsilon$ then for every
    nonzero $v \in V$ the series $f_\epsilon(v)$ does not converge in
    $\cTensor^\bullet_R(V)$.  In particular, the tensor algebra
    $\Tensor^\bullet_R(V)$ can \emph{not} be locally
    multiplicatively convex unless $R = 0$.
\end{remark}

We have an analogous statement for the symmetric algebra. We equip
$\Sym^\bullet(V)$ with the induced topology from
$\Tensor^\bullet_R(V)$ and denote it by $\Sym^\bullet_R(V)$. From
\eqref{eq:pSymp} we get immediately
\begin{equation}
    \label{eq:SymmetrizerContinuous}
    \halbnorm{p}_R(\Symmetrizer(v))
    \le
    \halbnorm{p}_R(v),
\end{equation}
which implies the continuity statement
\begin{equation}
    \label{eq:ContinuityOfSymmetricTensorProduct}
    \halbnorm{p}_R(vw)
    \le
    (2^R\halbnorm{p})_R(v)
    (2^R\halbnorm{p})_R(w)
\end{equation}
for all $v, w \in \Sym^\bullet(V)$. This shows that
$\Sym^\bullet_R(V)$ becomes a locally convex algebra, too. Again, it
will be locally multiplicatively convex only for $R = 0$ in which case
it is the \emph{free locally multiplicatively convex commutative}
unital algebra generated by $V$. The completion of $\Sym^\bullet_R(V)$
can be described analogously to \eqref{eq:CompletionTREins}. We also
have the continuous inclusions
\begin{equation}
    \label{eq:SymInclusionsRinftyREins}
    \cSym^\bullet_{R'}(V)
    \longrightarrow
    \cSym^\bullet_R(V)
\end{equation}
for $R' > R$. Finally, we have the continuous projections and
inclusions
\begin{equation}
    \label{eq:SymmetricProjectionsInclusionContinuous}
    \Sym^\bullet_R(V)
    \longrightarrow
    \Sym^n_\pi(V)
    \longrightarrow
    \Sym^\bullet_R(V)
\end{equation}
for all $n \in \mathbb{N}_0$. Thus it makes sense to speak of the
$n$-th component $v_n$ of a vector $v \in \cSym^\bullet_R(V)$
even after the completion. In fact, it is easy to see that the series
of components
\begin{equation}
    \label{eq:vAsaSeriesConverges}
    v = \sum_{n=0}^\infty v_n
\end{equation}
converges to $v \in \cSym^\bullet_R(V)$, even absolutely.

To get rid of the somehow arbitrary parameter $R$ we can pass to the
projective limit $R \longrightarrow \infty$. The resulting locally
convex algebras will be denoted by
\begin{equation}
    \label{eq:ProjectiveLimitTensorRtoInfty}
    \cTensor^\bullet_\infty(V)
    =
    \projlim_{R \longrightarrow \infty}
    \cTensor^\bullet_R (V)
    \quad
    \textrm{and}
    \quad
    \cSym^\bullet_\infty(V)
    =
    \projlim_{R \longrightarrow \infty}
    \cSym^\bullet_R (V)
\end{equation}
in the symmetric case.  We have a more explicit description of
$\cTensor^\bullet_\infty(V)$ and $\cSym^\bullet_\infty(V)$ as
consisting of those formal series $v = \sum_{n=0}^\infty v_n$ with
$v_n \in V^{\cpitensor n}$ or $v_n \in \cSym^n_\pi(V)$, respectively,
such that $\halbnorm{p}_R(v) < \infty$ for \emph{all} $R \ge 0$ and
for all continuous seminorms $\halbnorm{p}$ on $V$.  Note that these
completions will be rather small as we require a rather strong decay
of the coefficients $v_n$. We will not use these projective limits in
the sequel.

%
%

\subsection{The Continuity of the Star Product}
\label{subsec:ContinuityStarProduct}

Let us now consider an even bilinear form $\Lambda\colon V \times V
\longrightarrow \mathbb{K}$ which we require to be continuous. Thus
there exists a continuous seminorm $\halbnorm{p}$ on $V$ such that
\begin{equation}
    \label{eq:LambdaContinuous}
    |\Lambda(v, w)| \le \halbnorm{p}(v) \halbnorm{p}(w)
\end{equation}
for all $v, w \in V$. Note that we require continuity and not just
separate continuity. Note also, that if $\halbnorm{p}$ satisfies
\eqref{eq:LambdaContinuous} then we also have the estimates
\begin{equation}
    \label{eq:LambdaAntiSymContinuous}
    |\Lambda_\pm(v, w)|
    \le
    \halbnorm{p}(v) \halbnorm{p}(w),
\end{equation}
showing the continuity of the antisymmetric and symmetric part of
$\Lambda$.  The continuity of $\Lambda$ implies the continuity of the
operator $P_\Lambda$ when restricted to fixed symmetric degrees:
\begin{lemma}
    \label{lemma:ContinuityOfPLambda}%
    Let $\halbnorm{p}$ be a continuous seminorm of $V$ satisfying
    \eqref{eq:LambdaContinuous}. Then for all $u \in \Sym^n(V) \tensor
    \Sym^m(V)$ one has
    \begin{equation}
        \label{eq:ContinuityOfPLambda}
        \left(\halbnorm{p}^{n-1} \tensor \halbnorm{p}^{m-1}\right)
        \left(P_\Lambda(u)\right)
        \le
        nm \halbnorm{p}^{n+m}(u).
    \end{equation}
    The same estimate holds for $P_{\Lambda_\pm}$.
\end{lemma}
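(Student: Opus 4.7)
\emph{Proof proposal.} The plan is to unwind the explicit formula \eqref{eq:PLambdaDef} for $P_\Lambda$ on elementary factorizing arguments and then propagate the resulting pointwise estimate to arbitrary tensors via Lemma~\ref{lemma:TestOnFactorizingTensors}. Concretely, I would introduce the $(n+m)$-linear map
\[
    \phi\colon V^{n+m}
    \longrightarrow
    \Sym^{n-1}(V) \tensor \Sym^{m-1}(V),
    \quad
    (v_1, \ldots, v_n, w_1, \ldots, w_m)
    \longmapsto
    P_\Lambda\bigl((v_1 \cdots v_n) \tensor (w_1 \cdots w_m)\bigr),
\]
where $v_1 \cdots v_n$ denotes the symmetric product in $\Sym^n(V)$. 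Multilinearity of $\phi$ is immediate from the multilinearity of the symmetric product.

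Next I would expand $\phi(v_1, \ldots, w_m)$ by \eqref{eq:PLambdaDef} as a signed sum of $nm$ terms, each (up to a Koszul sign) of the form $\Lambda(v_k, w_\ell) \, (v_1 \cdots \widehat{v_k} \cdots v_n) \tensor (w_1 \cdots \widehat{w_\ell} \cdots w_m)$. For each such term the continuity hypothesis \eqref{eq:LambdaContinuous} gives $|\Lambda(v_k, w_\ell)| \le \halbnorm{p}(v_k)\halbnorm{p}(w_\ell)$, while iterating Lemma~\ref{lemma:SymmetrizerpiContinuous}\refitem{item:pnPlusmSymTensor} yields $\halbnorm{p}^{n-1}(v_1 \cdots \widehat{v_k} \cdots v_n) \le \prod_{i \neq k}\halbnorm{p}(v_i)$ and the analogous bound for the second factor. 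The triangle inequality for $\halbnorm{p}^{n-1} \tensor \halbnorm{p}^{m-1}$ then gives
\[
    (\halbnorm{p}^{n-1} \tensor \halbnorm{p}^{m-1})
    \bigl(\phi(v_1, \ldots, w_m)\bigr)
    \le
    nm \prod_{i=1}^n \halbnorm{p}(v_i) \prod_{j=1}^m \halbnorm{p}(w_j).
\]

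With this pointwise estimate at hand, Lemma~\ref{lemma:TestOnFactorizingTensors} applied to the associated linear extension $\hat\phi\colon V^{\tensor(n+m)} \longrightarrow \Sym^{n-1}(V) \tensor \Sym^{m-1}(V)$ upgrades it to $(\halbnorm{p}^{n-1} \tensor \halbnorm{p}^{m-1})(\hat\phi(u)) \le nm \halbnorm{p}^{n+m}(u)$ for every $u \in V^{\tensor(n+m)}$. Since $\phi$ depends on its first $n$ and last $m$ arguments only through their symmetric products, one has $\hat\phi = P_\Lambda \circ (\Symmetrizer_n \tensor \Symmetrizer_m)$, so $\hat\phi(u) = P_\Lambda(u)$ for every $u \in \Sym^n(V) \tensor \Sym^m(V)$; this establishes the desired bound. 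The estimate for $P_{\Lambda_\pm}$ follows by the same argument, using \eqref{eq:LambdaAntiSymContinuous} in place of \eqref{eq:LambdaContinuous}.

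The only subtle point I anticipate is the bookkeeping, namely verifying carefully that $\hat\phi$ and $P_\Lambda$ agree on the symmetric subspace (so that the test on factorizing tensors in $V^{\tensor(n+m)}$ really controls $P_\Lambda$ on $\Sym^n(V) \tensor \Sym^m(V)$) and tracking the Koszul signs through the computation. The arithmetic is then a one-line triangle inequality once the right multilinear map has been identified.
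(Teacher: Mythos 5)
Your proposal is correct and follows essentially the same strategy as the paper: both arguments expand $P_\Lambda$ on a symmetric factorizing tensor via the explicit formula, bound each of the $nm$ terms using the continuity of $\Lambda$ together with the contractivity of the symmetrizer, and upgrade the pointwise estimate to arbitrary tensors via Lemma~\ref{lemma:TestOnFactorizingTensors} together with $(\Symmetrizer_n\tensor\Symmetrizer_m)(u)=u$. The only difference is organizational: the paper introduces the auxiliary operator $\tilde P_\Lambda$ on the full tensor algebra and applies the symmetrizer contractivity at the end, whereas you pre-symmetrize into $\phi$ and invoke the same contractivity inside the term-by-term bound $\halbnorm{p}^{n-1}(v_1\cdots\widehat{v_k}\cdots v_n)\le\prod_{i\ne k}\halbnorm{p}(v_i)$; these are the same estimate in different order.
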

\begin{proof}
    We work on the whole tensor algebra first. Thus consider
    homogeneous vectors $v_1, \ldots, v_n$ and $w_1, \ldots, w_m \in
    V$ and define $\tilde{P}_\Lambda\colon \Tensor^\bullet(V) \tensor
    \Tensor^\bullet(V) \longrightarrow \Tensor^\bullet(V) \tensor
    \Tensor^\bullet(V)$ by the linear extension of
    \begin{equation*}
        \begin{split}
            &\tilde{P}_\Lambda
            (v_1 \tensor \cdots \tensor  v_n
            \tensor
            w_1 \tensor \cdots \tensor w_m) \\
            &\quad=
            \sum_{k=1}^n \sum_{\ell=1}^m
            (-1)^{v_k(v_{k+1} + \cdots + v_n)}
            (-1)^{w_\ell(w_1 + \cdots + w_{\ell-1})}
            \Lambda(v_k, w_\ell)
            v_1 \tensor \cdots \stackrel{k}{\wedge} \cdots \tensor v_n
            \tensor
            w_1 \tensor \cdots \stackrel{\ell}{\wedge} \cdots \tensor w_m.
        \end{split}
    \end{equation*}
    Then we have $P_\Lambda \circ (\Symmetrizer_n \tensor
    \Symmetrizer_m) = (\Symmetrizer_{n-1} \tensor \Symmetrizer_{m-1})
    \circ \tilde{P}_\Lambda$. For general tensors of arbitrary degree
    this yields
    \begin{equation*}
        P_\Lambda \circ \left(\Symmetrizer \tensor \Symmetrizer\right)
        =
        \left(\Symmetrizer \tensor \Symmetrizer\right)
        \circ \tilde{P}_\Lambda.
        \tag{$*$}
    \end{equation*}
    For homogeneous vectors we get now the estimate
    \begin{align*}
        &\left(\halbnorm{p}^{n-1} \tensor \halbnorm{p}^{m-1}\right)
        \left(
            \tilde{P}_\Lambda
            \left(
                v_1 \tensor \cdots \tensor v_n
                \tensor
                w_1 \tensor \cdots \tensor w_m
            \right)
        \right)\\
        &\quad\le
        \sum_{k=1}^n \sum_{\ell=1}^m
        |\Lambda(v_k, w_\ell)|
        \halbnorm{p}(v_1)
        \cdots \stackrel{k}{\wedge} \cdots
        \halbnorm{p}(v_n)
        \halbnorm{p}(w_1)
        \cdots \stackrel{\ell}{\wedge} \cdots
        \halbnorm{p}(w_m) \\
        &\quad\le
        nm
        \halbnorm{p}(v_1) \cdots \halbnorm{p}(v_n)
        \halbnorm{p}(w_1) \cdots \halbnorm{p}(w_m) \\
        &\quad=
        nm
        \left(\halbnorm{p}^n \tensor \halbnorm{p}^m\right)
        \left(
            v_1 \tensor \cdots \tensor v_n
            \tensor
            w_1 \tensor \cdots \tensor w_m
        \right).
    \end{align*}
    By Lemma~\ref{lemma:TestOnFactorizingTensors} we conclude that for
    all $u \in \Tensor^n(V) \tensor \Tensor^m(V)$ we have
    \[
    \left(\halbnorm{p}^{n-1} \tensor \halbnorm{p}^{m-1}\right)
    \left(
        \tilde{P}_\Lambda(u)
    \right)
    \le
    nm
    \left(\halbnorm{p}^n \tensor \halbnorm{p}^m\right)(u).
    \]
    Finally, for $u \in \Sym^n(V) \tensor \Sym^m(V)$ we have
    $(\Symmetrizer_n \tensor \Symmetrizer_m) (u) = u$ and thus
    by ($*$) and \eqref{eq:pSymp}
    \begin{align*}
        \left(\halbnorm{p}^{n-1} \tensor \halbnorm{p}^{m-1}\right)
        \left(P_\Lambda(u)\right)
        &=
        \left(\halbnorm{p}^{n-1} \tensor \halbnorm{p}^{m-1}\right)
        \left(P_\Lambda
            (\Symmetrizer_n \tensor \Symmetrizer_m) (u)
        \right) \\
        &=
        \left(\halbnorm{p}^{n-1} \tensor \halbnorm{p}^{m-1}\right)
        \left(
            (\Symmetrizer_{n-1} \tensor \Symmetrizer_{m-1})
            \tilde{P}_\Lambda(u)
        \right) \\
        &\le
        \left(\halbnorm{p}^{n-1} \tensor \halbnorm{p}^{m-1}\right)
        \left(\tilde{P}_\Lambda(u)\right) \\
        &\le
        nm
        \left(\halbnorm{p}^n \tensor \halbnorm{p}^m\right)(u).
    \end{align*}
    The last statement follows from
    \eqref{eq:LambdaAntiSymContinuous}.
\end{proof}
In fact, this estimate just reflects the fact that $P_\Lambda$ is a
biderivation. If $\Lambda$ is nontrivial then it can not be improved
in general. It implies immediately the continuity of the Poisson
bracket $\Bracket{\argument, \argument}$:
\begin{proposition}
    \label{proposition:ContinuityOfPossionBracket}%
    Let $\Lambda$ be continuous. Then the Poisson bracket
    $\{\argument, \argument\}_{\Lambda}$ is continuous on
    $\Sym^\bullet_R(V)$ for every $R \ge 0$. More precisely, for $v, w
    \in \Sym^\bullet_R(V)$ and any continuous seminorm $\halbnorm{p}$
    on $V$ with \eqref{eq:LambdaContinuous} we have a constant $c > 0$
    such that
    \begin{equation}
        \label{eq:ContinuityPoissonBracket}
        \halbnorm{p}_R\left(
            \{v, w\}_{\Lambda}
        \right)
        \le
        \left(2^{R+1}\halbnorm{p}\right)_R(v)
        \left(2^{R+1}\halbnorm{p}\right)_R(w).
    \end{equation}
\end{proposition}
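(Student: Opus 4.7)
The plan is to reduce everything to a fixed-degree estimate via Lemma~\ref{lemma:ContinuityOfPLambda} and then sum against the weights $k!^R$, absorbing the combinatorial factors into the factor $2^{R+1}$ by elementary bounds.

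First, by Corollary~\ref{corollary:ThePoissonBracket} combined with \eqref{eq:PLambdaAndPLambdaMinus} we have $\{v,w\}_\Lambda = 2\mu \circ P_{\Lambda_-}(v\tensor w)$, so it suffices to estimate $\mu \circ P_{\Lambda_-}$. Decompose $v = \sum_n v_n$ and $w = \sum_m w_m$ into their homogeneous symmetric components $v_n \in \Sym^n(V)$, $w_m \in \Sym^m(V)$. Since $P_{\Lambda_-}$ maps $\Sym^n(V) \tensor \Sym^m(V)$ into $\Sym^{n-1}(V) \tensor \Sym^{m-1}(V)$ (and vanishes if $n=0$ or $m=0$), the $k$-th symmetric component of $\mu \circ P_{\Lambda_-}(v \tensor w)$ is $\sum_{n+m = k+2} \mu\circ P_{\Lambda_-}(v_n \tensor w_m)$.

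Next I would combine the two basic estimates already at our disposal. By Lemma~\ref{lemma:SymmetrizerpiContinuous}\refitem{item:pnPlusmSymTensor}, applied to $\mu = \Symmetrizer \circ \tensor$, we have $\halbnorm{p}^{n+m-2}(\mu(u)) \le (\halbnorm{p}^{n-1} \tensor \halbnorm{p}^{m-1})(u)$ for $u \in \Sym^{n-1}(V) \tensor \Sym^{m-1}(V)$. Chaining this with Lemma~\ref{lemma:ContinuityOfPLambda} (with $\Lambda_-$ in place of $\Lambda$) yields, for any continuous seminorm $\halbnorm{p}$ satisfying \eqref{eq:LambdaContinuous},
\begin{equation*}
    \halbnorm{p}^{n+m-2}\bigl(\mu \circ P_{\Lambda_-}(v_n \tensor w_m)\bigr)
    \le nm\, \halbnorm{p}^n(v_n)\, \halbnorm{p}^m(w_m).
\end{equation*}

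Finally I would multiply by $k!^R = (n+m-2)!^R$ and sum. Using $nm \le 2^{n+m-2}$ for $n,m \ge 1$, together with $(n+m-2)! \le (n+m)! \le 2^{n+m} n!\, m!$, one obtains $nm(n+m-2)!^R \le \tfrac{1}{4} 2^{(R+1)(n+m)} n!^R m!^R$, so
\begin{equation*}
    \halbnorm{p}_R\bigl(\{v,w\}_\Lambda\bigr)
    \le 2 \sum_{n,m \ge 1} nm(n+m-2)!^R\, \halbnorm{p}^n(v_n)\,\halbnorm{p}^m(w_m)
    \le \tfrac{1}{2} (2^{R+1}\halbnorm{p})_R(v)\, (2^{R+1}\halbnorm{p})_R(w),
\end{equation*}
which gives the desired inequality (with room to spare). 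The only real obstacle is purely combinatorial: ensuring the $nm$ from the biderivation estimate and the reindexing $k = n+m-2$ are absorbed cleanly into the weight $2^{(R+1)(n+m)} n!^R m!^R$; the two elementary bounds above suffice for this.
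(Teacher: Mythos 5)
Your proof is correct and follows essentially the same route as the paper: decompose $v$, $w$ into symmetric-degree components, apply Lemma~\ref{lemma:ContinuityOfPLambda} for $\Lambda_-$ together with contractivity of the symmetrizer to get $\halbnorm{p}^{n+m-2}(\mu\circ P_{\Lambda_-}(v_n\tensor w_m)) \le nm\,\halbnorm{p}^n(v_n)\halbnorm{p}^m(w_m)$, and then sum against $k!^R$ using $(n+m)!\le 2^{n+m}n!m!$. Your use of $nm\le 2^{n+m-2}$ is a slightly sharper version of the step the paper handles loosely with an unspecified constant $c$, and it brings the final bound cleanly below the claimed right-hand side.
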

\begin{proof}
    Let $v, w \in \Sym^\bullet_R(V)$ with components $v = \sum_n
    v_n$ and $w = \sum_m w_m$ as usual. Then we have for a seminorm
    $\halbnorm{p}$ satisfying \eqref{eq:LambdaContinuous}
    \begin{align*}
        \halbnorm{p}_R\left(\Bracket{v, w}\right)
        &=
        \sum_{k=0}^\infty
        \halbnorm{p}^k\left(
            \sum_{n+m-2 = k} \Bracket{v_n, w_m}
        \right)
        k!^R \\
        &\le
        \sum_{k=0}^\infty
        \sum_{n+m-2 = k}
        \halbnorm{p}^{n+m-2}
        \left(
            2 \mu \circ P_{\Lambda_-}(v_n \tensor w_m)
        \right)
        k!^R
        \\
        &\le
        \sum_{k=0}^\infty
        \sum_{n+m-2 = k}
        2 nm
        \halbnorm{p}^n(v_n) \halbnorm{p}^m(w_m)
        (n+m-2)!^R \\
        &\le
        \sum_{n, m = 0}^\infty
        2 nm
        2^{(n+m)R}
        \halbnorm{p}^n(v_n) \halbnorm{p}^m(w_m)
        n!^Rm!^R \\
        &\le
        c
        \sum_{n, m = 0}^\infty
        2^{(n+m)(R+1)}
        \halbnorm{p}^n(v_n) \halbnorm{p}^m(w_m)
        n!^R m!^R ,
    \end{align*}
    where we have used Lemma~\ref{lemma:ContinuityOfPLambda} for
    $\Lambda_-$ and the standard estimate $(n+m)! \le 2^{n+m} n!m!$.
\end{proof}
In this sense, $\Sym^\bullet_R(V)$ becomes a \emph{locally convex
  Poisson algebra} for every $R \ge 0$. In particular, the Poisson
bracket extends to the completion $\cSym^\bullet_R(V)$ and still
obeys the continuity estimate \eqref{eq:ContinuityPoissonBracket} as
well as the algebraic properties of a Poisson bracket.

However, for the star product the situation is more complicated: first
we note that thanks to Corollary~\ref{corollary:NonFormalDeformation}
we can use the formal star product to get a well-defined
\emph{non-formal} star product $\starzLambda$ by replacing $\nu$ with
some real or complex number $z \in \mathbb{K}$, depending on our
choice of the underlying field. We fix $z$ in the following and
consider the dependence of $\starzLambda$ on $z$ later in
Subsection~\ref{subsec:DependenceOnz}. The next lemma provides the key
estimate for all continuity properties of $\starzLambda$: the main
point is that we have to limit the range of the possible values of
$R$:
\begin{lemma}
    \label{lemma:KeyLemma}%
    Let $R \ge \frac{1}{2}$. Then there exist constants $c, c' > 0$ such
    that for all $a, b \in \Sym^\bullet(V)$ and all seminorms
    $\halbnorm{p}$ with \eqref{eq:LambdaContinuous} we have
    \begin{equation}
        \label{eq:KeyEstimate}
        \halbnorm{p}_R(a \starzLambda b)
        \le
        c'
        (c\halbnorm{p})_R(a)
        (c\halbnorm{p})_R(b).
    \end{equation}
\end{lemma}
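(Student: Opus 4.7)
The plan is to expand $\starzLambda$ termwise as $a \starzLambda b = \sum_{k \ge 0} \frac{z^k}{k!}\, \mu \circ P_\Lambda^k(a \tensor b)$ and control each summand by iterating Lemma~\ref{lemma:ContinuityOfPLambda}. For $a_n \in \Sym^n(V)$ and $b_m \in \Sym^m(V)$ the sum is finite by Corollary~\ref{corollary:NonFormalDeformation}, running from $k=0$ to $\min(n,m)$. Iterating the biderivation estimate $k$ times yields
\begin{equation*}
    \bigl(\halbnorm{p}^{n-k} \tensor \halbnorm{p}^{m-k}\bigr)
    \bigl(P_\Lambda^k(a_n \tensor b_m)\bigr)
    \;\le\;
    \prod_{j=0}^{k-1}(n-j)(m-j)\cdot \halbnorm{p}^n(a_n)\halbnorm{p}^m(b_m)
    \;=\;
    \frac{n!\, m!}{(n-k)!\, (m-k)!}\,
    \halbnorm{p}^n(a_n)\, \halbnorm{p}^m(b_m),
\end{equation*}
and Lemma~\ref{lemma:SymmetrizerpiContinuous}\refitem{item:pnPlusmSymTensor} together with Lemma~\ref{lemma:TestOnFactorizingTensors} propagates this bound through $\mu$, giving the same estimate for $\halbnorm{p}^{n+m-2k}\bigl(\mu \circ P_\Lambda^k(a_n \tensor b_m)\bigr)$ on the left.

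After summing over all $n,m$ and grouping contributions by output degree $\ell = n+m-2k$, the desired inequality reduces to the scalar claim
\begin{equation*}
    S_{n,m}
    \;:=\;
    \sum_{k=0}^{\min(n,m)}
    \frac{|z|^k}{k!}\,
    \frac{n!\, m!}{(n-k)!\, (m-k)!}\,
    (n+m-2k)!^R
    \;\le\;
    c'\, C^{n+m}\, n!^R\, m!^R
\end{equation*}
uniformly in $n,m$, for some $c', C > 0$ depending only on $|z|$ and $R$. Once $S_{n,m}$ is bounded in this way, the double sum over $n,m$ factorizes and produces precisely $\halbnorm{p}_R(a \starzLambda b) \le c'\, (C\halbnorm{p})_R(a)\, (C\halbnorm{p})_R(b)$, so we may take $c = C$.

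The main obstacle is this combinatorial estimate, and exactly here is where the hypothesis $R \ge \tfrac{1}{2}$ becomes unavoidable. I would use the elementary inequalities $(n+m-2k)! \le 2^{n+m-2k}(n-k)!(m-k)!$ and $\tfrac{n!}{(n-k)!} = k!\binom{n}{k} \le k!\, 2^n$ (and analogously in $m$). Dividing $S_{n,m}$ by $n!^R m!^R$ and rearranging, the $k$-th summand is bounded by
\begin{equation*}
    |z|^k \cdot (k!)^{1-2R} \cdot 2^{n+m} \cdot 4^{-Rk}.
\end{equation*}
The condition $R \ge \tfrac{1}{2}$ is used exactly at this point to ensure $(k!)^{1-2R} \le 1$; without it the factorial in $k$ would be uncontrollable and no uniform bound of the required form could exist. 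What remains is the finite geometric-type sum $\sum_{k=0}^{\min(n,m)} (|z|/4^R)^k$. For $|z| < 4^R$ this is bounded by a constant; for $|z| \ge 4^R$ one uses $\min(n,m) \le (n+m)/2$ to bound it by $(n+m+1)\,(|z|/4^R)^{(n+m)/2}$, and absorbs the polynomial prefactor into a slightly larger exponential base. Either way one obtains $S_{n,m} \le c' C^{n+m} n!^R m!^R$, completing the proof.
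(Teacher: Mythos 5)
Your proof follows essentially the same path as the paper's: expand $\starzLambda$ termwise, iterate Lemma~\ref{lemma:ContinuityOfPLambda} to produce $\frac{n!\,m!}{(n-k)!\,(m-k)!}$, push through $\mu$ via Lemma~\ref{lemma:SymmetrizerpiContinuous}, and invoke the same two elementary inequalities $(n+m-2k)! \le 2^{n+m-2k}(n-k)!(m-k)!$ and $\tfrac{n!}{(n-k)!} \le 2^n k!$; the role of $R \ge \tfrac{1}{2}$ in taming $(k!)^{1-2R}$ is identical. The only genuine variation is in dispatching the residual $k$-sum: the paper splits on $|z|\ge 1$ versus $|z|<1$ and, in the former case, uses $k \le n,m$ to convert $|z|^{2k}$ into $|z|^{n+m}$ absorbed into $(2|z|\halbnorm{p})$; you instead bound the finite sum $\sum_{k=0}^{\min(n,m)}(|z|/4^R)^k$ directly, using $\min(n,m)\le (n+m)/2$ and absorbing a polynomial prefactor into the exponential base. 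Your variant is a bit cleaner and avoids the $|z|\ge 1$ case split.

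There is, however, a small gap. Your claimed intermediate bound
\begin{equation*}
|z|^k\,(k!)^{1-2R}\,2^{n+m}\,4^{-Rk}
\end{equation*}
for the $k$-th summand of $S_{n,m}/(n!^R m!^R)$ is obtained by raising $\tfrac{n!}{(n-k)!}\le 2^n k!$ to the $(1-R)$-th power after factoring $n!^R m!^R$ out; this step requires $1-R\ge 0$, i.e.\ $R\le 1$, or the inequality reverses. For $R>1$ the correct recombination (e.g.\ using $(n-k)!^{R-1}/n!^{R-1}\le k!^{-(R-1)}$) still gives a usable bound, but of the form $|z|^k\,(k!)^{1-2R}\,2^{R(n+m)}\,4^{-Rk}$ with base $2^R$ rather than $2$; the paper treats $R>1$ as a separate (easier) case for exactly this reason. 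Since all that is needed is some $C^{n+m}$, the conclusion survives, but as written your intermediate estimate does not cover $R>1$ and you should note that that range requires a slightly different combination of the same inequalities.
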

\begin{proof}
    Let $a, b \in \Sym^\bullet(V)$ be given and denote by $a_n$, $b_m$
    their homogeneous parts with respect to the tensor degree as
    usual. We have to distinguish several cases of the parameters. The
    non-trivial case is for $\frac{1}{2} \le R \le 1$ and $|z| \ge 1$,
    where we estimate
    \begin{align*}
        &\halbnorm{p}_R(a \starzLambda b)
        \le
        \sum_{k=0}^\infty \frac{|z|^k}{k!}
        \halbnorm{p}_R \left(
            \mu \circ P_\Lambda^k(a \tensor b)
        \right) \\
        &\quad\le
        \sum_{k=0}^\infty \frac{|z|^k}{k!}
        \sum_{k \le n, m}
        (n + m - 2k)!^R
        \halbnorm{p}^{n + m - 2k}
        \left(
            \mu \circ P_\Lambda^k (a_n \tensor b_m)
        \right) \\
        &\quad\stackrel{(a)}{\le}
        \sum_{k = 0}^\infty
        \frac{|z|^k}{k!}
        \sum_{k \le n, m}
        (n + m - 2k)!^R
        \frac{n!}{(n-k)!} \frac{m!}{(m-k)!}
        \halbnorm{p}^n(a_n) \halbnorm{p}^m(b_m) \\
        &\quad\stackrel{(b)}{\le}
        \sum_{k= 0}^\infty
        \sum_{k \le n, m}
        \frac{|z|^k 2^{R(n+m-2k)}}{k!}
        \frac{n!^{1-R}}{(n-k)!^{1-R}}
        \frac{m!^{1-R}}{(m-k)!^{1-R}}
        n!^R \halbnorm{p}^n(a_n) m!^R \halbnorm{p}^m(b_m) \\
        &\quad\stackrel{(c)}{\le}
        \sum_{k= 0}^\infty
        \sum_{k \le n, m}
        \frac{|z|^k 2^{R(n+m-2k)}}{k!}
        2^{(1-R)n} 2^{(1-R)m}
        k!^{2(1-R)}
        n!^R \halbnorm{p}^n(a_n)
        m!^R \halbnorm{p}^m(b_m) \\
        &\quad\stackrel{(d)}{\le}
        \sum_{k= 0}^\infty
        \sum_{k \le n, m}
        \frac{1}{|z|^k k!^{2R -1} 2^{2Rk}}
        n!^R (2|z|)^n \halbnorm{p}^n(a_n)
        m!^R (2|z|)^m \halbnorm{p}^m(b_m) \\
        &\quad\stackrel{(e)}{\le}
        \left(
            \sum_{k = 0}^\infty
            \frac{1}{|z|^k k!^{2R -1} 2^{2Rk}}
        \right)
        \left(
            \sum_{n=0}^\infty
            n!^R (2|z|)^n \halbnorm{p}^n(a_n)
        \right)
        \left(
            \sum_{m=0}^\infty
            m!^R (2|z|)^m \halbnorm{p}^m(b_m)
        \right) \\
        &\quad=
        c'
        (2|z|\halbnorm{p})_R(a)
        (2|z|\halbnorm{p})_R(b)
    \end{align*}
    where in ($a$) we used $k$-times the estimate from
    Lemma~\ref{lemma:KeyLemma}, in ($b$) we used $(n + m -2k)! \le
    2^{n+m-2k} (n-k)!(m-k)!$, in ($c$) we used $n! \le 2^n (n-k)!k!$
    and $m! \le 2^m (m-k)! k!$ together with the assumption $R \le 1$,
    in ($d$) we use $|z| \ge 1$ as well as $k \le n, m$, and finally,
    in ($e$) we note that the series over $k$ converges to a constant
    thanks to $R \ge \frac{1}{2}$ while the remaining series over $n$
    and $m$ give the seminorm $(2|z|\halbnorm{p})_R$ for the
    rescaled seminorm $2 |z|\halbnorm{p}$. If instead $|z| < 1$ then
    we continue instead of ($d$) by
    \[
    \halbnorm{p}_R(a \starzLambda b)
    \le \ldots
    \stackrel{(d')}{\le}
    \left(
        \sum_{k=0}^\infty \frac{|z|^k 2^{-2Rk}}{k!^{2R -1}}
    \right)
    (2\halbnorm{p})_R(a)
    (2\halbnorm{p})_R(b),
    \]
    where the series over $k$ always converges since $R \ge
    \frac{1}{2}$ and $|z| < 1$. Finally, if $R > 1$ then we continue
    instead of ($c$) with
    \begin{align*}
        \halbnorm{p}_R(a \starzLambda b)
        \le \ldots &\stackrel{(c')}{\le}
        \sum_{k= 0}^\infty
        \sum_{k \le n, m}
        \frac{|z|^k 2^{-2Rk}}{k!}
        n!^R 2^{Rn}\halbnorm{p}^n(a_n)
        m!^R 2^{Rm}\halbnorm{p}^m(b_m) \\
        &\le
        \left(
            \sum_{k=0}^\infty
            \frac{|z|^k 2^{-2Rk}}{k!}
        \right)
        (2^R\halbnorm{p})_R(a)
        (2^R\halbnorm{p})_R(b),
    \end{align*}
    where again the series over $k$ converges. In total, we always get
    an estimate for all $R \ge \frac{1}{2}$ and all $z$ as claimed.
\end{proof}
\begin{remark}
    \label{remark:REinhalbIstScharf}%
    We also note that the limiting case $R = \frac{1}{2}$ is sharp in
    the following sense: consider the most simple nontrivial situation
    $V = \mathbb{R}^2$ with basis vectors $q$ and $p$ as well as the
    bilinear form $\Lambda_{\scriptscriptstyle \mathrm{std}}(p, q) =
    1$ and zero for the other combinations. The corresponding Poisson
    bracket is the canonical Poisson bracket and the star product is
    the \emph{standard-ordered star product} if we take $z =
    \frac{\hbar}{\I}$, see
    e.g. \cite[Sect.~5.2.4]{waldmann:2007a}. Identifying elements in
    $\Sym^\bullet(\mathbb{R}^2)$ with polynomials in $q$ and $p$ we
    have the more explicit formula
    \begin{equation}
        \label{eq:StandardOrderedProduct}
        f \star_{\scriptscriptstyle\mathrm{std}} g
        =
        \sum_{k=0}^\infty
        \frac{(-\I\hbar)^k}{k!}
        \frac{\partial^k f}{\partial p^k}
        \frac{\partial^k g}{\partial q^k}.
    \end{equation}
    Using again the function $f_\epsilon$ from
    \eqref{eq:fEpsilonFunction} we see that $f_\epsilon(q)$ and
    $f_\epsilon(p)$ belong to $\cSym^\bullet_R(\mathbb{R}^2)$ as
    soon as $R < \epsilon$. However, for the star product we get
    (formally)
    \begin{equation}
        \label{eq:fepsStarfeps}
        f_\epsilon(p)
        \star_{\scriptscriptstyle\mathrm{std}}
        f_\epsilon(q)
        =
        \sum_{n, m, k \le m, n}
        \frac{(-\I\hbar)^k}{k!}
        \frac{n!^{1-\epsilon}}{(n-k)!}
        \frac{n!^{1-\epsilon}}{(n-k)!}
        q^{n-k} p^{n-k}.
    \end{equation}
    Since the projection $\cSym^\bullet_R(V) \longrightarrow
    \piSym^n(V)$ is continuous for all $R \ge 0$ we consider the
    coefficient of \eqref{eq:fepsStarfeps} in $\Sym^0(V)$ which is
    obtained for $n = m = k$, i.e.
    \begin{equation}
        \label{eq:DivergentZeroCoeff}
        \sum_{\ell = 0}^\infty \frac{(-\I\hbar)^\ell}{\ell!}
        \ell!^{1-\epsilon} \ell!^{1-\epsilon}
        =
        \sum_{\ell = 0}^\infty (-\I\hbar)^\ell \ell!^{1-2\epsilon}.
    \end{equation}
    This clearly diverges for $\epsilon < \frac{1}{2}$ unless $\hbar =
    0$. Thus for $R < \frac{1}{2}$ we can not expect a continuous star
    product.
\end{remark}

%
%

\subsection{The Weyl Algebra $\WeylR(V, \starzLambda)$}
\label{subsec:WeylAlgebra}

The estimate from Lemma~\ref{lemma:KeyLemma} shows that the star
product will be continuous for the topology of $\Sym^\bullet_R(V)$
\emph{provided} the parameter $R$ satisfies $R \ge \frac{1}{2}$. This
will motivate the following definition of the Weyl algebra. However,
we will also give an alternative definition for later use, where we
want to compare with the results from
\cite{beiser.roemer.waldmann:2007a, beiser.waldmann:2011a:pre}.
\begin{definition}[Weyl algebra]
    \label{definition:WeylAlgebraWR}%
    For $R \in \mathbb{R}$ we endow $\Sym^\bullet_R(V)$ with the
    product $\starzLambda$ and call the resulting algebra the Weyl
    algebra $\WeylR(V, \starzLambda)$. Moreover, for $R > \frac{1}{2}$
    we set
    \begin{equation}
        \label{eq:WeylProjective}
        \WeylRMinus(V) = \projlim_{\epsilon \longrightarrow 0}
        \Sym^\bullet_{R - \epsilon}(V)
    \end{equation}
    and endow $\WeylRMinus(V)$ with the Weyl product $\starzLambda$,
    too.
\end{definition}
This way, we arrive at two possible definitions of the Weyl
algebra. The projective limit can be made more explicitly, since the
underlying vector space is always the same: we use \emph{all}
seminorms $\halbnorm{p}_{R-\epsilon}$ for $\epsilon > 0$ and
$\halbnorm{p}$ a continuous seminorm on $V$ for $\WeylRMinus(V)$ and
have $\WeylRMinus(V) = \Sym^\bullet(V)$ as a linear space as before.
It will turn out that this projective limit enjoys some more
interesting properties when it comes to strong nuclearity.

The completion $\cWeylR(V)$ will be given as those formal series $v =
\sum_{n = 0}^\infty v_n$ with $v_n \in \cpiSym^n(V)$ such that
\emph{all} seminorms $\halbnorm{p}_R(v)$ are finite for all continuous
seminorms $\halbnorm{p}$ on $V$. Correspondingly, for the completion
of the projective limit we have to have finite seminorms
$\halbnorm{p}_{R-\epsilon}(v)$ for all $\epsilon > 0$ and all
continuous seminorms $\halbnorm{p}$ on $V$.

A last option is to take the projective limit $R \longrightarrow
\infty$. Most of the following statements will therefore also be
available for the case $R = \infty$. However, we will not be too much
interested in this case as the completion $\cWeylInfty(V) =
\cSym^\bullet_\infty(V)$ is rather small.
\begin{remark}
    \label{remark:TensorRDef}%
    For later use, we can endow also the tensor algebra with the
    projective topology of all the seminorms $\halbnorm{p}_{R -
      \epsilon}$ for a fixed $R \in \mathbb{R}$ and all $\epsilon >
    0$. The resulting tensor algebra will be denoted by
    $\Tensor^\bullet_{R^-}(V)$. Then $\WeylRMinus(V) \subseteq
    \Tensor^\bullet_{R^-}(V)$ is a closed subspace.
\end{remark}
\begin{remark}
    \label{remark:PoissonBracketContinuousForWeylTop}%
    From Proposition~\ref{proposition:ContinuityOfPossionBracket} it
    follows immediately that the Poisson bracket $\Bracket{\argument,
      \argument}$ is still continuous for the projective limit
    topology. Thus $\WeylRMinus(V)$ becomes a locally convex Poisson
    algebra.
\end{remark}

We start now to collect some basic features of $\WeylRMinus(V)$. From
Lemma~\ref{lemma:pREinspRinftyEstimates} we get immediately the
following statement:
\begin{lemma}
    \label{lemma:ComponentsAreContinuous}%
    Let $R' \ge R \ge 0$.
    \begin{lemmalist}
    \item \label{item:InducedTopologyOnSn} For all $n \in
        \mathbb{N}_0$ the induced topology on $\Sym^n(V) \subseteq
        \WeylRMinus(V)$ is the $\pi$-topology.
    \item \label{item:ComponentsAreContinuous} The projection and the
        inclusion maps
        \begin{equation}
            \label{eq:WeylSnWeyl}
            \WeylRMinus(V)
            \longrightarrow
            \piSym^n(V)
            \longrightarrow
            \WeylRMinus(V)
        \end{equation}
        are continuous for all $n \in \mathbb{N}_0$.
    \item \label{item:DifferentWeylRIntoWeylR} The inclusion map
        $\mathcal{W}_{R'^-}(V) \longrightarrow \WeylRMinus(V)$ is
        continuous.
    \item \label{item:ZzweiStetigFuerWeyl} The $\mathbb{Z}_2$-grading
        is continuous for $\WeylRMinus(V)$.
    \end{lemmalist}
\end{lemma}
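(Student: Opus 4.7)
The plan is to derive all four parts essentially by unpacking the definitions and applying the explicit seminorm estimates from Lemma~\ref{lemma:pREinspRinftyEstimates} together with Lemma~\ref{lemma:TensorProductContinuous}; no new estimates are needed and this lemma is a bookkeeping statement.

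For parts \textit{i.)} and \textit{ii.)}, the starting point is Lemma~\ref{lemma:pREinspRinftyEstimates},~\textit{ii.)}, which tells us that $\halbnorm{p}_{R-\epsilon}$ restricts to the scaled seminorm $n!^{R-\epsilon}\halbnorm{p}^n$ on $\Sym^n(V) \subseteq V^{\tensor n}$. Since all defining seminorms of $\WeylRMinus(V)$ restrict in this way, and the family of $\halbnorm{p}^n$ for $\halbnorm{p}$ running through all continuous seminorms on $V$ generates the $\pi$-topology on $\Sym^n(V)$, the induced topology on $\Sym^n(V)$ is precisely the $\pi$-topology; taking the projective limit over $\epsilon$ does not change this since the scaling factors $n!^{R-\epsilon}$ are positive constants for fixed $n$. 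The same computation shows that the inclusion $\piSym^n(V) \longrightarrow \WeylRMinus(V)$ is continuous. For the projection $v \mapsto v_n$, I would note that every term in the series defining $\halbnorm{p}_{R-\epsilon}(v)$ is non-negative, so
\begin{equation*}
    \halbnorm{p}^n(v_n) \; \le \; \frac{1}{n!^{R-\epsilon}}\halbnorm{p}_{R-\epsilon}(v),
\end{equation*}
which gives the continuity of the projection onto $\piSym^n(V)$ since for each continuous seminorm $\halbnorm{p}^n$ on $\Sym^n(V)$ comes from a continuous seminorm on $V$ by the definition of the $\pi$-topology.

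For part \textit{iii.)}, I would use the monotonicity in the exponent from Lemma~\ref{lemma:pREinspRinftyEstimates},~\textit{iv.)}: for $R' \ge R$ and any $\epsilon > 0$ one has $R - \epsilon \le R' - \epsilon$, hence $\halbnorm{p}_{R-\epsilon} \le \halbnorm{p}_{R'-\epsilon}$ termwise on $\Tensor^\bullet(V)$. Thus every defining seminorm of the projective-limit topology on $\WeylRMinus(V)$ is dominated by a defining seminorm of $\mathcal{W}_{R'^-}(V)$, which is the required continuity. Part \textit{iv.)} is obtained directly from Lemma~\ref{lemma:TensorProductContinuous},~\textit{v.)}: the $\mathbb{Z}_2$-grading is continuous on each $\Sym^\bullet_{R-\epsilon}(V)$ with the explicit estimate $\halbnorm{p}_{R-\epsilon}(v_{\mathbf{0}}), \halbnorm{p}_{R-\epsilon}(v_{\mathbf{1}}) \le \halbnorm{q}_{R-\epsilon}(v)$, and continuity passes automatically to the projective limit.

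No part of this proof presents a real obstacle; the only mildly subtle point is to make sure that the continuity statement for the projection onto $\piSym^n(V)$ is phrased correctly, i.e.\ one has to pick a continuous seminorm on $V$ first and only then estimate the resulting $\halbnorm{p}^n$. Everything else follows by inspection of the explicit seminorms.
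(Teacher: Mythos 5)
Your proof is correct and is essentially the same argument the paper has in mind: the paper dispenses with a written proof and simply says the lemma follows immediately from Lemma~\ref{lemma:pREinspRinftyEstimates} (with part~\textit{iv.)} additionally resting on Lemma~\ref{lemma:TensorProductContinuous},~\textit{v.)}), and you have merely unpacked that reference. All the details — the restriction of $\halbnorm{p}_{R-\epsilon}$ to $n!^{R-\epsilon}\halbnorm{p}^n$ on $\Sym^n(V)$, the termwise non-negativity giving continuity of the projection, the monotonicity $\halbnorm{p}_{R-\epsilon}\le\halbnorm{p}_{R'-\epsilon}$, and the inheritance of grading continuity by the projective limit — are exactly right.
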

The analogous statements for $\WeylR(V)$ hold for trivial reasons: we
have discussed them already for the symmetric algebra
$\Sym^\bullet_R(V)$.

This lemma has the important consequence that also after completion of
$\WeylRMinus(V)$ to $\cWeylRMinus(V)$ we can speak of the $n$-th
component $a_n \in \cpiSym^n(V)$ of an element $a \in \cWeylRMinus(V)$
in a meaningful way. More precisely, $a$ can be expressed as a
convergent series in its components of fixed tensor degree:
\begin{lemma}
    \label{lemma:aInWeylRAsSuman}%
    Let $R \in \mathbb{R}$ and let $a \in \cWeylRMinus(V)$ with
    components $a_n \in \cpiSym^n(V)$ for $n \in \mathbb{N}_0$. Then
    \begin{equation}
        \label{eq:aSeriesan}
        a = \sum_{n=0}^\infty a_n
    \end{equation}
    converges absolutely.
\end{lemma}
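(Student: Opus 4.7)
The plan is to unfold the explicit description of $\cWeylRMinus(V)$ together with Lemma~\ref{lemma:pREinspRinftyEstimates}\refitem{item:pRRestrictToVn}, which says that $\halbnorm{p}_{R-\epsilon}$ restricted to $\Sym^n(V)$ equals $n!^{R-\epsilon}\halbnorm{p}^n$. This identity will extend by continuity to $\cpiSym^n(V)$, at which point the lemma becomes essentially a restatement of what it means to be in the projective limit.

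First I would note that by Lemma~\ref{lemma:ComponentsAreContinuous} the projection $\WeylRMinus(V) \longrightarrow \piSym^n(V)$ is continuous and therefore extends by continuity to a projection $\cWeylRMinus(V) \longrightarrow \cpiSym^n(V)$. Thus the $n$-th component $a_n \in \cpiSym^n(V)$ of $a \in \cWeylRMinus(V)$ is unambiguously defined, and the identity $\halbnorm{p}_{R-\epsilon}(a_n) = n!^{R-\epsilon} \halbnorm{p}^n(a_n)$ persists after completion.

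Second, by the explicit description of $\cWeylRMinus(V)$ as the projective limit, for every continuous seminorm $\halbnorm{p}$ on $V$ and every $\epsilon > 0$ we have
\[
  \halbnorm{p}_{R-\epsilon}(a)
  = \sum_{n=0}^\infty n!^{R-\epsilon}\halbnorm{p}^n(a_n)
  < \infty,
\]
which is precisely $\sum_{n=0}^\infty \halbnorm{p}_{R-\epsilon}(a_n) < \infty$. Since the seminorms $\halbnorm{p}_{R-\epsilon}$ (with $\halbnorm{p}$ continuous on $V$ and $\epsilon > 0$) generate the topology of $\cWeylRMinus(V)$, this is absolute convergence with respect to every defining seminorm.

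Third, to see that the sum is $a$ itself rather than some other element, I would apply $\halbnorm{p}_{R-\epsilon}$ to the remainder $a - \sum_{n=0}^N a_n$, which by the same identity equals the tail $\sum_{n > N} n!^{R-\epsilon}\halbnorm{p}^n(a_n)$ of a convergent series and hence tends to zero as $N \longrightarrow \infty$. There is essentially no obstacle: the only point requiring a moment of care is the extension of the projections to the completion so that the components $a_n$ are well defined in $\cpiSym^n(V)$, and this is exactly what Lemma~\ref{lemma:ComponentsAreContinuous} provides.
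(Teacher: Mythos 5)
Your proof is correct and follows essentially the same approach as the paper: identify $\halbnorm{p}_{R-\epsilon}(a)$ with the series $\sum_n \halbnorm{p}_{R-\epsilon}(a_n)$ via the explicit description of the completion, which gives absolute convergence at once. The paper's proof is a one-liner of exactly this content; you merely spell out the tail estimate showing the partial sums converge to $a$, which the paper leaves implicit.
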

\begin{proof}
    Identifying $a_n \in \cpiSym^n(V)$ with its image in
    $\cWeylRMinus(V)$ we get for every continuous seminorm
    $\halbnorm{p}$ on $V$ the equation
    \[
    \halbnorm{p}_{R-\epsilon}(a)
    =
    \sum_{n=0}^\infty \halbnorm{p}^n(a_n) n!^{R-\epsilon}
    =
    \sum_{n=0}^\infty \halbnorm{p}_{R-\epsilon}(a_n),
    \]
    from which the statement follows immediately.
\end{proof}

In particular, the direct sum $\bigoplus_{n=0}^\infty \cpiSym^n(V)$ of
the completed symmetric $\pi$-tensor powers of $V$ is
\emph{sequentially} dense in $\cWeylRMinus(V)$. Again, this statement
is also true for the case $\cWeylR(V)$, see \eqref{eq:vAsaSeriesConverges}.

The first main result is now that $\WeylR(V, \starzLambda)$ as well as
$\WeylRMinus(V, \starzLambda)$ are indeed locally convex algebras
provided $R$ is suitably chosen:
\begin{theorem}
    \label{theorem:WeylAlgebra}%
    Let $R \ge \frac{1}{2}$.  The Weyl algebra $\WeylR(V,
    \starzLambda)$ is a locally convex algebra. Moreover,
    $\WeylRMinus(V, \starzLambda)$ is a locally convex algebra for $R
    > \frac{1}{2}$. In both cases, the Weyl algebra is first countable
    iff $V$ is first countable.
\end{theorem}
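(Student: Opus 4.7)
The plan is to read off both statements directly from the machinery already developed. Local convexity of the underlying space is immediate from the construction in Section 3.1, so the only nontrivial point is continuity of $\starzLambda$, which in turn reduces to the key estimate in Lemma~\ref{lemma:KeyLemma}.

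For $\WeylR(V, \starzLambda)$ with $R \ge \frac{1}{2}$, the topology is defined by the directed family $\{\halbnorm{p}_R\}$ where $\halbnorm{p}$ runs through all continuous seminorms on $V$. Given any such $\halbnorm{p}$, I would pick a continuous seminorm $\halbnorm{p}'$ on $V$ dominating both $\halbnorm{p}$ and a seminorm satisfying \eqref{eq:LambdaContinuous} (such $\halbnorm{p}'$ exists since finite maxima of continuous seminorms are continuous). Then Lemma~\ref{lemma:KeyLemma} applied to $\halbnorm{p}'$ yields constants $c, c' > 0$ with
\begin{equation*}
    \halbnorm{p}_R(a \starzLambda b)
    \le
    \halbnorm{p}'_R(a \starzLambda b)
    \le
    c' \, (c\halbnorm{p}')_R(a) \, (c\halbnorm{p}')_R(b),
\end{equation*}
and this is precisely the joint continuity of $\starzLambda$ in the defining seminorms. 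For $\WeylRMinus(V, \starzLambda)$ with $R > \frac{1}{2}$, the topology is determined by the seminorms $\halbnorm{p}_{R - \epsilon}$ for $\epsilon > 0$. Since $R > \frac{1}{2}$ I can restrict to $\epsilon$ small enough that $R - \epsilon \ge \frac{1}{2}$, and then the same application of Lemma~\ref{lemma:KeyLemma}, now with $R - \epsilon$ in place of $R$, gives joint continuity of $\starzLambda$ in each defining seminorm. Compatibility with the $\mathbb{Z}_2$-grading and completeness considerations carry over without extra work, since we only need to verify that multiplication is continuous as a bilinear map into the same locally convex space.

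For the first-countability claim, I would argue both implications via the description of the topology by the explicit seminorms. If $V$ admits a countable fundamental system $\{\halbnorm{p}_k\}_{k \in \mathbb{N}}$ of continuous seminorms, then any continuous seminorm $\halbnorm{p}$ on $V$ is dominated by some $C \halbnorm{p}_k$, and since
\begin{equation*}
    \halbnorm{p}_R(v)
    \le
    (C \halbnorm{p}_k)_R(v)
\end{equation*}
by monotonicity (Lemma~\ref{lemma:pREinspRinftyEstimates},~\refitem{item:pLEqPRLEqR} together with homogeneity in the scaling $\halbnorm{p} \mapsto C \halbnorm{p}$), the countable family $\{(N \halbnorm{p}_k)_R\}_{k, N \in \mathbb{N}}$ is a fundamental system of seminorms on $\WeylR(V)$. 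For $\WeylRMinus(V)$ I combine this countable family with $\epsilon = 1/j$, $j \in \mathbb{N}$, and use that $\halbnorm{p}_{R - \epsilon}$ is monotone in $\epsilon$ (Lemma~\ref{lemma:pREinspRinftyEstimates},~\refitem{item:SeminormsForDifferentRs}) to see that any $\halbnorm{p}_{R - \epsilon}$ is dominated by some $(N \halbnorm{p}_k)_{R - 1/j}$. Conversely, by Lemma~\ref{lemma:ComponentsAreContinuous},~\refitem{item:InducedTopologyOnSn} the subspace topology inherited by $\Sym^1(V) \cong V$ from $\WeylR(V)$ or $\WeylRMinus(V)$ is the original topology on $V$, so first countability of the Weyl algebra descends to $V$.

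I do not foresee a genuine obstacle here: the real work has been done in establishing Lemma~\ref{lemma:KeyLemma}. The only point requiring mild care is ensuring that the seminorms $\halbnorm{p}$ used in the estimate can always be chosen to simultaneously satisfy \eqref{eq:LambdaContinuous} and to dominate a prescribed target seminorm, which is just a matter of replacing $\halbnorm{p}$ with the maximum of two continuous seminorms; and then, in the projective-limit case, shrinking $\epsilon$ to land in the regime $R - \epsilon \ge \frac{1}{2}$ where the key lemma applies.
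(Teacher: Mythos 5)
Your proposal is correct and follows essentially the same line of argument as the paper: continuity of $\starzLambda$ is read off from Lemma~\ref{lemma:KeyLemma} (after shrinking $\epsilon$ in the $\WeylRMinus$ case so that $R - \epsilon \ge \frac{1}{2}$), and first countability is established in both directions by exhibiting an explicit countable fundamental system of seminorms, respectively by restricting to $\Sym^1(V) \cong V$ via Lemma~\ref{lemma:ComponentsAreContinuous}. The only cosmetic difference is that you make explicit the cofinality argument for seminorms satisfying \eqref{eq:LambdaContinuous} and carry a double index $(k,j,N)$ for the countable system, where the paper folds this into an increasing sequence $\halbnorm{p}^{(n)}$ with $\epsilon = 1/n$; these are equivalent.
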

\begin{proof}
    The continuity of the product for $\WeylR(V)$ is just
    Lemma~\ref{lemma:KeyLemma}. This gives also the continuity in the
    case of $\WeylRMinus(V)$.  Note that we need $R > \frac{1}{2}$ for
    the second case. A locally convex space $V$ is first countable iff
    we can choose a sequence of continuous seminorms
    $\halbnorm{p}^{(1)} \le \halbnorm{p}^{(2)} \le \cdots$ such that
    for every other continuous seminorm $\halbnorm{q}$ on $V$ we have
    some $n \in \mathbb{N}$ with $\halbnorm{q} \le
    \halbnorm{p}^{(n)}$. Then Lemma~\ref{lemma:pREinspRinftyEstimates}
    shows that the seminorms $\halbnorm{p}^{(n)}_R$ will do the job
    for $\WeylR(V)$ while for $\WeylRMinus(V)$ we can take the
    seminorms $\left(\halbnorm{p}^{(n)}\right)_{R - \frac{1}{n}}$ to
    determine the topology. The converse is obvious from
    Lemma~\ref{lemma:ComponentsAreContinuous},
    \refitem{item:InducedTopologyOnSn}.
\end{proof}
\begin{remark}
    \label{remark:FrechetIsOptimal}%
    Note that for a Banach space $V$ a Fréchet algebra $\cWeylR(V,
    \starzLambda)$ is the best we can hope for in general since the
    canonical commutation relations $[q,
    p]_{\star_{\scriptscriptstyle\mathrm{std}}} = \I \hbar \Unit$ can
    not be implemented in a Banach algebra for $\hbar \ne 0$. So if
    the even part of $V$ and $\Lambda\at{V_{\mathbf{0}}}$ are
    nontrivial, a Banach algebra structure is excluded since the
    standard-ordered star product from
    Remark~\ref{remark:REinhalbIstScharf} comprises always a
    subalgebra in this case. Even worse, in this case every
    submultiplicative seminorm is necessarily trivial: $\cWeylR(V,
    \starzLambda)$ is very far from being locally multiplicatively
    convex.
\end{remark}

As a first application we show that in the completion $\cWeylR(V)$ we
have exponentials of every vector in $V$, provided $R$ is small
enough:
\begin{proposition}
    \label{proposition:WeylAlgebraHasExp}%
    Assume $V_{\mathbf{0}} \ne \{0\}$.
    \begin{propositionlist}
    \item \label{item:expIffRkleinerEins} One has $\exp(v) \in
        \cWeylR(V)$ for every non-zero $v \in V$ iff $R < 1$.
    \item \label{item:expEntire} Let $R < 1$ and $v \in V$. The map
        $\mathbb{K} \ni t \mapsto \exp(t v) \in \cWeylR(V)$ is
        real-analytic in the case $\mathbb{K} = \mathbb{R}$ with
        radius of convergence $\infty$ and entire in the case
        $\mathbb{K} = \mathbb{C}$. The Taylor series converges
        absolutely.
    \end{propositionlist}
\end{proposition}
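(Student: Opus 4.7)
The plan is to derive a closed formula for $\halbnorm{p}_R(\exp v)$ and read off when it is finite for every continuous seminorm $\halbnorm{p}$ on $V$. First I would reduce to the even case. Writing $v = v_{\mathbf{0}} + v_{\mathbf{1}}$, the odd part obeys $v_{\mathbf{1}}^2 = \Symmetrizer(v_{\mathbf{1}} \tensor v_{\mathbf{1}}) = 0$, because on odd vectors $\Symmetrizer_2$ is the antisymmetrizer in the Koszul convention. Since $v_{\mathbf{0}}$ and $v_{\mathbf{1}}$ commute in $\Sym^\bullet(V)$, one has $\exp(v) = \exp(v_{\mathbf{0}}) \cdot (\Unit + v_{\mathbf{1}})$, so convergence of $\exp(v)$ is equivalent to convergence of $\exp(v_{\mathbf{0}})$.

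For $v \in V_{\mathbf{0}}$ the symmetric power $v^n$ coincides with the elementary tensor $v \tensor \cdots \tensor v$ (all Koszul signs are $+1$), and I would use the identity $\halbnorm{p}^n(v^n) = \halbnorm{p}(v)^n$ for the $\pi$-seminorm. The inequality ``$\le$'' is immediate from the definition applied to the factorizing representative; the reverse direction follows from Hahn-Banach, choosing $\varphi \in V^*$ with $\varphi(v) = \halbnorm{p}(v)$ and $|\varphi| \le \halbnorm{p}$, so that $\varphi^{\tensor n}$ extends continuously to $V^{\pitensor n}$ with $\halbnorm{p}^n$-seminorm bounded by $1$ and evaluates to $\halbnorm{p}(v)^n$ on $v^{\tensor n}$. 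Inserting this into the definition of $\halbnorm{p}_R$ gives the exact identity
\[
\halbnorm{p}_R(\exp v) = \sum_{n=0}^\infty \frac{\halbnorm{p}^n(v^n)}{n!}\, n!^R = \sum_{n=0}^\infty \halbnorm{p}(v)^n\, n!^{R-1}.
\]

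For the ``if'' direction of (i), when $R < 1$ the root test via $\sqrt[n]{n!^{R-1}} \sim n^{R-1} \longrightarrow 0$ shows convergence for every value of $\halbnorm{p}(v)$, hence for every continuous seminorm, and the description of $\cWeylR(V)$ as the space of formal series on which all $\halbnorm{p}_R$ are finite places $\exp v$ in $\cWeylR(V)$. For the ``only if'' direction, assuming $R \ge 1$ I would pick any non-zero $v \in V_{\mathbf{0}}$, which exists by hypothesis. Since $V$ is Hausdorff there is a continuous seminorm $\halbnorm{p}$ with $\halbnorm{p}(v) > 0$, and after rescaling one may assume $\halbnorm{p}(v) \ge 1$; the general term $\halbnorm{p}(v)^n n!^{R-1}$ of the above series then fails to tend to zero, so $\halbnorm{p}_R(\exp v) = +\infty$ and $\exp v \notin \cWeylR(V)$. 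The main delicacy here is the boundary case $R = 1$, where divergence is forced precisely by the freedom to rescale the seminorm; this is also the point where the variant $\cWeylRMinus(V)$ will behave differently.

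For part (ii) the same calculation applied to $tv$ yields
\[
\sum_{n=0}^\infty \halbnorm{p}_R\!\Bigl(\frac{t^n v^n}{n!}\Bigr) = \sum_{n=0}^\infty |t|^n\, \halbnorm{p}(v)^n\, n!^{R-1},
\]
which converges for every $t \in \mathbb{K}$ and uniformly on compact subsets of $\mathbb{K}$ when $R < 1$. This establishes absolute convergence of the Taylor series $\sum_n (t^n/n!)\, v^n$ in the sequentially complete space $\cWeylR(V)$ and shows that $t \mapsto \exp(tv)$ is entire on $\mathbb{C}$ (respectively real-analytic with infinite radius of convergence on $\mathbb{R}$), completing the argument.
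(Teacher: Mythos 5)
Your proof is correct and follows essentially the same route as the paper: compute the seminorm $\halbnorm{p}_R(\exp v)$ exactly for even $v$ via the identity $\halbnorm{p}^n(v^n) = \halbnorm{p}(v)^n$, and observe that the resulting series $\sum_n \halbnorm{p}(v)^n n!^{R-1}$ is finite for all continuous seminorms iff $R < 1$. You spell out two steps the paper leaves implicit --- the reduction to even $v$ through $\exp(v) = \exp(v_{\mathbf{0}})(\Unit + v_{\mathbf{1}})$ and the Hahn--Banach duality argument giving the exact $\pi$-seminorm of an elementary power --- but the core computation and the conclusion are identical.
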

\begin{proof}
    Let $v \in V_{\mathbf{0}}$ be non-zero and choose a seminorm
    $\halbnorm{p}$ with $\halbnorm{p}(v) > 1$ which is possible thanks
    to the Hausdorff property and by an appropriate rescaling of
    $v$. Then $\halbnorm{p}^n(v^n) = (\halbnorm{p}(v))^n$ since $v
    \tensor \cdots \tensor v = v \cdots v$ in this case. The
    exponential series therefore gives
    \[
    \halbnorm{p}_R(\exp(v))
    =
    \sum_{n=0}^\infty \frac{\halbnorm{p}^n(v^n)}{n!} n!^R
    =
    \sum_{n=0}^\infty (\halbnorm{p}(v))^n n!^{R - 1},
    \]
    which converges iff $R < 1$, showing the first part. The second
    part is clear from Lemma~\ref{lemma:aInWeylRAsSuman} since the
    homogeneous components of $\exp(t v)$ are given by $\frac{t^n
      v^v}{n!}$.
\end{proof}
\begin{remark}
    \label{remark:WeylMinusExp}%
    In the projective case we do not have to exclude $R = 1$: here the
    analogous statement is that $\exp(v) \in \cWeylRMinus(V)$ for
    every non-zero $v \in V$ iff $R \le 1$. The reason is that we have
    to use the seminorms $\halbnorm{p}_{R - \epsilon}$ instead.  Note
    also that in the case where $V_{\mathbf{0}} = \{0\}$ the
    exponential series of all elements $v \in V$ always converges
    since $\exp(v) = 1 + v$ for odd vectors $v \in V_{\mathbf{1}}$.
\end{remark}

%
%

\subsection{Dependence on $z$}
\label{subsec:DependenceOnz}

Up to now we have established the continuity of $\starzLambda$ on
$\WeylR(V)$ and thus we can conclude that $\starzLambda$ has a unique
extension to a continuous product $\starzLambda$ on the completion
$\cWeylR(V)$. We shall now re-interpret the proof of
Lemma~\ref{lemma:KeyLemma} to get the more specific statement that
also the formula for $\starzLambda$ stays valid:
\begin{proposition}
    \label{proposition:StarAbsConvergent}%
    Let $R \ge \frac{1}{2}$ and let $a, b \in \cWeylR(V,
    \starzLambda)$. Then
    \begin{equation}
        \label{eq:TheWeylProductConverges}
        a \starzLambda b
        =
        \mu \circ \E^{z P_\Lambda}(a \tensor b)
        =
        \sum_{k=0}^\infty
        \frac{z^k}{k!} \mu \circ P_\Lambda^k (a \tensor b)
    \end{equation}
    converges absolutely in $\cWeylR(V)$. The same statement holds for
    the projective limit version $\cWeylRMinus(V, \starzLambda)$ and
    $R > \frac{1}{2}$.
\end{proposition}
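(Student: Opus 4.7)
The plan is to extract more from the proof of Lemma~\ref{lemma:KeyLemma} than the single continuity bound it was used for: the chain of estimates $(a)$–$(e)$ actually controls each term $\frac{|z|^k}{k!}\mu \circ P_\Lambda^k(a \tensor b)$ \emph{individually}, and the bounding constant $c'$ is itself the sum of a convergent series in $k$. So the work consists of (i) defining each term of the series in the completion, (ii) reorganizing the proof of that lemma to read off absolute convergence, and (iii) identifying the resulting sum with $a \starzLambda b$.

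First I would give meaning to $\mu \circ P_\Lambda^k(a \tensor b)$ for $a, b \in \cWeylR(V)$. By Lemma~\ref{lemma:aInWeylRAsSuman} the expansions $a = \sum_n a_n$ and $b = \sum_m b_m$ converge absolutely in $\cWeylR(V)$, and iterating Lemma~\ref{lemma:ContinuityOfPLambda} $k$ times on pure-degree components (which is legal in the completed $\pi$-tensor powers, since each application is continuous) yields the component-wise bound
\begin{equation*}
\halbnorm{p}^{n+m-2k}\bigl(\mu \circ P_\Lambda^k(a_n \tensor b_m)\bigr) \le \frac{n!}{(n-k)!}\frac{m!}{(m-k)!}\halbnorm{p}^n(a_n)\halbnorm{p}^m(b_m).
\end{equation*}
Combined with $\halbnorm{p}_R(a), \halbnorm{p}_R(b) < \infty$, this shows that the double series $\sum_{n,m \ge k} \mu \circ P_\Lambda^k(a_n \tensor b_m)$ converges absolutely in $\cWeylR(V)$ to an unambiguous element, which we define to be $\mu \circ P_\Lambda^k(a \tensor b)$.

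Next I would simply rerun the estimate $(a)$–$(e)$ from the proof of Lemma~\ref{lemma:KeyLemma}, but now with the sum over $k$ kept outside and the seminorm $\halbnorm{p}_R$ kept inside. Exactly the same geometric series over $k$ that produced the factor $c'$ before now gives
\begin{equation*}
\sum_{k=0}^\infty \frac{|z|^k}{k!} \halbnorm{p}_R\bigl(\mu \circ P_\Lambda^k(a \tensor b)\bigr) \le c'(c\halbnorm{p})_R(a)(c\halbnorm{p})_R(b) < \infty
\end{equation*}
with the same constants $c, c'$ (and the parallel sub-cases for $|z| < 1$ and $R > 1$). This is precisely absolute convergence of $\sum_k \frac{z^k}{k!}\mu \circ P_\Lambda^k(a \tensor b)$ in $\cWeylR(V)$ with respect to every continuous seminorm $\halbnorm{p}$ on $V$, as required.

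Finally, for the identification of this sum with $a \starzLambda b$, I would approximate $a$ and $b$ by their partial sums $a^{(N)} = \sum_{n \le N} a_n$ and $b^{(N)} = \sum_{m \le N} b_m$ in $\Sym^\bullet(V)$, which converge by Lemma~\ref{lemma:aInWeylRAsSuman}. On these polynomial approximations both sides agree tautologically by Proposition~\ref{proposition:FormalStarProduct} and Corollary~\ref{corollary:NonFormalDeformation}; then one passes to the limit using the joint continuity of $\starzLambda$ from Theorem~\ref{theorem:WeylAlgebra} on the left-hand side and the uniform bound of the previous step on the right. The projective-limit assertion follows by applying the same argument seminorm-wise: each $\halbnorm{p}_{R-\epsilon}$ satisfies $R - \epsilon \ge \frac12$ provided $\epsilon < R - \frac12$, which is exactly the hypothesis $R > \frac12$. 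The only delicate point is the bookkeeping of the first step — ensuring that $\mu \circ P_\Lambda^k(a \tensor b)$ is truly well-defined in the completion as a double sum rather than merely symbolically — but this is immediate from the iterated estimate above and the absolute convergence of the component expansions of $a$ and $b$.
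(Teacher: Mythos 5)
Your proposal is correct and follows essentially the same route as the paper: the published proof simply observes that the chain of estimates in the proof of Lemma~\ref{lemma:KeyLemma} already bounds $\sum_k \frac{|z|^k}{k!}\halbnorm{p}_R(\mu\circ P_\Lambda^k(a\tensor b))$, which is the absolute convergence statement. Your extra care in first defining each $\mu\circ P_\Lambda^k(a\tensor b)$ for $a,b$ in the completion and then passing to the limit via partial sums makes explicit what the paper leaves implicit, but it is the same argument, not a different one.
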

\begin{proof}
    We have to show that for all seminorms $\halbnorm{p}_R$ of the
    defining system of seminorms the series $\sum_{k=0}^\infty
    \frac{|z|^k}{k!}  \halbnorm{p}_R(\mu \circ P_\Lambda^k(a \tensor
    b))$ converges. But this was exactly what we did in the proof of
    Lemma~\ref{lemma:KeyLemma}. The projective limit case is
    analogous.
\end{proof}

This proposition also allows us to discuss the dependence on the
deformation parameter $z$: here we have the best possible scenario. In
the real case, $z \in \mathbb{R}$, we have a real-analytic dependence
on $z$ in $a \starzLambda b$ with an explicit Taylor expansion around
$z = 0$ given by the absolutely convergent series
\eqref{eq:TheWeylProductConverges}. In the complex case, $z \in
\mathbb{C}$, we have an entire dependence, again by
\eqref{eq:TheWeylProductConverges}. Note that it is important for such
statements that the topology of the Weyl algebra is actually
independent of $z$. Holomorphic deformations were introduced and
studied in detail in \cite{pflaum.schottenloher:1998a}, mainly in the
context of Hopf algebra deformations.
\begin{proposition}
    \label{proposition:HolomorphicDeformation}%
    Let $R \ge \frac{1}{2}$.
    \begin{propositionlist}
    \item \label{item:RealAnalyticDeformation} If $\mathbb{K} =
        \mathbb{R}$ then for every $a, b \in \cWeylR(V, \starzLambda)$
        the map
        \begin{equation}
            \label{eq:RealAnalyticDeformation}
            \mathbb{R} \ni z
            \; \mapsto \;
            a \starzLambda b \in \cWeylR(V, \starzLambda)
        \end{equation}
        is real-analytic with Taylor expansion around $z = 0$ given by
        \eqref{eq:TheWeylProductConverges}.
    \item \label{item:HolomorphicDeformation} If $\mathbb{K} =
        \mathbb{C}$ then for every $a, b \in \cWeylR(V, \starzLambda)$
        the map
        \begin{equation}
            \label{eq:EntireDeformation}
            \mathbb{C} \ni z
            \; \mapsto \;
            a \starzLambda b \in \cWeylR(V, \starzLambda)
        \end{equation}
        is holomorphic (even entire) with Taylor expansion around $z =
        0$ given by \eqref{eq:TheWeylProductConverges}. The collection
        of Weyl algebras $\{\cWeylR(V, \starzLambda)\}_{z \in
          \mathbb{C}}$ provides a holomorphic (even entire)
        deformation of $\cWeylR(V, \mu)$, where $\mu =
        \starzLambda\at{z=0}$ is the symmetric tensor product.
    \end{propositionlist}
\end{proposition}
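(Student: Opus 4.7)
The substantive new content beyond Proposition~\ref{proposition:StarAbsConvergent} is to upgrade the absolute convergence of the series
\[
    a \starzLambda b = \sum_{k=0}^\infty \frac{z^k}{k!} c_k, \qquad c_k = \mu \circ P_\Lambda^k(a \tensor b) \in \cWeylR(V),
\]
from pointwise in $z$ to a real-analytic respectively entire dependence on $z$. Since the coefficients $c_k$ are themselves independent of $z$, the plan is simply to show that this vector-valued power series converges locally uniformly in $z$ and to invoke the standard theory of holomorphic functions with values in a sequentially complete locally convex space.

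The key step is to re-examine the proof of Lemma~\ref{lemma:KeyLemma} and to observe that its constants depend only on $|z|$ and are bounded on compacta. Concretely, for any $M > 0$ one can inspect the chain of estimates and replace $|z|$ throughout by $M$, yielding constants $c(M), c'(M) > 0$ (monotone and continuous in $M$) such that for every continuous seminorm $\halbnorm{p}$ on $V$ satisfying \eqref{eq:LambdaContinuous} and every $a, b \in \cWeylR(V)$
\[
    \sup_{|z| \le M} \sum_{k=0}^\infty \frac{|z|^k}{k!} \halbnorm{p}_R\bigl(\mu \circ P_\Lambda^k(a \tensor b)\bigr)
    \le c'(M)\, (c(M)\halbnorm{p})_R(a)\, (c(M)\halbnorm{p})_R(b) < \infty.
\]
Thus the partial sums $S_N(z) = \sum_{k=0}^N \frac{z^k}{k!} c_k$, which are $\cWeylR(V)$-valued polynomials in $z$ (hence trivially entire), converge to $a \starzLambda b$ uniformly on every compact subset of $\mathbb{K}$ with respect to every defining seminorm of the Fréchet-type topology.

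A Weierstrass-type theorem then concludes the proof: a locally uniform limit of holomorphic functions taking values in a sequentially complete locally convex space is again holomorphic. This can be reduced to the scalar case by composing with continuous linear functionals and applying Cauchy's integral formula on the partial sums, or equivalently one checks directly that the series has infinite radius of convergence in each seminorm $\halbnorm{p}_R$, which already gives entire in the complex case and real-analytic in the real case. By construction the Taylor series at $z = 0$ is exactly \eqref{eq:TheWeylProductConverges}. For the final clause, an entire deformation in the sense of \cite{pflaum.schottenloher:1998a} requires: a fixed underlying locally convex space (true, as $\cWeylR(V)$ is defined independently of $z$), joint continuity of each $\starzLambda$ (Lemma~\ref{lemma:KeyLemma}), holomorphic dependence on $z$ (just shown), and the correct classical limit at $z = 0$, where $\E^{zP_\Lambda}\at{z=0} = \id$ so $\starzLambda\at{z=0} = \mu$. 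The projective-limit case $\cWeylRMinus(V)$ is verbatim the same argument applied to each seminorm $\halbnorm{p}_{R-\epsilon}$.

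The only delicate point is the uniform-in-$z$ bookkeeping in the proof of Lemma~\ref{lemma:KeyLemma}; since that proof splits into the cases $\frac{1}{2} \le R \le 1$, $|z| \ge 1$, and $R > 1$, one must check in each subcase that the resulting numerical prefactor is locally bounded in $z$, which is straightforward but the main place one could slip.
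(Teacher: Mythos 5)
Your approach is essentially the same as the paper's: the paper leaves the proposition without an explicit proof, relying on Proposition~\ref{proposition:StarAbsConvergent} together with the remark that the topology of $\cWeylR(V)$ is $z$-independent, so that the absolutely convergent $\starzLambda$-series is a vector-valued power series with infinite radius of convergence. One small simplification you could make: there is no need to re-examine the constants in Lemma~\ref{lemma:KeyLemma} for uniform-in-$z$ bookkeeping, since absolute convergence at the single value $z = M$ from Proposition~\ref{proposition:StarAbsConvergent} already bounds $\sum_{k}\frac{|z|^k}{k!}\halbnorm{p}_R\bigl(\mu\circ P_\Lambda^k(a\tensor b)\bigr)$ uniformly for $|z|\le M$ by monotonicity of each term in $|z|$.
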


%
%

\subsection{Reality and the $^*$-Involution}
\label{subsec:RealityInvolution}

Up to now we treated the real and complex case on equal
footing. However, for applications in physics one typically needs an
additional structure, both for the classical Poisson algebra as well
as for the quantum algebra: a reality structure in form of a
$^*$-involution.

The following two structures are well-known to be equivalent. We
recall their relation in order to establish some notation: either we
can start with a real vector space $V_{\mathbb{R}}$ and complexify it
to $V_{\mathbb{C}} = V_{\mathbb{R}} \tensor \mathbb{C}$. This gives us
an antilinear involutive automorphism of $V_{\mathbb{C}}$, the complex
conjugation, denoted by $v_{\mathbb{R}} \tensor z \mapsto
\cc{v_{\mathbb{R}} \tensor z} = v_{\mathbb{R}} \tensor \cc{z}$ for
$v_{\mathbb{R}} \in V_{\mathbb{R}}$ and $z \in \mathbb{C}$. We can
recover $V_{\mathbb{R}}$ as the real subspace of $V_{\mathbb{C}}$ of
those vectors $v \in V_{\mathbb{C}}$ with $\cc{v} = v$. Or,
equivalently, we can start with a complex vector space
$V_{\mathbb{C}}$ and an antilinear involutive automorphism, still
denoted by $v \mapsto \cc{v}$. Then $V_{\mathbb{C}} \cong
V_{\mathbb{R}} \tensor \mathbb{C}$ with $V_{\mathbb{R}}$ consisting
again of the real vectors in $V_{\mathbb{C}}$. In this situation the
symmetric algebra $\Sym^\bullet(V_{\mathbb{C}})$ is a $^*$-algebra
with respect to the complex conjugation, i.e.  we have for homogeneous
$a, b \in \Sym^\bullet(V_{\mathbb{C}})$
\begin{equation}
    \label{eq:ccabInvolution}
    \cc{ab} = (-1)^{ab} \cc{b} \, \cc{a} = \cc{a} \, \cc{b},
\end{equation}
where in the second equation we use the commutativity of the symmetric
tensor product.
\begin{remark}
    \label{remark:NotQuiteTheRightInvolution}%
    There are various reasons why this might not be what one really
    wants in the case of $\mathbb{Z}_2$-graded algebras. Instead, an
    honest $^*$-involution \emph{without} signs might be more
    desirable, i.e.  $(ab)^* = b^*a^*$ for all $a$ and $b$, no matter
    what parity they have. However, this requires some extra structure
    which we will not discuss in the sequel.
\end{remark}

If in addition $V_{\mathbb{R}}$ is locally convex we can extend a
continuous seminorm $\halbnorm{p}_{\mathbb{R}}$ on $V_{\mathbb{R}}$ to
a seminorm $\halbnorm{p}_{\mathbb{C}}$ on $V_{\mathbb{C}}$ by setting
$\halbnorm{p}_{\mathbb{C}}(v \tensor z) =
|z|\halbnorm{p}_{\mathbb{R}}(v)$. This makes $V_{\mathbb{C}}$ a
locally convex space such that the complex conjugation is
continuous. In fact, $\halbnorm{p}_{\mathbb{C}}(\cc{v}) =
\halbnorm{p}_{\mathbb{C}}(v)$ for the seminorms of the form
$\halbnorm{p}_{\mathbb{C}}$. Conversely, if $V_{\mathbb{C}}$ is a
locally convex complex vector space with a continuous complex
conjugation then for every continuous seminorm $\halbnorm{q}$ also
$\halbnorm{p}(v) = \frac{1}{2}(\halbnorm{q}(v) +
\halbnorm{q}(\cc{v}))$ is continuous, now satisfying $\halbnorm{p}(v)
= \halbnorm{p}(\cc{v})$. Clearly, these seminorms still determine the
topology of $V_{\mathbb{C}}$. Finally, for $\halbnorm{p}_{\mathbb{R}}
= \halbnorm{p}\at{V_{\mathbb{R}}}$ we get
$(\halbnorm{p}_{\mathbb{R}})_{\mathbb{C}} = \halbnorm{p}$. Thus also
in the locally convex situation the two structures are equivalent.

Now let $V_{\mathbb{R}}$ be a real locally convex vector space and set
$V = V_{\mathbb{C}} = V_{\mathbb{R}} \tensor \mathbb{C}$, always
endowed with the above locally convex topology and the canonical
complex conjugation.
\begin{lemma}
    \label{lemma:ccContinuousOnWeylR}%
    Let $R \in \mathbb{R}$ and let $\halbnorm{p}$ be a continuous
    seminorm on $V$ with $\halbnorm{p}(v) =
    \halbnorm{p}(\cc{v})$. Then
    \begin{equation}
        \label{eq:pREinspREinftyCC}
        \halbnorm{p}_R(a)
        =
        \halbnorm{p}_R(\cc{a})
    \end{equation}
    for all $a \in \Sym^\bullet(V)$. Thus the complex conjugation
    extends to a continuous antilinear involutive endomorphism of
    $\cWeylR(V)$.
\end{lemma}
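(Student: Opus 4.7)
The plan is to show that complex conjugation acts isometrically with respect to each seminorm $\halbnorm{p}_R$ satisfying $\halbnorm{p}(v) = \halbnorm{p}(\cc{v})$, and then extend by continuity and density. I break the argument into three steps, working from $V$ up through $V^{\pitensor n}$ to $\Sym^\bullet_R(V)$.

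First, I would show that $\halbnorm{p}^n(\cc{w}) = \halbnorm{p}^n(w)$ for all $w \in V^{\tensor n}$. To this end, note that complex conjugation is an antilinear ring automorphism of $\Tensor^\bullet(V)$, so any decomposition $w = \sum_i v^{(1)}_i \tensor \cdots \tensor v^{(n)}_i$ of $w$ into factorizing tensors yields a corresponding decomposition $\cc{w} = \sum_i \cc{v^{(1)}_i} \tensor \cdots \tensor \cc{v^{(n)}_i}$ of $\cc{w}$, and vice versa. Since $\halbnorm{p}(\cc{v}) = \halbnorm{p}(v)$ by assumption, the set of real numbers over which one takes the infimum in the definition \eqref{eq:pEinscdotspnDef} is identical for $w$ and $\cc{w}$, proving the claim. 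This is the only step that uses a nontrivial property of the $\pi$-tensor topology; everything else is essentially formal.

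Second, since complex conjugation maps $\Sym^n(V)$ to itself (it commutes with the symmetrizer $\Symmetrizer_n$ as one sees on factorizing tensors, using \eqref{eq:ccabInvolution} and the fact that the sign in \eqref{eq:GradedSign} is real), the same isometry property holds on $\piSym^n(V)$. Summing the equalities $\halbnorm{p}^n(\cc{a_n}) = \halbnorm{p}^n(a_n)$ against the weights $n!^R$ and using that the components of $\cc{a}$ are exactly $\cc{a_n}$, I obtain
\begin{equation*}
    \halbnorm{p}_R(\cc{a})
    = \sum_{n=0}^\infty \halbnorm{p}^n(\cc{a_n}) n!^R
    = \sum_{n=0}^\infty \halbnorm{p}^n(a_n) n!^R
    = \halbnorm{p}_R(a)
\end{equation*}
for every $a \in \Sym^\bullet(V)$, which is \eqref{eq:pREinspREinftyCC}.

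Finally, to conclude the second assertion, I would invoke the reduction from the paragraph preceding the lemma: the seminorms $\halbnorm{p}$ with $\halbnorm{p}(v) = \halbnorm{p}(\cc{v})$ already determine the topology of $V$, so the corresponding seminorms $\halbnorm{p}_R$ determine the topology of $\Sym^\bullet_R(V) = \WeylR(V)$. The isometry \eqref{eq:pREinspREinftyCC} therefore says that complex conjugation is a continuous antilinear endomorphism of $\WeylR(V)$, and hence extends uniquely to a continuous antilinear endomorphism of the completion $\cWeylR(V)$. Involutivity on the completion follows by a standard density-and-continuity argument: both $(\argument)^{\cc{\phantom{x}}\cc{\phantom{x}}}$ and $\id$ are continuous maps agreeing on the dense subspace $\Sym^\bullet(V)$, so they agree everywhere. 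The main subtlety is really only the first step; once the $\pi$-seminorm is seen to be invariant under conjugation, the rest is bookkeeping.
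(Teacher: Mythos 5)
Your proof is correct, and in fact the paper states this lemma without proof, so your write-up supplies exactly the argument the author is implicitly leaving to the reader. The three steps are all sound: the infimum in the $\pi$-seminorm is conjugation-invariant because conjugation bijects decompositions into elementary tensors and $\halbnorm{p}$ is conjugation-invariant by hypothesis; conjugation preserves parities (since $\cc{v_{\mathbb R}\tensor z}=v_{\mathbb R}\tensor\cc z$) and the Koszul sign \eqref{eq:GradedSign} is rational hence real, so conjugation commutes with $\Symmetrizer_n$ and preserves the tensor degree, giving $\cc{a}_n=\cc{a_n}$ and hence $\halbnorm{p}_R(\cc a)=\halbnorm{p}_R(a)$; and the reduction from the preceding paragraph together with Lemma~\ref{lemma:pREinspRinftyEstimates}~\refitem{item:pLEqPRLEqR} shows that these conjugation-invariant seminorms $\halbnorm{p}_R$ already generate the topology, so continuity, the antilinear extension to $\cWeylR(V)$, and involutivity on the completion all follow by the usual density argument.
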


For a continuous $\mathbb{C}$-bilinear form $\Lambda\colon V \times V
\longrightarrow \mathbb{C}$ we define $\cc{\Lambda}\colon V \times V
\longrightarrow \mathbb{C}$ by
\begin{equation}
    \label{eq:ccLambda}
    \cc{\Lambda}(v, w) = \cc{\Lambda(\cc{v}, \cc{w})}
\end{equation}
as usual, and set $\RE(\Lambda) = \frac{1}{2}(\Lambda + \cc{\Lambda})$
as well as $\IM(\Lambda) = \frac{1}{2\I}(\Lambda - \cc{\Lambda})$.
Then $\cc{\Lambda}$, $\RE(\Lambda)$, and $\IM(\Lambda)$ are again
continuous $\mathbb{C}$-bilinear forms.

In view of applications in quantum physics we rescale our deformation
parameter $z \in \mathbb{C}$ to $z = \frac{\I\hbar}{2}$ and consider
only \emph{real} (or even positive) values for $\hbar$. Thus the star
product becomes
\begin{equation}
    \label{eq:Starhbar}
    a \starhLambda b
    =
    \mu \circ \E^{\frac{\I\hbar}{2} \Lambda} (a \tensor b).
\end{equation}
The next result clarifies under which conditions the complex
conjugation is a $^*$-involution for $\starhLambda$:
\begin{proposition}
    \label{proposition:Involution}%
    Let $R \ge \frac{1}{2}$ and $\hbar \in \mathbb{R} \setminus
    \{0\}$. Then the following statements are equivalent:
    \begin{propositionlist}
    \item \label{item:ccIsInvolution} The complex conjugation is a
        $^*$-involution with respect to $\starhLambda$, i.e.  we have
        for homogeneous $a, b \in \cWeylR(V, \starhLambda)$
        \begin{equation}
            \label{eq:ccInvolution}
            \cc{a \starhLambda b} =
            (-1)^{ab}
            \,
            \cc{b} \starhLambda \cc{a}.
        \end{equation}
    \item \label{item:LambdaPlusImaginaryLambdaMinusReal}
        $\cc{\Lambda}_+ = - \Lambda_+$ and $\cc{\Lambda}_- =
        \Lambda_-$.
    \item \label{item:LambdaPlusImaginaryPoissonStarAlg}
        $\cc{\Lambda}_+ = - \Lambda_+$ and $\cWeylR(V)$ is a Poisson
        $^*$-algebra in the sense that for all $a, b \in \cWeylR(V)$
        \begin{equation}
            \label{eq:PoissonInvolution}
            \cc{\Bracket{a, b}} = \Bracket{\cc{a}, \cc{b}}.
        \end{equation}
    \end{propositionlist}
\end{proposition}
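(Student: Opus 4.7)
The plan is to establish the chain (i) $\Leftrightarrow$ (ii) by direct manipulation of the star-product series on both sides of \eqref{eq:ccInvolution}, and then observe that the second parts of (ii) and (iii) are equivalent given the first part. To compute the left-hand side of \eqref{eq:ccInvolution}, I would combine the antilinearity of complex conjugation with $\cc{\frac{\I\hbar}{2}} = -\frac{\I\hbar}{2}$ (using $\hbar\in\mathbb{R}$) and the intertwining identity $\cc{P_\Lambda(u \tensor v)} = P_{\cc{\Lambda}}(\cc{u}\tensor\cc{v})$, which is immediate on generators $v,w \in V$ and extends to all of $\Sym^\bullet(V) \tensor \Sym^\bullet(V)$ via the Leibniz rules of Lemma~\ref{lemma:PLambdaMapProperties}. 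This yields
\[
\cc{a \starhLambda b} = \mu \circ \E^{-\frac{\I\hbar}{2}P_{\cc{\Lambda}}}(\cc{a} \tensor \cc{b}).
\]
For the right-hand side I would exploit the graded commutativity of the symmetric product in the form $\mu \circ \tau = \mu$, together with $\tau \circ P_\Lambda \circ \tau = P_{\Lambda^\opp}$ from \eqref{eq:PLambdaOpp} and $\tau(\cc{b} \tensor \cc{a}) = (-1)^{ab}\cc{a}\tensor\cc{b}$, to rewrite
\[
(-1)^{ab}\,\cc{b} \starhLambda \cc{a} = \mu \circ \E^{\frac{\I\hbar}{2}P_{\Lambda^\opp}}(\cc{a} \tensor \cc{b}).
\]
Thus (i) is equivalent to the operator equality $\mu \circ \E^{-\frac{\I\hbar}{2}P_{\cc{\Lambda}}} = \mu \circ \E^{\frac{\I\hbar}{2}P_{\Lambda^\opp}}$. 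Comparing first-order terms on $v \tensor w$ with $v, w \in V$ forces the bilinear-form identity $-\cc{\Lambda} = \Lambda^\opp$; conversely, this identity gives $P_{-\cc{\Lambda}} = P_{\Lambda^\opp}$ as operators and hence automatic agreement of the two exponentials at every order. Using $\Lambda^\opp = \Lambda_+ - \Lambda_-$ and splitting into symmetric and antisymmetric parts, $-\cc{\Lambda} = \Lambda^\opp$ decouples into $\cc{\Lambda}_+ = -\Lambda_+$ and $\cc{\Lambda}_- = \Lambda_-$, which is precisely (ii).

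For (ii) $\Leftrightarrow$ (iii) I would invoke \eqref{eq:PLambdaAndPLambdaMinus}, which makes $\Bracket{a,b} = 2\mu \circ P_{\Lambda_-}(a \tensor b)$ depend only on $\Lambda_-$. Running the same first-order comparison as above yields $\cc{\Bracket{a,b}} = 2\mu \circ P_{\cc{\Lambda}_-}(\cc{a} \tensor \cc{b})$ and $\Bracket{\cc{a},\cc{b}} = 2\mu \circ P_{\Lambda_-}(\cc{a} \tensor \cc{b})$, so that \eqref{eq:PoissonInvolution} is equivalent to the single identity $\cc{\Lambda}_- = \Lambda_-$, quite independently of $\Lambda_+$. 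Since both (ii) and (iii) explicitly include $\cc{\Lambda}_+ = -\Lambda_+$, they coincide.

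The main obstacle is the careful bookkeeping of graded signs when shuffling $\tau$, complex conjugation, and $P_\Lambda$ past each other; in particular, the two intertwining identities $\cc{\,\argument\,} \circ P_\Lambda = P_{\cc{\Lambda}} \circ \cc{\,\argument\,}$ and $\tau \circ P_\Lambda \circ \tau = P_{\Lambda^\opp}$ must be verified on quadratic monomials and propagated by Leibniz, which is nontrivial essentially because the graded tensor product \eqref{eq:TensorProductAlgebras} introduces signs absent from the purely even or purely odd setting. Once these intertwining properties are in place, the remaining reasoning is formal, and the passage from $\Sym^\bullet(V)$ to its completion $\cWeylR(V)$ in (i) is handled by joint continuity of both sides of \eqref{eq:ccInvolution}, using Lemma~\ref{lemma:KeyLemma} for the star product and Lemma~\ref{lemma:ccContinuousOnWeylR} for complex conjugation.
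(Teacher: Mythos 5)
Your proof is correct, and while it rests on the same underlying algebraic facts as the paper's proof --- the intertwining identities $\cc{\,\argument\,} \circ P_\Lambda = P_{\cc{\Lambda}} \circ \cc{\,\argument\,}$ and $\tau \circ P_\Lambda \circ \tau = P_{\Lambda^\opp}$, both verified on generators and propagated by the Leibniz rules of Lemma~\ref{lemma:PLambdaMapProperties}, plus the graded commutativity $\mu \circ \tau = \mu$ --- it organizes them differently and more compactly. The paper proves the cycle (i)$\Rightarrow$(ii)$\Rightarrow$(iii)$\Rightarrow$(ii)$\Rightarrow$(i), deriving along the way the two auxiliary relations $\cc{P_{\Lambda_\pm}(a \tensor b)} = \mp P_{\Lambda_\pm}(\cc{a} \tensor \cc{b})$ and $P_{\Lambda_\pm} = \pm\, \tau \circ P_{\Lambda_\pm} \circ \tau$ separately for the symmetric and antisymmetric parts, and deferring the final combination of these to a computation "analogously to [Prop.~5.2.19]" of an external reference. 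You instead rewrite \emph{both} sides of \eqref{eq:ccInvolution} in closed form as $\mu \circ \E^{-\frac{\I\hbar}{2}P_{\cc{\Lambda}}}(\cc{a}\tensor\cc{b})$ and $\mu \circ \E^{\frac{\I\hbar}{2}P_{\Lambda^\opp}}(\cc{a}\tensor\cc{b})$ respectively, which distills (i) into the single bilinear-form identity $-\cc{\Lambda} = \Lambda^\opp$; the equivalence with (ii) is then just the unique decomposition into (graded) symmetric and antisymmetric parts, and (iii) falls out as the $\Lambda_-$-half of the same statement. This buys a more self-contained and transparent argument --- in particular the ``converse'' direction (ii)$\Rightarrow$(i) becomes automatic, since $-\cc{\Lambda}=\Lambda^\opp$ forces $-P_{\cc{\Lambda}} = P_{\Lambda^\opp}$ as operators and hence equality of the exponentials at every order, with no separate bookkeeping needed --- at the cost of having to check the two exponential reformulations carefully against the graded sign conventions. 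Your closing remarks on extending from $\Sym^\bullet(V)$ to the completion via continuity (Lemma~\ref{lemma:KeyLemma} and Lemma~\ref{lemma:ccContinuousOnWeylR}) are exactly the first line of the paper's proof.
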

\begin{proof}
    First we note that by continuity it suffices to work on $\WeylR(V,
    \starhLambda)$ instead of the completion. Suppose
    \refitem{item:ccIsInvolution} and consider homogeneous $v, w \in
    V$. Then
    \begin{align*}
        0
        &=
        \cc{v \starhLambda w} - (-1)^{vw} \cc{w} \starhLambda \cc{v}
        \\
        &=
        - \frac{\I\hbar}{2}\left(
            \cc{\Lambda}(\cc{v}, \cc{w})
            +
            (-1)^{vw}
            \Lambda(\cc{w}, \cc{v})
        \right) \\
        &=
        - \frac{\I\hbar}{2}\left(
            \cc{\Lambda}_+(\cc{v}, \cc{w}) + \Lambda_+(\cc{w}, \cc{v})
            +
            \cc{\Lambda}_-(\cc{v}, \cc{w}) - \Lambda_-(\cc{w}, \cc{v})
        \right),
    \end{align*}
    since the star product gives only zeroth and first order terms and
    $\Lambda_+$ is symmetric while $\Lambda_-$ is antisymmetric. Now
    $\cc{\Lambda}_+ + \Lambda_+$ is still symmetric and
    $\cc{\Lambda}_- - \Lambda_-$ is still antisymmetric. Hence their
    contributions have to vanish separately which implies
    \refitem{item:LambdaPlusImaginaryLambdaMinusReal}. Next, assume
    \refitem{item:LambdaPlusImaginaryLambdaMinusReal}. Then
    \begin{equation*}
        \cc{P_{\Lambda_\pm}(a \tensor b)}
        = \mp P_{\Lambda_\pm}(\cc{a} \tensor \cc{b})
        \tag{$*$}
    \end{equation*}
    follows immediately. For the symmetric tensor product $\mu$ we
    $\cc{\mu(a \tensor b)} = \mu(\cc{a} \tensor \cc{b})$ which
    combines to give \refitem{item:LambdaPlusImaginaryPoissonStarAlg}
    at once. Conversely,
    \refitem{item:LambdaPlusImaginaryPoissonStarAlg} implies
    \refitem{item:LambdaPlusImaginaryLambdaMinusReal} by evaluating on
    $v, w \in V$. Finally, assume
    \refitem{item:LambdaPlusImaginaryLambdaMinusReal}. In general the
    (anti-) symmetry of $\Lambda_\pm$ implies
    \begin{equation*}
        P_{\Lambda_\pm} = \pm \; \tau \circ P_{\Lambda_\pm} \circ \tau,
        \tag{$**$}
    \end{equation*}
    as this follows either by a direct computation or by verifying the
    Leibniz rules for $\tau \circ P_{\Lambda_\pm} \circ \tau$ and then
    applying the uniqueness result from
    Lemma~\ref{lemma:PLambdaMapProperties}. Combining now ($*$) and
    ($**$) with the commutativity $\mu = \mu \circ \tau$ of the
    symmetric tensor product gives \refitem{item:ccIsInvolution} by a
    computation analogously to \cite[Prop.~5.2.19]{waldmann:2007a}.
\end{proof}

Thus we need a real Poisson bracket to start the deformation and an
imaginary symmetric part $\Lambda_+$ in the star product
$\starhLambda$ to have the complex conjugation as $^*$-involution. In
this case $\cWeylR(V, \starhLambda)$ is a locally convex $^*$-algebra.

%
%

\section{Bases and Nuclearity}
\label{sec:BasesNuclearity}

In this section we collect some additional properties of the Weyl
algebra $\WeylR(V, \starzLambda)$ which are inherited from $V$.

%
%

\subsection{Absolute Schauder Bases}
\label{subsec:AsoluteSchauderBases}

Suppose that $V$ has an absolute Schauder basis, i.e.  there exists a
linearly independent set $\{e_i\}_{i \in I}$ of vectors in $V$
together with continuous coefficient functionals $\{\varphi^i\}_{i \in
  I}$ in $V'$ such that $\varphi^i(e_j) = \delta^i_j$ for $i, j \in I$
and
\begin{equation}
    \label{eq:SchauderBasis}
    v = \sum_{i \in I} \varphi^i(v) e_i
\end{equation}
converges. More precisely, for an \emph{absolute} Schauder basis one
requires that for all continuous seminorms $\halbnorm{p}$ on $V$ there
exists a continuous seminorm $\halbnorm{q}$ such that
\begin{equation}
    \label{eq:AbsoluteConvergesBasis}
    \sum_{i \in I} |\varphi^i(v)| \halbnorm{p}(e_i)
    \le
    \halbnorm{q}(v)
\end{equation}
for all $v \in V$, i.e.  the series in \eqref{eq:SchauderBasis}
converges absolutely for all continuous seminorms and can be estimated
by a continuous seminorm. In particular, at most countably many
contributions $\varphi^i(v)\halbnorm{p}(e_i)$ can be different from
$0$ for a given $v \in V$. Typically, $I$ will be countable itself.
In the following we assume to have such an absolute Schauder basis
$\{e_i\}_{i \in I}$ for $V$ with coefficient functionals
$\{\varphi^i\}_{i \in I}$.

The projective topology on $V^{\tensor n}$ is known to be compatible
with absolute Schauder bases. More precisely, we have the following
folklore lemma:
\begin{lemma}
    \label{lemma:BasisOnTensorProduct}%
    The vectors $\{e_{i_1} \tensor \cdots \tensor e_{i_n}\}_{i_1,
      \ldots, i_n \in I}$ form an absolute Schauder basis for
    $V^{\pitensor n}$ with coefficient functionals $\{\varphi^{i_1}
    \tensor \cdots \tensor \varphi^{i_n}\}_{i_1, \ldots, i_n \in I}$.
    If $\halbnorm{p}$ and $\halbnorm{q}$ are continuous seminorms on
    $V$ with \eqref{eq:AbsoluteConvergesBasis} then one has for all $v
    \in V^{\pitensor n}$
    \begin{equation}
        \label{eq:SchauderBasisTensorVn}
        \sum_{i_1, \ldots, i_n \in I}
        \left|
            \left(
                \varphi^{i_1} \tensor \cdots \tensor \varphi^{i_n}
            \right)
            (v)
        \right|
        \halbnorm{p}^n(e_{i_1} \tensor \cdots \tensor e_{i_n})
        \le
        \halbnorm{q}^n(v).
    \end{equation}
\end{lemma}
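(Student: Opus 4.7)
The plan is to verify the statement first on factorizing tensors, then extend it to arbitrary finite sums via the infimum definition of the $\pi$-tensor product seminorm, and finally pass to the completion $V^{\cpitensor n}$ by continuity.

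First I would fix continuous seminorms $\halbnorm{p} \le \halbnorm{q}$ on $V$ such that \eqref{eq:AbsoluteConvergesBasis} holds. For a factorizing tensor $v = v_1 \tensor \cdots \tensor v_n$ the coefficient functionals evaluate as a product,
$(\varphi^{i_1} \tensor \cdots \tensor \varphi^{i_n})(v_1 \tensor \cdots \tensor v_n) = \varphi^{i_1}(v_1) \cdots \varphi^{i_n}(v_n)$,
and $\halbnorm{p}^n$ factorizes correspondingly on elementary tensors. Hence the left-hand side of \eqref{eq:SchauderBasisTensorVn} becomes the product $\prod_{k=1}^n \sum_{i_k \in I} |\varphi^{i_k}(v_k)|\halbnorm{p}(e_{i_k})$ which, by \eqref{eq:AbsoluteConvergesBasis} applied $n$ times, is bounded by $\prod_{k=1}^n \halbnorm{q}(v_k)$. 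Along the way this also shows that $\sum_{i_1,\ldots,i_n} \varphi^{i_1}(v_1)\cdots \varphi^{i_n}(v_n)\, e_{i_1}\tensor\cdots\tensor e_{i_n}$ converges absolutely in $V^{\pitensor n}$ to $v_1 \tensor \cdots \tensor v_n$, by iterating \eqref{eq:SchauderBasis} in each factor and using absolute summability to interchange the summations.

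Next I would pass to a general $v \in V^{\tensor n}$. Given any representation $v = \sum_{\alpha} v_1^{(\alpha)} \tensor \cdots \tensor v_n^{(\alpha)}$, linearity yields
\begin{equation*}
    \sum_{i_1, \ldots, i_n}
    \bigl|
    (\varphi^{i_1} \tensor \cdots \tensor \varphi^{i_n})(v)
    \bigr|
    \halbnorm{p}^n(e_{i_1} \tensor \cdots \tensor e_{i_n})
    \le
    \sum_{\alpha}
    \prod_{k=1}^n \halbnorm{q}(v_k^{(\alpha)}),
\end{equation*}
by the previous step and the triangle inequality for absolutely convergent series. Taking the infimum over all such representations of $v$ gives exactly $\halbnorm{q}^n(v)$, which is \eqref{eq:SchauderBasisTensorVn} on $V^{\tensor n}$. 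Convergence of the series $\sum_{i_1,\ldots,i_n} (\varphi^{i_1} \tensor \cdots \tensor \varphi^{i_n})(v)\, e_{i_1}\tensor\cdots\tensor e_{i_n}$ to $v$ then follows by decomposing $v$ into finitely many elementary tensors.

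Finally I would extend to the completion $V^{\cpitensor n}$: since the coefficient functionals $\varphi^{i_1} \tensor \cdots \tensor \varphi^{i_n}$ are continuous on $V^{\pitensor n}$, they extend uniquely to $V^{\cpitensor n}$, and the estimate \eqref{eq:SchauderBasisTensorVn} extends by continuity (both sides are continuous in $v$: the right hand side obviously, the left hand side because the partial sums over any finite subset of $I^n$ are continuous and dominated uniformly by $\halbnorm{q}^n$). Convergence of the series representation \eqref{eq:SchauderBasis} in $V^{\cpitensor n}$ for general $v$ then follows from \eqref{eq:SchauderBasisTensorVn} by a standard Cauchy argument since tails of the series are bounded by tails of an absolutely convergent majorant. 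The only technical point to watch is the interchange of summations when iterating the one-dimensional Schauder expansion in each factor of an elementary tensor, but this is justified by absolute convergence in the $\pi$-topology via \eqref{eq:AbsoluteConvergesBasis}.
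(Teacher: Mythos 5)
Your argument is correct. Note, however, that the paper offers no proof of this lemma at all: it is introduced as a ``folklore lemma'', so there is no in-paper argument to compare against. What you write is exactly the standard proof one would expect, and it is fully consistent with the paper's own toolkit: your step from factorizing tensors to general $v\in V^{\tensor n}$ (termwise triangle inequality, then infimum over representations) is precisely the mechanism of Lemma~\ref{lemma:TestOnFactorizingTensors}, applied here with target the weighted $\ell^1$-space over $I^n$, and your factorizing-tensor computation uses the identity $\halbnorm{p}^n(v_1\tensor\cdots\tensor v_n)=\halbnorm{p}(v_1)\cdots\halbnorm{p}(v_n)$, which the paper asserts (the nontrivial direction $\ge$ of that identity is the only input you use without proof, but it is stated in Section~\ref{subsec:TopologyForSymV}). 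Two small remarks: the interchange of summations when expanding an elementary tensor is, as you say, justified by absolute convergence together with the joint continuity of $\tensor\colon V\times V\to V\pitensor V$; and your final passage to the completion $V^{\cpitensor n}$ is not actually required by the statement, since $V^{\pitensor n}$ denotes the uncompleted $\pi$-tensor product --- the paper defers that extension to Remark~\ref{remark:AbsoluteSchauderToCompletion} with a reference to the literature, so including it is harmless but redundant.
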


In a next step we consider the whole tensor algebra
$\Tensor^\bullet_R(V)$ endowed with the topology from
Definition~\ref{definition:TREinsTopology} for some fixed $R \ge
0$. We claim that the collection $\{e_{i_1} \tensor \cdots \tensor
e_{i_n}\}_{n \in \mathbb{N}_0, i_1, \ldots, i_n \in I}$ provides an
absolute Schauder basis for $\Tensor^\bullet_R(V)$ with corresponding
coefficient functionals $\{\varphi^{i_1} \tensor \cdots \tensor
\varphi^{i_n}\}_{n \in \mathbb{N}_0, i_1, \ldots, i_n \in I}$. Here
for $n = 0$ we take the standard basis vector $1 \in \mathbb{K}$ with
the corresponding coefficient functional.  First we note that the
linear functionals $\varphi^{i_1} \tensor \cdots \tensor
\varphi^{i_n}$ are continuous on $T^\bullet_R(V)$ when they are
extended by $0$ to the tensor degrees different from $n$. Moreover, we
have
\begin{equation}
    \label{eq:pReinsRestrictspnFac}
    \halbnorm{p}_R(e_{i_1} \tensor \cdots \tensor e_{i_n})
    =
    \halbnorm{p}^n(e_{i_1} \tensor \cdots \tensor e_{i_n}) n!^R
\end{equation}
by Lemma~\ref{lemma:pREinspRinftyEstimates},
\refitem{item:pRRestrictToVn}. This results in the following
statement:
\begin{lemma}
    \label{lemma:SchauderForTReins}%
    The vectors $\{e_{i_1} \tensor \cdots \tensor e_{i_n}\}_{n \in
      \mathbb{N}_0, i_1, \ldots, i_n \in I}$ form an absolute Schauder
    basis for $\Tensor^\bullet_R(V)$ with coefficient functionals
    $\{\varphi^{i_1} \tensor \cdots \tensor \varphi^{i_n}\}_{n \in
      \mathbb{N}_0, i_1, \ldots, i_n \in I}$.  If $\halbnorm{p}$ and
    $\halbnorm{q}$ are continuous seminorms on $V$ with
    \eqref{eq:AbsoluteConvergesBasis} then one has for all $v \in
    \Tensor^\bullet_R(V)$
    \begin{equation}
        \label{eq:EstimateAbsoluteSchauderTensorAlgebra}
        \sum_{n=0}^\infty
        \sum_{i_1, \ldots, i_n \in I}
        \left|
            \left(
                \varphi^{i_1} \tensor \cdots \tensor \varphi^{i_n}
            \right)
            (v)
        \right|
        \halbnorm{p}_R(e_{i_1} \tensor \cdots \tensor e_{i_n})
        \le
        \halbnorm{q}_R(v).
    \end{equation}
\end{lemma}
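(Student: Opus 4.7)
The plan is to reduce the statement to \textsc{Lemma \ref{lemma:BasisOnTensorProduct}} by handling each tensor degree separately and then summing with the factorial weights. The two ingredients already in place are: the $\pi$-tensor Schauder basis estimate \eqref{eq:SchauderBasisTensorVn} on each $V^{\pitensor n}$, and the fact (\textsc{Lemma \ref{lemma:pREinspRinftyEstimates}.ii}) that $\halbnorm{p}_R$ restricts to $n!^R \halbnorm{p}^n$ on $V^{\pitensor n}$, so that in particular \eqref{eq:pReinsRestrictspnFac} holds.

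First I would verify that the proposed coefficient functionals make sense on all of $\Tensor^\bullet_R(V)$. By \textsc{Lemma \ref{lemma:TensorProductContinuous}.ii}, the projection onto the degree-$n$ component is continuous from $\Tensor^\bullet_R(V)$ to $V^{\pitensor n}$; composing with the continuous functional $\varphi^{i_1}\pitensor\cdots\pitensor\varphi^{i_n}$ on $V^{\pitensor n}$ yields a continuous linear functional on $\Tensor^\bullet_R(V)$ that extends $\varphi^{i_1}\tensor\cdots\tensor\varphi^{i_n}$ by zero on the other tensor degrees. The duality relation $(\varphi^{i_1}\tensor\cdots\tensor\varphi^{i_n})(e_{j_1}\tensor\cdots\tensor e_{j_m}) = \delta_{nm}\delta^{i_1}_{j_1}\cdots\delta^{i_n}_{j_n}$ is immediate and gives linear independence of the proposed basis.

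The main step is the estimate \eqref{eq:EstimateAbsoluteSchauderTensorAlgebra}. Given a continuous seminorm $\halbnorm{p}$ on $V$, pick $\halbnorm{q}$ as in \eqref{eq:AbsoluteConvergesBasis}. For $v \in \Tensor^\bullet_R(V)$ write $v = \sum_n v_n$ with $v_n \in V^{\pitensor n}$ (a finite sum). Since the functionals vanish off degree $n$, the double sum factors as
\begin{align*}
  &\sum_{n=0}^\infty \sum_{i_1,\ldots,i_n \in I}
    \left|(\varphi^{i_1}\tensor\cdots\tensor\varphi^{i_n})(v)\right|
    \halbnorm{p}_R(e_{i_1}\tensor\cdots\tensor e_{i_n}) \\
  &\qquad= \sum_{n=0}^\infty n!^R \sum_{i_1,\ldots,i_n \in I}
    \left|(\varphi^{i_1}\tensor\cdots\tensor\varphi^{i_n})(v_n)\right|
    \halbnorm{p}^n(e_{i_1}\tensor\cdots\tensor e_{i_n}) \\
  &\qquad\le \sum_{n=0}^\infty n!^R \halbnorm{q}^n(v_n)
   = \halbnorm{q}_R(v),
\end{align*}
where the inequality is \eqref{eq:SchauderBasisTensorVn} applied in each degree. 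This simultaneously yields absolute convergence of the Schauder expansion and the seminorm bound.

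Finally, I would check that the expansion actually recovers $v$ in the topology of $\Tensor^\bullet_R(V)$. For $v\in\Tensor^\bullet_R(V)$ only finitely many $v_n$ are nonzero, so it suffices to show that each $v_n$ is the sum of its $\pi$-basis expansion in $V^{\pitensor n}$; but that is exactly \textsc{Lemma \ref{lemma:BasisOnTensorProduct}}, and the continuity of the inclusion $V^{\pitensor n}\hookrightarrow \Tensor^\bullet_R(V)$ from \textsc{Lemma \ref{lemma:TensorProductContinuous}.ii} transports this convergence to $\Tensor^\bullet_R(V)$. There is no real obstacle here since all the analytic work was absorbed into the two preceding lemmas; the only point to be careful about is the bookkeeping of the $n!^R$ factors, which match up precisely because $\halbnorm{p}_R$ picks up exactly one factor of $n!^R$ on pure degree-$n$ tensors.
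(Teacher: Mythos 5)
Your proposal is correct and follows essentially the same route as the paper: the paper does not give a separate proof environment for this lemma, but the paragraph preceding it already outlines exactly your argument — extend the coefficient functionals by zero off degree $n$ and observe they are continuous, invoke \eqref{eq:pReinsRestrictspnFac} to pick up the $n!^R$ factor, and then sum the degreewise estimate \eqref{eq:SchauderBasisTensorVn} over $n$. Your write-up just makes the bookkeeping and the convergence-of-expansion step explicit, which is a faithful unfolding of what the paper leaves implicit.
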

\begin{remark}
    \label{remark:AbsoluteSchauderToCompletion}
    We note that an \emph{absolute} Schauder basis stays an absolute
    Schauder basis after completion, see
    \cite[Prop.~14.7.7]{jarchow:1981a}.
\end{remark}

The absolute Schauder basis descends now to the symmetric algebra by
symmetrizing. If $v \in \Sym^\bullet(V)$ then we have $v =
\Symmetrizer v$ with the continuous symmetrization map from
\eqref{eq:SymmetrizerContinuous}. Applying $\Symmetrizer$ twice, this
shows the convergence
\begin{equation}
    \label{eq:SchauderForSym}
    v
    =
    \sum_{n=0}^\infty \sum_{i_1, \ldots, i_n \in I}
    \left(\varphi^{i_1} \cdots \varphi^{i_n}\right)(v)
    e_{i_1} \cdots e_{i_n}
\end{equation}
for all $v \in \Sym^\bullet_R(V)$, where $\varphi^{i_1} \cdots
\varphi^{i_n} = (\varphi^{i_1} \tensor \cdots \tensor \varphi^{i_n})
\circ \Symmetrizer$. Moreover, using again $v = \Symmetrizer v$ we get
from the estimate \eqref{eq:SymmetrizerContinuous} and
\eqref{eq:EstimateAbsoluteSchauderTensorAlgebra} the estimate
\begin{equation}
    \label{eq:SchauderForSymIsAbsolute}
    \sum_{n=0}^\infty \sum_{i_1, \ldots, i_n \in I}
    \left|
        \left(\varphi^{i_1} \cdots \varphi^{i_n}\right)(v)
    \right|
    \halbnorm{p}_R(e_{i_1} \cdots e_{i_n})
    \le
    \halbnorm{q}_R(v).
\end{equation}
So we have all we need for an absolute Schauder basis \emph{except}
that the symmetrizations $e_{i_1} \cdots e_{i_n}$ will no longer be
linearly independent: some of them will be zero if they contain twice
the same odd vector and some of them will differ by signs. So we only
have to single out a maximal linearly independent subset, the choice
of which might be personal taste:
\begin{proposition}
    \label{proposition:SchauderBasisForWeylR}%
    Let $R \ge 0$ and let $\{e_i\}_{i \in I}$ be an absolute Schauder
    basis of $V$ of homogeneous vectors with coefficient functionals
    $\{\varphi^i\}_{i \in I}$. Then any choice of a maximal linearly
    independent subset of $\{e_{i_1} \cdots e_{i_n}\}_{n \in
      \mathbb{N}_0, i_1, \ldots, i_n \in I}$ will give an absolute
    Schauder basis of $\Sym^\bullet_R(V)$ and of $\WeylRMinus(V)$ with
    coefficient functionals given by the corresponding subset of
    $\{\varphi^{i_1} \cdots \varphi^{i_n}\}_{n \in \mathbb{N}_0, i_1,
      \ldots, i_n \in I}$. One has the estimate
    \eqref{eq:SchauderForSymIsAbsolute} whenever $\halbnorm{p}$ and
    $\halbnorm{q}$ satisfy \eqref{eq:AbsoluteConvergesBasis}.
\end{proposition}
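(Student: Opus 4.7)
The strategy is to transfer the absolute Schauder basis for $\Tensor^\bullet_R(V)$ from Lemma~\ref{lemma:SchauderForTReins} through the continuous symmetrization map $\Symmetrizer$, and then thin out the resulting spanning set to a linearly independent one.

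First, I would record what the estimate \eqref{eq:SchauderForSymIsAbsolute} already gives us: for every $v \in \Sym^\bullet_R(V)$ the identity $v = \Symmetrizer v$ combined with Lemma~\ref{lemma:SchauderForTReins} and the continuity of $\Symmetrizer$ from \eqref{eq:SymmetrizerContinuous} yields the absolutely convergent expansion \eqref{eq:SchauderForSym} together with the norm estimate. So the candidate generating set $\{e_{i_1} \cdots e_{i_n}\}$ already satisfies the absolute-convergence requirement; what remains is to handle the redundancy of this set.

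Second, I would fix a total order $\le$ on $I$ that is adapted to the $\mathbb{Z}_2$-grading (any such choice will do). The candidate basis $B$ consists of the products $e_{i_1} \cdots e_{i_n}$ with $i_1 \le \cdots \le i_n$ and with strict inequality whenever both $e_{i_j}$ and $e_{i_{j+1}}$ are odd. A standard combinatorial argument shows that $B$ is linearly independent and that every $e_{j_1} \cdots e_{j_n}$ equals $\pm e_{i_1} \cdots e_{i_n}$ for a unique reordered tuple in $B$ (or vanishes if an odd vector repeats). Grouping the terms of \eqref{eq:SchauderForSym} according to this equivalence relation collects a multinomial number of summands onto each basis element, and produces the coefficient functionals $\psi^{i_1, \ldots, i_n}$ which are finite linear combinations of the $\varphi^{j_1} \cdots \varphi^{j_n}$ with appropriate signs and multinomial factors. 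In particular each $\psi^{i_1, \ldots, i_n}$ lies in the linear span of the corresponding $\{\varphi^{j_1} \cdots \varphi^{j_n}\}$ and is continuous on $\Sym^\bullet_R(V)$ by Lemma~\ref{lemma:ComponentsAreContinuous}.

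Third, I would derive the absolute Schauder estimate for the thinned basis. Grouping terms of \eqref{eq:SchauderForSymIsAbsolute} by equivalence classes and using the triangle inequality only reduces the sum, so
\[
\sum_{n=0}^\infty \sum_{(i_1, \ldots, i_n) \in B}
\left|\psi^{i_1, \ldots, i_n}(v)\right|
\halbnorm{p}_R\left(e_{i_1} \cdots e_{i_n}\right)
\le
\halbnorm{q}_R(v),
\]
which is the required absolute Schauder estimate for $\Sym^\bullet_R(V)$. For the projective-limit algebra $\WeylRMinus(V)$ one applies the same argument simultaneously to the defining family of seminorms $\halbnorm{p}_{R-\epsilon}$ with $\epsilon > 0$: absoluteness is inherited termwise and the expansion \eqref{eq:SchauderForSym} converges absolutely for each $\halbnorm{p}_{R-\epsilon}$, hence for the projective topology.

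The only delicate step is the second one: making the choice of representatives compatible with the Koszul signs so that the extracted coefficient functionals are well defined, and so that regrouping the absolutely convergent series \eqref{eq:SchauderForSym} by representatives does not introduce cancellation that would damage the bound \eqref{eq:SchauderForSymIsAbsolute}. This is why we use the absolute values at the point of regrouping: absolute convergence allows arbitrary rearrangement, and the multinomial factors together with the $\pm$ signs are uniformly absorbed into a single coefficient $|\psi^{i_1, \ldots, i_n}(v)|$ bounded by the sum of the original $|(\varphi^{j_1} \cdots \varphi^{j_n})(v)|$ over the equivalence class.
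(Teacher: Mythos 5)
Your proof follows essentially the same route the paper takes: use $v = \Symmetrizer v$ to pull the tensor-algebra absolute Schauder basis of Lemma~\ref{lemma:SchauderForTReins} down to $\Sym^\bullet_R(V)$, obtaining \eqref{eq:SchauderForSym} and \eqref{eq:SchauderForSymIsAbsolute}, and then thin the redundant spanning set $\{e_{i_1}\cdots e_{i_n}\}$ to a maximal linearly independent subset, passing to the family of seminorms $\halbnorm{p}_{R-\epsilon}$ for the projective case. Your regrouping step is in fact a touch more explicit than the paper's exposition, and you correctly observe that the genuine coefficient functionals of the sorted monomials differ from $\varphi^{i_1}\cdots\varphi^{i_n}$ by multinomial normalization factors.
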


Having an absolute Schauder basis is a very strong property for a
locally convex space. In fact, they are completely known: after
completion one obtains a Köthe (sequence) space where the index set
for the ``sequences'' is $I$, see e.g. \cite[Sect.~1.7E and
Sect.~14.7]{jarchow:1981a} for a detailed description. More precisely,
the Köthe matrix $K_V$ of $V$ is obtained from $K_V = (\lambda_{i,
  \halbnorm{p}})$ with $\lambda_{i, \halbnorm{p}} = \halbnorm{p}(e_i)$
where $i \in I$ and $\halbnorm{p}$ ranges over a defining system of
continuous seminorms of $V$. Thus the corresponding Köthe matrix of
the tensor algebra $\Tensor^\bullet_R(V)$ is given by
$K_{\Tensor^\bullet_R(V)} = (\lambda_{(n, i_1, \ldots, i_n),
  \halbnorm{p}})$ with
\begin{equation}
    \label{eq:KoetheMatrixTensorAlgebra}
    \lambda_{(n, i_1, \ldots, i_n), \halbnorm{p}}
    =
     n!^R
     \lambda_{i_1, \halbnorm{p}} \cdots \lambda_{i_n, \halbnorm{p}}.
\end{equation}
Thus we have an \emph{explicit} description in terms of the Köthe
matrix of $V$. Analogously, one can proceed for the Weyl algebra
$\WeylR(V)$. For Köthe spaces many properties are (easily) encoded in
their Köthe matrix, so we see here that the appearance of $n!^R$ will
play a prominent role when passing from $V$ to $\Tensor^\bullet_R(V)$
or $\WeylR(V)$.

%
%

\subsection{Nuclearity of $\WeylR(V)$ and Strong Nuclearity of $\WeylRMinus(V)$}
\label{subsec:Nuclearity}

Let us now discuss nuclearity properties of the Weyl algebra
$\WeylR(V)$ originating from those of $V$: since $V \subseteq
\WeylR(V)$ is a closed subspace inheriting the original topology from
the one of $\WeylR(V)$, we see that nuclearity of $\WeylR(V)$ implies
the nuclearity of $V$. The aim of this subsection is to show the
converse.

To this end, it will be convenient to work with the tensor algebra
$\Tensor^\bullet_R(V)$ instead of the symmetric algebra
$\Sym^\bullet_R(V)$ since we do not have to take care of the
combinatorics of symmetrization.

Let $U \subseteq V$ be a subspace and denote by $\langle U \rangle
\subseteq \Tensor^\bullet(V)$ the two-sided ideal generated by
$U$. Then the quotient algebra $\Tensor^\bullet(V) \big/ \langle U
\rangle$ is still $\mathbb{Z}$-graded by the tensor degree since $U$
has homogeneous generators of tensor degree one. The map
\begin{equation}
    \label{eq:CanonicalIsoTVmodU}
    \iota\colon
    \Tensor^\bullet(V) \big/\langle U\rangle
    \longrightarrow
    \Tensor^\bullet\left(V \big/ U\right)
\end{equation}
determined by $\iota([v_1 \tensor \cdots \tensor v_n]) = [v_1] \tensor
\cdots \tensor{} [v_n]$ turns out to be an isomorphism of graded
algebras. We shall now show that $\iota$ also respects the seminorms
$\halbnorm{p}_R$. First recall that for a seminorm $\halbnorm{p}$
on $V$ one defines a seminorm $[\halbnorm{p}]$ on $V \big/ U$ by
\begin{equation}
    \label{eq:QuotientSeminorm}
    [\halbnorm{p}]([v])
    =
    \inf\left\{\halbnorm{p}(v + u) \; | \; u \in U\right\}.
\end{equation}
Then the locally convex quotient topology on $V \big/ U$ is obtained
from all the seminorms $[\halbnorm{p}]$ where $\halbnorm{p}$ runs
through all the continuous seminorms of $V$.
\begin{lemma}
    \label{lemma:SeminormQuotientIota}%
    Let $U \subseteq V$ be a subspace and let $\halbnorm{p}$ be a
    seminorm on $V$. Then for all $R \in \mathbb{R}$ one has
    \begin{equation}
        \label{eq:SeminormQuotientIota}
        [\halbnorm{p}_R]
        =
        [\halbnorm{p}]_R \circ \iota.
    \end{equation}
\end{lemma}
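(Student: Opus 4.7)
The plan is to decompose both sides of \eqref{eq:SeminormQuotientIota} by tensor degree and then reduce to the classical fact that the $\pi$-topology is compatible with quotients.

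First I would observe that $\langle U \rangle$ is a \emph{graded} ideal: since $U$ sits in tensor degree one, the ideal decomposes as $\langle U \rangle = \bigoplus_{n=0}^\infty \langle U \rangle_n$ with
\[
\langle U \rangle_n
= \sum_{k=1}^n V^{\tensor (k-1)} \tensor U \tensor V^{\tensor (n-k)}
\subseteq V^{\tensor n}.
\]
This is precisely the kernel of the canonical map $\pi^{\tensor n}\colon V^{\tensor n} \longrightarrow (V/U)^{\pitensor n}$, where $\pi\colon V \longrightarrow V/U$ denotes the quotient. Hence $\iota$ restricts in each degree to the induced isomorphism $\iota_n\colon V^{\tensor n}/\langle U\rangle_n \xrightarrow{\ \sim\ } (V/U)^{\tensor n}$.

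Next, since $\halbnorm{p}_R(v) = \sum_n n!^R \halbnorm{p}^n(v_n)$ is a weighted sum over degrees and the ideal is graded, a correction element $w \in \langle U\rangle$ in the defining infimum for $[\halbnorm{p}_R]$ splits as $w = \sum_n w_n$ with $w_n \in \langle U\rangle_n$, so the infimum factors:
\[
[\halbnorm{p}_R]([v])
= \sum_{n=0}^\infty n!^R \inf_{w_n \in \langle U\rangle_n}\halbnorm{p}^n(v_n+w_n)
= \sum_{n=0}^\infty n!^R \,[\halbnorm{p}^n]([v_n]).
\]
On the other hand, by the very definition of $[\halbnorm{p}]_R$ applied to $\iota([v])$, we have
\[
[\halbnorm{p}]_R(\iota([v]))
= \sum_{n=0}^\infty n!^R \,[\halbnorm{p}]^n(\iota_n([v_n])).
\]
Therefore it suffices to prove, for each $n$, the purely linear statement
\[
[\halbnorm{p}^n]([v_n]) = [\halbnorm{p}]^n(\iota_n([v_n])) \qquad \text{for all } v_n \in V^{\tensor n}.
\]

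The remaining equality is the standard fact that the $\pi$-tensor product commutes with quotients. The inequality $\le$ follows from the definition \eqref{eq:pEinscdotspnDef}: given any decomposition $\iota_n([v_n]) = \sum_i [u_i^{(1)}]\tensor\cdots\tensor[u_i^{(n)}]$ with $\sum_i [\halbnorm{p}]([u_i^{(1)}])\cdots[\halbnorm{p}]([u_i^{(n)}])$ arbitrarily close to $[\halbnorm{p}]^n(\iota_n([v_n]))$, one lifts each $[u_i^{(k)}]$ to $u_i^{(k)} \in V$ with $\halbnorm{p}(u_i^{(k)})$ arbitrarily close to $[\halbnorm{p}]([u_i^{(k)}])$; the tensor $v_n' = \sum_i u_i^{(1)}\tensor\cdots\tensor u_i^{(n)}$ has the same image under $\pi^{\tensor n}$ as $v_n$, so $v_n - v_n' \in \langle U\rangle_n$, yielding $[\halbnorm{p}^n]([v_n]) \le \halbnorm{p}^n(v_n') \le \sum_i \halbnorm{p}(u_i^{(1)})\cdots\halbnorm{p}(u_i^{(n)})$ which can be made arbitrarily close to $[\halbnorm{p}]^n(\iota_n([v_n]))$. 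For the inequality $\ge$, any factorizing decomposition of an admissible representative $v_n + w_n$ (with $w_n \in \langle U\rangle_n$) maps to a factorizing decomposition of $\iota_n([v_n])$ whose associated sum is estimated from above by $\halbnorm{p}(\argument) \ge [\halbnorm{p}](\pi(\argument))$ in each factor. Taking the infimum gives the reverse bound. The mild obstacle is the lifting step for the $\le$ direction, which uses only that $[\halbnorm{p}]([u])$ is defined as an infimum; once this is in place the argument is completely formal. This equality (also recorded e.g.\ in \cite[Prop.~7.2.7]{jarchow:1981a}) combined with the degreewise decomposition above yields \eqref{eq:SeminormQuotientIota}.
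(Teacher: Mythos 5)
Your proof is correct and follows the same route as the paper: reduce by the grading of $\langle U\rangle$ to the per-degree identity $[\halbnorm{p}^n] = [\halbnorm{p}]^n\circ\iota_n$ (which is the compatibility of the $\pi$-tensor seminorm with quotients), then sum over $n$. The paper compresses both steps to ``a simple argument shows \ldots\ from which the claim follows at once''; you merely supply the details the paper elides, including the observation that the infimum in $[\halbnorm{p}_R]$ decouples degreewise because the ideal is graded.
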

\begin{proof}
    Let $\langle U \rangle_n = \sum_{\ell = 1}^n V \tensor \cdots
    \tensor U \tensor \cdots \tensor V \subseteq V^{\tensor n}$ with
    $U$ being at the $\ell$-th position. This is the $n$-th
    homogeneous part of $\langle U \rangle$. Then $V^{\tensor n} \big/
    \langle U \rangle_n$ gives the $n$-th homogeneous part of the
    graded algebra $\Tensor^\bullet(V) \big/\langle U \rangle$. For a
    seminorm $\halbnorm{p}$ on $V$ and the induced isomorphism $\iota$
    restricted to $V^{\tensor n} \big/ \langle U \rangle_n$ a simple
    argument shows
    \[
    [\halbnorm{p}^n] = [\halbnorm{p}]^n \circ \iota.
    \]
    From this, \eqref{eq:SeminormQuotientIota} follows at once.
\end{proof}

This simple lemma has an important consequence which we formulate in
two ways:
\begin{corollary}
    \label{corollary:QuotientIotaIsomorphism}%
    Let $U \subseteq V$ be a subspace and $R \in \mathbb{R}$.
    \begin{corollarylist}
    \item \label{item:TRmodGenUTRVmodU} The isomorphism
        \eqref{eq:CanonicalIsoTVmodU} induces an isomorphism
        \begin{equation}
            \label{eq:iotaTRModGenUIsomorphism}
            \iota\colon
            \Tensor_R^\bullet(V) \big / \langle U \rangle
            \longrightarrow
            \Tensor_R^\bullet \left(V \big/ U\right)
        \end{equation}
        of locally convex algebras if the left hand side as well as $V
        \big/ U$ carry the locally convex quotient topologies.
    \item \label{item:iotaIsomProjLims} The isomorphism
        \eqref{eq:CanonicalIsoTVmodU} induces an isomorphism
        \begin{equation}
            \label{eq:iotaProjLimIsomorphism}
            \iota\colon
            \Tensor^\bullet_{R^-}(V)
            \big/ \langle U \rangle
            \longrightarrow
            \Tensor^\bullet_{R^-} \left(V \big/ U\right)
        \end{equation}
        of locally convex algebras.
    \end{corollarylist}
\end{corollary}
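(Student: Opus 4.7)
The plan is to deduce both parts directly from Lemma \ref{lemma:SeminormQuotientIota}, with only routine verification that the topological structures match up via the already-established algebraic isomorphism $\iota$ of $\mathbb{Z}$-graded algebras from \eqref{eq:CanonicalIsoTVmodU}. Since the multiplication is continuous on both sides (by Lemma \ref{lemma:TensorProductContinuous} applied to $V$ and to $V/U$), and since $\langle U \rangle$ is a two-sided ideal, all the algebraic structure automatically descends; so the only content is the identification of the topologies.

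For part \refitem{item:TRmodGenUTRVmodU}, I would argue as follows. The locally convex quotient topology of $\Tensor^\bullet_R(V)\big/\langle U\rangle$ is by definition generated by the quotient seminorms $[\halbnorm{p}_R]$ as $\halbnorm{p}$ runs through the continuous seminorms of $V$. On the other hand, the topology of $\Tensor^\bullet_R(V/U)$ is generated by the seminorms $[\halbnorm{p}]_R$ in the sense of Definition \ref{definition:TREinsTopology}, where $[\halbnorm{p}]$ runs through all continuous seminorms of $V/U$. Because the continuous seminorms on $V/U$ are exactly the quotient seminorms of continuous seminorms on $V$, these two families correspond to each other under $\iota$ precisely by the identity $[\halbnorm{p}_R] = [\halbnorm{p}]_R \circ \iota$ of Lemma \ref{lemma:SeminormQuotientIota}. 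Hence $\iota$ is a homeomorphism, and therefore an isomorphism of locally convex algebras.

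Part \refitem{item:iotaIsomProjLims} follows by the same argument once one observes that the locally convex quotient topology commutes with the projective limit used in Remark \ref{remark:TensorRDef}: the topology of $\Tensor^\bullet_{R^-}(V)$ is the initial topology generated by the seminorms $\halbnorm{p}_{R-\epsilon}$ for $\epsilon > 0$ and $\halbnorm{p}$ continuous on $V$, so the quotient topology of $\Tensor^\bullet_{R^-}(V)\big/\langle U\rangle$ is generated by the seminorms $[\halbnorm{p}_{R-\epsilon}]$. By Lemma \ref{lemma:SeminormQuotientIota} each of these equals $[\halbnorm{p}]_{R-\epsilon} \circ \iota$, and varying $\halbnorm{p}$ and $\epsilon$ exhausts precisely the defining system of seminorms of $\Tensor^\bullet_{R^-}(V/U)$. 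Thus $\iota$ is again a topological (and hence locally convex algebra) isomorphism.

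There is no real obstacle: the only point that needs a moment's thought is the interchange of the projective limit in $\epsilon$ with the quotient construction in part \refitem{item:iotaIsomProjLims}, but this is automatic since both are defined as initial topologies with respect to families of seminorms and such initial topologies commute. Everything else is a direct application of Lemma \ref{lemma:SeminormQuotientIota} together with the standard description of the continuous seminorms on a locally convex quotient.
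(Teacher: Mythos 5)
Your proof is correct and follows the same route as the paper: both parts are deduced from Lemma~\ref{lemma:SeminormQuotientIota}, together with the observations that the quotient seminorms $[\halbnorm{p}]$ exhaust the continuous seminorms on $V/U$ and that the seminorms $\halbnorm{p}_R$ (resp.\ $\halbnorm{p}_{R-\epsilon}$) form a directed defining family for $\Tensor^\bullet_R(V)$ (resp.\ $\Tensor^\bullet_{R^-}(V)$). One terminological slip worth flagging: the locally convex quotient topology is a \emph{final} topology, not an initial one, so the appeal to ``initial topologies commute'' in part\refitem{item:iotaIsomProjLims} is not the right justification. What is actually used is the standard fact that the quotient seminorms $\{[\halbnorm{p}_{R-\epsilon}]\}$ of a directed defining family constitute a defining family for the quotient space; with this substituted, your argument coincides with the paper's, which only spells out the second part and treats the first as immediate.
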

\begin{proof}
    For the second part, we note that the seminorms $[\halbnorm{p}_{R
      - \epsilon, 1}]$ and $[\halbnorm{p}]_{R - \epsilon, 1}$ for
    $\epsilon > 0$ and $\halbnorm{p}$ a continuous seminorm on $V$
    constitute a defining system of seminorms for the projective limit
    topologies.
\end{proof}

Let us now assume that $V$ is nuclear. There are many equivalent ways
to characterize nuclearity, see e.g.  \cite[Chap.~21]{jarchow:1981a},
we shall use the following very basic one: for a given continuous
seminorm $\halbnorm{p}$ on $V$ we consider $V \big/ \ker \halbnorm{p}$
with the quotient seminorm $[\halbnorm{p}]$. This is now a normed
space as we have divided by $\ker \halbnorm{p}$. Thus we can complete
$V \big/ \ker \halbnorm{p}$ to a Banach space denoted by
$V_{\halbnorm{p}}$. Then $V$ is called nuclear if for every continuous
seminorm $\halbnorm{p}$ there is a another continuous seminorm
$\halbnorm{q} \ge \halbnorm{p}$ such that the canonical map
$\iota_{\halbnorm{q}\halbnorm{p}}\colon V_{\halbnorm{q}}
\longrightarrow V_{\halbnorm{p}}$ is a nuclear map. This means that
there are vectors $e_i \in V_{\halbnorm{p}}$ and continuous linear
functionals $\epsilon^i \in V_{\halbnorm{q}}'$ such that
\begin{equation}
    \label{eq:NuclearMap}
    \iota_{\halbnorm{q}\halbnorm{p}}(v)
    =
    \sum_{i=1}^\infty \epsilon^i(v) e_i
    \quad
    \textrm{with}
    \quad
    \sum_{i=1}^\infty
    \norm{\epsilon^i}_{\halbnorm{q}} \norm{e_i}_{\halbnorm{p}}
    < \infty,
\end{equation}
where we use the notation $\norm{\argument}_{\halbnorm{p}} =
[\halbnorm{p}]$ for the Banach norms on $V_{\halbnorm{p}}$ and
\begin{equation}
    \label{eq:FunctionalNorm}
    \norm{\epsilon^i}_{\halbnorm{q}}
    =
    \sup_{v \ne 0}
    \frac{|\epsilon^i(v)|}{\norm{v}_{\halbnorm{q}}}
\end{equation}
denotes the functional norm as usual. The following lemma is
well-known:
\begin{lemma}
    \label{lemma:BanachTensorFunctionalNorms}%
    Let $(V, \norm{\argument})$ be a Banach space and let $\varphi_1,
    \ldots, \varphi_n \in V'$. Then $\varphi_1 \tensor \cdots \tensor
    \varphi_n \in (V^{\pitensor n})'$ with
    \begin{equation}
        \label{eq:FunctionalNormFactorizes}
        \norm{\varphi_1 \tensor \cdots \tensor \varphi_n}
        =
        \norm{\varphi_1} \cdots \norm{\varphi_n},
    \end{equation}
    where on $V^{\pitensor n}$ we use the norm $\norm{\argument}
    \tensor \cdots \tensor \norm{\argument}$ as usual.
\end{lemma}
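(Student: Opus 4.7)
The plan is to prove the two inequalities $\le$ and $\ge$ separately, using that the $\pi$-norm on a factorizing tensor is dominated by the product of the factor norms.

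For the upper bound, I would first work on a factorizing tensor, where the functional simply multiplies out: $(\varphi_1 \tensor \cdots \tensor \varphi_n)(v_1 \tensor \cdots \tensor v_n) = \varphi_1(v_1) \cdots \varphi_n(v_n)$. Bounding each factor by $\norm{\varphi_i} \norm{v_i}$ gives
\begin{equation*}
    |(\varphi_1 \tensor \cdots \tensor \varphi_n)(v_1 \tensor \cdots \tensor v_n)|
    \le
    (\norm{\varphi_1}\norm{v_1}) \cdots (\norm{\varphi_n}\norm{v_n}).
\end{equation*}
Applying Lemma~\ref{lemma:TestOnFactorizingTensors} with the seminorms $\norm{\varphi_i}\norm{\argument}$ on the $i$-th copy of $V$ immediately upgrades this to
\begin{equation*}
    |(\varphi_1 \tensor \cdots \tensor \varphi_n)(v)|
    \le
    \norm{\varphi_1} \cdots \norm{\varphi_n}
    \,(\norm{\argument} \tensor \cdots \tensor \norm{\argument})(v)
\end{equation*}
for every $v \in V^{\pitensor n}$. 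This shows at once that $\varphi_1 \tensor \cdots \tensor \varphi_n$ lies in $(V^{\pitensor n})'$ and that its operator norm is at most $\norm{\varphi_1} \cdots \norm{\varphi_n}$.

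For the reverse inequality I would argue by a standard approximation. If some $\norm{\varphi_i}$ vanishes, both sides of \eqref{eq:FunctionalNormFactorizes} are zero and there is nothing to show. Otherwise, given $\epsilon > 0$, I pick unit vectors $v_i \in V$ with $|\varphi_i(v_i)| \ge \norm{\varphi_i} - \epsilon$. The factorizing tensor $w = v_1 \tensor \cdots \tensor v_n$ is non-zero and satisfies $\norm{w} \le \norm{v_1} \cdots \norm{v_n} = 1$ directly from the definition of the $\pi$-norm. Evaluating the functional yields
\begin{equation*}
    |(\varphi_1 \tensor \cdots \tensor \varphi_n)(w)|
    =
    |\varphi_1(v_1)| \cdots |\varphi_n(v_n)|
    \ge
    \prod_{i=1}^n (\norm{\varphi_i} - \epsilon),
\end{equation*}
hence $\norm{\varphi_1 \tensor \cdots \tensor \varphi_n} \ge \prod_i (\norm{\varphi_i} - \epsilon)$, and letting $\epsilon \to 0$ finishes the proof.

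There is really no serious obstacle here: this is the textbook statement that the projective cross-norm is a reasonable crossnorm. The only pitfall worth mentioning is the degenerate case where some $\norm{\varphi_i}$ vanishes, which has to be treated separately because the approximation step otherwise produces a vacuous estimate; everything else is forced by the definitions together with Lemma~\ref{lemma:TestOnFactorizingTensors}.
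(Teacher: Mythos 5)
Your proof is correct. The paper does not prove this lemma — it is stated as well-known with no argument given — but what you have written is the standard argument that the projective cross-norm is a reasonable cross-norm: the upper bound via Lemma~\ref{lemma:TestOnFactorizingTensors} together with homogeneity of the $\pi$-seminorm (so the scalars $\norm{\varphi_i}$ factor out), and the lower bound by testing against a factorizing tensor of near-norming unit vectors, whose $\pi$-norm is $\le 1$ directly from the definition. Your separation of the degenerate case where some $\norm{\varphi_i} = 0$ is genuinely necessary: otherwise the estimate $\prod_i(\norm{\varphi_i}-\epsilon)$ could acquire negative factors and become vacuous.
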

The next lemma shows how $\ker \halbnorm{p} \subseteq V$ is related to
$\ker \halbnorm{p}_R \subseteq T_R^\bullet(V)$.
\begin{lemma}
    \label{lemma:kerpGenerateskerpReins}%
    Let $R \in \mathbb{R}$ and let $\halbnorm{p}$ be a continuous
    seminorm on $V$. Then
    \begin{equation}
        \label{eq:kerpGenerateskerpReins}
        \langle \ker \halbnorm{p} \rangle
        =
        \ker \halbnorm{p}_R.
    \end{equation}
\end{lemma}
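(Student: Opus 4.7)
The plan is to reduce everything to a statement about a single tensor degree, and then exploit the fact that the $\pi$-tensor product of a genuine norm is still a norm.

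First I would observe that both sides of \eqref{eq:kerpGenerateskerpReins} are $\mathbb{Z}$-graded with respect to the tensor degree: the right hand side because $\halbnorm{p}_R(v) = \sum_{n=0}^\infty n!^R \halbnorm{p}^n(v_n)$ vanishes iff $\halbnorm{p}^n(v_n) = 0$ for every $n$, so $\ker \halbnorm{p}_R = \bigoplus_{n=0}^\infty \ker \halbnorm{p}^n$; the left hand side because $\ker \halbnorm{p} \subseteq V$ is homogeneous of degree one, so $\langle \ker \halbnorm{p} \rangle = \bigoplus_{n=0}^\infty \langle \ker \halbnorm{p} \rangle_n$ with $\langle \ker \halbnorm{p} \rangle_n = \sum_{\ell = 1}^n V \tensor \cdots \tensor \ker \halbnorm{p} \tensor \cdots \tensor V$. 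Thus the claim reduces to proving $\ker \halbnorm{p}^n = \langle \ker \halbnorm{p} \rangle_n$ for every $n \ge 0$, and the parameter $R$ drops out entirely.

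The inclusion $\langle \ker \halbnorm{p} \rangle_n \subseteq \ker \halbnorm{p}^n$ is the easy direction: for a factorizing tensor $v_1 \tensor \cdots \tensor v_n$ with some $v_\ell \in \ker \halbnorm{p}$, one has $\halbnorm{p}^n(v_1 \tensor \cdots \tensor v_n) = \halbnorm{p}(v_1) \cdots \halbnorm{p}(v_n) = 0$, and then Lemma~\ref{lemma:TestOnFactorizingTensors} (or direct linearity of the defining infimum) extends this to arbitrary elements of $\langle \ker \halbnorm{p} \rangle_n$.

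For the reverse inclusion $\ker \halbnorm{p}^n \subseteq \langle \ker \halbnorm{p} \rangle_n$ I would use the formula $[\halbnorm{p}^n] = [\halbnorm{p}]^n \circ \iota$ already established in the proof of Lemma~\ref{lemma:SeminormQuotientIota}, where $\iota\colon V^{\tensor n}/\langle \ker \halbnorm{p}\rangle_n \longrightarrow (V/\ker \halbnorm{p})^{\tensor n}$ is the canonical algebra isomorphism. The key point is that $[\halbnorm{p}]$ is an honest norm on the normed space $V/\ker \halbnorm{p}$, and the $\pi$-tensor product of norms on vector spaces is again a norm (not merely a seminorm) on $(V/\ker \halbnorm{p})^{\pitensor n}$. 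This is the one non-formal ingredient, and it is the step I expect to need the most care: for a tensor $w \in (V/\ker\halbnorm{p})^{\tensor n}$ with $[\halbnorm{p}]^n(w) = 0$ one picks a finite-dimensional subspace containing $w$ and pairs it with products $\varphi_1 \tensor \cdots \tensor \varphi_n$ of continuous functionals, using the estimate $|(\varphi_1 \tensor \cdots \tensor \varphi_n)(w)| \le \|\varphi_1\| \cdots \|\varphi_n\| \, [\halbnorm{p}]^n(w)$ together with Hahn-Banach to conclude $w = 0$.

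Granting this, for $w \in V^{\tensor n}$ with $\halbnorm{p}^n(w) = 0$ the quotient seminorm satisfies $[\halbnorm{p}^n]([w]) \le \halbnorm{p}^n(w) = 0$, hence $[\halbnorm{p}]^n(\iota([w])) = 0$. Since $[\halbnorm{p}]^n$ is a norm and $\iota$ is an isomorphism, this forces $[w] = 0$, i.e.\ $w \in \langle \ker \halbnorm{p} \rangle_n$. Combined with the graded decompositions from the first paragraph this yields the asserted equality $\ker \halbnorm{p}_R = \langle \ker \halbnorm{p} \rangle$.
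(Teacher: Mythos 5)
Your proof is correct and takes essentially the same route as the paper: both rest on the fact that the projective tensor power $[\halbnorm{p}]^n$ of the quotient norm is again a norm, and both invoke Lemma~\ref{lemma:SeminormQuotientIota} to transport this back to $V^{\tensor n}/\langle\ker\halbnorm{p}\rangle_n$ via $\iota$. Your version simply makes the degree-by-degree decomposition and the easy inclusion $\langle\ker\halbnorm{p}\rangle\subseteq\ker\halbnorm{p}_R$ explicit, steps the paper leaves implicit in its single chain of equivalences.
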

\begin{proof}
    Since $[\halbnorm{p}]$ is a norm on $V \big/ \ker \halbnorm{p}$,
    also $[\halbnorm{p}]^n$ is a norm on $(V \big/ \ker
    \halbnorm{p})^{\tensor n}$. It follows that $[\halbnorm{p}]_{R,
      1}$ is a norm on $\Tensor^\bullet(V \big/ \ker \halbnorm{p})$ as
    well, implying that $[\halbnorm{p}_R]$ is a norm on
    $\Tensor(V) \big/ \langle \ker \halbnorm{p} \rangle$ according to
    Lemma~\ref{lemma:SeminormQuotientIota}. Thus $[v] \in \ker
    \halbnorm{p}_R$ iff $[\halbnorm{p}_R]([v]) = 0$ iff $[v]
    = 0$ iff $v \in \langle \ker \halbnorm{p} \rangle$.
\end{proof}
The following lemma is the key to understand nuclearity:
\begin{lemma}
    \label{lemma:NuclearRepresentationTensor}%
    Let $\halbnorm{p} \le \halbnorm{q}$ be continuous seminorms such
    that the canonical map $\iota_{\halbnorm{q}\halbnorm{p}}\colon
    V_{\halbnorm{q}} \longrightarrow V_{\halbnorm{p}}$ has a nuclear
    representation
    \begin{equation}
        \label{eq:NuclearRepresentation}
        \iota_{\halbnorm{q}, \halbnorm{p}}
        =
        \sum_{i = 1}^\infty \epsilon^i \tensor e_i
    \end{equation}
    with $\epsilon^i \in V_{\halbnorm{q}}'$ and $e_i \in
    V_{\halbnorm{p}}$. Then there is a constant $c > 0$ such that the
    canonical map
    \begin{equation}
        \label{eq:iotaqReinspReins}
        \iota_{(c\halbnorm{q})_R, \halbnorm{p}_R}\colon
        \Tensor_R^\bullet(V)_{(c\halbnorm{q})_R}
        \longrightarrow
        \Tensor_R^\bullet(V)_{\halbnorm{p}_R}
    \end{equation}
    has the nuclear representation
    \begin{equation}
        \label{eq:NuclearRepresentationTensor}
        \iota_{(c\halbnorm{q})_R, \halbnorm{p}_R}
        =
        \sum_{n=0}^\infty \sum_{i_1, \ldots, i_n = 1}^\infty
        \left(
            \epsilon^{i_1} \tensor \cdots \tensor \epsilon^{i_n}
        \right)
        \tensor
        \left(
            e_{i_1} \tensor \cdots \tensor e_{i_n}
        \right).
    \end{equation}
\end{lemma}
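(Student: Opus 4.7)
The plan is to exploit the explicit description of the Banach space $\Tensor^\bullet_R(V)_{\halbnorm{p}_R}$ as a weighted $\ell^1$-direct sum of the Banach tensor powers $V_{\halbnorm{p}}^{\cpitensor n}$, combined with the fact that functional norms on projective tensor products of Banach spaces factorize (Lemma~\ref{lemma:BanachTensorFunctionalNorms}).

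First, I would identify the two Banach spaces. By Lemma~\ref{lemma:kerpGenerateskerpReins}, $\ker \halbnorm{p}_R = \langle \ker \halbnorm{p}\rangle$, so Corollary~\ref{corollary:QuotientIotaIsomorphism} together with Lemma~\ref{lemma:SeminormQuotientIota} gives an isometric isomorphism between $\Tensor^\bullet(V)\big/\ker \halbnorm{p}_R$ with norm $[\halbnorm{p}_R]$ and $\Tensor^\bullet(V\big/\ker\halbnorm{p})$ with norm $[\halbnorm{p}]_R$. Since $[\halbnorm{p}]_R$ restricts to $n!^R[\halbnorm{p}]^n$ on the $n$-th homogeneous component, after completion $\Tensor^\bullet_R(V)_{\halbnorm{p}_R}$ is canonically the $\ell^1$-weighted direct sum of the Banach projective tensor powers $V_{\halbnorm{p}}^{\cpitensor n}$ with weights $n!^R$. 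The analogous statement applies to $c\halbnorm{q}$, noting that $(c\halbnorm{q})_R$ restricts to $n!^R c^n \halbnorm{q}^n$ on $V^{\tensor n}$.

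Second, I compute the relevant norms. Extended by zero to tensor degrees other than $n$, the functional $\epsilon^{i_1}\tensor\cdots\tensor\epsilon^{i_n}$ lies in $(\Tensor^\bullet_R(V)_{(c\halbnorm{q})_R})'$ with norm
\[
\frac{\|\epsilon^{i_1}\|_{\halbnorm{q}}\cdots\|\epsilon^{i_n}\|_{\halbnorm{q}}}{n!^R \, c^n}
\]
by Lemma~\ref{lemma:BanachTensorFunctionalNorms} and the restriction formula. The vector $e_{i_1}\tensor\cdots\tensor e_{i_n}$ has norm $n!^R \|e_{i_1}\|_{\halbnorm{p}}\cdots\|e_{i_n}\|_{\halbnorm{p}}$ in $\Tensor^\bullet_R(V)_{\halbnorm{p}_R}$. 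The weights $n!^R$ cancel in the product, so setting $C = \sum_{i=1}^\infty \|\epsilon^i\|_{\halbnorm{q}}\|e_i\|_{\halbnorm{p}}$ (which is finite by the nuclear hypothesis), summing over indices gives
\[
\sum_{n=0}^\infty \sum_{i_1, \ldots, i_n = 1}^\infty
\frac{\|\epsilon^{i_1}\|_{\halbnorm{q}}\cdots\|\epsilon^{i_n}\|_{\halbnorm{q}} \,
\|e_{i_1}\|_{\halbnorm{p}}\cdots\|e_{i_n}\|_{\halbnorm{p}}}{c^n}
=
\sum_{n=0}^\infty \left(\frac{C}{c}\right)^n,
\]
which converges provided we choose the constant $c > C$. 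This establishes the required absolute convergence for a nuclear representation of \emph{some} continuous linear map.

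Finally, I must verify that this map equals the canonical map $\iota_{(c\halbnorm{q})_R, \halbnorm{p}_R}$. On each fixed homogeneous component $V^{\tensor n}$, the partial sums over $i_1, \ldots, i_n \le N$ represent $\iota_{\halbnorm{q}\halbnorm{p}}^{\tensor n}$ and converge to it in the operator norm between Banach tensor powers as $N \to \infty$, since nuclear representations tensor componentwise and the projective norm is a cross-norm. The two maps therefore agree on the dense subspace $\bigoplus_{n=0}^\infty V^{\tensor n}$ of finitely supported tensors, and by continuity they agree on the completion. The main obstacle is purely the bookkeeping: keeping straight the identifications among $V^{\tensor n}$, its quotient modulo $\ker\halbnorm{p}^n$, and the Banach completion $V_{\halbnorm{p}}^{\cpitensor n}$ under the various seminorms $\halbnorm{p}_R$ and $(c\halbnorm{q})_R$; once these identifications are made, all estimates reduce to the factorization of the projective cross-norm and a geometric series.
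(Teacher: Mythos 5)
Your proposal is correct and follows essentially the same route as the paper: identify the Banach completions via Lemma~\ref{lemma:SeminormQuotientIota}, use the factorization of the projective cross-norm (Lemma~\ref{lemma:BanachTensorFunctionalNorms}) to compute the norms of $e_{i_1}\tensor\cdots\tensor e_{i_n}$ and $\epsilon^{i_1}\tensor\cdots\tensor\epsilon^{i_n}$, observe that the $n!^R$ factors cancel, and choose $c$ large enough to make the resulting geometric series converge. The only cosmetic difference is that you pick $c > C$ explicitly at the end while the paper rescales $\halbnorm{q}$ at the start so that the summability constant is below $1$; the density/continuity argument at the end matches what the paper leaves implicit.
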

\begin{proof}
    By rescaling the seminorm $\halbnorm{q}$ we can achieve that the
    numerical value
    \begin{equation*}
        x
        =
        \sum_{i=1}^\infty
        \norm{\epsilon^i}_{\halbnorm{q}} \norm{e_i}_{\halbnorm{p}}
        < 1
        \tag{$*$}
    \end{equation*}
    of the convergence condition in \eqref{eq:NuclearMap} is not only
    finite but actually as small as we need: rescaling of
    $\halbnorm{q}$ by $c$ shrinks the value of ($*$) by $\frac{1}{c}$
    since we need the \emph{dual} norm
    $\norm{\argument}_{\halbnorm{q}}$. Thus we may assume ($*$)
    without restriction defining the possibly necessary rescaling
    factor $c$.  Next we note that
    $\big(\Tensor^\bullet_R(V)\big)_{\halbnorm{q}_R}$ is the Banach
    space completion of $\Tensor^\bullet_R(V) \big/ \ker
    \halbnorm{q}_R \cong \Tensor^\bullet_R(V \big/ \ker \halbnorm{q})$
    with respect to the norm $[\halbnorm{q}_R] = [\halbnorm{q}]_R
    \circ \iota$, according to Lemma~\ref{lemma:SeminormQuotientIota},
    and analogously for
    $\left(\Tensor^\bullet_R(V)\right)_{\halbnorm{p}_R}$. In this
    sense we have $e_{i_1} \tensor \cdots \tensor e_{i_n} \in
    \left(\Tensor^\bullet_R(V)\right)_{\halbnorm{p}_R}$ as well as
    $\epsilon^{i_1} \tensor \cdots \tensor \epsilon^{i_n} \in
    \Big(\big(\Tensor^\bullet_R(V)\big)_{\halbnorm{q}_R}\Big)'$. For
    the norms of these vectors and linear functionals we have
    \[
    \halbnorm{p}_R(e_{i_1} \tensor \cdots \tensor e_{i_n})
    =
    n!^R[\halbnorm{p}]^n(e_{i_1} \tensor \cdots \tensor e_{i_n})
    =
    n!^R
    \norm{e_{i_1}}_{\halbnorm{p}}
    \cdots
    \norm{e_{i_n}}_{\halbnorm{p}}
    \]
    and
    \[
    \norm{
      \epsilon^{i_1}
      \tensor \cdots \tensor
      \epsilon^{i_n}
    }_{\halbnorm{q}_R}
    =
    \frac{1}{n!^R}
    \norm{
      \epsilon^{i_1}
      \tensor \cdots \tensor
      \epsilon^{i_n}
    }_{\halbnorm{q}^n}
    =
    \frac{1}{n!^R}
    \norm{\epsilon^{i_1}}_{\halbnorm{q}}
    \cdots
    \norm{\epsilon^{i_n}}_{\halbnorm{q}}.
    \]
    Again, due to dualizing, the prefactor $n!^R$ appears now in
    the denominator. Combining these results we have
    \[
    \sum_{n=0}^\infty \sum_{i_1, \ldots, i_n = 0}^\infty
    \norm{
      \epsilon^{i_1}
      \tensor \cdots \tensor
      \epsilon^{i_n}
    }_{\halbnorm{q}_R}
    \halbnorm{p}_R(e_{i_1} \tensor \cdots \tensor e_{i_n})
    =
    \sum_{n=0}^\infty x^n < \infty,
    \]
    since we arranged $x < 1$. Finally, it is clear that
    \eqref{eq:NuclearRepresentationTensor} holds before the completion
    and thus also afterwards by continuity.
\end{proof}
\begin{theorem}
    \label{theorem:NuclearWeylAlgebra}%
    Let $R \ge 0$. Then the following statements are equivalent:
    \begin{theoremlist}
    \item \label{item:Vnuclear} $V$ is nuclear.
    \item \label{item:TVnuclear} $\Tensor_R^\bullet(V)$ is
        nuclear.
    \item \label{item:WeylRnuclear} $\WeylR(V)$ is nuclear.
    \end{theoremlist}
\end{theorem}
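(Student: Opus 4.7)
The plan is to prove the four implications (ii) $\Rightarrow$ (i), (iii) $\Rightarrow$ (i), (i) $\Rightarrow$ (ii), and (ii) $\Rightarrow$ (iii). Most of the technical work has already been packaged into Lemma~\ref{lemma:NuclearRepresentationTensor}, so the theorem should amount to an assembly of earlier results plus the standard fact that closed subspaces of nuclear spaces are nuclear.

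For the two ``descent'' directions (ii) $\Rightarrow$ (i) and (iii) $\Rightarrow$ (i): by the tensor-algebra version of Lemma~\ref{lemma:ComponentsAreContinuous} (already observed for $\WeylR(V)$), the projection onto, and the inclusion of, the tensor-degree-one component are continuous, so $V = V^{\pitensor 1}$ sits as a closed topological direct summand in $\Tensor^\bullet_R(V)$ and in $\WeylR(V)$ with the induced topology being the original one on $V$. Nuclearity is inherited by closed subspaces, so $V$ is nuclear in either case.

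The main direction is (i) $\Rightarrow$ (ii). A defining system of seminorms on $\Tensor^\bullet_R(V)$ is given by $\{\halbnorm{p}_R\}$ as $\halbnorm{p}$ runs through the continuous seminorms on $V$, so to establish nuclearity it suffices to show that for each such $\halbnorm{p}$ there is a continuous seminorm $\halbnorm{r} \ge \halbnorm{p}_R$ on $\Tensor^\bullet_R(V)$ for which the canonical map $\Tensor^\bullet_R(V)_{\halbnorm{r}} \longrightarrow \Tensor^\bullet_R(V)_{\halbnorm{p}_R}$ is nuclear. By nuclearity of $V$ we find $\halbnorm{q} \ge \halbnorm{p}$ with a nuclear representation $\iota_{\halbnorm{q}\halbnorm{p}} = \sum_i \epsilon^i \tensor e_i$ of the canonical map between the Banach spaces $V_{\halbnorm{q}}$ and $V_{\halbnorm{p}}$. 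Lemma~\ref{lemma:NuclearRepresentationTensor} then produces a constant $c > 0$ and an explicit nuclear representation of $\iota_{(c\halbnorm{q})_R,\halbnorm{p}_R}$, so taking $\halbnorm{r} = (c\halbnorm{q})_R$ finishes this direction. The only subtle point, namely that the factorial weights $n!^R$ do not destroy the absolute convergence $\sum_n x^n < \infty$ in the nuclear representation, is precisely what that lemma handles: the $n!^R$ appearing in $\halbnorm{p}_R(e_{i_1}\tensor\cdots\tensor e_{i_n})$ cancels against the $1/n!^R$ coming from the dual norm $\norm{\epsilon^{i_1}\tensor\cdots\tensor\epsilon^{i_n}}_{\halbnorm{q}_R}$, after which the rescaling $\halbnorm{q} \rightsquigarrow c\halbnorm{q}$ forces the geometric series to have ratio $< 1$.

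Finally, for (ii) $\Rightarrow$ (iii): the locally convex space underlying $\WeylR(V)$ is by definition $\Sym^\bullet_R(V)$, which by Lemma~\ref{lemma:SymmetrizerpiContinuous}\refitem{item:SymClosed} (applied degreewise, together with continuity of the total symmetrizer $\Symmetrizer$ in the sense of \eqref{eq:SymmetrizerContinuous}) is a closed subspace of $\Tensor^\bullet_R(V)$, namely the image of the continuous projector $\Symmetrizer$ or equivalently the kernel of $\id - \Symmetrizer$. Nuclearity of $\Tensor^\bullet_R(V)$ thus descends to $\WeylR(V)$. The only real obstacle in the whole argument is (i) $\Rightarrow$ (ii), and that obstacle has already been overcome in Lemma~\ref{lemma:NuclearRepresentationTensor}; the rest of the theorem is a routine packaging.
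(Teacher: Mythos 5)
Your proposal is correct and follows essentially the same route as the paper: the descent directions are handled by noting that $V \subseteq \WeylR(V) \subseteq \Tensor^\bullet_R(V)$ are closed subspaces (and nuclearity passes to closed subspaces), while the main direction (i) $\Rightarrow$ (ii) is exactly the application of Lemma~\ref{lemma:NuclearRepresentationTensor} to a defining system of seminorms $\halbnorm{p}_R$. The paper's proof is just a more compressed version of the same assembly.
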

\begin{proof}
    We consider the tensor algebra $\Tensor^\bullet_R(V)$. Then
    $\WeylR(V) = \Sym^\bullet_R(V)$ is a closed subspace of
    $\Tensor^\bullet_R(V)$ and $V$ is a closed subspace of
    $\WeylR(V)$. Hence it will suffice to show that
    $\Tensor^\bullet_R(V)$ is nuclear whenever $V$ is nuclear. Since
    the topology of $\Tensor^\bullet_R(V)$ is determined by all the
    seminorms $\halbnorm{p}_R$,
    Lemma~\ref{lemma:NuclearRepresentationTensor} gives us the nuclear
    representation of $\iota_{(c\halbnorm{q})_R, \halbnorm{p}_R}$
    whenever we have one for $\iota_{\halbnorm{q},
      \halbnorm{p}}$. Hence $\Tensor^\bullet_{R, 1}(V)$ is nuclear.
\end{proof}

For the projective limit version $\WeylRMinus(V)$ we can argue either
along the same line of proof as above or use the above result and rely
on the general fact that projective limits of nuclear spaces are again
nuclear. However, the following statement is less obvious and shines
some new light on the projective version of the Weyl algebra:
\begin{theorem}
    \label{theorem:StronglyNuclear}%
    Let $R \ge 0$. Then the following statements are equivalent:
    \begin{theoremlist}
    \item \label{item:VStronglyNuclear} $V$ is strongly nuclear.
    \item \label{item:TensorStronglyNuclear} $\Tensor_{R^-}(V)$ is
        strongly nuclear.
    \item \label{item:WeylStronglyNuclear} $\WeylRMinus(V)$ is
        strongly nuclear.
    \end{theoremlist}
\end{theorem}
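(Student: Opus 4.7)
The plan is to mirror the proof of Theorem~\ref{theorem:NuclearWeylAlgebra} step by step, replacing nuclearity by strong nuclearity throughout and exploiting the additional freedom built into the projective limit topology of $\Tensor^\bullet_{R^-}(V)$. Since strong nuclearity is inherited by closed subspaces and the continuous embeddings $V \hookrightarrow \WeylRMinus(V) \hookrightarrow \Tensor^\bullet_{R^-}(V)$ are onto closed subspaces (by Remark~\ref{remark:TensorRDef} and Lemma~\ref{lemma:ComponentsAreContinuous}), the implications \textit{(ii)}$\Rightarrow$\textit{(iii)}$\Rightarrow$\textit{(i)} will be automatic. The substance of the theorem is therefore \textit{(i)}$\Rightarrow$\textit{(ii)}, for which I would establish a strongly nuclear analogue of Lemma~\ref{lemma:NuclearRepresentationTensor}.

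My approach starts, for a given continuous seminorm $\halbnorm{p}$ on $V$ and any $\epsilon > 0$, from a strongly nuclear representation $\iota_{\halbnorm{q},\halbnorm{p}} = \sum_i \epsilon^i \tensor e_i$ supplied by the assumed strong nuclearity of $V$, so that $x_s := \sum_i (\norm{\epsilon^i}_{\halbnorm{q}} \norm{e_i}_{\halbnorm{p}})^s < \infty$ for every $s > 0$. Choosing any $\epsilon'$ with $0 < \epsilon' < \epsilon$, I would plug this representation into the tensorization \eqref{eq:NuclearRepresentationTensor} and repeat the norm bookkeeping of Lemma~\ref{lemma:NuclearRepresentationTensor}, but now with $\halbnorm{p}_{R-\epsilon}$ on the target side and $\halbnorm{q}_{R-\epsilon'}$ on the source side. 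The factorial weights then combine to produce an overall prefactor $n!^{-(\epsilon-\epsilon')}$ in each summand of tensor degree $n$, and the total $s$-sum reads
\begin{equation*}
    \sum_{n=0}^\infty \sum_{i_1, \ldots, i_n}
    \Bigl(
        \norm{\epsilon^{i_1} \tensor \cdots \tensor \epsilon^{i_n}}_{\halbnorm{q}_{R-\epsilon'}}
        \halbnorm{p}_{R-\epsilon}(e_{i_1} \tensor \cdots \tensor e_{i_n})
    \Bigr)^s
    =
    \sum_{n=0}^\infty \frac{x_s^n}{n!^{s(\epsilon-\epsilon')}}.
\end{equation*}
Since $s(\epsilon - \epsilon') > 0$, the factorial decay dominates any geometric growth of $x_s^n$ and the series converges for every $s > 0$. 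This exhibits $\iota_{\halbnorm{q}_{R-\epsilon'},\, \halbnorm{p}_{R-\epsilon}}$ as $s$-nuclear for every $s > 0$, hence strongly nuclear; as the seminorms $\halbnorm{p}_{R-\epsilon}$ form a defining system for $\Tensor^\bullet_{R^-}(V)$, the strong nuclearity of the tensor algebra then follows.

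The key conceptual point --- and the reason the result requires passing to $\WeylRMinus(V)$ rather than to $\WeylR(V)$ --- is that here no rescaling of $\halbnorm{q}$ is needed to force $x_s < 1$: the extra factorial damping comes for free from the projective limit, since one is always allowed to enlarge the source seminorm from $\halbnorm{p}_{R-\epsilon}$ to $\halbnorm{q}_{R-\epsilon'}$ with a strictly smaller $\epsilon'$. The main technical obstacle I anticipate is verifying that a \emph{single} representation of $\iota_{\halbnorm{q},\halbnorm{p}}$ can be used simultaneously for every $s > 0$; this relies on the characterization of strong nuclearity via a representation with rapidly decreasing singular numbers, which is precisely the stronger property that distinguishes this result from Theorem~\ref{theorem:NuclearWeylAlgebra}.
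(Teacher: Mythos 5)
Your proof is correct and takes essentially the same route as the paper: reduce to the implication \textit{(i)}$\Rightarrow$\textit{(ii)}, then tensorize the strongly nuclear representation exactly as in Lemma~\ref{lemma:NuclearRepresentationTensor}, trading $n!^{R-\epsilon}$ on the target side against $n!^{-(R-\epsilon')}$ on the source side to obtain the decisive factor $n!^{-s(\epsilon-\epsilon')}$ that gives convergence for every $s>0$. Your displayed sum even has the correct exponent $x_s^n$, whereas the paper writes $x(\alpha)^\alpha$ at the corresponding point, which is a typo (the conclusion there still holds, but your bookkeeping is the more careful one).
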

\begin{proof}
    Again, since closed subspaces inherit strong nuclearity, we only
    have to show the implication \refitem{item:VStronglyNuclear}
    $\implies$ \refitem{item:TensorStronglyNuclear}. Thus let
    $\halbnorm{p}$ be a continuous seminorm on $V$ with a matching
    continuous seminorm $\halbnorm{q}$ such that \eqref{eq:NuclearMap}
    holds with
    \[
    x(\alpha)
    =
    \sum_{i=1}^\infty
    \norm{\epsilon^i}_{\halbnorm{q}}^\alpha
    \norm{e_i}_{\halbnorm{p}}^\alpha
    < \infty
    \]
    for all $\alpha > 0$. Now we have to take $\halbnorm{q}_{R -
      \epsilon'}$ for $\halbnorm{p}_{R - \epsilon}$ with some $0
    < \epsilon' < \epsilon$. Then the series
    \[
    \sum_{n=0}^\infty \sum_{i_1, \ldots, i_n = 0}^\infty
    \left(
        \norm{
          \epsilon^{i_1}
          \tensor \cdots \tensor
          \epsilon^{i_n}
        }_{\halbnorm{q}_{R - \epsilon'}}
        \halbnorm{p}_{R - \epsilon}(e_{i_1} \tensor \cdots \tensor
        e_{i_n})
    \right)^\alpha
    =
    \sum_{n=0}^\infty
    \frac{1}{n!^{\alpha(\epsilon - \epsilon')}}
    x(\alpha)^{\alpha}
    \]
    still converges for all $\alpha > 0$ showing the strong nuclearity
    of $\Tensor_{R^-}(V)$.
\end{proof}

Note that it is crucial to have some (small) inverse power of $n!$ at
hand: without this option we can not succeed in showing the strong
nuclearity as we need the $\alpha$-summability for \emph{all} $\alpha
> 0$. Thus, concerning strong nuclearity, the projective limit version
$\WeylRMinus(V)$ turns out to behave nicer than the more direct
version $\WeylR(V)$.
\begin{example}
    \label{example:FiniteDimV}%
    For $V$ finite-dimensional we get a strongly nuclear Weyl algebra
    $\WeylRMinus(V, \starzLambda)$. Here we can either use the above
    theorem since $V$ being finite-dimensional is strongly nuclear, or
    we can rely on the explicit description of $V$ and hence of
    $\WeylRMinus(V, \starzLambda)$ as a Köthe space: since for a
    finite-dimensional vector space it suffices to take a single norm,
    the Köthe matrix is finite and hence its entries are bounded, say
    by $c > 0$. Thus the Köthe matrix
    \eqref{eq:KoetheMatrixTensorAlgebra} has entries bounded by $c^n
    n!^{R - \epsilon}$ where $\epsilon > 0$. To this result one can
    apply the Grothendieck-Pietsch criterion and conclude strong
    nuclearity directly, see \cite[Prop.~21.8.2]{jarchow:1981a}. Of
    course, for a finite-dimensional $V$ the Weyl algebra $\WeylR(V,
    \starzLambda)$ is still a nuclear space.
\end{example}

%
%

\section{Symmetries and Equivalences}
\label{sec:SymmetriesEquivalences}

In this section we discuss how the algebraic symmetries and
equivalences translate into our locally convex framework.

%
%

\subsection{Functoriality of $\cWeylR(V, \starzLambda)$}
\label{subsec:Functoriality}

Suppose that $V$ and $W$ are two $\mathbb{Z}_2$-graded locally convex
Hausdorff spaces and let $\Lambda_V$ and $\Lambda_W$ be continuous
even bilinear forms on $V$ and $W$, respectively. We want to extend
the functoriality statement from
Proposition~\ref{proposition:Functorial}. The following estimates are
obvious:
\begin{lemma}
    \label{lemma:ContinuityA}%
    Let $A\colon V \longrightarrow W$ be an even linear map and let
    $\halbnorm{p}$ and $\halbnorm{q}$ be seminorms on $V$ and $W$ such
    that $\halbnorm{q}(A(v)) \le \halbnorm{p}(v)$ for all $v \in
    V$. Then
    \begin{equation}
        \label{eq:pReinsAEstimates}
        \halbnorm{q}_R(A(v))
        \le
        \halbnorm{p}_R(v)
    \end{equation}
    for all $v \in \Tensor^\bullet(V)$.
\end{lemma}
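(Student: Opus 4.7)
The plan is to reduce the claim to an estimate on each fixed tensor degree and then sum with the appropriate weight. Extend $A$ as a unital algebra homomorphism $A\colon \Tensor^\bullet(V) \longrightarrow \Tensor^\bullet(W)$ in the usual way so that $A(v_1 \tensor \cdots \tensor v_n) = A(v_1) \tensor \cdots \tensor A(v_n)$ on factorizing tensors. Since $A$ is even, this extension is well-defined and compatible with the $\mathbb{Z}$-grading by tensor degree, so it suffices to estimate $A$ restricted to each $V^{\tensor n}$ separately.

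On factorizing tensors the estimate $\halbnorm{q} \circ A \le \halbnorm{p}$ immediately gives
\begin{equation*}
    \halbnorm{q}^n\bigl(A(v_1) \tensor \cdots \tensor A(v_n)\bigr)
    \le
    \halbnorm{q}(A(v_1)) \cdots \halbnorm{q}(A(v_n))
    \le
    \halbnorm{p}(v_1) \cdots \halbnorm{p}(v_n).
\end{equation*}
Then Lemma~\ref{lemma:TestOnFactorizingTensors} applied to the $n$-linear map $(v_1, \ldots, v_n) \mapsto A(v_1) \tensor \cdots \tensor A(v_n)$ extends this bound to arbitrary tensors, yielding $\halbnorm{q}^n(A(v)) \le \halbnorm{p}^n(v)$ for all $v \in V^{\tensor n}$.

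Finally, for a general $v = \sum_{n=0}^\infty v_n \in \Tensor^\bullet(V)$ with components $v_n \in V^{\tensor n}$, the $n$-th component of $A(v)$ is $A(v_n)$, so summing the per-degree bound against the weights $n!^R$ gives
\begin{equation*}
    \halbnorm{q}_R(A(v))
    =
    \sum_{n=0}^\infty \halbnorm{q}^n(A(v_n)) n!^R
    \le
    \sum_{n=0}^\infty \halbnorm{p}^n(v_n) n!^R
    =
    \halbnorm{p}_R(v),
\end{equation*}
as desired. There is no serious obstacle here; the only thing worth checking carefully is that the extension of $A$ respects the component decomposition $v = \sum_n v_n$, which is automatic since $A$ is defined degree-wise.
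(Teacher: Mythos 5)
Your proof is correct and follows essentially the same route as the paper: establish $\halbnorm{q}^n(A^{\tensor n}(v_n)) \le \halbnorm{p}^n(v_n)$ degree by degree (via Lemma~\ref{lemma:TestOnFactorizingTensors}) and then sum against the weights $n!^R$. The paper simply states the per-degree estimate as clear, so your version only spells out details the paper leaves implicit.
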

\begin{proof}
    We clearly have $\halbnorm{q}^n(A^{\tensor n}(v)) \le
    \halbnorm{p}^n(v)$ for all $v \in \Tensor^n(V)$. From this,
    \eqref{eq:pReinsAEstimates} is clear by the definition of
    $\halbnorm{p}_R$.
\end{proof}
\begin{proposition}
    \label{proposition:FunctorialityWeylR}%
    Let $R \ge \frac{1}{2}$ and $z \in \mathbb{C}$. Then the Weyl
    algebra $\WeylR(V, \starzLambda)$ as well as its completion
    $\cWeylR(V, \starzLambda)$ depend functorially on $(V, \Lambda)$
    with respect to continuous Poisson maps.
\end{proposition}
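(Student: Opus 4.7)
The plan is to read off continuity from Lemma~\ref{lemma:ContinuityA}, extend to the completion by the universal property, and then lift the algebraic functoriality of Proposition~\ref{proposition:Functorial} to the topological setting.

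First, given a continuous even Poisson map $A\colon V\longrightarrow W$, extend it canonically to the unital algebra homomorphism $A\colon\Sym^\bullet(V)\longrightarrow\Sym^\bullet(W)$. By Proposition~\ref{proposition:Functorial}, $A$ is a homomorphism of Poisson algebras and of the formal star product algebras; substituting the formal parameter by $z$, which is allowed by Corollary~\ref{corollary:NonFormalDeformation}, gives $A(a\starzLambda b)=A(a)\starzLambda A(b)$ for all $a,b\in\Sym^\bullet(V)$, where the two star products on the two sides are understood with respect to $\Lambda_V$ and $\Lambda_W$ respectively.

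Second, I would establish the continuity of $A\colon\WeylR(V,\starzLambda)\longrightarrow\WeylR(W,\starzLambda)$. Given any continuous seminorm $\halbnorm{q}$ on $W$, the continuity of $A\colon V\longrightarrow W$ yields a continuous seminorm $\halbnorm{p}$ on $V$ with $\halbnorm{q}(A(v))\le\halbnorm{p}(v)$ for all $v\in V$. Since the extension of $A$ to $\Tensor^\bullet(V)\longrightarrow\Tensor^\bullet(W)$ is $A^{\tensor n}$ on the $n$-th tensor power and commutes with the symmetrizer, Lemma~\ref{lemma:ContinuityA} applies and gives $\halbnorm{q}_R(A(v))\le\halbnorm{p}_R(v)$ for all $v\in\Sym^\bullet(V)\subseteq\Tensor^\bullet(V)$. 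This is exactly the continuity of $A$ with respect to the Weyl topology on both sides.

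Third, since $\cWeylR(W,\starzLambda)$ is a complete Hausdorff locally convex space and $\WeylR(V,\starzLambda)$ is dense in its completion, the continuous linear map $A$ extends uniquely to a continuous linear map $\hat A\colon\cWeylR(V,\starzLambda)\longrightarrow\cWeylR(W,\starzLambda)$. To see that $\hat A$ still satisfies $\hat A(a\starzLambda b)=\hat A(a)\starzLambda\hat A(b)$, I would invoke Lemma~\ref{lemma:KeyLemma}: $\starzLambda$ is jointly continuous on both completions, so the algebraic identity, valid on the dense subspace $\WeylR(V)\times\WeylR(V)$, extends to the whole product by continuity of $\hat A$ and of the two star products.

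Finally, for functoriality in the strict sense, the identity $\id_V$ extends to $\id_{\cWeylR(V)}$ by uniqueness of the extension, and the composition of two continuous Poisson maps $B\circ A$ is again a continuous Poisson map whose induced extension equals $\hat B\circ\hat A$ (first verified on generators in $V$, hence on $\Sym^\bullet(V)$ by the universal property of the symmetric algebra, and finally on the completion by density and continuity). There is no genuine obstacle here; the only subtlety worth isolating is that the homomorphism property survives passage to the completion, which is precisely where the joint continuity of $\starzLambda$ from Lemma~\ref{lemma:KeyLemma} is needed.
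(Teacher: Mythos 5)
Your proposal is correct and follows exactly the route the paper has in mind: the paper omits an explicit proof of this proposition but precedes it with Lemma~\ref{lemma:ContinuityA} precisely to supply the seminorm estimate $\halbnorm{q}_R(A(v))\le\halbnorm{p}_R(v)$, and the algebraic homomorphism property is already Proposition~\ref{proposition:Functorial} together with Corollary~\ref{corollary:NonFormalDeformation}. Your extension to the completion via density and the joint continuity of $\starzLambda$ from Lemma~\ref{lemma:KeyLemma}, as well as the check that identities and compositions behave correctly, is the standard and intended argument.
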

In particular, the \emph{continuous} Poisson automorphisms in $\Aut(V,
\Lambda)$ act on the Weyl algebra $\WeylR(V, \starzLambda)$ as well as
on its completion $\cWeylR(V, \starzLambda)$ by continuous
automorphisms. The analogous statement holds for the projective
version $\WeylRMinus(V, \starzLambda)$ and $R > \frac{1}{2}$.

%
%

\subsection{Translation Invariance}
\label{subsec:TranslationInvariance}

Let us now investigate the action of the translations by linear forms
on $V$ as done algebraically in \eqref{eq:Translation}. We discuss the
continuity of the translations $\tau_\varphi^*$ in two ways: first
directly for a general even continuous $\varphi \in V'$ and second for
a $\varphi$ in the image of $\sharp$ from \eqref{eq:sharpMap}: since
$\Lambda$ is continuous, an element $\varphi \in \image \sharp
\subseteq V^*$ is clearly continuous as well. In this more special
situation we show a much stronger statement, namely that
$\tau_\varphi^*$ is an inner automorphism.

We start with the following basic estimate for the continuity of the
translation operators $\tau_\varphi^*$:
\begin{lemma}
    \label{lemma:tauContinuous}%
    Let $\varphi \in V'$ be even and let $\halbnorm{p}$ be a
    continuous seminorm on $V$ such that $|\varphi(v)| \le
    \halbnorm{p}(v)$ for all $v \in V$. Then for $R \ge 0$ we have for
    all $v \in \Tensor^\bullet_R(V)$
    \begin{equation}
        \label{eq:pREinsTau}
        \halbnorm{p}_R(\tau_\varphi^* v)
        \le
        (2\halbnorm{p})_R (v).
    \end{equation}
\end{lemma}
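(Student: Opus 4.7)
The plan is to reduce to factorizing tensors, then carry out the bookkeeping via a simple binomial identity. First I would unpack the definition of $\tau_\varphi^*$ as a unital algebra homomorphism: on a factorizing tensor one has
\[
    \tau_\varphi^*(v_1 \tensor \cdots \tensor v_n)
    =
    (v_1 + \varphi(v_1)\Unit) \tensor \cdots \tensor (v_n + \varphi(v_n)\Unit),
\]
which, expanded out, is the sum over all subsets $S \subseteq \{1, \ldots, n\}$ of the scalar $\prod_{i \in S} \varphi(v_i)$ times the tensor over $j \notin S$ of the remaining $v_j$ (kept in their original order, absorbing the $\Unit$-factors). The term indexed by $S$ lives in $V^{\tensor(n-|S|)}$.

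Next I would isolate the component of tensor degree $m$: it is a sum over the $\binom{n}{n-m}$ subsets $S$ of size $n - m$. Using subadditivity together with the hypothesis $|\varphi(v)| \le \halbnorm{p}(v)$ and the factorizing form of $\halbnorm{p}^n$, each such subset contributes at most $\prod_{i=1}^n \halbnorm{p}(v_i)$, so the degree-$m$ component of $\tau_\varphi^*(v_1 \tensor \cdots \tensor v_n)$ has $\halbnorm{p}^m$-seminorm bounded by $\binom{n}{m} \halbnorm{p}^n(v_1 \tensor \cdots \tensor v_n)$. By Lemma~\ref{lemma:TestOnFactorizingTensors}, the same estimate passes to arbitrary $v_n \in V^{\tensor n}$:
\[
    \halbnorm{p}^m\bigl((\tau_\varphi^* v_n)_m\bigr)
    \le \binom{n}{m} \halbnorm{p}^n(v_n) \quad (0 \le m \le n),
\]
with the degree-$m$ component vanishing for $m > n$.

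Finally, for $v = \sum_n v_n \in \Tensor^\bullet_R(V)$ I would sum up, interchange the order of summation, and exploit the elementary inequality $m!^R \le n!^R$ valid for $0 \le m \le n$ and $R \ge 0$:
\begin{align*}
    \halbnorm{p}_R(\tau_\varphi^* v)
    &= \sum_{m=0}^\infty m!^R \halbnorm{p}^m\bigl((\tau_\varphi^* v)_m\bigr)
    \le \sum_{m=0}^\infty m!^R \sum_{n \ge m} \binom{n}{m} \halbnorm{p}^n(v_n)\\
    &= \sum_{n=0}^\infty \halbnorm{p}^n(v_n) \sum_{m=0}^n \binom{n}{m} m!^R
    \le \sum_{n=0}^\infty \halbnorm{p}^n(v_n)\, n!^R \sum_{m=0}^n \binom{n}{m}
    = (2\halbnorm{p})_R(v),
\end{align*}
since $\sum_{m=0}^n \binom{n}{m} = 2^n$ and $(2\halbnorm{p})^n = 2^n \halbnorm{p}^n$ by the definition \eqref{eq:pEinscdotspnDef} of the $\pi$-seminorm. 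This gives exactly \eqref{eq:pREinsTau}. There is no real obstacle here; the only subtle point is choosing to keep $m!^R$ instead of, say, optimising with the refined factor $\binom{n}{m} m!^R = n!/(n-m)! \cdot m!^{R-1}$, but the crude bound $m!^R \le n!^R$ is enough and works uniformly in $R \ge 0$.
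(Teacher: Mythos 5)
Your proof is correct and follows essentially the same route as the paper: expand $\tau_\varphi^*$ on a factorizing tensor as a sum over subsets, count $\binom{n}{m}$ subsets contributing in degree $m$, bound each using $|\varphi|\le\halbnorm{p}$, and finish with $m!^R \le n!^R$ and $\sum_{m=0}^n\binom{n}{m}=2^n$. The only cosmetic difference is bookkeeping: you isolate the degree-$m$ component and invoke Lemma~\ref{lemma:TestOnFactorizingTensors} to pass from factorizing to general tensors, whereas the paper keeps an arbitrary decomposition $v_n=\sum_i v_i^{(1)}\tensor\cdots\tensor v_i^{(n)}$ in hand throughout and takes the infimum over decompositions at the very end; these are two equivalent ways of handling the $\pi$-seminorm, and the core combinatorial estimate $\binom{n}{k}k!^R\le 2^n n!^R$ is identical.
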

\begin{proof}
    We write $v = \sum_{n=0}^\infty v_n \in \Tensor^\bullet(V)$ with
    its homogeneous components $v_n \in V^{\tensor n}$, all of which
    are zero except finitely many. Moreover, we write
    \begin{equation*}
        v_n = \sum\nolimits_i v_i^{(1)} \tensor \cdots \tensor v_i^{(n)}
        \tag{$*$}
    \end{equation*}
    as usual. Then the homomorphism property of $\tau_\varphi^*$ gives
    \[
    \tau_\varphi^* v
    =
    \sum_{n=0}^\infty \sum\nolimits_i
    \left(v_i^{(1)} + \varphi\left(v_i^{(1)}\right) \Unit\right)
    \tensor \cdots \tensor
    \left(v_i^{(n)} + \varphi\left(v_i^{(n)}\right) \Unit\right).
    \]
    For every $n$ we get now various contributions in all tensor
    degrees $k \le n$. The contributions in the tensor degree $k$
    consists linear combinations of a choice of $k$ vectors among the
    $v_i^{(1)}, \ldots, v_i^{(n)}$, taking their tensor product,
    applying $\varphi$ to the remaining $n-k$ vectors, and multiplying
    everything together in the end. For a fixed index $i$ there are
    $\binom{n}{k}$ possibilities to distribute $n-k$ copies of
    $\varphi$ to the $n$ vectors $v_i^{(1)}, \ldots, v_i^{(n)}$.
    Finally, using the estimate $|\varphi(w)| \le \halbnorm{p}(w)$ for
    all $w \in V$ we obtain that the contributions to $\halbnorm{p}_R$
    from these terms can be estimated by
    \[
    \halbnorm{p}_R
    \left(
        \left(v_i^{(1)} + \varphi\left(v_i^{(1)}\right)\Unit \right)
        \tensor \cdots \tensor
        \left(v_i^{(n)} + \varphi\left(v_i^{(n)}\right)\Unit \right)
    \right)
    \le
    \sum_{k=0}^n
    \binom{n}{k} k!^R
    \halbnorm{p}\left(v_{i}^{(1)}\right)
    \cdots
    \halbnorm{p}\left(v_i^{(n)}\right).
    \]
    In total, we get the estimate
    \[
    \halbnorm{p}_R(\tau_\varphi^*v)
    \le
    \sum_{n=0}^\infty \sum\nolimits_i
    \sum_{k=0}^n
    \binom{n}{k} k!^R
    \halbnorm{p}\left(v_{i}^{(1)}\right)
    \cdots
    \halbnorm{p}\left(v_i^{(n)}\right)
    \le
    \sum_{n=0}^\infty \sum\nolimits_i
    2^n n!^R
    \halbnorm{p}\left(v_{i}^{(1)}\right)
    \cdots
    \halbnorm{p}\left(v_i^{(n)}\right).
    \]
    Since the decomposition ($*$) was arbitrary, we can take the
    infimum over all such decompositions resulting in
    \eqref{eq:pREinsTau}.
\end{proof}

From this estimate we get immediately the following continuity
statements:
\begin{proposition}
    \label{proposition:ContinuityOfTau}%
    Let $\varphi \in V'$ be an even continuous linear functional and
    let $R \ge 0$.
    \begin{propositionlist}
    \item \label{item:TauTReinsContinuous} The algebra automorphism
        $\tau_\varphi^*\colon \Tensor^\bullet_R(V)
        \longrightarrow \Tensor^\bullet_R(V)$ is continuous.
    \item \label{item:TauWeylRContinuous} For $R \ge \frac{1}{2}$, the
        algebra automorphism $\tau_\varphi^*\colon \WeylR(V,
        \starzLambda) \longrightarrow \WeylR(V, \starzLambda)$ is
        continuous.
    \item \label{item:TauWeylRMinusContinuous} For $R > \frac{1}{2}$,
        the algebra automorphism $\tau_\varphi^*\colon \WeylRMinus(V,
        \starzLambda) \longrightarrow \WeylRMinus(V, \starzLambda)$ is
        continuous.
    \end{propositionlist}
\end{proposition}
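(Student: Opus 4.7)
The plan is to deduce all three continuity statements directly from the basic estimate $\halbnorm{p}_R(\tau_\varphi^* v) \le (2\halbnorm{p})_R(v)$ established in Lemma~\ref{lemma:tauContinuous}, exploiting the fact that in each case the topology is defined by a system of seminorms of the form $\halbnorm{p}_R$ (or $\halbnorm{p}_{R-\epsilon}$).

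For part \refitem{item:TauTReinsContinuous}, I would simply note the following. By Definition~\ref{definition:TREinsTopology}, the topology of $\Tensor^\bullet_R(V)$ is given by all seminorms $\halbnorm{p}_R$ as $\halbnorm{p}$ ranges over the continuous seminorms on $V$. Given any continuous seminorm $\halbnorm{p}$ on $V$, we may, using continuity of $\varphi \in V'$, choose $\halbnorm{p}$ large enough that $|\varphi(v)| \le \halbnorm{p}(v)$ holds for all $v \in V$; then Lemma~\ref{lemma:tauContinuous} bounds $\halbnorm{p}_R \circ \tau_\varphi^*$ by the continuous seminorm $(2\halbnorm{p})_R$. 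This gives continuity of $\tau_\varphi^*$.

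For part \refitem{item:TauWeylRContinuous}, recall that $\WeylR(V,\starzLambda)$ is $\Sym^\bullet(V)$ endowed with the subspace topology inherited from $\Tensor^\bullet_R(V)$, and that $\tau_\varphi^*$ restricts to a unital algebra homomorphism of $\Sym^\bullet(V)$ by construction (see \eqref{eq:Translation}). The very same estimate \eqref{eq:pREinsTau} therefore applies on $\Sym^\bullet_R(V)$, yielding continuity of $\tau_\varphi^*$ as a linear map. That $\tau_\varphi^*$ is an algebra homomorphism with respect to $\starzLambda$ was already established algebraically in Lemma~\ref{lemma:SymmetriesOfPoissonStructure}, and by continuity (together with absolute convergence of $\starzLambda$ in Proposition~\ref{proposition:StarAbsConvergent}) it extends to an algebra automorphism of the completion.

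For part \refitem{item:TauWeylRMinusContinuous}, the topology of $\WeylRMinus(V,\starzLambda)$ is defined by the seminorms $\halbnorm{p}_{R-\epsilon}$ for all $\epsilon > 0$ and all continuous seminorms $\halbnorm{p}$ on $V$. Since $R - \epsilon \ge 0$ for $\epsilon$ small enough (and we only need small $\epsilon$ to define the topology), Lemma~\ref{lemma:tauContinuous} applies uniformly to each index, giving $\halbnorm{p}_{R-\epsilon}(\tau_\varphi^* v) \le (2\halbnorm{p})_{R-\epsilon}(v)$ whenever $|\varphi| \le \halbnorm{p}$. Hence $\tau_\varphi^*$ is continuous as a map into the projective limit. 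There is no real obstacle here: the only mild point worth checking is that the bound in Lemma~\ref{lemma:tauContinuous} depends on $R$ only through the exponent $R$ in $k!^R$ and does not blow up as $\epsilon \to 0$, which is manifest from the proof. All three parts then follow uniformly.
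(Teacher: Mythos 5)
Your proposal is correct and follows exactly the route the paper intends: the proposition is stated right after the sentence ``From this estimate we get immediately the following continuity statements,'' and indeed all three parts are direct applications of the bound $\halbnorm{p}_R(\tau_\varphi^* v) \le (2\halbnorm{p})_R(v)$ from Lemma~\ref{lemma:tauContinuous}, combined with the monotonicity $\halbnorm{p} \le \halbnorm{q} \Rightarrow \halbnorm{p}_R \le \halbnorm{q}_R$ and, for part \textit{iii.)}, the observation that only $\epsilon$ small enough to keep $R-\epsilon \ge 0$ matters for the projective-limit topology. Your phrasing in part \textit{i.)} ``choose $\halbnorm{p}$ large enough'' is a little loose (you should pass to a larger seminorm $\halbnorm{q} \ge \halbnorm{p}$ with $|\varphi| \le \halbnorm{q}$ rather than modify $\halbnorm{p}$ itself), but the intended argument is the right one.
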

In particular, $\tau_\varphi^*$ extends in all three cases to the
corresponding completions and yields a continuous automorphism for the
completions, too.

In a next step we want to understand which of the $\tau_\varphi^*$ are
inner automorphisms. Heuristically, this is a well-known statement: if
the linear functional $\varphi$ is in the image of $\sharp$ then
$\tau_\varphi^*$ is inner via the star-exponential of a pre-image of
$\varphi$ with respect to $\sharp$. Also the heuristic formula for the
star-exponential is folklore. Our main point here is that we have an
analytic framework where the star-exponential actually makes sense:
this is in so far nontrivial as we know that the canonical commutation
relations do not allow for a general entire calculus, see the
discussion in Remark~\ref{remark:FreeLMCAlgebra} and
Remark~\ref{remark:FrechetIsOptimal}. Thus the existence of an
exponential has to be shown by hand.

In the following it will be crucial to have $R \le 1$ in view of
Proposition~\ref{proposition:WeylAlgebraHasExp}. We start with some
basic properties of the exponential series:
\begin{lemma}
    \label{lemma:Exp}%
    Let $R < 1$ and $w \in V_{\mathbf{0}}$ be an even vector.
    \begin{lemmalist}
    \item \label{item:expStarv} For all $v \in V$ we have
        \begin{equation}
            \label{eq:expwStarv}
            \exp(w) \starzLambda v
            =
            \exp(w) \left(v + z \Lambda(w, v)\right),
        \end{equation}
        and
        \begin{equation}
            \label{eq:vStarexpw}
            v \starzLambda \exp(w)
            =
            \exp(w) \left(v + z \Lambda(v, w)\right).
        \end{equation}
    \item \label{item:DDtExp} For all $t \in \mathbb{K}$ one has
        \begin{equation}
            \label{eq:DDtExp}
            \frac{\D}{\D t}
            \E^{tw + \frac{t^2 z}{2} \Lambda(w, w) \Unit}
            =
            \E^{tw + \frac{t^2 z}{2} \Lambda(w, w) \Unit}
            \starzLambda w
            =
            w \starzLambda
            \E^{t w + \frac{t^2 z}{2} \Lambda(w, w) \Unit}.
        \end{equation}
    \item \label{item:StarExponentialw} The star-exponential series
        for $w \in V$ converges absolutely in $\WeylR(V,
        \starzLambda)$ and
        \begin{equation}
            \label{eq:StarExpw}
            \Exp_{\starzLambda}(tw)
            =
            \sum_{n = 0}^\infty
            \frac{t^n}{n!} w \starzLambda \cdots \starzLambda w
            =
            \E^{tw + \frac{t^2 z}{2} \Lambda(w, w) \Unit}.
        \end{equation}
    \end{lemmalist}
\end{lemma}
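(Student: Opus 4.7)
For \refitem{item:expStarv}, the key observation is that $v \in V$ has symmetric degree one, so $P_\Lambda^k(w^n \tensor v) = 0$ for $k \ge 2$. Using Lemma~\ref{lemma:PLambdaMapProperties} together with the Leibniz rule and induction on $n$, one computes $P_\Lambda(w^n \tensor v) = n\,\Lambda(w, v)\, w^{n-1} \tensor \Unit$, so that
\[
    \frac{w^n}{n!} \starzLambda v \;=\; \frac{w^n v}{n!} + z\,\Lambda(w, v)\,\frac{w^{n-1}}{(n-1)!}.
\]
Summing over $n$ and using the continuity of $\starzLambda$ on $\cWeylR(V)$ from Theorem~\ref{theorem:WeylAlgebra}, combined with $\exp(w) \in \cWeylR(V)$ from Proposition~\ref{proposition:WeylAlgebraHasExp} and the continuity of the symmetric tensor product from \eqref{eq:ContinuityOfSymmetricTensorProduct}, gives \eqref{eq:expwStarv}. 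The formula \eqref{eq:vStarexpw} is obtained analogously by observing $P_\Lambda(v \tensor w^n) = n\,\Lambda(v,w)\, \Unit \tensor w^{n-1}$ (up to signs which vanish since $w$ is even), the commutativity of the symmetric product making the two right-hand sides consistent.

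For \refitem{item:DDtExp}, I exploit that $\Lambda(w,w)\Unit$ is a scalar multiple of the unit and already commutes with $tw$ in the symmetric algebra, so
\[
\E^{tw + \frac{t^2z}{2}\Lambda(w,w)\Unit} \;=\; \E^{\frac{t^2 z}{2} \Lambda(w,w)}\, \exp(tw),
\]
where the first factor is an ordinary scalar exponential. Since by Proposition~\ref{proposition:WeylAlgebraHasExp} the map $t \mapsto \exp(tw)$ is real-analytic (resp.\ entire) into $\cWeylR(V)$ with derivative $w\,\exp(tw) = \exp(tw)\,w$, the Leibniz rule yields
\[
\frac{\D}{\D t} \E^{tw + \frac{t^2 z}{2}\Lambda(w,w)\Unit}
\;=\; \E^{\frac{t^2 z}{2}\Lambda(w,w)}\, \exp(tw) \left(w + tz\,\Lambda(w,w)\right).
\]
Applying \refitem{item:expStarv} with $tw$ in place of $w$ gives $\exp(tw) \starzLambda w = \exp(tw)\,(w + tz\,\Lambda(w,w))$; multiplying by the central scalar $\E^{\frac{t^2z}{2}\Lambda(w,w)}$ reproduces the first equality of \eqref{eq:DDtExp}. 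The second equality is identical using the second half of \refitem{item:expStarv}.

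For \refitem{item:StarExponentialw}, the factorization above exhibits $F(t) := \E^{tw + \frac{t^2 z}{2}\Lambda(w,w)\Unit}$ as the product of an entire scalar function of $t$ with the entire $\cWeylR(V)$-valued function $t \mapsto \exp(tw)$ of Proposition~\ref{proposition:WeylAlgebraHasExp}. Since multiplication by a scalar is continuous, $F$ itself is real-analytic (resp.\ entire) in $t$ with absolutely convergent Taylor series in $\cWeylR(V)$. Iterating \refitem{item:DDtExp} shows $\frac{\D^n}{\D t^n} F(t)\big|_{t = 0} = w^{\starzLambda n}$, so the Taylor expansion of $F$ at $0$ equals $\sum_{n=0}^\infty \frac{t^n}{n!}\, w^{\starzLambda n}$ term by term, proving \eqref{eq:StarExpw} and the convergence of the star exponential series.

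The only delicate point is the termwise computation in \refitem{item:expStarv}, where one must interchange the infinite symmetric-algebra sum defining $\exp(w)$ with $\starzLambda v$; this is precisely what the joint continuity of $\starzLambda$ on $\cWeylR(V)$ (Theorem~\ref{theorem:WeylAlgebra}, together with the absolute convergence from Proposition~\ref{proposition:StarAbsConvergent}) provides, so the argument reduces to careful bookkeeping once that estimate is in hand.
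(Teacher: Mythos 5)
Your proof is correct and takes essentially the same route as the paper: compute $w^n \starzLambda v$ termwise using that only $P_\Lambda^0$ and $P_\Lambda^1$ survive on a degree-one factor, sum using continuity and absolute convergence, differentiate to get the star-derivative identity, and read off the star-exponential from the Taylor expansion. The one place where you add something is the explicit factorization $\E^{tw + \frac{t^2z}{2}\Lambda(w,w)\Unit} = \E^{\frac{t^2z}{2}\Lambda(w,w)}\exp(tw)$ in parts \textit{ii.)} and \textit{iii.)}; the paper invokes Proposition~\ref{proposition:WeylAlgebraHasExp} directly for the analyticity of this $t$-dependent exponential, which strictly speaking only covers $t \mapsto \exp(tw)$, so making the scalar factor explicit and then appealing to products of analytic functions is a cleaner justification of the same step.
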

\begin{proof}
    We use the continuity of $\starzLambda$ and the (absolute)
    convergence of the exponential series to get
    \begin{align*}
        \exp(w) \starzLambda v
        &=
        \sum_{n=0}^\infty \frac{1}{n!} w^n \starzLambda v \\
        &=
        \sum_{n=0}^\infty \frac{1}{n!}
        \left(
            w^n v + z \mu \circ P_\Lambda (w^n \tensor v) + 0
        \right) \\
        &=
        \sum_{n=0}^\infty \frac{1}{n!}
        \left(
            w^n v + n z w^{n-1} \Lambda(w, v)
        \right) \\
        &=
        \exp(w) \left(v + z \Lambda(w, v)\right),
    \end{align*}
    since $\mu \circ P_\Lambda(\argument, v)$ is a derivation of the
    undeformed symmetric tensor product and $w$ is even. The second
    equation is analogous. For the second part, we first note that $t
    \mapsto \exp(tw + \frac{t^2z}{2}\Lambda(w, w)\Unit)$ is
    real-analytic (entire in the case $\mathbb{K} = \mathbb{C}$) with
    convergent Taylor expansion around $0$ for all $t \in \mathbb{K}$
    thanks to Proposition~\ref{proposition:WeylAlgebraHasExp},
    \refitem{item:expEntire}. We compute the derivative
    \[
    \frac{\D}{\D t}
    \E^{tw + \frac{t^2 z}{2} \Lambda(w, w) \Unit}
    =
    \E^{tw + \frac{t^2 z}{2} \Lambda(w, w) \Unit}
    \left(w + z \Lambda(tw, w)\right)
    =
    \E^{tw + \frac{t^2 z}{2} \Lambda(w, w) \Unit}
    \starzLambda w,
    \]
    using the first part and the fact that $\Lambda(w, w)\Unit$ is
    central. Analogously, we can write $w \starzLambda$ in front. This
    shows the second part. Together, this gives
    \[
    \frac{\D}{\D t}\At{t=0}
    \E^{tw + \frac{t^2 z}{2} \Lambda(w, w) \Unit}
    =
    w \starzLambda \cdots \starzLambda w
    \]
    for the Taylor coefficients of the real-analytic (entire) function
    $t \mapsto \E^{tw + \frac{t^2 z}{2} \Lambda(w, w) \Unit}$. Since
    its Taylor series converges absolutely, the last part follows.
\end{proof}

With this preparation the following statement is now an easy
computation:
\begin{proposition}
    \label{proposition:TranslationInnerAutos}%
    Let $R < 1$ and let $\varphi \in V'$ be even. If $\varphi$ is in
    the image of $\sharp$ then $\tau_\varphi^*$ is an inner
    automorphism of $\cWeylR(V, \starzLambda)$ for all $z \ne 0$. In
    fact,
    \begin{equation}
        \label{eq:TauVarphiInner}
        \tau_\varphi^*(a)
        =
        \Exp_{\starzLambda}(w) \starzLambda a \starzLambda \Exp(-w)
    \end{equation}
    for all $a \in \cWeylR(V, \starzLambda)$ where $w \in
    V_{\mathbf{0}}$ is such that $2z w^\sharp = \varphi$.
\end{proposition}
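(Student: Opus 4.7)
The plan is to choose, using $z\neq 0$ and the hypothesis $\varphi \in \image\sharp$, an even vector $w \in V_{\mathbf{0}}$ with $2zw^\sharp = \varphi$, so that $\varphi(v) = 2z\Lambda_-(w, v)$ for every $v \in V$. Since $R < 1$ and $w$ is even, Proposition~\ref{proposition:WeylAlgebraHasExp} together with part \refitem{item:StarExponentialw} of Lemma~\ref{lemma:Exp} give that $\Exp_{\starzLambda}(\pm w) = \E^{\pm w + \frac{z}{2}\Lambda(w,w)\Unit}$ is a bona fide element of $\cWeylR(V, \starzLambda)$. The goal is then to verify the identity \eqref{eq:TauVarphiInner} on the generators $V$ and extend it by continuity and the homomorphism property; we understand the $\Exp(-w)$ in \eqref{eq:TauVarphiInner} as $\Exp_{\starzLambda}(-w)$.

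The first step is to check that $\Exp_{\starzLambda}(w)$ is $\starzLambda$-invertible with inverse $\Exp_{\starzLambda}(-w)$, so that $\alpha(a) := \Exp_{\starzLambda}(w) \starzLambda a \starzLambda \Exp_{\starzLambda}(-w)$ is a continuous inner automorphism of $\cWeylR(V, \starzLambda)$. Set $g(t) = \Exp_{\starzLambda}(tw) \starzLambda \Exp_{\starzLambda}(-tw)$. By Lemma~\ref{lemma:Exp}\refitem{item:DDtExp}, $g$ is differentiable and its derivative equals $w \starzLambda g(t) - \Exp_{\starzLambda}(tw) \starzLambda w \starzLambda \Exp_{\starzLambda}(-tw)$; using Lemma~\ref{lemma:Exp}\refitem{item:DDtExp} once more to commute $w$ past $\Exp_{\starzLambda}(tw)$, both terms cancel, hence $g'(t) = 0$ and $g(0) = \Unit$ give $g(t) \equiv \Unit$. (Alternatively, one can verify $\exp(w) \starzLambda \exp(-w) = \E^{-z\Lambda(w,w)}\Unit$ directly, using $P_\Lambda^k(w^n \tensor w^m) = \frac{n!m!}{(n-k)!(m-k)!}\Lambda(w,w)^k\, w^{n-k}\tensor w^{m-k}$ and regrouping the resulting triple sum as a product of three exponential series.)

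The second step computes $\alpha(v)$ for $v \in V$ by the same kind of ODE argument. Let $f(t) = \Exp_{\starzLambda}(tw) \starzLambda v \starzLambda \Exp_{\starzLambda}(-tw)$. Then $f(0) = v$ and
\[
    f'(t)
    =
    w \starzLambda f(t)
    -
    \Exp_{\starzLambda}(tw) \starzLambda v \starzLambda w
    \starzLambda \Exp_{\starzLambda}(-tw).
\]
Since $v \starzLambda w = w \starzLambda v - [w, v]_{\starzLambda} = w \starzLambda v - 2z\Lambda_-(w, v)\Unit = w \starzLambda v - \varphi(v)\Unit$, and since $w$ commutes with $\Exp_{\starzLambda}(tw)$ under $\starzLambda$ by Lemma~\ref{lemma:Exp}\refitem{item:DDtExp}, the second term equals $w \starzLambda f(t) - \varphi(v)\,\Exp_{\starzLambda}(tw) \starzLambda \Exp_{\starzLambda}(-tw) = w \starzLambda f(t) - \varphi(v)\Unit$ by Step~1. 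Hence $f'(t) = \varphi(v)\Unit$, and integrating gives $\alpha(v) = f(1) = v + \varphi(v)\Unit = \tau_\varphi^*(v)$.

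The final step extends the equality from $V$ to all of $\cWeylR(V, \starzLambda)$. Both $\tau_\varphi^*$ and $\alpha$ are continuous unital algebra endomorphisms: the former by Proposition~\ref{proposition:ContinuityOfTau}\refitem{item:TauWeylRContinuous}, the latter because $\starzLambda$ is continuous and $\Exp_{\starzLambda}(w)$ is a fixed invertible element. They agree on $\Unit$ trivially and on $V$ by Step~2, hence on the subalgebra generated by $V$, which is $\WeylR(V, \starzLambda) = \Sym^\bullet(V)$ by Corollary~\ref{corollary:VGeneratesWeylAlgebra}. Since this subalgebra is dense in $\cWeylR(V, \starzLambda)$ (cf.\ Lemma~\ref{lemma:aInWeylRAsSuman} and \eqref{eq:vAsaSeriesConverges}), continuity yields \eqref{eq:TauVarphiInner} on the completion. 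The main obstacle is Step~1, namely giving a rigorous proof that $\Exp_{\starzLambda}(w)$ has $\Exp_{\starzLambda}(-w)$ as its two-sided $\starzLambda$-inverse: the ODE argument is clean but relies on the fact that a differentiable curve in a Hausdorff locally convex space with vanishing derivative is constant, and on the availability of Lemma~\ref{lemma:Exp}\refitem{item:DDtExp} to differentiate star-exponentials under $\starzLambda$.
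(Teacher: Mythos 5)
Your proposal is correct and follows essentially the same route as the paper's proof: establish that $\Exp_{\starzLambda}(\pm w)$ are mutually inverse (the paper asserts this from absolute convergence, you spell it out via an ODE argument), differentiate the conjugation $t\mapsto \Exp_{\starzLambda}(tw)\starzLambda v\starzLambda\Exp_{\starzLambda}(-tw)$ using Lemma~\ref{lemma:Exp}\refitem{item:DDtExp} to obtain the constant derivative $\varphi(v)\Unit$, and then extend from generators to the completion via Corollary~\ref{corollary:VGeneratesWeylAlgebra} and the continuity from Proposition~\ref{proposition:ContinuityOfTau}. The only real difference is cosmetic: you verify invertibility more carefully than the paper does, which actually strengthens the exposition.
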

\begin{proof}
    First we note that the star-exponential function gives a
    one-parameter group of invertible elements in $\cWeylR(V,
    \starzLambda)$ with respect to the star product
    $\starzLambda$. This is clear from the absolute convergence of the
    star-exponential series. Thus the right hand side of
    \eqref{eq:TauVarphiInner} defines an inner automorphism of the
    Weyl algebra. Now consider $v \in V$. Using Lemma~\ref{lemma:Exp}
    we compute for $v \in V$
    \begin{align*}
        \frac{\D}{\D t}
        \Exp_{\starzLambda}(tw)
        \starzLambda v \starzLambda
        \Exp_{\starzLambda}(-tw)
        &=
        \Exp_{\starzLambda}(tw)
        \starzLambda (w \starzLambda v - v \starzLambda w) \starzLambda
        \Exp_{\starzLambda}(-tw) \\
        &=
        \Exp_{\starzLambda}(tw)
        \starzLambda
        (z \Lambda (w, v)\Unit - z \Lambda (v, w) \Unit)
        \starzLambda
        \Exp_{\starzLambda}(-tw) \\
        &=
        2z \Lambda_-(w, v) \Unit,
    \end{align*}
    where we use that $\Exp_{\starzLambda}(-tw)$ is the
    $\starzLambda$-inverse of $\Exp_{\starzLambda}(tw)$. On the other
    hand, $t  \mapsto \tau_{t\varphi}^* (v) = v + t \varphi(v)
    \Unit$ has the derivative
    \[
    \frac{\D}{\D t} \tau_{t\varphi}^*(v) = \varphi(v) \Unit.
    \]
    Thus taking $w$ such that $2w \Lambda_-(w, \argument) = \varphi$,
    i.e. $2zw^\sharp = \varphi$, shows \eqref{eq:TauVarphiInner} for
    $a = v$. Now both sides are automorphisms and hence both sides
    coincide on all $\starzLambda$-polynomials in elements from
    $V$. But $V$ together with $\Unit$ generates $\WeylR(V,
    \starzLambda)$ according to
    Corollary~\ref{corollary:VGeneratesWeylAlgebra}. Thus the two
    automorphisms coincide on $\WeylR(V, \starzLambda)$. Since both
    are continuous, they also coincide on the completion $\cWeylR(V,
    \starzLambda)$.
\end{proof}
\begin{remark}
    \label{remark:NeededContinuityOfInnerStuff}%
    Note that we need the continuity of $\tau_{\varphi}^*$ in order to
    show that it coincides with the (obviously continuous) inner
    automorphism on the right hand side \eqref{eq:TauVarphiInner} on
    the completion. Note also that for $\cWeylRMinus(V)$ we can extend
    the above statement also to $R = 1$, see
    Remark~\ref{remark:WeylMinusExp}.
\end{remark}

%
%

\subsection{Continuous Equivalences}
\label{subsec:ContinuousEquivalences}

In the formal setting we have seen that the same antisymmetric part
$\Lambda_-$ yields equivalent deformations, no matter what the
symmetric part $\Lambda_+$ of $\Lambda$ is. We extend this now to the
analytic framework. The following lemma shows the continuity of the
equivalence transformation from
Proposition~\ref{proposition:FormalEquivalence}:
\begin{lemma}
    \label{lemma:DeltaContinuous}%
    Let $g\colon V \times V \longrightarrow \mathbb{K}$ be an even
    symmetric bilinear form. Let $R \ge \frac{1}{2}$ and let
    $\halbnorm{p}$ be a seminorm on $V$ with
    \begin{equation}
        \label{eq:gContinuity}
        |g(v, w)| \le \halbnorm{p}(v) \halbnorm{p}(w)
    \end{equation}
    for all $v, w \in V$. Then we have for all $a \in \Sym^\bullet(V)$
    \begin{equation}
        \label{eq:gContinuous}
        \halbnorm{p}_R(\Delta_g a) \le (2\halbnorm{p})_R(a).
    \end{equation}
    Moreover, there are constants $c, c' > 0$ with
    \begin{equation}
        \label{eq:expDeltagContinuous}
        \halbnorm{p}(\E^{t\Delta_g} a)
        \le
        c'(c\halbnorm{p})_R(a).
    \end{equation}
\end{lemma}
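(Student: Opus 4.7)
The plan is to mirror the proof of Lemma~\ref{lemma:ContinuityOfPLambda}, first obtaining a sharp estimate for $\Delta_g$ on each homogeneous piece $\Sym^n(V)$, then summing, and finally iterating to control the exponential.

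I would first define an auxiliary operator $\tilde{\Delta}_g\colon \Tensor^\bullet(V) \longrightarrow \Tensor^\bullet(V)$ by the same formula \eqref{eq:gLaplacian} but on the full tensor algebra, so that $\Delta_g \circ \Symmetrizer = \Symmetrizer \circ \tilde{\Delta}_g$. On a factorising tensor $v_1 \tensor \cdots \tensor v_n$ of homogeneous vectors the formula for $\tilde{\Delta}_g$ is a sum of $\binom{n}{2}$ terms, each of modulus at most $\halbnorm{p}(v_1)\cdots\halbnorm{p}(v_n)$ thanks to \eqref{eq:gContinuity} (the signs are just $\pm 1$). Lemma~\ref{lemma:TestOnFactorizingTensors} then yields $\halbnorm{p}^{n-2}(\tilde{\Delta}_g u) \le \binom{n}{2}\halbnorm{p}^n(u)$ for all $u \in V^{\tensor n}$, and restricting to $u \in \Sym^n(V)$ together with \eqref{eq:pSymp} gives the key pointwise estimate
\[
\halbnorm{p}^{n-2}(\Delta_g a_n) \le \tbinom{n}{2}\halbnorm{p}^n(a_n).
\]

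From this, \eqref{eq:gContinuous} is immediate: writing $a = \sum_n a_n$ with $a_n \in \Sym^n(V)$, one has $\halbnorm{p}_R(\Delta_g a) = \sum_{n\ge 2} (n-2)!^R\, \halbnorm{p}^{n-2}(\Delta_g a_n)$, and the elementary inequality $\binom{n}{2} \le 2^n$ together with $(n-2)!^R \le n!^R$ gives $(n-2)!^R \binom{n}{2} \le 2^n n!^R$, hence $\halbnorm{p}_R(\Delta_g a) \le (2\halbnorm{p})_R(a)$ as desired.

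For \eqref{eq:expDeltagContinuous}, which I read as $\halbnorm{p}_R(\E^{t\Delta_g} a) \le c'(c\halbnorm{p})_R(a)$, I would iterate the pointwise bound. Applying $\Delta_g$ $k$ times on $a_n$ telescopes to
\[
\halbnorm{p}^{n-2k}(\Delta_g^k a_n)
\le \tbinom{n}{2}\tbinom{n-2}{2}\cdots\tbinom{n-2k+2}{2}\halbnorm{p}^n(a_n)
= \frac{n!}{2^k(n-2k)!}\halbnorm{p}^n(a_n).
\]
Substituting into the exponential series and exchanging the sums gives, for each $n$, an inner sum
\[
S_n \;=\; \sum_{k=0}^{\lfloor n/2\rfloor}\frac{|t|^k}{k!\,2^k}\left(\frac{n!}{(n-2k)!}\right)^{1-R} (n!)^R \halbnorm{p}^n(a_n),
\]
so that $\halbnorm{p}_R(\E^{t\Delta_g} a) \le \sum_n S_n$. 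The main (and only) obstacle is then to absorb the ratio $(n!/(n-2k)!)^{1-R}$ into a geometric factor $c^n$. Here $R \ge \tfrac{1}{2}$ is exactly what is needed: for $R \ge 1$ the exponent $1-R$ is non-positive and the ratio is $\le 1$, yielding $S_n \le e^{|t|/2}(n!)^R\halbnorm{p}^n(a_n)$, so $c=1$, $c' = e^{|t|/2}$; for $\tfrac{1}{2}\le R\le 1$ one uses $n!/(n-2k)! \le n^{2k}$ and then $n^{2k(1-R)} \le n^k$, giving
\[
\sum_{k=0}^{\infty}\frac{(|t|n/2)^k}{k!} = e^{|t|n/2} = (e^{|t|/2})^n,
\]
so $c = e^{|t|/2}$ and $c' = 1$ does the job. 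Summing over $n$ yields $\halbnorm{p}_R(\E^{t\Delta_g} a) \le c'(c\halbnorm{p})_R(a)$, completing the proof.
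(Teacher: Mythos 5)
Your proof is correct and follows the paper's own proof almost verbatim in its structure: the extension to an operator $\tilde{\Delta}_g$ on the tensor algebra intertwining with $\Symmetrizer$, the bound $\halbnorm{p}^{n-2}(\Delta_g a_n) \le \tfrac{n(n-1)}{2}\halbnorm{p}^n(a_n)$, its iteration to $\halbnorm{p}^{n-2k}(\Delta_g^k a_n) \le \tfrac{n!}{2^k (n-2k)!}\halbnorm{p}^n(a_n)$, and the derivation of \eqref{eq:gContinuous} are all exactly the paper's steps. The only place you diverge is the final absorption of $\bigl(n!/(n-2k)!\bigr)^{1-R}$ in the exponential estimate: the paper uses $n! \le 2^n (n-2k)!\,(2k)!$ and $(2k)! \le k!^2\, 2^{2k}$ to produce a convergent $k$-series $\sum_k |t|^{\pm k} k!^{-(2R-1)}$, which forces a case distinction on $|t|$, whereas you use $n!/(n-2k)! \le n^{2k}$ and $n^{2k(1-R)} \le n^k$ (valid precisely for $R \ge \tfrac12$) so that the $k$-sum is dominated by $\E^{|t|n/2} = c^n$ and gets absorbed into the rescaled seminorm directly. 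Your variant is sound and in fact slightly cleaner, since it yields explicit constants $c = \E^{|t|/2}$, $c' = 1$ (resp.\ $c'=\E^{|t|/2}$, $c=2$ for $R\ge 1$) uniformly in $t$ and with no case split; you also correctly read the left-hand side of \eqref{eq:expDeltagContinuous} as $\halbnorm{p}_R$ rather than $\halbnorm{p}$.
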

\begin{proof}
    First we extend the operator $\Delta_g$ to the whole tensor
    algebra $\Tensor^\bullet(V)$ as usual by setting
    \[
    \tilde{\Delta}_g (v_1 \tensor \cdots \tensor v_n)
    =
    \sum_{i<j} (-1)^{v_i(v_1 + \cdots + v_{i-1})}
    (-1)^{v_j(v_1 + \cdots v_{i-1} + v_{i+1} + \cdots v_{j-1})}
    g(v_i, v_j)
    v_1 \tensor
    \cdots \stackrel{i}{\wedge} \cdots \stackrel{j}{\wedge} \cdots
    \tensor v_n
    \]
    on factorizing homogeneous tensor and extending linearly.  Then we
    have
    \begin{equation*}
        \Symmetrizer \circ \tilde{\Delta}_g
        =
        \Delta_g \circ \Symmetrizer
        \tag{$*$}
    \end{equation*}
    as already for $P_\Lambda$. With an analogous estimate as for the
    Poisson bracket we get
    \[
    \halbnorm{p}^{n-2}\left(
        \tilde{\Delta}_g (v_1 \tensor \cdots \tensor v_n)
    \right)
    \le
    \sum_{i < j} |g(v_i, v_j)|
    \halbnorm{p}(v_1)
    \cdots \stackrel{i}{\wedge} \cdots \stackrel{j}{\wedge} \cdots
    \halbnorm{p}(v_n)
    \le
    \frac{n(n-1)}{2} \halbnorm{p}(v_1) \cdots \halbnorm{p}(v_n).
    \]
    This implies for all $a_n \in \Tensor^n(V)$ the estimate
    \[
    \halbnorm{p}^{n-2}(\tilde{\Delta}_g a_n)
    \le
    \frac{n(n-1)}{2}
    \halbnorm{p}^n(a_n).
    \]
    Thanks to ($*$) we get the same estimate for $a_n \in \Sym^n(V)$
    and $\Delta_g$ in place of $\tilde{\Delta}_g$. By induction this
    results in
    \begin{equation*}
        \halbnorm{p}^{n - 2k}(\Delta_g^k a_n)
        \le
        \frac{n!}{2^k (n-2k)!} \halbnorm{p}^n(a_n)
        \tag{$**$}
    \end{equation*}
    as long as $n - 2k \ge 0$ and $\Delta_g^k a_n = 0$ for $2k > n$.
    This gives
    \[
    \halbnorm{p}_R(\Delta_g a)
    =
    \sum_{n=2}^\infty
    (n-2)!^R \halbnorm{p}^{n-2}(\Delta_g a_n)
    \le
    \sum_{n=2}^\infty
    (n-2)!^R \frac{n(n-1)}{2} \halbnorm{p}^n(a_n)
    \le
    \sum_{n=0}^\infty n!^R 2^n \halbnorm{p}^n(a_n),
    \]
    which is the first estimate \eqref{eq:gContinuous}. For the second
    we have to be slightly more efficient with the estimates as a
    simple iteration of \eqref{eq:gContinuous} would not suffice. We
    have for $|t| \ge 1$
    \begin{align*}
        \halbnorm{p}_R(\E^{t \Delta_g} a)
        &\le
        \sum_{k = 0}^\infty
        \frac{|t|^k}{k!}
        \halbnorm{p}_R(\Delta_g^k a) \\
        &\le
        \sum_{n, k = 0}^\infty
        \frac{|t|^k}{k!}
        \halbnorm{p}_R(\Delta_g^k a_n)
        \\
        &\stackrel{\mathclap{(**)}}{\le} \;
        \sum_{\substack{n, k = 0 \\ n \ge 2k}}^\infty
        \frac{|t|^k}{k!}
        (n - 2k)!^R \frac{n!}{2^k (n - 2k)!}
        \halbnorm{p}^n(a_n)
        \\
        &=
        \sum_{\substack{n, k = 0 \\ n \ge 2k}}^\infty
        \frac{|t|^k}{2^k k!}
        \left(
            \frac{n!}{(n-2k)!}
        \right)^{1-R}
        n!^R
        \halbnorm{p}^n(a_n)
        \\
        &\stackrel{(a)}{\le}
        \sum_{\substack{n, k = 0 \\ n \ge 2k}}^\infty
        \frac{2^{2k(1 - R)} |t|^k}{2^k}
        \frac{1}{k!^{1- 2(1-R)}}
        n!^R
        2^n
        \halbnorm{p}^n(a_n)
        \\
        &\stackrel{(b)}{\le}
        \sum_{\substack{n, k = 0 \\ n \ge 2k}}^\infty
        \frac{1}{|t|^k k!^{2R - 1}}
        n!^R
        (2|t|)^n
        \halbnorm{p}^n(a_n) \\
        &\le
        \left(
            \sum_{k = 0}^\infty
            \frac{1}{|t|^k k!^{2R - 1}}
        \right)
        (2|t|\halbnorm{p})_R (a_n),
    \end{align*}
    where in ($a$) we use $n! \le 2^n (n-2k)! (2k)!$ and $(2k)! \le
    k!^2 2^{2k}$, and in ($b$) we use the assumption $|t| \ge 1$ as
    well as $k \le 2k \le n$ and $R \ge \frac{1}{2}$. If instead $|t|
    < 1$ then we proceed in ($b$) by
    \[
    \halbnorm{p}_R(\E^{t \Delta_g} a)
    \le \cdots
    \stackrel{(b')}{\le}
    \left(
        \sum_{k = 0}^\infty
        \frac{|t|^k}{k!^{2R - 1}}
    \right)
    (2\halbnorm{p})_R (a_n).
    \]
    In both cases the series over $k$ converge as long as $R \ge
    \frac{1}{2}$ and yield constants $c'$ as required for the estimate
    \eqref{eq:expDeltagContinuous}. The constant $c$ can be taken as
    the maximum of $2|t|$ and $2$.
\end{proof}

We see that the idea of this estimate is rather similar to the one in
Lemma~\ref{lemma:KeyLemma}. These estimates provide now the key to
establish the equivalences also in the analytic framework:
\begin{proposition}
    \label{proposition:EquivalenceContinuous}%
    Let $R \ge \frac{1}{2}$ and let $\Lambda, \Lambda'\colon V \times V
    \longrightarrow \mathbb{K}$ be even continuous bilinear forms such
    that their antisymmetric parts coincide. Then the Weyl algebras
    $\cWeylR(V, \starzLambda)$ and $\cWeylR(V, \star_{z\Lambda'})$ are
    isomorphic via the continuous equivalence transformation
    \begin{equation}
        \label{eq:ContinuousEquivalence}
        \E^{z\Delta_g} (a \starzLambda b)
        =
        \left(\E^{z\Delta_g} a\right)
        \star_{z\Lambda'}
        \left(\E^{z\Delta_g} b\right),
    \end{equation}
    where $g = \Lambda' - \Lambda = \Lambda_+' - \Lambda_+$ and $a, b
    \in \cWeylR(V)$.
\end{proposition}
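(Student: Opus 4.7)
The plan is to combine the continuity of $\E^{z\Delta_g}$ established in Lemma~\ref{lemma:DeltaContinuous} with the formal algebraic identity of Proposition~\ref{proposition:FormalEquivalence} and then extend by density from $\Sym^\bullet(V)$ to its completion $\cWeylR(V)$. The setup is already essentially in place, so the proof reduces to a density argument once the three ingredients are correctly identified.

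First I would check that $\E^{z\Delta_g}$ makes sense as a continuous endomorphism of the underlying locally convex space $\cSym^\bullet_R(V)$ common to both Weyl algebras. On $\Sym^\bullet(V)$ the estimate \eqref{eq:expDeltagContinuous} of Lemma~\ref{lemma:DeltaContinuous} (applied with $t=z$) gives a constant and a rescaled seminorm controlling $\halbnorm{p}_R(\E^{z\Delta_g}a)$, and by density of $\Sym^\bullet(V)$ in $\cSym^\bullet_R(V)$ the operator extends uniquely to a continuous endomorphism of the completion. Applying the same argument with $g$ replaced by $-g$ and using $\Delta_{-g}=-\Delta_g$ yields that $\E^{-z\Delta_g}$ is the continuous inverse, so $\E^{z\Delta_g}$ is a topological linear automorphism of $\cSym^\bullet_R(V)$.

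Second I would verify the intertwining identity \eqref{eq:ContinuousEquivalence} on the algebraic subspace $\Sym^\bullet(V)$. By Corollary~\ref{corollary:NonFormalDeformation} one may substitute the formal parameter $\nu$ by the scalar $z$ in $\Sym^\bullet(V)[\nu]$. For $a,b\in\Sym^\bullet(V)$ the operators $P_\Lambda$, $P_{\Lambda'}$, $P_g$ and $\Delta_g$ all lower symmetric degree, so $P_\Lambda^k(a\tensor b)$, $P_{\Lambda'}^k(a\tensor b)$ and $\Delta_g^k a$, $\Delta_g^k b$ vanish for $k$ larger than the maximal symmetric degree occurring in $a$ or $b$. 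All exponential series therefore reduce to finite sums, and the formal identity \eqref{eq:FormalEqiuvalenceBetweenStarStarPrime} of Proposition~\ref{proposition:FormalEquivalence} becomes literally \eqref{eq:ContinuousEquivalence} after the substitution $\nu=z$; note that $g=\Lambda_+'-\Lambda_+$ is even and symmetric so that Proposition~\ref{proposition:FormalEquivalence} applies.

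Third I would extend the identity from $\Sym^\bullet(V)$ to $\cWeylR(V)$ by a continuity and density argument. By Lemma~\ref{lemma:KeyLemma} the products $\starzLambda$ and $\star_{z\Lambda'}$ are jointly continuous on $\cWeylR(V)\times\cWeylR(V)$ (with values in $\cWeylR(V)$), and by the first step $\E^{z\Delta_g}$ is continuous on $\cWeylR(V)$. Hence both sides of \eqref{eq:ContinuousEquivalence} depend continuously on $(a,b)$. Since $\Sym^\bullet(V)$ is dense in $\cWeylR(V)$ (by Lemma~\ref{lemma:aInWeylRAsSuman} and the absolute convergence of $\sum_n v_n$), the identity established in the second step extends to all $a,b\in\cWeylR(V)$. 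Combined with the invertibility from the first step this shows that $\E^{z\Delta_g}$ is a continuous algebra isomorphism between the two Weyl algebras, proving the proposition. There is no genuine obstacle; the only delicate point is to invoke the formal identity on the truly algebraic subspace $\Sym^\bullet(V)$, where convergence is trivial, before passing to the completion, rather than trying to rearrange infinite sums over $k$ directly.
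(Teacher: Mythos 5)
Your proposal is correct and follows essentially the same route as the paper's proof: continuity of $\E^{z\Delta_g}$ from Lemma~\ref{lemma:DeltaContinuous}, the algebraic identity from Proposition~\ref{proposition:FormalEquivalence} on the dense subspace $\Sym^\bullet(V)$, and extension to the completion by continuity. The only cosmetic difference is that the paper establishes invertibility via the one-parameter group law $\E^{z\Delta_g}\circ\E^{w\Delta_g}=\E^{(z+w)\Delta_g}$ using absolute convergence of the exponential series, whereas you obtain the same conclusion from the finite-sum identity on $\Sym^\bullet(V)$ plus density, which is equally valid.
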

\begin{proof}
    First we note that the continuity of $\Lambda$ and $\Lambda'$
    implies the continuity of $g$. Moreover, to test the continuity of
    the map $\E^{z\Delta_g}$ it clearly suffices to consider only
    those seminorms $\halbnorm{p}_R$ of $\cWeylR(V, \starzLambda)$
    with $\halbnorm{p}$ being a seminorm such that
    \eqref{eq:gContinuity} holds. Thus we can apply
    Lemma~\ref{lemma:DeltaContinuous} to conclude that
    $\E^{z\Delta_g}$ is continuous on $ \WeylR(V, \starzLambda)$ and
    hence extends to a continuous endomorphism of $\cWeylR(V,
    \starzLambda)$ as well. Then the relation
    \eqref{eq:ContinuousEquivalence} holds for all $a, b \in \WeylR(V,
    \starzLambda)$ by Proposition~\ref{proposition:FormalEquivalence}
    and hence for all $a, b \in \cWeylR(V, \starzLambda)$ by
    continuity. Finally, for a fixed $a \in \cWeylR(V, \starzLambda)$
    the exponential series
    \[
    \E^{t\Delta_g} a = \sum_{k=0}^\infty \frac{t^k}{k!} \Delta_g^k a
    \]
    converges absolutely in the topology of $\cWeylR(V,
    \starzLambda)$. Indeed, this follows from the estimate in the
    proof of Lemma~\ref{lemma:DeltaContinuous}. Thus for $z, w \in
    \mathbb{K}$ we get $\E^{z \Delta_g} \circ \E^{w \Delta_g} =
    \E^{(z+w)\Delta_g}$ at once. This shows that $\E^{z \Delta_g}$ is
    indeed invertible and hence a continuous isomorphism with
    continuous inverse $\E^{-z \Delta_g}$.
\end{proof}
\begin{remark}
    \label{remark:AlsoWeylREquivalent}%
    Note that $\E^{z\Delta_g}$ maps $\Sym^\bullet_R(V)$ into
    itself. Hence also the (non-completed) Weyl algebras $\WeylR(V,
    \starzLambda)$ and $\WeylR(V, \star_{z\Lambda'})$ are isomorphic
    via $\E^{z\Delta_g}$. Analogously, the equivalence transformation
    $\E^{z\Delta_g}$ is a continuous algebra isomorphism between
    $\cWeylRMinus(V, \starzLambda)$ and $\cWeylRMinus(V,
    \star_{z\Lambda'})$ for $R > \frac{1}{2}$. This continuity is
    clearly included in the above estimate
    \eqref{eq:expDeltagContinuous}.
\end{remark}
\begin{remark}
    \label{remark:IsoClassesWeylR}%
    Putting together the results from
    Proposition~\ref{proposition:FunctorialityWeylR} and
    Proposition~\ref{proposition:EquivalenceContinuous} we see that
    the isomorphism class of the Weyl algebra $\cWeylR(V,
    \starzLambda)$ is determined by the isomorphism class of the
    \emph{antisymmetric} part $\Lambda_-$ alone.
\end{remark}
\begin{remark}
    \label{remark:StarEquivalence}%
    In the situation of Proposition~\ref{proposition:Involution} we
    see that also in the $^*$-algebra case, two Weyl algebras
    $\WeylR(V, \starhLambda)$ and $\WeylR(V,
    \star_{\frac{\I\hbar}{2}\Lambda'})$ are $^*$-equivalent via an
    equivalence transformation which is now a $^*$-isomorphism
    provided that the antisymmetric parts of $\Lambda$ and $\Lambda'$
    coincide. The reason is that the symmetric parts are necessarily
    purely imaginary.
\end{remark}

%
%

\subsection{Finite Dimensional $V$: Comparison with existing results}
\label{subsec:FiniteDimensionalV}

In the finite-dimensional case the situation is very simple: first we
note that there is only one Hausdorff locally convex topology on $V$
and all bilinear maps are continuous. In this situation we get a
defining system of continuous seminorms for the topology of
$\WeylR(V)$ and $\WeylRMinus(V)$ very easily:
\begin{lemma}
    \label{lemma:FiniteDimVSeminorms}%
    Let $V$ be finite-dimensional and let $\halbnorm{p}$ be a norm on
    $V$. Then the norms $\{(c\halbnorm{p})_R\}_{c > 0}$ yield a
    defining system of seminorms for $\WeylR(V)$.
\end{lemma}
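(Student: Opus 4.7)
The plan is to invoke the fact that in finite dimensions any two norms are equivalent, combined with the monotonicity property of the $R$-seminorms in the underlying seminorm on $V$ already established in Lemma~\ref{lemma:pREinspRinftyEstimates}, \refitem{item:pLEqPRLEqR}.

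More precisely, recall that by definition the topology of $\WeylR(V)$ is determined by the family $\{\halbnorm{q}_R\}$ where $\halbnorm{q}$ ranges over \emph{all} continuous seminorms on $V$. So I need to show that every such $\halbnorm{q}_R$ is dominated by some $(c\halbnorm{p})_R$. Since $V$ is finite-dimensional, it carries a unique Hausdorff locally convex topology, and every continuous seminorm $\halbnorm{q}$ on $V$ is automatically bounded by a multiple of the norm $\halbnorm{p}$: there is $c = c(\halbnorm{q}) > 0$ such that $\halbnorm{q}(v) \le c\halbnorm{p}(v)$ for all $v \in V$.

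From Lemma~\ref{lemma:pREinspRinftyEstimates}, \refitem{item:pLEqPRLEqR}, the map $\halbnorm{p} \mapsto \halbnorm{p}_R$ is monotone, so this estimate on $V$ lifts directly to the estimate $\halbnorm{q}_R(a) \le (c\halbnorm{p})_R(a)$ for all $a \in \Sym^\bullet(V)$. Conversely, each $(c\halbnorm{p})_R$ is itself of the form $\halbnorm{q}_R$ with $\halbnorm{q} = c\halbnorm{p}$ a continuous seminorm on $V$, hence belongs to the original defining family. Thus $\{(c\halbnorm{p})_R\}_{c > 0}$ and $\{\halbnorm{q}_R\}_{\halbnorm{q}\text{ cts}}$ generate the same locally convex topology on $\WeylR(V)$, which is exactly the claim.

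There is no genuine obstacle here; the statement is essentially a reformulation of finite-dimensional norm equivalence through the functorial construction $\halbnorm{p} \rightsquigarrow \halbnorm{p}_R$. The only thing worth being careful about is that the family remains directed, which it is: given $c_1, c_2 > 0$ we have $(c_1\halbnorm{p})_R, (c_2\halbnorm{p})_R \le (\max(c_1,c_2)\halbnorm{p})_R$ by \refitem{item:pLEqPRLEqR} again. The same argument obviously carries over to $\WeylRMinus(V)$ for $R > \tfrac{1}{2}$ using the seminorms $(c\halbnorm{p})_{R - \epsilon}$ parametrised by $c > 0$ and $\epsilon > 0$.
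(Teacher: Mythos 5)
Your proof is correct and follows the same route as the paper: bound an arbitrary continuous seminorm $\halbnorm{q}$ on $V$ by $c\halbnorm{p}$ using finite-dimensional norm equivalence, then lift this to $\halbnorm{q}_R \le (c\halbnorm{p})_R$ via the monotonicity of the $R$-seminorm construction (Lemma~\ref{lemma:pREinspRinftyEstimates}, \refitem{item:pLEqPRLEqR}, which the paper spells out in the equivalent form $\halbnorm{q}^n \le c^n\halbnorm{p}^n$). Your extra remarks on directedness and the converse containment are fine; note only that the paper's subsequent Lemma~\ref{lemma:FiniteDimensionsProCase} establishes the sharper statement for $\WeylRMinus(V)$ that the multiples $c$ are not even needed there, because the decreasing factorial powers can absorb any $c^n$.
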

\begin{proof}
    Let $\halbnorm{q}$ be an arbitrary seminorm on $V$. Then there is
    a constant $c > 0$ with $\halbnorm{q} \le c \halbnorm{p}$ since
    $\halbnorm{p}$ is a norm and we are in finite dimensions. Then we
    have $\halbnorm{q}^n \le c^n \halbnorm{p}^n$. Hence we also get
    $\halbnorm{q}_R \le (c\halbnorm{p})_R$ from which the
    claim follows.
\end{proof}
\begin{lemma}
    \label{lemma:FiniteDimensionsProCase}%
    Let $V$ be finite-dimensional and let $\halbnorm{p}$ be a norm on
    $V$. Then the norms $\{\halbnorm{p}_{R - \epsilon}\}_{\epsilon
      > 0}$ yield a defining system of seminorms for $\WeylRMinus(V)$.
\end{lemma}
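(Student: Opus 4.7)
The plan is to argue that the seminorms $\halbnorm{p}_{R-\epsilon}$ are a subset of the defining system coming from Definition~\ref{definition:WeylAlgebraWR} of $\WeylRMinus(V)$, and that every other seminorm from that defining system is dominated by one of them. Since $\halbnorm{p}$ itself is a continuous seminorm on $V$, the norm $\halbnorm{p}_{R-\epsilon}$ is automatically one of the seminorms determining the projective limit topology, so continuity in one direction is immediate.

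For the converse direction, I would take an arbitrary seminorm $\halbnorm{q}$ on $V$ together with some $\epsilon > 0$. Since $V$ is finite-dimensional and $\halbnorm{p}$ is a norm, there exists $c > 0$ with $\halbnorm{q} \le c \halbnorm{p}$, exactly as in the proof of Lemma~\ref{lemma:FiniteDimVSeminorms}. Passing to the $n$-fold tensor powers gives $\halbnorm{q}^n \le c^n \halbnorm{p}^n$ on $V^{\tensor n}$, so
\begin{equation*}
    \halbnorm{q}_{R-\epsilon}(v)
    =
    \sum_{n=0}^\infty \halbnorm{q}^n(v_n) n!^{R-\epsilon}
    \le
    \sum_{n=0}^\infty c^n \halbnorm{p}^n(v_n) n!^{R-\epsilon}.
\end{equation*}

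The main point is now to absorb the geometric prefactor $c^n$ into a slightly larger weight $n!^{R-\epsilon'}$ with $0 < \epsilon' < \epsilon$. Writing $c^n n!^{R-\epsilon} = \bigl(c^n n!^{-(\epsilon-\epsilon')}\bigr) n!^{R-\epsilon'}$ and using that $n!^{\epsilon-\epsilon'}$ grows faster than any exponential (as $\epsilon-\epsilon' > 0$), the sequence $c^n n!^{-(\epsilon-\epsilon')}$ is bounded by some constant $C > 0$ uniformly in $n$. Consequently
\begin{equation*}
    \halbnorm{q}_{R-\epsilon}(v)
    \le
    C \sum_{n=0}^\infty \halbnorm{p}^n(v_n) n!^{R-\epsilon'}
    =
    C \halbnorm{p}_{R-\epsilon'}(v),
\end{equation*}
which exhibits $\halbnorm{q}_{R-\epsilon}$ as dominated by a member of the proposed system. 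Since the full defining system of $\WeylRMinus(V)$ consists of all $\halbnorm{q}_{R-\epsilon}$ with $\halbnorm{q}$ a continuous seminorm on $V$ and $\epsilon > 0$, this proves that $\{\halbnorm{p}_{R-\epsilon}\}_{\epsilon > 0}$ is equivalent to that system and hence defines the topology.

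There is no real obstacle here: once one notices that the drop from $R-\epsilon'$ to $R-\epsilon$ produces an inverse factorial which beats any exponential $c^n$, the argument is a one-line estimate identical in spirit to Lemma~\ref{lemma:FiniteDimVSeminorms} but with the freedom to shrink $\epsilon$ playing the role of the freedom to enlarge $c$.
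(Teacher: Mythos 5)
Your proposal is correct and follows essentially the same route as the paper: reduce to $\halbnorm{q} \le c\halbnorm{p}$ by finite-dimensionality, then absorb the factor $c^n$ into the gap between the weights $n!^{R-\epsilon}$ and $n!^{R-\epsilon'}$ for some $0 < \epsilon' < \epsilon$, which is exactly the estimate $c^n \le C\, n!^{\epsilon-\epsilon'}$ used in the paper's proof. Nothing is missing.
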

\begin{proof}
    Here we do not even need the multiples of $\halbnorm{p}$. As
    before, for a given seminorm $\halbnorm{q}$ on $V$ there is a
    constant $c > 0$ with $\halbnorm{q} \le c \halbnorm{p}$ and hence
    $\halbnorm{q}^n \le c^n \halbnorm{p}^n$. Now fix $\epsilon' > 0$
    with $\epsilon' < \epsilon$ and let $C > 0$ be a constant such
    that $c^n \le C n!^{\epsilon - \epsilon'}$ for all $n \in
    \mathbb{N}$. Then we have
    \[
    \halbnorm{q}_{R - \epsilon}(a)
    =
    \sum_{n=0}^\infty n!^{R - \epsilon} \halbnorm{q}^n(a_n)
    \le
    \sum_{n=0}^\infty n!^{R - \epsilon} c^n \halbnorm{p}^n(a_n)
    \le
    C \sum_{n=0}^\infty n!^{R - \epsilon'} \halbnorm{p}^n(a_n)
    =
    C \halbnorm{p}_{R - \epsilon'}(a).
    \]
    This shows that we can estimate every seminorm of the form
    $\halbnorm{q}_{R - \epsilon}$ by a suitable $\halbnorm{p}_{R -
      \epsilon'}$.
\end{proof}

Let $V = V_{\mathbf{0}} \oplus V_{\mathbf{1}}$ be finite-dimensional
and real. Moreover, let $\Lambda\colon V \times V \longrightarrow
\mathbb{R}$ be antisymmetric and even. Then $\Lambda =
\Lambda_{\mathbf{0}} + \Lambda_{\mathbf{1}}$ with
\begin{equation}
    \label{eq:LambdaBosonicFermionicPart}
    \Lambda_{\mathbf{0}}\colon
    V_{\mathbf{0}} \times V_{\mathbf{0}} \longrightarrow \mathbb{R}
    \quad
    \textrm{and}
    \quad
    \Lambda_{\mathbf{1}}\colon
    V_{\mathbf{1}} \times V_{\mathbf{1}} \longrightarrow \mathbb{R},
\end{equation}
such that $\Lambda_{\mathbf{0}}$ is an antisymmetric bilinear form on
$V_{\mathbf{0}}$ and $\Lambda_{\mathbf{1}}$ is a symmetric bilinear
form on $V_{\mathbf{1}}$. By the linear Darboux Theorem we can find a
basis $q_1, \ldots, q_d, p_1, \ldots, p_d, c_1, \ldots, c_k$ of
$V_{\mathbf{0}}$ such that the only nontrivial pairing is
\begin{equation}
    \label{eq:DarbouxBasis}
    \Lambda_{\mathbf{0}}(q_i, p_j)
    =
    \delta_{ij}
    =
    - \Lambda_{\mathbf{0}}(p_j, q_i).
\end{equation}
For the odd part, we can find a basis $e_1, \ldots, e_r, f_1,
\ldots, f_s, x_1, \ldots, x_t$ with the only nontrivial pairings
\begin{equation}
    \label{eq:OrthonormalBasis}
    \Lambda_{\mathbf{1}}(e_i, e_j) = \delta_{ij}
    \quad
    \textrm{and}
    \quad
    \Lambda_{\mathbf{1}}(f_i, f_j) = - \delta_{ij}.
\end{equation}
Here $\dim V_{\mathbf{0}} = 2d+k$ and $\dim V_{\mathbf{1}} =
r+s+t$. Then $\Lambda_{\mathbf{0}}$ is symplectic iff $k = 0$ and
$\Lambda_{\mathbf{1}}$ is an (indefinite) inner product iff $t = 0$,
its signature is then given by $(r, s)$. If we use $\Lambda$ directly
for building the star product $\starzLambda$ in this case then we
obtain the usual Weyl-Moyal star product for the even part and a
Clifford multiplication for the odd part. Thus the numbers $d, k, r,
s, t$ encode the isomorphism class of $\WeylR(V, \starzLambda)$ as
well as those of $\WeylRMinus(V, \starzLambda)$ in the
finite-dimensional case. The complex case is analogous.

Let us now use this simple classification to compare our general
construction with a previous construction in finite dimensions: first
we want to relate our construction to the one of
\cite{omori.maeda.miyazaki.yoshioka:2000a}, where the Weyl-Moyal type
star product was considered, i.e. no symmetric contribution to the
symplectic antisymmetric part $\Lambda$.

In the approach of \cite{omori.maeda.miyazaki.yoshioka:2000a}, the
relevant topology on the complexified symmetric algebra
$\Sym^\bullet(V_{\mathbb{R}}) \tensor \mathbb{C} = \mathbb{C}[z^1,
\ldots, z^d, \cc{z}^1, \ldots, \cc{z}^d]$ over $V_{\mathbb{R}} =
\mathbb{R}^{2d}$ is obtained as follows. For a parameter $0 < p \le 2$
the topology is defined by the seminorms
\begin{equation}
    \label{eq:SupSeminorms}
    \norm{a}_{p, s}
    =
    \sup_{x \in \mathbb{C}^{2d}}
    \{\abs{a(x)} \E^{-s \abs{x}^p}\},
\end{equation}
where we denote by $x \in \mathbb{C}^{2d}$ a point in the complexified
vector space and use the obvious extension of $a \in
\Sym^\bullet(V_{\mathbb{R}}) \tensor \mathbb{C}$ as a function
(polynomial) on $\mathbb{C}^{2d}$. Moreover, $\abs{x}$ denotes the
euclidean norm of $x$.  Then the locally convex topology used is the
one determined by all the seminorms $\norm{\argument}_{p, s}$ for all
$s > 0$. In fact, in \cite{omori.maeda.miyazaki.yoshioka:2000a} only
the case of $d = 1$ is considered, but it is clear that everything can
be done in higher (finite) dimensions as well. The following
proposition clarifies the relation between the two approaches, the
proof of which is implicitly contained already in
\cite{omori.maeda.miyazaki.yoshioka:2000a}.
\begin{proposition}
    \label{proposition:OMMYcoincides}%
    Let $0 < p \le 2$. Then the locally convex topology on
    $\Sym^\bullet(V_{\mathbb{R}})\tensor \mathbb{C}$ induced by the
    seminorms $\norm{\argument}_{p, s}$ for $s > 0$ coincides with the
    topology of $\Sym^\bullet_R(V_{\mathbb{R}}) \tensor
    \mathbb{C}$ if we set $R = \frac{1}{p}$.
\end{proposition}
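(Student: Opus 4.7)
I would identify $\Sym^\bullet(V_{\mathbb{R}})\tensor\mathbb{C}$ with the polynomial algebra on the complexified dual $\mathbb{C}^{2d}$, so that a homogeneous element $a_n\in\Sym^n(V_{\mathbb{R}})\tensor\mathbb{C}$ becomes a homogeneous polynomial of degree $n$. Since $V_{\mathbb{R}}$ is finite-dimensional, Lemma~\ref{lemma:FiniteDimVSeminorms} reduces the claim to comparing the family $\{(c\halbnorm{q})_R\}_{c>0}$ for a single fixed norm $\halbnorm{q}$ on $V_{\mathbb{R}}\tensor\mathbb{C}$ with the family $\{\norm{\argument}_{p,s}\}_{s>0}$. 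I would choose $\halbnorm{q}$ so that its induced dual norm on $\mathbb{C}^{2d}$ coincides with the Euclidean norm in \eqref{eq:SupSeminorms}; one then has the evaluation estimate $|a_n(x)|\le\halbnorm{q}^n(a_n)\,|x|^n$ and the classical polarization inequality $\halbnorm{q}^n(a_n)\le(n^n/n!)\sup_{|\xi|\le 1}|a_n(\xi)|$.

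For the direction ``each $\norm{\argument}_{p,s}$ is dominated by some $\halbnorm{q}_R$'', an elementary maximization gives $\sup_{r\ge 0}r^n\E^{-sr^p}=(n/(esp))^{n/p}$, and Stirling's estimate $(n/e)^{n/p}\le n!^{1/p}$ converts this into $|x|^n\E^{-s|x|^p}\le n!^{1/p}/(sp)^{n/p}$. Applying the evaluation estimate to the decomposition $a=\sum_n a_n$ and summing yields
\begin{equation*}
  \norm{a}_{p,s}\le \sum_{n=0}^\infty \halbnorm{q}^n(a_n)\,\frac{n!^{1/p}}{(sp)^{n/p}} = \bigl((sp)^{-1/p}\halbnorm{q}\bigr)_R(a),
\end{equation*}
so each OMMY seminorm is controlled by a single Waldmann seminorm with $R=1/p$.

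For the reverse direction I would extract the homogeneous components by Cauchy's contour formula $a_n(w)=\frac{1}{2\pi\I}\oint_{|\zeta|=r}a(\zeta w)\zeta^{-n-1}\D\zeta$ applied with the optimal radius $r=(n/(sp))^{1/p}$. Combined once more with Stirling this gives $\sup_{|\xi|\le 1}|a_n(\xi)|\le C(2sp)^{n/p}n!^{-1/p}\norm{a}_{p,s}$ (the subexponential factor from Stirling being absorbed into a $2^{n/p}$). Polarization then upgrades this to $\halbnorm{q}^n(a_n)\le Ce^n(2sp)^{n/p}n!^{-1/p}\norm{a}_{p,s}$, so that
\begin{equation*}
  n!^{1/p}\halbnorm{q}^n(a_n)\le C\bigl(2e(sp)^{1/p}\bigr)^n \norm{a}_{p,s}.
\end{equation*}
For a rescaling $c\halbnorm{q}$ I would then pick $s>0$ so small that $2ce(sp)^{1/p}<1$; the resulting geometric series in $n$ converges and yields $(c\halbnorm{q})_R(a)\le C'\norm{a}_{p,s}$.

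The main technical point is the joint use of Cauchy's formula with an optimally chosen radius and Stirling's formula: these must be combined so that the various factors of $n^n$, $n!$, and $(sp)^{n/p}$ collapse to an exponential of the form $(C(s))^n$ whose base can be made strictly less than $1/c$ by shrinking $s$. The fact that $s$ must be chosen after the scaling constant $c$ is inessential for the topologies, but explains why the two seminorm families are only topologically, not pointwise, equivalent. The underlying analytic computation is essentially the one implicit in \cite{omori.maeda.miyazaki.yoshioka:2000a} for $d=1$, and extends to arbitrary finite $d$ without further ideas.
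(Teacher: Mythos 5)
Your proposal follows the same broad template as the paper (optimize $r^n\E^{-sr^p}$ or its reciprocal, invoke Stirling, and finally shrink $s$ to defeat the free scaling constant $c$ supplied by Lemma~\ref{lemma:FiniteDimVSeminorms}), but with a genuinely different choice of reference norm, and that choice creates a gap in the reverse direction. The paper deliberately takes $\halbnorm{p}$ to be the $\ell^1$-norm on coefficients precisely because $\ell^1\pitensor\ell^1=\ell^1$: the projective tensor seminorm $\halbnorm{p}^n(a_n)$ then \emph{equals} $\sum_\alpha\abs{a_{\alpha_1\ldots\alpha_n}}$ with no constants, and the multi-variable Cauchy estimate bounds this sum directly. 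You instead choose $\halbnorm{q}$ so that its dual is the Euclidean norm and then appeal to ``the classical polarization inequality $\halbnorm{q}^n(a_n)\le(n^n/n!)\sup_{|\xi|\le 1}|a_n(\xi)|$''. As stated this is not correct: polarization controls the norm of the symmetric $n$-linear form associated with $a_n$, i.e. the \emph{injective} ($\epsilon$) tensor norm $\sup_{|\xi_i|\le1}|\langle a_n,\xi_1\tensor\cdots\tensor\xi_n\rangle|$, whereas $\halbnorm{q}^n$ in the paper's sense (see \eqref{eq:pEinscdotspnDef}) is the \emph{projective} ($\pi$) tensor norm, which dominates the injective one rather than the other way around. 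Already for $n=2$ and $a_2$ a symmetric matrix $A$ the sup of the quadratic form over the unit ball is the operator norm of $A$, while the projective Euclidean tensor norm is the trace norm, which can be larger by a factor of $\dim V$.

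The gap is harmless but must be filled: in finite dimensions the $\pi$-norm on $V^{\pitensor n}$ is bounded by a constant times the $\epsilon$-norm, the constant growing at most like $C(\dim V)^n$, so one does obtain
$\halbnorm{q}^n(a_n)\le C(\dim V)^n\,\frac{n^n}{n!}\sup_{|\xi|\le 1}|a_n(\xi)|$
and this extra $C(\dim V)^n$ is another geometric factor to be absorbed by shrinking $s$, exactly like the $\E^n$ and $2^{n/p}$ you already track. You should state this finite-dimensional $\pi$--$\epsilon$ comparison explicitly rather than attribute the bound to polarization alone. Everything else in your argument is sound, and it is a legitimate alternative to the paper's route: it replaces the multi-variable Cauchy bound on each coefficient by a one-variable Cauchy integral that isolates the degree-$n$ homogeneous part, which is in some ways cleaner. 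The paper's $\ell^1$-trick buys the exact identity $\halbnorm{p}^n(a_n)=\sum_\alpha|a_\alpha|$ and thereby avoids any tensor-norm comparison; your Euclidean choice buys the direct evaluation estimate $|a_n(x)|\le\halbnorm{q}^n(a_n)|x|^n$ at the cost of the polarization/$\pi$--$\epsilon$ step. Both work once the missing comparison is inserted.
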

\begin{proof}
    We have to find mutual estimates for the two families of
    seminorms. We begin with some preparatory material. In view of
    Lemma~\ref{lemma:FiniteDimVSeminorms} we are free to chose the
    following $\ell^1$-like norm
    \begin{equation}
        \label{eq:ellEinsHalbnorm}
        \halbnorm{p}(a) = \sum_\alpha \abs{a_\alpha}
    \end{equation}
    with respect to the canonical basis $e_1, \ldots, e_{2d}$ on
    $\mathbb{R}^{2d} \tensor \mathbb{C}$. The reason to chose this
    $\ell^1$-norm is that it behaves most nicely for the tensor
    product. We write a polynomial as
    \begin{equation}
        \label{eq:Polynomiala}
        a = \sum_{n = 0}^\infty \sum_{\alpha_1, \ldots, \alpha_n = 1}^{2d}
        a_{\alpha_1 \ldots \alpha_n} x_{\alpha_1} \cdots x_{\alpha_n},
    \end{equation}
    where $x_\alpha$ are the coordinate functions in $x = \sum_\alpha
    x_\alpha e_\alpha$. The components $a_{\alpha_1 \ldots \alpha_n}$
    are totally symmetric. For the seminorm $\halbnorm{p}^n$ of the
    homogeneous part $a_n$ of $a$ of degree $n$ we get
    \begin{equation*}
        \halbnorm{p}^n(a_n)
        =
        \sum_{\alpha_1, \ldots, \alpha_n = 1}^{2d}
        \abs{a_{\alpha_1 \ldots \alpha_n}}.
        \tag{$*$}
    \end{equation*}
    For a homogeneous monomial we estimate the seminorm
    $\norm{\argument}_{p, s}$, yielding
    \begin{equation*}
        \sup_{x \in \mathbb{C}^{2d}}
        \left\{
            |\abs{x_{\alpha_1} \cdots x_{\alpha_n}} \E^{-s |x|^p}
        \right\}
        \le
        |x|^n \E^{-s |x|^p}
        \le
        \left(\frac{n}{sp}\right)^{\frac{n}{p}} \E^{- \frac{n}{p}},
        \tag{$**$}
    \end{equation*}
    by explicitly computing the maximum value of the scalar function
    $g(r) = r^n \E^{-sr^p}$ for $r \ge 0$, see also \cite[Proof of
    Prop.~6.1]{omori.maeda.miyazaki.yoshioka:2000a}.  With this
    preparation and noting $n^n \le \frac{1}{\E} n! \E^n$ we get
    \[
    \norm{a}_{p, s}
    \le
    \sum_{n=0}^\infty \sum_{\alpha_1, \ldots, \alpha_n = 0}^{2d}
    \abs{a_{\alpha_1 \ldots \alpha_n}}
    \norm{x_{\alpha_1} \cdots x_{\alpha_n}}_{p, s}
    \stackrel{(*), (**)}{\le}
    \sum_{n=0}^\infty
    \halbnorm{p}^n(a_n)
    \left(\frac{n}{sp}\right)^{\frac{n}{p}} \E^{- \frac{n}{p}}
    \le
    \sum_{n=0}^\infty
    \halbnorm{p}^n(a_n)
    n!^{\frac{1}{p}} c^n
    \]
    where $c = \left(\frac{p}{s}\right)^{\frac{1}{p}}$. This gives the
    estimate $\norm{a}_{p, s} \le (c\halbnorm{p})_{\frac{1}{p}}(a)$.
    For the converse estimate we first note that we can apply the
    (multi-variable) Cauchy formula for the polynomial $a$. This gives
    an estimate for the Taylor coefficients following
    \cite{omori.maeda.miyazaki.yoshioka:2000a}: first we have for a
    fixed $r > 0$ the estimate
    \[
    \sum_{\alpha_1, \ldots, \alpha_n = 1}^{2d}
    \abs{a_{\alpha_1 \ldots \alpha_n}}
    \le
    \frac{1}{r^n} \E^{sr^p} \norm{a}_{p, s}.
    \]
    Using the minimum value $(sp)^n n^{-\frac{n}{p}} \E^{\frac{n}{p}}$
    of the scalar function $r^n \E^{sr^p}$ this gives for a fixed $s >
    0$ and $R = \frac{1}{p}$ the estimate
    \begin{align*}
        (c\halbnorm{p})_R(a)
        &=
        \sum_{n=0}^\infty \sum_{\alpha_1, \ldots, \alpha_n = 1}^{2d}
        \abs{a_{\alpha_1 \ldots \alpha_n}} n!^R c^n\\
        &\le
        \sum_{n=0}^\infty n!^R \frac{1}{r^n} \E^{sr^p}
        \norm{a}_{p, s} c^n \\
        &\le
        \sum_{n=0}^\infty
        n!^R (sp)^n n^{-\frac{n}{p}} \E^{\frac{n}{p}}
        \norm{a}_{p, s} c^n \\
        &\le
        \sum_{n=0}^\infty
        \E^R n^R n^{Rn} n^{-Rn} (sp)^n \E^{nR}
        \norm{a}_{p, s} c^n \\
        &=
        \sum_{n=0}^\infty
        \E^R n^R (spc \sqrt[p]{E})^n
        \norm{a}_{p, s}.
    \end{align*}
    Now choosing $s > 0$ sufficiently small such that $spc\sqrt[p]{\E}
    < 1$ gives a converging series over $n$ and hence a suitable
    constant $c' > 0$ with $(c\halbnorm{p})_R(a) \le c' \norm{a}_{p,
      s}$ for $R = \frac{1}{p}$ and the above chosen $s$. Thus the two
    topologies coincide.
\end{proof}
\begin{remark}
    \label{remark:OMMY}%
    Thus our general construction can be seen as a way to extend the
    results of \cite{omori.maeda.miyazaki.yoshioka:2000a} beyond the
    finite-dimensional case. It also shows some additional functorial
    behaviour of the construction which, even in finite dimensions, is
    not immediate from the description using the seminorms in
    \eqref{eq:SupSeminorms}. Finally, several additional and important
    features like the nuclearity and the existence of an absolute
    Schauder basis are now available for the Weyl algebra of
    \cite{omori.maeda.miyazaki.yoshioka:2000a}. The follow-up articles
    \cite{omori.maeda.miyazaki.yoshioka:2002a,
      omori.maeda.miyazaki.yoshioka:2007a} indicate in which direction
    a further investigation of the Weyl algebra should go also in the
    infinite dimensional setting: the exponentiation of quadratic
    elements gives some interesting new phenomena which are worth to
    be studied in the infinite-dimensional setting as
    well. Ultimately, these results will be related to a quantum
    theory of free time evolutions, i.e. for quadratic Hamiltonians.
\end{remark}

The second situation which we shall relate our general Weyl algebra
$\WeylR(V)$ to is the convergent Wick star product as in
\cite{beiser.roemer.waldmann:2007a, beiser.waldmann:2011a:pre}. Here
we are again in the real symplectic situation with $V_{\mathbb{R}} =
\mathbb{R}^{2d}$ and its canonical symplectic form. Using the same
notation as above, the Wick star product
\begin{equation}
    \label{eq:WickStarProduct}
    f \starwick g
    =
    \sum_{N=0}^\infty \frac{(2\hbar)^{|N|}}{N!}
    \frac{\partial^{|N| f}}{\partial z^N}
    \frac{\partial^{|N| g}}{\partial \cc{z}^N}
\end{equation}
from \cite{beiser.roemer.waldmann:2007a, beiser.waldmann:2011a:pre}
can then be written as $\starwick = \starhLambda$ with $\Lambda$ given
by
\begin{equation}
    \label{eq:LambdaForWick}
    \Lambda(z^k, \cc{z}^\ell) = \frac{4}{\I}\delta_{k\ell},
\end{equation}
and all other pairings trivial. In
\cite[Thm.~3.19]{beiser.waldmann:2011a:pre} it was shown that the
previously constructed locally convex topology for the Wick star
product from \cite{beiser.roemer.waldmann:2007a} can be described as
follows: write $a \in \Sym^\bullet(V)$ as Taylor polynomial
\begin{equation}
    \label{eq:TaylorExpansion}
    a
    =
    \sum_{I, J = 0}^\infty
    a_{IJ} \frac{z^I \cc{z}^J}{I! J!}.
\end{equation}
Then the defining system of seminorms is given by
\begin{equation}
    \label{eq:SubFactorial}
    \norm{a}_\epsilon
    =
    \sup_{I, J} \frac{|a_{IJ}|}{|I + J|!^\epsilon},
\end{equation}
where $\epsilon > 0$. With other words, the Taylor coefficients
$a_{IJ}$ have \emph{sub-factorial} growth with respect to the
multiindices $I$ and $J$. Note that in
\cite{beiser.waldmann:2011a:pre} an addition factor $(2\hbar)^{|I| +
  |J|}$ is present in the denominator in
\eqref{eq:TaylorExpansion}. But clearly such an exponential
contribution will not change the sub-factorial growth properties at
all. Therefore the seminorms \eqref{eq:SubFactorial} give the same
topology as the one in \cite{beiser.waldmann:2011a:pre}.
\begin{proposition}
    \label{proposition:WickIsWeylForREins}%
    The locally convex topology on $\Sym^\bullet(V)$ induced by the
    seminorms $\{\norm{\argument}_\epsilon\}_{\epsilon > 0}$ coincides
    with the topology of the Weyl algebra $\WeylRMinus(V)$ for $R =
    1$.
\end{proposition}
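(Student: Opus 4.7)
My plan is to prove the statement by establishing mutual estimates between the two systems of seminorms and then invoking Lemma~\ref{lemma:FiniteDimensionsProCase}. Since $V$ is finite-dimensional, that lemma ensures that the topology of $\WeylRMinus(V)$ for $R = 1$ is already determined by the single family $\{\halbnorm{p}_{1-\epsilon}\}_{\epsilon > 0}$ where $\halbnorm{p}$ is any fixed norm on $V$. I would take $\halbnorm{p}$ to be the $\ell^1$ norm with respect to the basis $\{z_1, \ldots, z_d, \cc{z}_1, \ldots, \cc{z}_d\}$, because for such a norm the projective tensor norm on $V^{\pitensor n}$ coincides with the $\ell^1$ norm in the induced basis of $V^{\tensor n}$. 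A short computation grouping tensor coefficients by their content $K = (I, J)$ with $|K| = n$ then yields the identity
\begin{equation*}
\halbnorm{p}^n(a_n) = \sum_{|I|+|J|=n} \frac{|a_{IJ}|}{I!J!}
\end{equation*}
for the homogeneous part $a_n \in \Sym^n(V)$ of $a$, where $a_{IJ}$ are the Taylor coefficients from \eqref{eq:TaylorExpansion}. The factors $I!J!$ compensate the normalization in $\frac{z^I \cc{z}^J}{I!J!}$ against the multiplicity $\frac{n!}{I!J!}$ of permutations with the given content.

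For the first direction, for any multi-index $(I,J)$ with $|I|+|J| = n$, using $I!J! \le n!$ together with the above identity, I can estimate
\begin{equation*}
\frac{|a_{IJ}|}{n!^\epsilon}
\le n!^{1-\epsilon} \frac{|a_{IJ}|}{n!}
\le n!^{1-\epsilon} \frac{|a_{IJ}|}{I!J!}
\le n!^{1-\epsilon} \halbnorm{p}^n(a_n)
\le \halbnorm{p}_{1-\epsilon}(a),
\end{equation*}
and taking the supremum over $(I,J)$ gives $\norm{a}_\epsilon \le \halbnorm{p}_{1-\epsilon}(a)$.

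For the converse direction, given $\epsilon > \epsilon' > 0$, I use $|a_{IJ}| \le \norm{a}_{\epsilon'} n!^{\epsilon'}$ and the multinomial identity $\sum_{|I|+|J|=n} \frac{1}{I!J!} = \frac{(2d)^n}{n!}$ to estimate
\begin{equation*}
\halbnorm{p}_{1-\epsilon}(a)
= \sum_{n=0}^\infty n!^{1-\epsilon} \sum_{|I|+|J|=n} \frac{|a_{IJ}|}{I!J!}
\le \norm{a}_{\epsilon'} \sum_{n=0}^\infty \frac{(2d)^n}{n!^{\epsilon-\epsilon'}},
\end{equation*}
where the series converges since $\epsilon > \epsilon'$ and hence defines a finite constant $C_{\epsilon, \epsilon'}$ with $\halbnorm{p}_{1-\epsilon}(a) \le C_{\epsilon,\epsilon'} \norm{a}_{\epsilon'}$. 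These two estimates together show that the systems $\{\norm{\argument}_\epsilon\}_{\epsilon>0}$ and $\{\halbnorm{p}_{1-\epsilon}\}_{\epsilon>0}$ are equivalent and therefore induce the same locally convex topology on $\Sym^\bullet(V)$. The only real bookkeeping is the identity for $\halbnorm{p}^n(a_n)$ in terms of Taylor data; the rest is a straightforward application of factorial estimates and the multinomial formula.
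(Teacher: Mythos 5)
Your proof is correct and takes essentially the same route as the paper's: both directions rest on the same combinatorial identity relating the $\ell^1$ tensor coefficients of $a_n$ to the Taylor data $\frac{|a_{IJ}|}{I!J!}$, the same estimates $I!J!\le n!$ and $\sum_{|I|+|J|=n}\frac{n!}{I!J!}=(2d)^n$, and the same appeal to Lemma~\ref{lemma:FiniteDimensionsProCase} to reduce to a single norm on $V$. The only cosmetic difference is that you isolate the identity $\halbnorm{p}^n(a_n)=\sum_{|I|+|J|=n}\frac{|a_{IJ}|}{I!J!}$ as an explicit intermediate step, which the paper carries along implicitly in its multi-index bookkeeping.
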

\begin{proof}
    To get the combinatorics of the Taylor and tensor coefficients
    right, we note that for a homogeneous $a_n \in \Sym^n(V) \subseteq
    \Tensor^n(V)$ written as in \eqref{eq:Polynomiala} we have
    \[
    \frac{a_{IJ}}{I!J!}
    =
    \sum_{\alpha \in (I, J)} a_{\alpha_1 \ldots \alpha_n},
    \]
    where the summation runs over all those $n$-tuples $\alpha =
    (\alpha_1, \ldots, \alpha_n)$ containing $i_1$ times the index
    $1$, \ldots, $i_d$ times the index $d$, $j_1$ times the index
    $\cc{1}$, \ldots, and $j_d$ times the index $\cc{d}$. Since the
    coefficients are totally symmetric we get
    \begin{equation*}
        \frac{|a_{IJ}|}{I!J!}
        =
        \sum_{\alpha \in (I, J)} |a_{\alpha_1 \ldots \alpha_n}|.
        \tag{$*$}
    \end{equation*}
    The first estimate we need is now
    \begin{align*}
        \norm{a}_\epsilon
        &=
        \sup_{I, J} \frac{|a_{IJ}|}{|I + J|!^\epsilon} \\
        &=
        \sup_n \frac{1}{n!^\epsilon}
        \sup_{\substack{I, J \\ |I + J| = n}} \sum_{\alpha \in (I, J)}
        |a_{\alpha_1 \ldots \alpha_n}| I! J! \\
        &\le
        \sum_{n=0}^\infty \frac{1}{n!^\epsilon}
        \sum_{\alpha_1, \ldots, \alpha_n}
        |a_{\alpha_1 \ldots \alpha_n}| n! \\
        &=
        \halbnorm{p}_{1 - \epsilon}(a),
    \end{align*}
    where $\halbnorm{p}$ is again the norm \eqref{eq:ellEinsHalbnorm}.
    For the other direction we take the same norm $\halbnorm{p}$ and
    chose again a $0 < \epsilon' < \epsilon$. Then
    \begin{align*}
        \halbnorm{p}_{1 - \epsilon}(a)
        &=
        \sum_{n=0}^\infty n!^{1-\epsilon}
        \sum_{\alpha_1, \ldots, \alpha_n}
        |a_{\alpha_1 \ldots \alpha_n}| \\
        &=
        \sum_{n=0}^\infty n!^{1-\epsilon}
        \sum_{|I + J| = n} \frac{|a_{IJ}|}{I!J!} \\
        &\le
        \sum_{n=0}^\infty n!^{1-\epsilon}
        \sum_{|I + J| = n}
        \frac{\norm{a}_{\epsilon'} n!^{\epsilon'}}{I!J!} \\
        &=
        \norm{a}_{\epsilon'}
        \sum_{n=0}^\infty \frac{(2d)^n}{n!^{\epsilon - \epsilon'}},
    \end{align*}
    which gives an estimate $\halbnorm{p}_{R - \epsilon}(a) \le c
    \norm{a}_{\epsilon'}$ with $c$ being the above convergent
    series. Since by Lemma~\ref{lemma:FiniteDimensionsProCase} it suffices
    to consider one norm $\halbnorm{p}$ for $V$, this finishes the
    proof.
\end{proof}
\begin{remark}
    \label{remark:WickIsCool}%
    The construction in \cite{beiser.roemer.waldmann:2007a,
      beiser.waldmann:2011a:pre} still had the flavour of being rather
    ad-hoc. Thanks to Proposition~\ref{proposition:WickIsWeylForREins}
    and the more conceptual definition of the topology of the Weyl
    algebra $\WeylRMinus(V)$ in
    Definition~\ref{definition:WeylAlgebraWR} this flaw
    disappears. Also, the choice of the Wick star product is not
    essential as on $V_{\mathbb{R}} = \mathbb{R}^{2d}$ there is only
    one symplectic form up to isomorphism, see
    Remark~\ref{remark:IsoClassesWeylR}. Finally, together with
    Proposition~\ref{proposition:OMMYcoincides}, this clarifies the
    relation between the two constructions of
    \cite{omori.maeda.miyazaki.yoshioka:2000a} on one side and
    \cite{beiser.roemer.waldmann:2007a, beiser.waldmann:2011a:pre} on
    the other side.
\end{remark}

%
%

\section{An Example: the Peierls Bracket and Free QFT}
\label{sec:ExamplePeierlsFreeQFT}

In this last section we discuss a first example in infinite
dimensions: the Poisson bracket and the corresponding Weyl algebra
underlying a free, i.e. linear, field theory. Our main focus here is
the precise definition of the relevant locally convex topologies as
well as the global aspects of the construction. One has essentially
two possibilities for the Poisson structure: the canonical Poisson
structure built on a Hamiltonian formulation using the initial value
problem and the covariant Poisson structure, also called the Peierls
bracket, built on a Lagrangian approach. In the following, we will
exclusively work in the smooth category, all manifolds and bundles
will be $\Cinfty$.

The material on the Cauchy problem on globally hyperbolic spacetimes
is standard and can be found e.g. in the book
\cite{baer.ginoux.pfaeffle:2007a}. The comparison of the canonical and
the covariant Poisson brackets is folklore and was taken from
\cite[Sect.~4.4]{waldmann:2012a:pre}. For a much more far-reaching
discussion of the Peierls bracket including also non-linear field
equations we refer to \cite{brunetti.fredenhagen.ribeiro:2012a:pre}:
in fact, it would be a very interesting project to combine the results
from \cite{brunetti.fredenhagen.ribeiro:2012a:pre} on the classical
side with the nuclear Weyl algebra quantization to obtain the
corresponding quantum side.

%
%

\subsection{The Geometric Framework}
\label{subsec:GeometricFramework}

We consider an $n$-dimensional connected Lorentz manifold $(M, g)$
with a Lorentz metric $g$ of signature $(+, -, \ldots, -)$. The
important concept we need is the causal structure: first we require
that $(M, g)$ is time-orientable and time-oriented. Then one defines
the causal future $J_M^+(p)$ of a point $p \in M$ to be the set of
those points which can be reached by a future-directed causal
curve. Analogously, $J_M^-(p)$ denotes the causal past of $p$. For two
points $p, q \in M$ one defines the diamond $J_M(p, q) = J_M^+(p) \cap
J_M^-(q)$. For an arbitrary subset $A \subseteq M$ we set
\begin{equation}
    \label{eq:JpmMDiverse}
    J_M^\pm(A) = \bigcup_{p \in A} J_M^\pm(A)
    \quad
    \textrm{and}
    \quad
    J_M(A) = J_M^+(A) \cup J_M^-(A).
\end{equation}

A time-oriented Lorentz manifold is called \emph{globally hyperbolic}
if it is causal, i.e. there are no closed causal loops, and if all
diamonds are compact. A first consequence is that $J_M^\pm(p)$ is
always a closed subset of $M$. In the following we always assume that
$(M, g)$ is globally hyperbolic.  A celebrated theorem of Bernal and
Sánchez, refining a topological statement of Geroch, states that this
is equivalent to the existence of a smooth spacelike Cauchy surface
\begin{equation}
    \label{eq:CauchySurface}
    \iota\colon \Sigma \longrightarrow M
\end{equation}
together with a smooth Cauchy temporal function $t \in \Cinfty(M)$,
i.e. the gradient of $t$ is future directed and time-like everywhere
and the level sets of $t$ are smooth spacelike Cauchy surfaces for all
times. Moreover, $M$ is diffeomorphic to the product manifold
$\mathbb{R} \times \Sigma$ with the metric being
\begin{equation}
    \label{eq:MetricFactorizes}
    g = \beta \D t^2 - g_t,
\end{equation}
where $\beta \in \Cinfty(\mathbb{R} \times \Sigma)$ is positive and
$g_t$ is a Riemannian metric on $\Sigma$ smoothly depending on
$t$. Finally, the Cauchy temporal function $t$ can be chosen in such a
way that $\Sigma$ is the $t=0$ level set. For a detailed discussion
see the review \cite{minguzzi.sanchez:2008a}.

Since $M$ is diffeomorphic to $\mathbb{R} \times \Sigma$ we get a
global vector field $\frac{\partial}{\partial t}$ on $M$. Normalizing
this to a unit vector field gives
\begin{equation}
    \label{eq:NormalVectorField}
    \mathfrak{n}
    =
    \frac{1}{\sqrt{\beta}} \frac{\partial}{\partial t}
    \in \Secinfty(TM),
\end{equation}
which is a future-directed time-like unit vector field such that it is
normal to every level surface of the Cauchy temporal function. In
particular $\iota^\# \mathfrak{n} \in \Secinfty(TM_\Sigma)$ will be
normal to the Cauchy surface $\Sigma$. Here $TM_\Sigma = \iota^\# TM$
is the restriction (pull-back via $\iota$) of the tangent bundle to
$\Sigma$ and $\iota^\# \mathfrak{n} = \mathfrak{n} \at{\Sigma}$ is the
pull-back of $\mathfrak{n}$ to $\Sigma$.

The fields we are interested in will be modeled by sections of a
vector bundle over $M$: we require the vector bundle $E
\longrightarrow M$ to be real and equipped with a fiber metric $h$,
not necessarily positive definite but non-degenerate. The dynamics of
the field is now governed by a second order differential operator $D
\in \Diffop^2(E)$ with the following property: there is a metric
connection $\nabla^E$ for $(E, h)$ and a zeroth order differential
operator $B \in \Diffop^0(E) = \Secinfty(\End(E))$ such that
\begin{equation}
    \label{eq:DdAlembert}
    D = \dAlembert^\nabla + B,
\end{equation}
where $\dAlembert^\nabla$ denotes the d'Alembert operator obtained
from $\nabla^E$ and paring with the metric $g$. In particular, $D$ is
\emph{normally hyperbolic}. Conversely, note that for a normally
hyperbolic differential operator there is a unique connection
$\nabla^E$ and a unique $B \in \Secinfty(\End(E))$ such that
\eqref{eq:DdAlembert} holds, see
e.g. \cite[Lem.~1.5.5]{baer.ginoux.pfaeffle:2007a}. Thus the only
additional requirement we need is that $\nabla^E$ is also metric with
respect to $h$.

Let $\mu_g \in \Secinfty(|\Anti^n| T^*M)$ be the canonical metric
density induced by $g$ which we shall use for various integrations
over $M$. First we can use $\mu_g$ to define the \emph{transpose} of a
differential operator $D \in \Diffop^\bullet(E)$ to be the unique
differential operator $D^\Trans \in \Diffop^\bullet(E^*)$ such that
\begin{equation}
    \label{eq:TransposeD}
    \int_M (D^\Trans \varphi) \cdot u \; \mu_g
    =
    \int_M \varphi \cdot (D u) \; \mu_g
\end{equation}
for all $\varphi \in \Secinfty(E^*)$ and $u \in \Secinfty(E)$, at
least one of them having compact support. Here $\cdot$ means pointwise
natural pairing. Note that $D^\Trans$ depends on the choice of $\mu_g$
and has the same order as $D$. Taking into account also the fiber
metric $h$ we can define the \emph{adjoint} of $D$ to be the unique
differential operator $D^* \in \Diffop^\bullet(E)$, again of the same
order as $D$, such that
\begin{equation}
    \label{eq:AdjointD}
    \int_M h(D^*u, v) \; \mu_g
    =
    \int_M h(u, Dv) \; \mu_g
\end{equation}
for $u, v \in \Secinfty(E)$, at least one of them having compact
support. Denoting the musical isomorphisms induced by $h$ by
$\sharp\colon E^* \longrightarrow E$ and $\flat\colon E
\longrightarrow E^*$ as usual, we get $D^*u = (D^\Trans
u^\flat)^\sharp$ for $u \in \Secinfty(E)$.

This allows now to formulate the last requirement on $D =
\dAlembert^\nabla + B$, namely we need $D$ to be a \emph{symmetric}
operator, i.e.
\begin{equation}
    \label{eq:Dsymmetric}
    D^* = D.
\end{equation}
Since the connection $\nabla^E$ is required to be metric, it is easy
to see that \eqref{eq:Dsymmetric} is equivalent to $B^* = B$.

%
%

\subsection{The Wave Equation and Green Operators}
\label{subsec:WaveEquationGreenOperators}

Let $D$ be a normally hyperbolic differential operator as before. Then
the \emph{wave equation} we are interested in is simply given by
\begin{equation}
    \label{eq:Wave}
    Du = 0
\end{equation}
for a section $u$ of $E$. Depending on the regularity of $u$ we can
interpret \eqref{eq:Wave} as a pointwise equation or as an equation in
a distributional sense. Dualizing, we have the corresponding wave
equation
\begin{equation}
    \label{eq:WaveTrans}
    D^\Trans \varphi = 0
\end{equation}
for a section $\varphi$ of the dual bundle $E^*$.

Under our general assumption that $(M, g)$ is globally hyperbolic we
have the existence and uniqueness of advanced and retarded \emph{Green
  operators}
\begin{equation}
    \label{eq:GreenOperators}
    G_M^\pm\colon \Secinfty_0(E) \longrightarrow \Secinfty(E)
\end{equation}
for $D$. This means that there are unique, linear, and continuous maps
$G_M^\pm$ such that
\begin{equation}
    \label{eq:DGisId}
    D G_M^\pm = \id_{\Secinfty_0(E)} = G_M^\pm D\at{\Secinfty_0(E)}
\end{equation}
and
\begin{equation}
    \label{eq:GreenCausal}
    \supp G_M^\pm \subseteq J_M^\pm(\supp u)
\end{equation}
for all $u \in \Secinfty_0(E)$. The continuity refers to the usual LF
and Fréchet topologies of $\Secinfty_0(E)$ and $\Secinfty(E)$,
respectively. Using the volume density $\mu_g$ we can identify the
distributional sections $\Sec[-\infty](E^*)$ with the dual
$\Secinfty_0(E)'$ and $\Sec[-\infty]_0(E^*)$ becomes identified with
the dual $\Secinfty(E)'$.

We need the following space of sections: Let $K \subseteq M$ be
compact. Then denote by $\Secinfty_{J_M(K)}(E)$ those sections in
$\Secinfty(E)$ with $\supp u \subseteq J_M(K)$. Since on a globally
hyperbolic spacetime $J_M(K)$ is a closed subset,
$\Secinfty_{J_M(K)}(E) \subseteq \Secinfty(E)$ is a closed subspace
and hence a Fréchet space itself. Moreover, for $K \subseteq K'$ we
have $\Secinfty_{J_M(K)}(E) \subseteq \Secinfty_{J_M(K')}(E)$ and the
thereby induced topology on $\Secinfty_{J_M(K)}(E)$ coincides with the
original. Hence we can consider the inductive limit
\begin{equation}
    \label{eq:Secsc}
    \Secinftysc(E)
    =
    \bigcup_{\substack{K \subseteq M \\ K \; \mathrm{compact}}}
    \Secinfty_{J_M(K)}(E)
\end{equation}
of those smooth sections of $E$ which have compact support in
spacelike directions. It is a strict inductive limit, and since we can
exhaust $M$ with a sequence of compact subsets, it is a countable
strict inductive limit, endowing $\Secinftysc(E)$ with a LF
topology. The continuity statement \eqref{eq:GreenOperators} can then
be sharpened to the statement that
\begin{equation}
    \label{eq:GreenContinuousSC}
    G_M^\pm\colon
    \Secinfty_0(E) \longrightarrow \Secinftysc(E)
\end{equation}
is continuous. In fact, this follows in a straightforward manner from
the continuity of \eqref{eq:GreenOperators} and the causality
condition \eqref{eq:GreenCausal}.
\begin{remark}
    \label{remark:SecscNuclear}%
    Being a close subspace of a nuclear Fréchet space,
    $\Secinfty_{J_M(K)}(E)$ is nuclear itself. Hence the countable
    strict inductive limit $\Secinftysc(E)$ is again nuclear by
    \cite[Cor.~21.2.3]{jarchow:1981a}.
\end{remark}

We consider now the \emph{propagator} which is defined by
\begin{equation}
    \label{eq:Propagator}
    G_M = G_M^+ - G_M^-\colon
    \Secinfty_0(E) \longrightarrow \Secinftysc(E),
\end{equation}
for which one has the following crucial properties: the sequence
\begin{equation}
    \label{eq:ExactSequence}
    0
    \longrightarrow
    \Secinfty_0(E)
    \stackrel{D}{\longrightarrow}
    \Secinfty_0(E)
    \stackrel{G_M}{\longrightarrow}
    \Secinftysc(E)
    \stackrel{D}{\longrightarrow}
    \Secinftysc(E)
\end{equation}
of continuous linear maps is \emph{exact}. Note that the exactness
relies crucially on the assumption that $(M, g)$ is globally
hyperbolic.

The Green operators can now be used to give a solution to the Cauchy
problem of the wave equation \eqref{eq:Wave}. For the time $t = 0$
level surface $\Sigma$ we want to specify initial conditions $u_0,
\dot{u}_0 \in \Secinfty_0(E_\Sigma)$. Then we want to find a section
$u \in \Secinfty(E)$ with
\begin{equation}
    \label{eq:CauchyProblem}
    Du = 0,
    \quad
    \iota^\# u = u_0,
    \quad
    \textrm{and}
    \quad
    \iota^\# \nabla^E_{\mathfrak{n}} u = \dot{u}_0.
\end{equation}
A core result in the globally hyperbolic case is that this is indeed a
well-posed Cauchy problem: for any $(u_0, \dot{u}_0)$ we have a unique
solution $u$ of the Cauchy problem \eqref{eq:CauchyProblem} such that
the map
\begin{equation}
    \label{eq:SolutionMap}
    \Secinfty_0(E_\Sigma) \oplus \Secinfty_0(E_\Sigma)
    \ni (u_0, \dot{u}_0)
    \; \mapsto \;
    u \in
    \Secinftysc(E)
\end{equation}
is continuous and $\supp u \subseteq J_M(\supp u_0 \cup \supp
\dot{u}_0)$. Moreover, this solution $u$ can be characterized by the
formula
\begin{equation}
    \label{eq:Solution}
    \int_M \varphi \cdot u \; \mu_g
    =
    \int_\Sigma
    \left(
        \iota^\#\left(\nabla_{\mathfrak{n}}^E F_M(\varphi)\right)
        \cdot u_0
        -
        \iota^\#\left(F_M(\varphi)\right)
        \cdot \dot{u}_0
    \right) \; \mu_\Sigma,
\end{equation}
where $F_M$ is the propagator of $D^\Trans$ and $\varphi \in
\Secinfty_0(E^*)$. The density $\mu_\Sigma$ is the one induced by
$\mu_g$.

Since we assume that $D^* = D$ we have a last property of the Green
operators, namely
\begin{equation}
    \label{eq:GreenSymmetric}
    \left(G_M^\pm\right)^* = G_M^\mp
    \quad
    \textrm{and}
    \quad
    G_M^* = - G_M.
\end{equation}
This antisymmetry of the propagator will play a crucial role in the
definition of the covariant Poisson bracket. Moreover, for all
$\varphi \in \Secinfty_0(E^*)$ we have
\begin{equation}
    \label{eq:GMpmsharp}
    G_M^\pm(\varphi^\sharp) = \left(F_M^\pm(\varphi)\right)^\sharp.
\end{equation}

The results and their proofs in this section as well as many more
additional features of the Cauchy problem of the wave equation on a
globally hyperbolic spacetime can be found in the beautiful book
\cite{baer.ginoux.pfaeffle:2007a}, some additional remarks can be
found in the lecture notes \cite{waldmann:2012a:pre}.

%
%

\subsection{The Canonical Poisson Algebra}
\label{subsec:CanonicalPoissonAlgebra}

The canonical i.e. Hamiltonian approach uses an algebra of functions
on the initial data, which constitute the classical phase space
\begin{equation}
    \label{eq:PhaseSpace}
    \mathcal{P}_\Sigma
    =
    \Secinfty_0(E_\Sigma) \oplus \Secinfty_0(E_\Sigma).
\end{equation}
We view $\mathcal{P}_\Sigma$ as symplectic vector space via the
symplectic form
\begin{equation}
    \label{eq:SymplecticForm}
    \omega_\Sigma\left(
        (u_0, \dot{u}_0),
        (v_0, \dot{v}_0)
    \right)
    =
    \int_\Sigma
    \left(
        h_\Sigma (u_0, \dot{v}_0) - h_\Sigma(\dot{u}_0, v_0)
    \right)
    \;
    \mu_\Sigma,
\end{equation}
where $h_\Sigma$ is the restriction of $h$ to $E\at{\Sigma}$.  We have
the following basic result:
\begin{lemma}
    \label{lemma:omegaSymplectic}%
    The two-form $\omega_\Sigma$ on $\mathcal{P}_\Sigma$ is
    antisymmetric, non-degenerate, and continuous.
\end{lemma}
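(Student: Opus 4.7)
The three properties should be established in the order antisymmetry, continuity, non-degeneracy, since the last relies on testing against compactly supported sections of arbitrarily small support.

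For antisymmetry one just exchanges the arguments of $\omega_\Sigma$ and uses the fact that $h_\Sigma$, being the restriction of the fiber metric $h$ to $E\at{\Sigma}$, is symmetric in its two arguments. A direct computation yields
\[
    \omega_\Sigma((v_0,\dot v_0),(u_0,\dot u_0))
    =
    \int_\Sigma\bigl(h_\Sigma(v_0,\dot u_0) - h_\Sigma(\dot v_0, u_0)\bigr)\mu_\Sigma
    =
    -\omega_\Sigma((u_0,\dot u_0),(v_0,\dot v_0)).
\]

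For continuity the plan is to use that $\mathcal{P}_\Sigma$ carries the strict inductive limit LF topology coming from the sections with support in some fixed compact $K\subseteq\Sigma$. It suffices to show that $\omega_\Sigma$ is continuous on $\Secinfty_K(E_\Sigma)\oplus\Secinfty_K(E_\Sigma)$ for each compact $K$, as then continuity on the inductive limit (at least jointly continuous in the separate‐support picture, which is all that is needed later for the construction of $\WeylR(\mathcal{P}_\Sigma)$) follows. On such a subspace, the map $(u,v)\mapsto\int_K h_\Sigma(u,v)\mu_\Sigma$ is easily bounded by $\mathrm{vol}_\Sigma(K)\cdot\sup_{x\in K}\|h_\Sigma\|_x\cdot \halbnorm{p}_{K,0}(u)\halbnorm{p}_{K,0}(v)$, where $\halbnorm{p}_{K,0}$ is the $\sup$-seminorm on sections supported in $K$ expressed in an auxiliary bundle atlas; this is a continuous seminorm on $\Secinfty_K(E_\Sigma)$. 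Summing the two terms of $\omega_\Sigma$ yields the desired estimate.

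For non-degeneracy, suppose $(u_0,\dot u_0)\in\mathcal{P}_\Sigma$ satisfies $\omega_\Sigma((u_0,\dot u_0),(v_0,\dot v_0))=0$ for all $(v_0,\dot v_0)\in\mathcal{P}_\Sigma$. Testing first with $v_0=0$ and arbitrary $\dot v_0$ yields
\[
    \int_\Sigma h_\Sigma(u_0,\dot v_0)\,\mu_\Sigma = 0
    \quad\text{for all } \dot v_0\in\Secinfty_0(E_\Sigma),
\]
and the non-degeneracy of the fiber metric $h_\Sigma$, together with the fact that compactly supported sections separate points (one localizes around any point $x\in\Sigma$ where $u_0(x)\ne 0$ and chooses $\dot v_0$ supported in a small chart to be a bump times the vector $u_0(x)^\flat$ viewed in a local frame, so that the integrand becomes a positive bump), forces $u_0=0$. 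An analogous argument applied to $\dot v_0=0$ gives $\dot u_0=0$.

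The only part that requires a moment of care is the continuity claim, since $\mathcal{P}_\Sigma$ is only an LF space; but the argument reduces at once to the Fréchet pieces $\Secinfty_K(E_\Sigma)$, so there is no genuine obstacle.
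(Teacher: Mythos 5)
Your antisymmetry and non-degeneracy arguments are fine (the paper dismisses both as ``clear''), but the continuity part contains the one genuine gap, and it is exactly the point the paper is careful about. You claim that continuity of $\omega_\Sigma$ on each Fréchet piece $\Secinfty_K(E_\Sigma) \times \Secinfty_K(E_\Sigma)$ implies continuity on the inductive limit, and that therefore ``there is no genuine obstacle.'' This inference is not valid: the product topology on $\Secinfty_0(E_\Sigma) \times \Secinfty_0(E_\Sigma)$ is in general strictly coarser than the inductive limit topology of the pieces $\Secinfty_K(E_\Sigma) \times \Secinfty_K(E_\Sigma)$, so continuity of all restrictions only gives you separate continuity (equivalently, by the Fréchet case, hypocontinuity) of the bilinear map, not joint continuity. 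Joint continuity means a single estimate $|\omega_\Sigma(x,y)| \le \halbnorm{q}(x)\halbnorm{q}(y)$ with one continuous seminorm $\halbnorm{q}$ on the full LF space, and that is what the Weyl algebra construction genuinely needs (the paper explicitly excludes mere separate continuity in \eqref{eq:LambdaContinuous}, and the remark following the lemma stresses that for non-compact $\Sigma$ one cannot upgrade separate to joint continuity by a Baire argument). Your parenthetical hedge that separate-support continuity ``is all that is needed later'' is therefore also incorrect.

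The paper closes this gap differently: it embeds $E_\Sigma$ as a subbundle of a trivial bundle $\Sigma \times \mathbb{R}^N$, extends $h_\Sigma$, and thereby reduces the claim to the joint continuity of the map $\left((u_i),(v_i)\right) \mapsto \sum_{i,j} u_i H_{ij} v_j$ on $\Cinfty_0(\Sigma)^N$, which in turn rests on the known --- but nontrivial --- fact that pointwise multiplication $\Cinfty_0(\Sigma) \times \Cinfty_0(\Sigma) \longrightarrow \Cinfty_0(\Sigma)$ is jointly continuous for the LF topology, followed by the continuity of integration. To repair your proof you would either have to quote that result (or an equivalent one guaranteeing joint continuity of this particular bilinear map on the LF space), or produce the single global seminorm estimate directly; the reduction to the Fréchet pieces alone does not do it.
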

\begin{proof}
    The non-degeneracy and the antisymmetry are clear. For the
    continuity we can rely on several standard arguments: first we
    note that every vector bundle can be written as a subbundle of a
    suitable trivial vector bundle $\Sigma \times \mathbb{R}^N$. This
    gives an identification $\Secinfty_0(E_\Sigma) \subseteq
    \Secinfty_0(\Sigma \times \mathbb{R}^N)$ as a \emph{closed
      embedded} subspace. We can extend $h_\Sigma$ in some way to a
    smooth fiber metric on the trivial bundle and this way,
    $\omega_\Sigma$ is just the restriction of the corresponding
    symplectic form on $\Secinfty(\Sigma \times \mathbb{R}^N)$. Thus
    it suffices to consider a trivial bundle from the beginning. There
    we have $\Secinfty_0(\Sigma \times \mathbb{R}^N) \cong
    \Cinfty_0(\Sigma)^N$. Thus we have to show the continuity of a
    bilinear map of the form
    \begin{equation*}
        \Cinfty_0(\Sigma)^N \times \Cinfty_0(\Sigma)^N
        \ni
        \left(
            (u_i), (v_i)
        \right)
        \; \mapsto \;
        \sum_{i, j = 1}^N u_i H_{ij} v_j
        \in \Cinfty_0(\Sigma),
        \tag{$*$}
    \end{equation*}
    where $H_{ij} \in \Cinfty(\Sigma)$. But since the multiplication
    of compactly supported smooth functions is continuous (and not
    just separately continuous) the continuity of ($*$) follows. The
    final integration needed for \eqref{eq:SymplecticForm} is
    continuous as well.
\end{proof}
Note that it is obvious that $\omega_\Sigma$ is separately continuous,
however, we are interested in continuity. In the case where $\Sigma$
is compact, this would follow directly from separate continuity as
then $\Secinfty_0(E_\Sigma) = \Secinfty(E_\Sigma)$ is a Fréchet space.
But of course, the case of a non-compact $\Sigma$ is of interest, too.

The idea is now to look at certain polynomial functions on
$\mathcal{P}_\Sigma$ and endow them with the Poisson bracket
originating from $\omega_\Sigma$. It turns out that the symmetric
algebra over the dual $\mathcal{P}_\Sigma'$ will be too big and
problematic when it comes to the comparison with the covariant Poisson
structure. Hence we decide here for a rather small piece of all
polynomials, namely for the symmetric algebra over
\begin{equation}
    \label{eq:VSigma}
    V_\Sigma
    =
    \Secinfty_0(E^*_\Sigma) \oplus \Secinfty_0(E^*_\Sigma).
\end{equation}
Using the density $\mu_\Sigma$ we can indeed pair elements from
$V_\Sigma$ with points in $\mathcal{P}_\Sigma$:
\begin{lemma}
    \label{lemma:EvaluateContinuous}%
    The integration
    \begin{equation}
        \label{eq:EvaluateVSigmaOnPSigma}
        (\varphi_0, \dot{\varphi}_0) (u_0, \dot{u}_0)
        =
        \int_\Sigma
        \left(
            \varphi_0 \cdot u_0 + \dot{\varphi}_0 \cdot \dot{u}_0
        \right)
        \;
        \mu_\Sigma
    \end{equation}
    provides a continuous bilinear pairing between $V_\Sigma$ and
    $\mathcal{P}_\Sigma$.
\end{lemma}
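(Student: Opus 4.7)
The plan is to follow essentially the same strategy as in the proof of Lemma~\ref{lemma:omegaSymplectic}. Bilinearity of the pairing is immediate from the construction, so the only real content is continuity, and since the pairing is a sum of two independent contributions (one from the first summand of each factor, one from the second), it suffices to prove continuity of a single term of the form $(\varphi, u) \mapsto \int_\Sigma \varphi \cdot u \, \mu_\Sigma$ with $\varphi \in \Secinfty_0(E^*_\Sigma)$ and $u \in \Secinfty_0(E_\Sigma)$.

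First I would reduce to the trivial-bundle case by the same embedding argument as before: choose an embedding of $E_\Sigma$ as a closed subbundle of a trivial bundle $\Sigma \times \mathbb{R}^N$, which induces a closed embedding $E^*_\Sigma \hookrightarrow \Sigma \times \mathbb{R}^N$ (after using a suitable extension of $h_\Sigma$ to identify $E^*$ with a subbundle of the trivial bundle as well, or simply dualizing). This gives closed embeddings of both $\Secinfty_0(E_\Sigma)$ and $\Secinfty_0(E^*_\Sigma)$ into $\Cinfty_0(\Sigma)^N$, so continuity on the trivial bundle implies continuity on the original. The natural pairing then becomes the sum $\sum_{i=1}^N \varphi_i u_i$ of products of compactly supported smooth scalar functions on $\Sigma$.

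The second step is to invoke the fact that scalar multiplication $\Cinfty_0(\Sigma) \times \Cinfty_0(\Sigma) \longrightarrow \Cinfty_0(\Sigma)$ is \emph{jointly} continuous with respect to the LF-topology (and not merely separately continuous). Combined with continuity of the addition, this produces a continuous bilinear map $\Cinfty_0(\Sigma)^N \times \Cinfty_0(\Sigma)^N \longrightarrow \Cinfty_0(\Sigma)$. Composing with the (continuous) integration functional $\Cinfty_0(\Sigma) \ni f \mapsto \int_\Sigma f\, \mu_\Sigma \in \mathbb{R}$ then yields joint continuity of the pairing.

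The main subtlety, as in the previous lemma, is the passage from separate continuity (which is obvious) to joint continuity, and this is exactly where the argument via a trivial bundle and the joint continuity of multiplication of compactly supported smooth functions is essential; without the reduction to scalar functions on $\Sigma$ one would have to verify joint continuity directly on the LF-space $\Secinfty_0(E_\Sigma)$, which is more awkward. Once the reduction has been carried out, the remaining steps are standard locally convex manipulations and do not require further ingredients beyond those already used in the proof of Lemma~\ref{lemma:omegaSymplectic}.
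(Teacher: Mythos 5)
Your proposal is correct and follows exactly the route the paper takes: the paper's proof of this lemma is a one-line remark that the continuity argument is "analogous to the one in Lemma~\ref{lemma:omegaSymplectic}", i.e.\ precisely the reduction to a trivial bundle, the joint continuity of multiplication on $\Cinfty_0(\Sigma)$, and composition with the integration functional that you spell out.
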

The proof of the continuity is analogous to the one in
Lemma~\ref{lemma:omegaSymplectic}. In particular, we can view points
in $\mathcal{P}_\Sigma$ as elements of the dual of $V_\Sigma$ and vice
versa.
\begin{lemma}
    \label{lemma:PoissonBracketSigma}%
    The symplectic form $\omega_\Sigma$ induces a non-degenerate
    antisymmetric continuous bilinear form
    \begin{equation}
        \label{eq:LambdaSigma}
        \Lambda_\Sigma\colon
        V_\Sigma \times V_\Sigma \longrightarrow \mathbb{R},
    \end{equation}
    explicitly given by
    \begin{equation}
        \label{eq:LambdaSigmaExpl}
        \Lambda_\Sigma\left(
            (\varphi_0, \dot{\varphi}_0),
            (\psi_0, \dot{\psi}_0)
        \right)
        =
        \int_\Sigma
        \left(
            h_\Sigma^{-1}(\varphi_0, \dot{\psi}_0)
            -
            h_\Sigma^{-1}(\dot{\varphi}_0, \psi_0)
        \right)
        \;
        \mu_\Sigma.
    \end{equation}
\end{lemma}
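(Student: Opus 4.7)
The plan is to define $\Lambda_\Sigma$ by pulling back $\omega_\Sigma$ along the musical isomorphism. Concretely, for $(\varphi_0,\dot\varphi_0),(\psi_0,\dot\psi_0) \in V_\Sigma$, set
\[
\Lambda_\Sigma\bigl((\varphi_0,\dot\varphi_0),(\psi_0,\dot\psi_0)\bigr)
=
\omega_\Sigma\bigl((\varphi_0^\sharp,\dot\varphi_0^\sharp),(\psi_0^\sharp,\dot\psi_0^\sharp)\bigr),
\]
using the fiberwise musical isomorphism $\sharp\colon E^*_\Sigma \longrightarrow E_\Sigma$ induced by $h_\Sigma$. The three properties to prove then reduce to the following routine checks.

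First, antisymmetry is immediate from the antisymmetry of $\omega_\Sigma$ proved in Lemma~\ref{lemma:omegaSymplectic}. Non-degeneracy follows because $\sharp$ restricts to a topological linear isomorphism $\Secinfty_0(E^*_\Sigma) \longrightarrow \Secinfty_0(E_\Sigma)$ (since $h$ is non-degenerate and smooth, hence $\sharp$ and its inverse $\flat$ are continuous bundle maps, preserving compact support), so that non-degeneracy of $\omega_\Sigma$ transports directly to $\Lambda_\Sigma$. For continuity, I would note that $\sharp$ is a continuous map on sections by an argument analogous to the one in Lemma~\ref{lemma:omegaSymplectic} (reduction to a trivial bundle followed by pointwise multiplication by smooth functions, which is continuous on $\Cinfty_0(\Sigma)$), and composition with the continuous bilinear form $\omega_\Sigma$ from Lemma~\ref{lemma:omegaSymplectic} preserves continuity.

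Finally, for the explicit formula \eqref{eq:LambdaSigmaExpl}, I would expand $\omega_\Sigma$ according to \eqref{eq:SymplecticForm} and use the defining identity $h_\Sigma(\alpha^\sharp,\beta^\sharp) = h_\Sigma^{-1}(\alpha,\beta)$ of the musical isomorphisms, which gives
\[
\omega_\Sigma\bigl((\varphi_0^\sharp,\dot\varphi_0^\sharp),(\psi_0^\sharp,\dot\psi_0^\sharp)\bigr)
=
\int_\Sigma\bigl(h_\Sigma^{-1}(\varphi_0,\dot\psi_0) - h_\Sigma^{-1}(\dot\varphi_0,\psi_0)\bigr)\,\mu_\Sigma
\]
as claimed. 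There is no serious obstacle here; the only mild subtlety is the continuity of $\sharp$ on $\Secinfty_0(E^*_\Sigma)$, but this is handled exactly as in the proof of Lemma~\ref{lemma:omegaSymplectic}, so essentially nothing new needs to be done.
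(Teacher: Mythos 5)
Your proof is correct, but the way you make $\omega_\Sigma$ ``induce'' $\Lambda_\Sigma$ is not quite the one the paper has in mind. You pull $\omega_\Sigma$ back along the component-wise musical map $(\varphi_0,\dot\varphi_0) \mapsto (\varphi_0^\sharp,\dot\varphi_0^\sharp)$; the paper (see the sentence directly after the lemma) instead takes the Poisson bracket of the two linear observables on $\mathcal{P}_\Sigma$, i.e.\ pulls $\omega_\Sigma$ back along the Hamiltonian-vector-field map $(\varphi_0,\dot\varphi_0) \mapsto X_{(\varphi_0,\dot\varphi_0)} = (\dot\varphi_0^\sharp,-\varphi_0^\sharp)$, where $X$ is determined by $\omega_\Sigma(X_{(\varphi_0,\dot\varphi_0)},\argument) = (\varphi_0,\dot\varphi_0)(\argument)$ using the pairing of Lemma~\ref{lemma:EvaluateContinuous}. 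These two maps $V_\Sigma \to \mathcal{P}_\Sigma$ differ by post-composition with the linear symplectomorphism $(a,b)\mapsto(b,-a)$ of $(\mathcal{P}_\Sigma,\omega_\Sigma)$, so both pullbacks give exactly \eqref{eq:LambdaSigmaExpl}, and the lemma is established either way. Your route is the more elementary one and fully suffices for the three asserted properties; the Poisson-bracket route is what makes it conceptually transparent that $\Lambda_\Sigma$ is the correct constant Poisson structure to feed into the machinery of Section~3, so that $\starSigma$ quantizes the canonical Poisson bracket of $\omega_\Sigma$. Your remaining checks---antisymmetry and non-degeneracy transported through the topological isomorphism $\sharp$, continuity by the Lemma~\ref{lemma:omegaSymplectic} argument, and the $h_\Sigma^{-1}$ computation---are all sound.
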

Here $h_\Sigma^{-1}$ stands for the induced fiber metric on
$E^*_\Sigma$ and the Poisson bracket is determined by $\omega_\Sigma$
in the sense that the Hamiltonian vector field of the linear function
$(\varphi_0, \dot{\varphi}_0)$ on $\mathcal{P}_\Sigma$ is determined
via $\omega_\Sigma$ and the Poisson bracket is determined by the
Hamiltonian vector field as usual.

We can now use the Poisson bracket $\BracketSigma{\argument,
  \argument}$ for the symmetric algebra $\Sym^\bullet(V_\Sigma)$ as
described in
Section~\ref{subsec:ConstantPoissonStructuresFormalStarProducts}
together with its quantization given by the star product $\starSigma =
\star_{\frac{\I\hbar}{2}\Lambda_\Sigma}$ for the corresponding nuclear
Weyl algebra $\WeylR(V_\Sigma \tensor \mathbb{C})$. This will be the
canonically quantized model of our Hamiltonian picture of the field
theory. Since we started with a real vector bundle, the resulting Weyl
algebra carries the complex conjugation as $^*$-involution. Note
however that up to now we only described the kinematic part, the field
equation did not yet enter at all.

%
%

\subsection{The Covariant Poisson Algebra}
\label{subsec:CovariantPoissonAlgebra}

As the covariant ``phase space'' we take simply all possible field
configurations on the spacetime, i.e.
\begin{equation}
    \label{eq:CovariantPhaseSpace}
    \mathcal{P}_\cov = \Secinftysc(E),
\end{equation}
whether or not they satisfy the wave equation.  This will not be a
symplectic vector space in any reasonable sense as $\mathcal{P}_\cov$
contains all the unwanted field configurations as well. Nevertheless,
and this is perhaps the surprising observation, the symmetric algebra
over its dual allows for a Poisson bracket: again, we take only a
small part of the dual, namely $\Secinfty_0(E^*)$, where we evaluate
$\varphi \in \Secinfty_0(E^*)$ on $u \in \Secinftysc(E)$ by means of
the integration with respect to $\mu_g$ as usual. As before, we denote
this integration simply by $\varphi(u)$.

The Poisson bracket will then be determined by a bilinear form on
$\Secinfty_0(E^*)$ as before. Using the propagator $F_M$ of $D^\Trans$
we define
\begin{equation}
    \label{eq:LambdaCov}
    \Lambda_\cov (\varphi, \psi)
    =
    \int_M h^{-1}(F_M(\varphi), \psi) \; \mu_g.
\end{equation}
Note that the compact support of $\psi$ makes this integration
well-defined. Moreover, we have the following property:
\begin{lemma}
    \label{lemma:LambdaCovContinuous}%
    The bilinear form $\Lambda_\cov\colon \Secinfty_0(E^*) \times
    \Secinfty_0(E^*) \longrightarrow \mathbb{R}$ is antisymmetric and
    continuous.
\end{lemma}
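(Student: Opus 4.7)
The plan is to establish the two properties separately, reducing both to consequences of facts already collected in Section~\ref{subsec:WaveEquationGreenOperators} (existence and properties of Green operators, musical isomorphisms, self-adjointness) and mimicking the continuity argument from Lemma~\ref{lemma:omegaSymplectic} for the integral pairing.

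For antisymmetry, I would first rewrite $\Lambda_\cov$ in terms of the propagator $G_M$ of $D$ (not of $D^\Trans$) so that I can invoke the antisymmetry relation $G_M^* = -G_M$ from \eqref{eq:GreenSymmetric}. Using the musical isomorphisms $\sharp\colon E^* \to E$ and $\flat\colon E \to E^*$ induced by $h$, and the intertwining relation \eqref{eq:GMpmsharp} (applied to the $+$ and $-$ pieces and subtracted), we have $(F_M\varphi)^\sharp = G_M(\varphi^\sharp)$. Moreover, by definition of $h^{-1}$, one has $h^{-1}(\alpha, \beta) = h(\alpha^\sharp, \beta^\sharp)$ pointwise. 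Therefore
\begin{equation*}
    \Lambda_\cov(\varphi, \psi)
    = \int_M h\!\left(G_M(\varphi^\sharp), \psi^\sharp\right) \mu_g.
\end{equation*}
Both $\varphi^\sharp, \psi^\sharp \in \Secinfty_0(E)$, so \eqref{eq:GreenSymmetric} applies and gives $\Lambda_\cov(\varphi, \psi) = - \int_M h(\varphi^\sharp, G_M \psi^\sharp) \mu_g = - \Lambda_\cov(\psi, \varphi)$, as desired.

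For continuity, I would factor $\Lambda_\cov$ through the continuous propagator $F_M\colon \Secinfty_0(E^*) \to \Secinftysc(E^*)$ (continuity of this map with values in $\Secinftysc(E^*)$ is exactly the $\Secinftysc$-version of \eqref{eq:GreenContinuousSC}, which follows from the $D^\Trans$ analogue, noting that the propagator of $D^\Trans$ has the same causal support properties). Concretely, we write $\Lambda_\cov = B \circ (F_M \times \id)$, where
\begin{equation*}
    B\colon \Secinftysc(E^*) \times \Secinfty_0(E^*) \longrightarrow \mathbb{R},
    \qquad
    B(\eta, \psi) = \int_M h^{-1}(\eta, \psi) \mu_g
\end{equation*}
is the natural integration pairing. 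Since composition with continuous linear maps in each slot preserves continuity of a bilinear map, it suffices to prove that $B$ is continuous.

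The continuity of $B$ is the main technical point, and it is handled exactly as in Lemma~\ref{lemma:omegaSymplectic}: embed $E^*$ as a closed subbundle of a trivial bundle so that the problem reduces to a componentwise integral of the form $\int_M \sum_{ij} \eta_i H^{ij} \psi_j \, \mu_g$ with $H^{ij} \in \Cinfty(M)$, and then exploit that pointwise multiplication of a compactly-supported smooth function with an arbitrary smooth function, followed by integration against $\mu_g$, is (jointly) continuous on the relevant Fréchet/LF pieces. Here the crucial ingredient is that the support of the integrand is controlled by the support of $\psi$: restricting to a compact set $L \subseteq M$ containing $\supp \psi$ and writing $\eta = F_M\varphi$, the integral only sees $\eta\at{L}$, which is governed by the induced topology of $\Secinfty_{J_M(K)}(E^*)$ for any compact $K$ with $L \subseteq J_M(K)$; on such Fréchet pieces one obtains a straightforward estimate $|B(\eta, \psi)| \le \halbnorm{p}_L(\eta) \halbnorm{q}_L(\psi)$ with continuous seminorms $\halbnorm{p}_L, \halbnorm{q}_L$. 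The hard part is bookkeeping, not analysis: verifying that these local estimates glue to a global continuity statement on the LF product $\Secinftysc(E^*) \times \Secinfty_0(E^*)$, using that the defining LF inductive systems are countable and strict, so continuity can be checked on each step of the inductive limit and then transferred by the universal property.
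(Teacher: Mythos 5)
Your antisymmetry argument is correct and is essentially a more explicit version of the paper's one-line observation: since $D^\Trans$ is symmetric for $h^{-1}$, the relation $G_M^* = -G_M$ from \eqref{eq:GreenSymmetric} transfers directly to $F_M$; unwinding through the musical isomorphisms as you do is fine but not needed. The continuity argument, however, has a genuine gap, and your assessment that ``the hard part is bookkeeping, not analysis'' is exactly backwards. The factorization $\Lambda_\cov = B \circ (F_M \times \id)$ with $B\colon \Secinftysc(E^*) \times \Secinfty_0(E^*) \to \mathbb{R}$ the integration pairing does not reduce the problem, because $B$ itself is \emph{not} jointly continuous. A basic neighbourhood of zero in $\Secinftysc(E^*)$ only constrains $\eta$ on some compact $L \subseteq M$, and elements of $\Secinftysc(E^*)$ need not decay in the time direction. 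Thus one can choose $\eta$ in that neighbourhood arbitrarily large on a compact set $A \subseteq J_M(K)\setminus L$ and a $\psi \in \Secinfty_0(E^*)$ supported in $A$, making $|B(\eta,\psi)|$ as large as one likes. This same obstruction kills the proposed ``gluing'': your local estimate $|B(\eta,\psi)| \le \halbnorm{p}_L(\eta)\halbnorm{q}_L(\psi)$ has $L$ depending on $\supp\psi$, and because $\Secinftysc(E^*)$ lacks compact support, there is no single continuous seminorm on it dominating $\sup_L|\eta|$ for all compact $L$; the weighted-exhaustion trick that succeeds when \emph{both} arguments are compactly supported (which is why the integration pairing behind Lemma~\ref{lemma:omegaSymplectic} is jointly continuous) breaks down here. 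Joint continuity of a bilinear map on a product of LF spaces does not follow from continuity on each Fréchet step and cannot be transferred through the universal property, which only concerns \emph{linear} maps out of the inductive limit.

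The paper routes around exactly this obstacle, and the difference is essential, not cosmetic. Instead of estimating $B$, it shows that the \emph{linear} map $\sharp_\cov\colon \Secinfty_0(E^*) \to \Secinfty_0(E^*)'_\beta$, $\varphi \mapsto \Lambda_\cov(\varphi,\argument)$, is continuous, and it does so by factoring through the Fréchet space $\Secinfty(E^*)$ — not the LF space $\Secinftysc(E^*)$ — followed by the continuous inclusion $\Secinfty(E^*) \hookrightarrow \Secinfty_0(E^*)'_\beta$ induced by $h^{-1}$ and $\mu_g$. That much is elementary. The passage from continuity of the linear map $\sharp_\cov$ to \emph{joint} continuity of the bilinear form $\Lambda_\cov$ is precisely where nuclearity of $\Secinfty_0(E^*)$ enters, via the Kernel Theorem \cite[Thm.~21.6.9]{jarchow:1981a}. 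That is the missing ingredient in your argument; no amount of direct seminorm estimation on the LF product can replace it, because the bilinear form you factored through is genuinely discontinuous.
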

\begin{proof}
    The antisymmetry is clear since $D^\Trans$ is symmetric and hence
    \eqref{eq:GreenSymmetric} applies also to $F_M$. The continuity is
    slightly more involved: first we note that $F_M\colon
    \Secinfty_0(E^*) \longrightarrow \Secinfty(E^*)$ is continuous by
    the continuity of the Green operators $F_M^\pm$. Next, we use the
    fact that the inclusion $\Secinfty(E^*) \longrightarrow
    \Secinfty_0(E^*)'$ given by the integration with respect to
    $\mu_g$ using $h^{-1}$ is also continuous where we equip the dual
    $\Secinfty_0(E^*)'$ with the \emph{strong} topology. This shows
    that the corresponding ``musical'' homomorphism
    \[
    \sharp_\cov\colon
    \Secinfty_0(E^*) \ni \varphi
    \; \mapsto \;
    \Lambda_\cov(\varphi, \argument) \in \Secinfty_0(E^*)'
    \]
    is continuous with respect to the LF and the strong topology,
    respectively. Hence the Kernel Theorem for the nuclear space
    $\Secinfty_0(E^*)$ states that $\Lambda_\cov(\argument,
    \argument)$ is a distribution on the Cartesian product, or,
    equivalently, a continuous bilinear map, see
    e.g. \cite[Thm.~21.6.9]{jarchow:1981a}.
\end{proof}
\begin{remark}
    \label{remark:CompactSigmaDoesNotHelp}%
    Contrary to the continuity of $\Lambda_\Sigma$ the above argument
    does not simplify for a compact Cauchy surface $\Sigma$ since a
    globally hyperbolic spacetime is always non-compact.
\end{remark}
\begin{definition}[Covariant Poisson algebra]
    \label{definition:CovariantPoissonAlgebra}%
    The covariant Poisson algebra for $D$ is the symmetric algebra
    $\Sym^\bullet(V_\cov)$, where $V_\cov = \Secinfty_0(E^*)$, with the
    constant Poisson bracket $\Bracketcov{\argument, \argument}$
    coming from $\Lambda_\cov$.
\end{definition}

This is indeed a Poisson algebra with a continuous Poisson bracket if
we endow it with one of the topologies discussed in
Section~\ref{subsec:TopologyForSymV}, see
Proposition~\ref{proposition:ContinuityOfPossionBracket} and
Remark~\ref{remark:PoissonBracketContinuousForWeylTop}.  A first and
heuristic appearance of this Poisson bracket in a very particular case
seems to be \cite{peierls:1952a}.

From our general theory we know that the corresponding \emph{covariant
  Weyl algebra} $\WeylR(V_\cov \tensor \mathbb{C}, \starcov)$ with the
\emph{covariant star product} $\starcov = \star_{\frac{\I\hbar}{2}
  \Lambda_\cov}$ is a nuclear $^*$-algebra with respect to the complex
conjugation, where as usual $R \ge \frac{1}{2}$.

As a first result we note that $\Lambda_\cov$ is now degenerate. In
fact, we can determine its degeneracy space explicitly
\cite[Lem.~4.4.18]{waldmann:2012a:pre}:
\begin{lemma}
    \label{lemma:Casimirs}%
    Let $\varphi \in \Secinfty_0(E^*)$. Then the following statements
    are equivalent:
    \begin{lemmalist}
    \item \label{item:Casimir} $\varphi$ is a Casimir element of
        $\Sym^\bullet(V_\cov)$, i.e. $\Bracketcov{\varphi, \argument}
        = 0$.
    \item \label{item:VanishesOnSolutions} $\varphi$ vanishes on
        solutions $u \in \Secinftysc(E)$, i.e. we have
        \begin{equation}
            \label{eq:VanishesOnSolutions}
            \int_M \varphi \cdot u \; \mu_g = 0
            \quad
            \textrm{whenever}
            \quad
            Du = 0.
        \end{equation}
    \item \label{item:KernelOfFM} $\varphi \in \ker F_M$.
    \end{lemmalist}
\end{lemma}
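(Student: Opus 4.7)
The strategy is to show the cyclic implications $(i) \Leftrightarrow (iii)$ directly from the definition of $\Lambda_\cov$, and then $(ii) \Leftrightarrow (iii)$ from the exactness of the sequence \eqref{eq:ExactSequence} together with the transpose relationship between the propagators for $D$ and $D^\Trans$.

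First I would unpack condition $(i)$. By the Leibniz rule for the constant Poisson bracket, $\Bracketcov{\varphi,\argument}$ vanishes on all of $\Sym^\bullet(V_\cov)$ as soon as it vanishes on the generators $V_\cov = \Secinfty_0(E^*)$; together with the antisymmetry of $\Lambda_\cov$ and \eqref{eq:PLambdaAndPLambdaMinus} this reduces $(i)$ to $\Lambda_\cov(\varphi,\psi) = \int_M h^{-1}(F_M(\varphi),\psi)\,\mu_g = 0$ for all $\psi \in \Secinfty_0(E^*)$. Since $F_M(\varphi) \in \Secinfty(E^*)$ and $h^{-1}$ is a non-degenerate fibre metric, the fundamental lemma of the calculus of variations gives $F_M(\varphi)=0$, i.e.\ $(iii)$; the converse is immediate.

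Next I would handle $(iii)\Rightarrow (ii)$. If $F_M(\varphi) = 0$ then by the exactness of \eqref{eq:ExactSequence} applied to $D^\Trans$ (which is again normally hyperbolic on the globally hyperbolic spacetime $(M,g)$) there exists $\chi \in \Secinfty_0(E^*)$ with $\varphi = D^\Trans \chi$. For any $u \in \Secinftysc(E)$ with $Du = 0$ the compactness of $\supp\chi$ justifies integration by parts via \eqref{eq:TransposeD}, yielding $\int_M \varphi\cdot u\,\mu_g = \int_M \chi\cdot Du\,\mu_g = 0$.

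Finally, for $(ii)\Rightarrow (iii)$ the key ingredient is to identify the transpose of $G_M$ with $-F_M$. Concretely, the operators $(G_M^\pm)^\Trans\colon \Secinfty_0(E^*) \to \Secinfty(E^*)$ defined via the $\mu_g$-pairing satisfy $(G_M^\pm)^\Trans D^\Trans = \id$ on $\Secinfty_0(E^*)$ and have support in $J_M^\mp(\supp\varphi)$, hence by the uniqueness of advanced/retarded Green operators for $D^\Trans$ we get $(G_M^\pm)^\Trans = F_M^\mp$, and so $G_M^\Trans = -F_M$. Given $(ii)$, for every $w \in \Secinfty_0(E)$ the section $u = G_M w$ lies in $\Secinftysc(E)$ and satisfies $Du = 0$, so
\begin{equation*}
  0 \;=\; \int_M \varphi \cdot G_M(w)\,\mu_g \;=\; \int_M (G_M^\Trans \varphi)\cdot w\,\mu_g \;=\; -\int_M F_M(\varphi)\cdot w\,\mu_g,
\end{equation*}
for all $w \in \Secinfty_0(E)$, which forces $F_M(\varphi) = 0$. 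The only delicate point in the plan is this transpose identification; once it is in place, all three equivalences reduce to standard continuity and density arguments already used in Subsection~\ref{subsec:WaveEquationGreenOperators}.
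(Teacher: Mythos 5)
Your proof is correct and follows essentially the same cyclic strategy as the paper: non-degeneracy of $h^{-1}$ for $(i)\Leftrightarrow(iii)$, exactness of \eqref{eq:ExactSequence} for $D^\Trans$ for $(iii)\Rightarrow(ii)$, and testing on sections in the image of $G_M$ for the remaining implication. The only (harmless) difference is in the last step: the paper closes the loop as $(ii)\Rightarrow(i)$ via the musical identity $(F_M\psi)^\sharp = G_M(\psi^\sharp)$ from \eqref{eq:GMpmsharp}, whereas you go $(ii)\Rightarrow(iii)$ via the transpose identification $G_M^\Trans = -F_M$ (itself a consequence of \eqref{eq:GreenSymmetric} and \eqref{eq:GMpmsharp}), which amounts to the same computation.
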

\begin{proof}
    Assume $\Bracketcov{\varphi, \argument} = 0$ then we have $0 =
    \Bracketcov{\varphi, \psi} = \int_M h^{-1}(F_M(\varphi), \psi) \;
    \mu_g$ for all $\psi \in \Secinfty_0(E^*)$. Since $h^{-1}$ is
    non-degenerate this implies $F_M(\varphi) = 0$. Next, assume
    $F_M(\varphi) = 0$. Then we know $\varphi = D^\Trans \chi$ for
    some $\chi \in \Secinfty_0(E^*)$ by \eqref{eq:ExactSequence}
    applied to $D^\Trans$. Thus \eqref{eq:VanishesOnSolutions} follows
    by definition of $D^\Trans$ as in \eqref{eq:TransposeD}. Finally,
    assume \eqref{eq:VanishesOnSolutions} and let $\psi \in
    \Secinfty_0(E^*)$ be arbitrary. Then $(F_M(\psi))^\sharp =
    G_M(\psi^\sharp)$ by \eqref{eq:GMpmsharp} and it solves the wave
    equation $D G_M(\psi^\sharp) = 0$ by
    \eqref{eq:ExactSequence}. Thus $\Bracketcov{\varphi, \psi} = 0$
    follows. Since $\Sym^\bullet(V_\cov)$ is generated by $V_\cov$ and
    $\Bracketcov{\varphi, \argument}$ is a derivation,
    $\Bracketcov{\varphi, \argument} = 0$ follows.
\end{proof}

Since the elements of $\ker F_M \subseteq \Secinfty_0(E^*)$ are
Casimir elements, the ideal they generate inside
$\Sym^\bullet(V_\cov)$ is a Poisson ideal. It turns out that it is
even a two-sided ideal with respect to $\starcov$:
\begin{lemma}
    \label{lemma:StarCovIdeal}%
    Let $\langle\ker F_M\rangle \subseteq \Sym^\bullet(V_\cov)$ be the
    ideal generated by $\ker F_M$ with respect to the symmetric tensor
    product. Then we have:
    \begin{lemmalist}
    \item \label{item:kerFMPoissonIdeal} $\langle\ker F_M\rangle$ is a
        Poisson ideal with respect to $\Bracketcov{\argument,
          \argument}$.
    \item \label{item:kerFMStarcovIdeal} $\langle\ker F_M\rangle
        \tensor \mathbb{C} \subseteq \WeylR(V_\cov \tensor
        \mathbb{C})$ is a $^*$-ideal for $\starcov$, in fact generated
        by $\ker F_M$.
    \end{lemmalist}
\end{lemma}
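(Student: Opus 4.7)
The plan is as follows. For part (i), the approach is essentially immediate from Lemma~\ref{lemma:Casimirs}, which tells us that every $\varphi \in \ker F_M$ is a Poisson Casimir. Since $\Bracketcov{\argument, \argument}$ is a biderivation, for a generator $\xi = \varphi a$ of the ideal the Leibniz rule produces two terms, one of which vanishes by the Casimir property while the other equals $\varphi \cdot \Bracketcov{a, b}$, still inside $\langle \ker F_M \rangle$. Linear extension over sums of such generators finishes this part.

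For part (ii), the key idea is that the star product with an element of $\ker F_M$ collapses to the symmetric tensor product, so that $\starcov$-ideals and $\mu$-ideals generated by $\ker F_M$ coincide. First I would establish the base step on generators: for $\varphi \in \ker F_M$ and $\psi \in V_\cov$, both pairings $\Lambda_\cov(\varphi, \psi)$ and $\Lambda_\cov(\psi, \varphi)$ vanish because $F_M(\varphi) = 0$ together with the antisymmetry of $\Lambda_\cov$ from Lemma~\ref{lemma:LambdaCovContinuous}. Combining this with Lemma~\ref{lemma:PLambdaMapProperties},~\refitem{item:PLambdaLeibnizEins} and inducting on the symmetric degree of $b$, one concludes
\[
P_{\Lambda_\cov}(\varphi \tensor b) = 0 = P_{\Lambda_\cov}(b \tensor \varphi)
\]
for every $b \in \Sym^\bullet(V_\cov)$. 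Next, for a generic generator $\xi = \varphi a$ of the ideal, Lemma~\ref{lemma:PLambdaMapProperties},~\refitem{item:PLambdaLeibnizZwei} collapses to
\[
P_{\Lambda_\cov}(\varphi a \tensor b) = (\varphi \tensor \Unit) P_{\Lambda_\cov}(a \tensor b),
\]
since the second Leibniz term involves $P_{\Lambda_\cov}(\varphi \tensor b) = 0$. Iterating this identity gives $P_{\Lambda_\cov}^k(\varphi a \tensor b) = (\varphi \tensor \Unit) P_{\Lambda_\cov}^k(a \tensor b)$ for every $k \in \mathbb{N}_0$. Applying $\mu$ and summing the series, which on polynomial inputs terminates by Corollary~\ref{corollary:NonFormalDeformation}, yields $(\varphi a) \starcov b = \varphi \cdot (a \starcov b)$. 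This manifestly lies in $\langle \ker F_M \rangle \tensor \mathbb{C}$; the analogous argument with the roles of the two slots reversed handles right multiplication. Specializing $a = \Unit$ in particular shows $\varphi \starcov b = \varphi \cdot b = b \starcov \varphi$, so the $\starcov$-ideal generated by $\ker F_M$ coincides with the symmetric tensor ideal.

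The $^*$-property is then a short remark: since $F_M$ is a real differential operator, $\ker F_M \subseteq V_\cov$ is a real subspace, so $\ker F_M \tensor \mathbb{C}$ and thereby the ideal it generates are stable under the complex conjugation that serves as $^*$-involution. Combined with two-sidedness this gives the $^*$-ideal property. I do not foresee a substantial obstacle; the heart of the argument is the collapse $P_{\Lambda_\cov}(\varphi \tensor \argument) = 0$, after which everything reduces to routine Leibniz-rule bookkeeping already packaged in Lemma~\ref{lemma:PLambdaMapProperties}. The step requiring the most care is the induction lifting the vanishing of $P_{\Lambda_\cov}$ from degree-one arguments to all of $\Sym^\bullet(V_\cov)$, which is straightforward given the uniqueness part of Lemma~\ref{lemma:PLambdaMapProperties}.
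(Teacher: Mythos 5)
Your proposal is correct, and for part~\refitem{item:kerFMStarcovIdeal} you take a genuinely different (and somewhat more detailed) route than the paper. The paper exploits the fact that any $\varphi \in \ker F_M$ has tensor degree one, so for arbitrary $\Phi$ the series for $\Phi \starcov \varphi$ truncates after the first order: $\Phi \starcov \varphi = \Phi\varphi + \text{(multiple of)}\, \Bracketcov{\Phi, \varphi}$, with the bracket term killed by the Casimir property. This immediately gives $\Phi \starcov \varphi = \Phi\varphi = \varphi \starcov \Phi$, showing that $\varphi$ is $\starcov$-central, and the equality $\langle \ker F_M\rangle \tensor \mathbb{C} = \WeylR \starcov \ker F_M \starcov \WeylR$ then follows by moving $\varphi$ to one side. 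You instead work directly at the level of the biderivation $P_{\Lambda_\cov}$: you first establish $P_{\Lambda_\cov}(\varphi \tensor b) = 0$ for all $b$ via the Leibniz rule of Lemma~\ref{lemma:PLambdaMapProperties}, then use the other Leibniz rule to collapse $P_{\Lambda_\cov}(\varphi a \tensor b) = (\varphi \tensor \Unit)\, P_{\Lambda_\cov}(a \tensor b)$, iterate, and sum to obtain the stronger pointwise identity $(\varphi a) \starcov b = \varphi\,(a \starcov b)$, and its mirror for right multiplication. What the paper's degree-one observation buys is brevity; what your argument buys is an explicit factorization that avoids having to argue through centrality and the two-sided $\starcov$-ideal generated by $\ker F_M$ — you get that $\langle\ker F_M\rangle \tensor \mathbb{C}$ is closed under left and right $\starcov$-multiplication directly. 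The signs you need to track are all trivial since $V_\cov$ is purely even, which you correctly do not dwell on. Parts~\refitem{item:kerFMPoissonIdeal} and the $^*$-stability are handled exactly as in the paper.
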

\begin{proof}
    The first part is clear by Lemma~\ref{lemma:Casimirs},
    \refitem{item:Casimir}. Now let $\Phi \in \WeylR(V_\cov \tensor
    \mathbb{C})$ be an arbitrary tensor and let $\varphi \in
    \Secinfty_0(E^*)$. Then we have
    \[
    \Phi \starcov \varphi
    =
    \Phi \varphi + \frac{\I\hbar}{2} \Bracketcov{\Phi, \varphi}
    \quad
    \textrm{and}
    \quad
    \varphi \starcov \Phi
    =
    \Phi \varphi + \frac{\I\hbar}{2} \Bracketcov{\varphi, \Phi},
    \]
    since $\varphi$ has tensor degree $1$ and hence the higher order
    contributions in $\starcov$ all vanish. Thus for $\varphi \in \ker
    F_M$ we get $\Phi \starcov \varphi = \Phi\varphi = \varphi
    \starcov \Phi$. But this shows that
    \[
    \langle \ker F_M \rangle \tensor \mathbb{C}
    =
    \WeylR(V_\cov \tensor \mathbb{C})
    \starcov
    \ker F_M
    \starcov
    \WeylR(V_\cov \tensor \mathbb{C}).
    \]
    Since $\ker F_M$ consists of real sections, it is clear that
    $\langle \ker F_M \rangle \tensor \mathbb{C}$ is a $^*$-ideal.
\end{proof}

%
%

\subsection{The Relation between the Canonical and the Covariant Poisson Algebra}
\label{subsec:RelationCanonicalCovariantPoissonAlgebra}

Let us now relate the two Poisson algebras $\Sym^\bullet(V_\Sigma)$
and $\Sym^\bullet(V_\cov)$. In view of \eqref{eq:Solution} it is
reasonable to relate a section $\varphi \in \Secinfty_0(E^*)$ to
sections $\varphi_0, \dot{\varphi}_0 \in \Secinfty_0(E^*_\Sigma)$ by
defining
\begin{equation}
    \label{eq:NullViaRho}
    \varphi_0
    =
    \iota^\#\left(\nabla^{E^*}_{\mathfrak{n}} F_M(\varphi)\right)
    \quad
    \textrm{and}
    \quad
    \dot{\varphi}_0
    =
    - \iota^\#\left(F_M(\varphi)\right),
\end{equation}
thereby defining a linear map
\begin{equation}
    \label{eq:rhoSigmaDef}
    \varrho_\Sigma\colon
    \Secinfty_0(E^*) \ni \varphi
    \; \mapsto \;
    (\varphi_0, \dot{\varphi}_0) \in
    \Secinfty_0(E^*_\Sigma) \oplus \Secinfty_0(E^*_\Sigma),
\end{equation}
with $\varphi_0$, $\dot{\varphi}_0$ given as in
\eqref{eq:NullViaRho}. This map $\varrho_\Sigma$ has the following
property:
\begin{lemma}
    \label{lemma:RhoSigmaEvaluation}%
    The map $\varrho_\Sigma$ is continuous and for all solutions $u
    \in \Secinftysc(E)$ of the wave equation with initial conditions
    $u_0, \dot{u}_0$ we have
    \begin{equation}
        \label{eq:PairingRhoSigma}
        \varphi(u)
        =
        \varrho_\Sigma(\varphi) (u_0, \dot{u}_0).
    \end{equation}
\end{lemma}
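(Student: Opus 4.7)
The plan is to handle the continuity and the pairing formula separately, with the pairing formula being essentially immediate from the Cauchy problem representation \eqref{eq:Solution}.

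For the pairing formula, I would simply substitute the definitions: by \eqref{eq:NullViaRho} the pair $\varrho_\Sigma(\varphi) = (\varphi_0, \dot{\varphi}_0)$ has components $\varphi_0 = \iota^\#(\nabla^{E^*}_{\mathfrak{n}} F_M(\varphi))$ and $\dot{\varphi}_0 = - \iota^\#(F_M(\varphi))$. Plugging into \eqref{eq:EvaluateVSigmaOnPSigma} gives
\[
    \varrho_\Sigma(\varphi)(u_0, \dot{u}_0)
    =
    \int_\Sigma
    \left(
        \iota^\#(\nabla^{E^*}_{\mathfrak{n}} F_M(\varphi)) \cdot u_0
        -
        \iota^\#(F_M(\varphi)) \cdot \dot{u}_0
    \right) \mu_\Sigma,
\]
which is exactly the right hand side of \eqref{eq:Solution}. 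Since $u$ is the solution of the Cauchy problem with initial data $(u_0, \dot{u}_0)$, the left hand side of \eqref{eq:Solution} is $\int_M \varphi \cdot u \; \mu_g = \varphi(u)$. This establishes \eqref{eq:PairingRhoSigma}.

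For the continuity, I would chain three continuous pieces. First, $F_M\colon \Secinfty_0(E^*) \longrightarrow \Secinftysc(E^*)$ is continuous, which is the analogue of \eqref{eq:GreenContinuousSC} for the dual operator $D^\Trans$. Next, the covariant derivative $\nabla^{E^*}_{\mathfrak{n}}\colon \Secinftysc(E^*) \longrightarrow \Secinftysc(E^*)$ is a continuous first-order differential operator. Finally, the pullback $\iota^\#\colon \Secinftysc(E^*) \longrightarrow \Secinfty_0(E^*_\Sigma)$ is continuous because, for any compact $K \subseteq M$, the closed subspace $\Secinfty_{J_M(K)}(E^*)$ is mapped continuously into $\Secinfty_{K'}(E^*_\Sigma)$ where $K' = J_M(K) \cap \Sigma$ is compact by global hyperbolicity, and these maps assemble to a continuous map out of the strict inductive limit.

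The only mildly subtle point is to note that $\iota^\#$ actually lands in compactly supported sections: for $\varphi \in \Secinfty_0(E^*)$ we have $\supp F_M(\varphi) \subseteq J_M(\supp \varphi)$, so the restriction to $\Sigma$ is contained in $J_M(\supp \varphi) \cap \Sigma$, which is compact in the globally hyperbolic setting. This is not really an obstacle but rather the reason the composition makes sense at the level of locally convex spaces; once it is observed, the continuity of $\varrho_\Sigma$ is just composition of continuous maps, and the computation above yields \eqref{eq:PairingRhoSigma}.
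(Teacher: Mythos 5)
Your proof is correct and follows essentially the same route as the paper: the pairing formula is obtained by substituting the definitions from \eqref{eq:NullViaRho} into the pairing \eqref{eq:EvaluateVSigmaOnPSigma} and invoking \eqref{eq:Solution}, and the continuity is obtained by composing $F_M$, $\nabla^{E^*}_{\mathfrak{n}}$, and $\iota^\#$, with the universal property of the strict inductive limit topology used to get the continuity of $\iota^\#\colon \Secinftysc(E^*) \longrightarrow \Secinfty_0(E^*_\Sigma)$. You index the inductive system over compact $K \subseteq M$ and restrict to $J_M(K) \cap \Sigma$, whereas the paper indexes over compact $K \subseteq \Sigma$ and uses $J_M(K) \cap \Sigma = K$; these are equivalent, and your explicit observation about the compact support of $\iota^\#(F_M(\varphi))$ is a fair elaboration of a point the paper leaves implicit.
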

\begin{proof}
    The propagator $F_M$ gives a continuous map into
    $\Secinftysc(E^*)$ and the covariant derivative is clearly
    continuous, too, mapping $\Secinftysc(E^*)$ into
    $\Secinftysc(E^*)$. For every compact subset $K \subseteq \Sigma$
    the restriction
    \[
    \iota^\#\colon
    \Secinfty_{J_M(K)}(E^*) \longrightarrow \Secinfty_K(E^*_\Sigma)
    \]
    is a continuous map between Fréchet spaces. But then also
    $\iota^\#\colon \Secinftysc(E^*) \longrightarrow
    \Secinfty_0(E^*_\Sigma)$ is continuous by the universal property
    of LF topologies. This shows the continuity of $\varrho_\Sigma$,
    the equality in \eqref{eq:PairingRhoSigma} is just
    \eqref{eq:Solution}.
\end{proof}

Since $\Sym^\bullet(V_\cov)$ is freely generated by $V_\cov$ we get a
unique unital algebra homomorphism extending $\varrho_\Sigma$ which we
still denote by the same symbol
\begin{equation}
    \label{eq:RhoSigma}
    \varrho_\Sigma\colon
    \Sym^\bullet(V_\cov) \longrightarrow \Sym^\bullet(V_\Sigma).
\end{equation}
Since \eqref{eq:rhoSigmaDef} is continuous, also \eqref{eq:RhoSigma}
is continuous as linear map from $\Sym^\bullet_R(V_\cov)$ to
$\Sym^\bullet_R(V_\Sigma)$ by
Lemma~\ref{lemma:ContinuityA}. Moreover, we have
\begin{equation}
    \label{eq:RhoSigmaEvaluation}
    \Phi(u) = \varrho_\Sigma(\Phi)(u_0, \dot{u}_0)
\end{equation}
for all $\Phi \in \Sym^\bullet(V_\cov)$ and all solutions $u \in
\Secinftysc(E)$ of the wave equation $Du = 0$ with initial conditions
$(u_0, \dot{u}_0)$. This is clear since evaluation of an element in
the symmetric algebra on a point is a homomorphism and
$\varrho_\Sigma$ is a homomorphism as well. Since we only have to
check the equality of two homomorphism on generators,
\eqref{eq:PairingRhoSigma} is all we need to conclude
\eqref{eq:RhoSigmaEvaluation}.
\begin{lemma}
    \label{lemma:RhoSigmaContinuousBetweenWeylR}%
    The algebra homomorphism $\varrho_\Sigma$ is a Poisson morphism as
    well as a continuous $^*$-algebra homomorphism
    \begin{equation}
        \label{eq:rhoSigmaWeylR}
        \varrho_\Sigma\colon
        \WeylR(V_\cov \tensor \mathbb{C}, \starcov)
        \longrightarrow
        \WeylR(V_\Sigma \tensor \mathbb{C}, \starSigma).
    \end{equation}
\end{lemma}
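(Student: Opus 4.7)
Plan of proof.
My approach is to verify that $\varrho_\Sigma$, viewed as a linear map on generators, intertwines the two bilinear forms $\Lambda_\cov$ and $\Lambda_\Sigma$, so that it is a Poisson map in the sense of \eqref{eq:varphiPoissonMap}. Once this key identity is established, Proposition~\ref{proposition:Functorial} immediately upgrades $\varrho_\Sigma$ to a homomorphism of Poisson algebras and star product algebras; continuity and the $^*$-property will then follow from what we have already built up. I complexify everything $\mathbb{C}$-linearly from the start and keep in mind that $\Lambda_\cov$ and $\Lambda_\Sigma$ are real and antisymmetric, so in particular $(\Lambda_\cov)_+ = 0 = (\Lambda_\Sigma)_+$, which is essential for the $^*$-business later.

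The heart of the argument is the identity
\begin{equation*}
   \Lambda_\Sigma\bigl(\varrho_\Sigma(\varphi), \varrho_\Sigma(\psi)\bigr)
   =
   \Lambda_\cov(\varphi, \psi)
   \quad
   \textrm{for all } \varphi, \psi \in \Secinfty_0(E^*).
\end{equation*}
To prove this I introduce the auxiliary solution $u = (F_M(\psi))^\sharp = G_M(\psi^\sharp) \in \Secinftysc(E)$, which satisfies $Du=0$ by exactness of \eqref{eq:ExactSequence}. Because $\nabla^E$ is metric and the musical isomorphism commutes with $\iota^\#$, the initial data of $u$ read $u_0 = -(\dot{\psi}_0)^\sharp$ and $\dot{u}_0 = (\psi_0)^\sharp$ in the notation of \eqref{eq:NullViaRho}. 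Applying the solution formula \eqref{eq:Solution} to $u$ and $\varphi$ yields
\begin{equation*}
   \int_M \varphi \cdot u \, \mu_g
   =
   \int_\Sigma (\varphi_0 \cdot u_0 + \dot{\varphi}_0 \cdot \dot{u}_0)\, \mu_\Sigma
   =
   -\Lambda_\Sigma\bigl(\varrho_\Sigma(\varphi), \varrho_\Sigma(\psi)\bigr),
\end{equation*}
after inserting the formulas for $u_0, \dot{u}_0$ and using \eqref{eq:LambdaSigmaExpl}. On the other hand, rewriting $\varphi \cdot u = h^{-1}(\varphi, F_M(\psi))$ and invoking $F_M^* = -F_M$ (equivalent to \eqref{eq:GreenSymmetric} via \eqref{eq:GMpmsharp}) gives $\int_M \varphi \cdot u \, \mu_g = -\Lambda_\cov(\varphi,\psi)$. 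Comparing the two expressions yields the desired intertwining identity. This is the only genuinely non-trivial step; everything else is bookkeeping.

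With the intertwining in hand, Proposition~\ref{proposition:Functorial} ensures that the unital algebra homomorphism $\varrho_\Sigma\colon \Sym^\bullet(V_\cov) \to \Sym^\bullet(V_\Sigma)$ is simultaneously a morphism of Poisson algebras and of star product algebras. For the continuity from $\WeylR(V_\cov \tensor \mathbb{C})$ to $\WeylR(V_\Sigma \tensor \mathbb{C})$, I invoke the continuity of $\varrho_\Sigma$ on generators (Lemma~\ref{lemma:RhoSigmaEvaluation}): given any continuous seminorm $\halbnorm{p}$ on $V_\Sigma$ there is a continuous seminorm $\halbnorm{q}$ on $V_\cov$ with $\halbnorm{p}(\varrho_\Sigma(\varphi)) \le \halbnorm{q}(\varphi)$, whereupon Lemma~\ref{lemma:ContinuityA} gives $\halbnorm{p}_R(\varrho_\Sigma(\Phi)) \le \halbnorm{q}_R(\Phi)$ on the tensor (hence symmetric) algebra. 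Finally, for the $^*$-compatibility: since $\varrho_\Sigma\colon V_\cov \to V_\Sigma$ is real linear, its $\mathbb{C}$-linear extension commutes with complex conjugation, and the induced algebra map therefore intertwines the two complex conjugations on $\Sym^\bullet(V_\cov \tensor \mathbb{C})$ and $\Sym^\bullet(V_\Sigma \tensor \mathbb{C})$. Because $\Lambda_\cov$ and $\Lambda_\Sigma$ are real and antisymmetric, Proposition~\ref{proposition:Involution} guarantees that complex conjugation is a genuine $^*$-involution on both Weyl algebras, completing the proof.
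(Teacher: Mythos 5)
Your proof is correct and follows the same route as the paper: both reduce to verifying the intertwining identity $\Lambda_\Sigma(\varrho_\Sigma(\varphi),\varrho_\Sigma(\psi)) = \Lambda_\cov(\varphi,\psi)$ via the auxiliary solution $u = (F_M\psi)^\sharp = G_M(\psi^\sharp)$, the solution formula \eqref{eq:Solution}, and the antisymmetry of $F_M$, and then invoke the functoriality results (Proposition~\ref{proposition:Functorial} together with Lemma~\ref{lemma:ContinuityA}/Proposition~\ref{proposition:FunctorialityWeylR}) to upgrade to a continuous Poisson and star-algebra homomorphism. Your version merely spells out a few steps the paper compresses (the antisymmetry of $F_M$, the explicit seminorm estimate for continuity, and the appeal to Proposition~\ref{proposition:Involution} for the $^*$-structure), but the underlying argument is identical.
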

\begin{proof}
    Thanks to Proposition~\ref{proposition:Functorial} and
    Proposition~\ref{proposition:FunctorialityWeylR} we only have to
    show that \eqref{eq:rhoSigmaDef} is a Poisson map. Thus let
    $\varphi, \psi \in \Secinfty_0(E^*)$ be given and let $(\varphi_0,
    \dot{\varphi}_0) = \varrho_\Sigma(\varphi)$ as well as $(\psi_0,
    \dot{\psi}_0) = \varrho_\Sigma(\psi)$ be their images in
    $V_\Sigma$ under $\varrho_\Sigma$. Consider now $u = (F_M
    \psi)^\sharp = G_M(\psi^\sharp) \in \Secinftysc(E)$ which is a
    solution of the wave equation $D u = 0$ with initial conditions
    $u_0 = \iota^\# u = \iota^\#\left(F_M(\psi)\right)^\sharp$ and
    $\dot{u}_0 = \iota^\#\left(\nabla^E_{\mathfrak{n}} u\right) =
    \iota^\#\left(\nabla^{E^*}_{\mathfrak{n}}
        F_M(\psi)\right)^\sharp$. Here we use that $\nabla^E$ is
    metric and hence compatible with the musical isomorphism $\sharp$
    induced by $h$. Now we have
    \begin{align*}
        &\Lambda_\Sigma\left(
            (\varphi_0, \dot{\varphi}_0),
            (\psi_0, \dot{\psi}_0)
        \right) \\
        &\quad=
        - \int_\Sigma \Big(
            \left(
                \iota^\#
                \nabla^{E^*}_{\mathfrak{n}} F_M(\varphi)
            \right)
            \cdot
            \underbrace{
              \left(
                  \iota^\# F_M(\psi)
              \right)^\sharp
            }_{u_0}
            -
            \left(
                \iota^\# F_M(\varphi)
            \right)
            \cdot
            \underbrace{
              \left(
                  \iota^\#
                  \nabla^{E^*}_{\mathfrak{n}} F_M(\psi)
              \right)^\sharp
            }_{\dot{u}_0}
        \Big)
        \;
        \mu_\Sigma \\
        &\quad\stackrel{\mathclap{\eqref{eq:Solution}}}{=}
        \quad
        - \int_M \varphi \cdot u \; \mu_g \\
        &\quad=
        - \int_M h^{-1}(\varphi, F_M(\psi)) \; \mu_g \\
        &\quad=
        \Lambda_\cov(\varphi, \psi).
    \end{align*}
    Clearly, $\varrho_\Sigma$ is real and hence commutes with the
    complex conjugation.
\end{proof}
\begin{lemma}
    \label{lemma:kerRhoSigma}%
    The kernel of $\varrho_\Sigma$ coincides with the Poisson ideal
    generated by $\ker F_M$ which consists of those elements in
    $\Sym^\bullet(V_\cov)$ which vanish on all solutions.
\end{lemma}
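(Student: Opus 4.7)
The plan is to split the statement into two assertions: first that $\ker \varrho_\Sigma = \langle \ker F_M \rangle$, and second that this coincides with the elements vanishing on all solutions. I would begin by analyzing $\varrho_\Sigma$ on the generators $V_\cov = \Secinfty_0(E^*)$. The inclusion $\ker F_M \subseteq \ker(\varrho_\Sigma\at{V_\cov})$ is immediate from \eqref{eq:NullViaRho}. For the reverse, suppose $\varrho_\Sigma(\varphi) = 0$. Then $\iota^\# F_M(\varphi) = 0$ and $\iota^\#(\nabla^{E^*}_{\mathfrak{n}} F_M(\varphi)) = 0$. Since $F_M(\varphi)$ solves the dual wave equation $D^\Trans F_M(\varphi) = 0$ by the exactness of the sequence \eqref{eq:ExactSequence} applied to $D^\Trans$, the uniqueness part of the well-posed Cauchy problem on the globally hyperbolic spacetime $(M, g)$ forces $F_M(\varphi) = 0$, i.e.\ $\varphi \in \ker F_M$. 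Hence $\ker(\varrho_\Sigma\at{V_\cov}) = \ker F_M$.

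Next I would lift this to the symmetric algebra. Since $\varrho_\Sigma$ is the free unital algebra extension of its restriction to $V_\cov$, the ideal $\langle \ker F_M \rangle$ is trivially contained in $\ker \varrho_\Sigma$. For the converse inclusion, $\varrho_\Sigma$ factors as
\[
\Sym^\bullet(V_\cov)
\twoheadrightarrow \Sym^\bullet(V_\cov) \big/ \langle \ker F_M \rangle
\cong \Sym^\bullet\left(V_\cov \big/ \ker F_M\right)
\hookrightarrow \Sym^\bullet(V_\Sigma),
\]
where the isomorphism is the purely algebraic analogue of \eqref{eq:CanonicalIsoTVmodU} in the symmetric setting, and the last arrow is induced by the linear injection $V_\cov/\ker F_M \hookrightarrow V_\Sigma$ established in the previous paragraph. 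Since we work over a field of characteristic zero, the symmetric-algebra functor preserves injections of vector spaces (for instance, one can check this by fixing a linear complement and using the resulting decomposition $\Sym^\bullet(V_\Sigma) \cong \Sym^\bullet(V_\cov/\ker F_M) \tensor \Sym^\bullet(\text{complement})$). Therefore $\ker \varrho_\Sigma = \langle \ker F_M \rangle$.

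For the second assertion, I would combine the identity \eqref{eq:RhoSigmaEvaluation} with the well-posedness of the Cauchy problem. If $\Phi \in \langle \ker F_M \rangle = \ker \varrho_\Sigma$, then $\Phi(u) = \varrho_\Sigma(\Phi)(u_0, \dot{u}_0) = 0$ for every solution $u$. Conversely, suppose $\Phi$ vanishes on every solution $u \in \Secinftysc(E)$. Using again that the map $(u_0, \dot{u}_0) \mapsto u$ from $\Secinfty_0(E_\Sigma) \oplus \Secinfty_0(E_\Sigma)$ to solutions of $Du = 0$ is a bijection, \eqref{eq:RhoSigmaEvaluation} yields $\varrho_\Sigma(\Phi)(u_0, \dot{u}_0) = 0$ for all $(u_0, \dot{u}_0) \in \mathcal{P}_\Sigma$.

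The last step, which I expect to be the main technical obstacle, is to conclude $\varrho_\Sigma(\Phi) = 0$ from the vanishing of its polynomial-function incarnation on $\mathcal{P}_\Sigma$. For this I would check that the evaluation map $\Sym^\bullet(V_\Sigma) \longrightarrow \{\text{polynomial functions on } \mathcal{P}_\Sigma\}$ induced by the pairing of Lemma~\ref{lemma:EvaluateContinuous} is injective. This reduces by the characteristic-zero polarization identity to the statement that the associated $n$-linear forms on $\mathcal{P}_\Sigma$ are determined by their diagonal values, and further to the statement that $V_\Sigma$ separates points of $\mathcal{P}_\Sigma$. The latter is clear: for any nonzero $(u_0, \dot{u}_0) \in \mathcal{P}_\Sigma$ a bump function paired via $\mu_\Sigma$ against a nonvanishing component detects it. With injectivity in hand, $\varrho_\Sigma(\Phi) = 0$, so $\Phi \in \ker \varrho_\Sigma = \langle \ker F_M \rangle$, completing the proof.
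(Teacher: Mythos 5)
Your proof is correct and in fact considerably more detailed than the paper's, which dispatches the first step by citing Lemma~\ref{lemma:Casimirs}, asserts $\ker\varrho_\Sigma = \langle\ker F_M\rangle$ ``in general'' without elaboration, and declares the vanishing-ideal description to follow from \eqref{eq:RhoSigmaEvaluation} ``at once.'' You take a slightly different route to $\ker(\varrho_\Sigma\at{V_\cov}) = \ker F_M$: where the paper invokes the equivalences of Lemma~\ref{lemma:Casimirs} (via the ``vanishes on solutions'' characterization together with \eqref{eq:PairingRhoSigma}), you argue directly that $\varrho_\Sigma(\varphi) = 0$ means $F_M(\varphi)$ is a $D^\Trans$-solution with vanishing Cauchy data and hence is zero by well-posedness. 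Both arguments are sound, and your version is arguably more self-contained. Your factorization $\Sym^\bullet(V_\cov) \twoheadrightarrow \Sym^\bullet(V_\cov/\ker F_M) \hookrightarrow \Sym^\bullet(V_\Sigma)$ is exactly the right way to make precise the paper's unexplained step that $\ker$ on generators determines $\ker$ on the symmetric algebra, and it is the symmetric-algebra counterpart of the mechanism the paper itself uses in Corollary~\ref{corollary:QuotientIotaIsomorphism}.

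One slip, though, in the last paragraph: the condition you actually need is not that $V_\Sigma$ separates points of $\mathcal{P}_\Sigma$ (i.e.\ $\mathcal{P}_\Sigma \hookrightarrow V_\Sigma^*$), but the dual statement that the pairing is non-degenerate on the $V_\Sigma$ side, i.e.\ $V_\Sigma \hookrightarrow \mathcal{P}_\Sigma^*$. After polarization one has a symmetric tensor $T \in \Sym^n(V_\Sigma)$ whose induced $n$-linear form on $\mathcal{P}_\Sigma$ vanishes identically; to conclude $T = 0$ one needs the composite $V_\Sigma^{\otimes n} \hookrightarrow (\mathcal{P}_\Sigma^*)^{\otimes n} \hookrightarrow (\mathcal{P}_\Sigma^{\otimes n})^*$ to be injective, which comes down to injectivity of $V_\Sigma \to \mathcal{P}_\Sigma^*$. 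Your bump-function argument actually verifies the reverse injectivity. The repair is immediate and symmetric (pair a nonzero $(\varphi_0, \dot\varphi_0) \in V_\Sigma$ against a suitable bump section of $E_\Sigma$), so nothing in the proof breaks, but the reduction as written points at the wrong non-degeneracy direction.
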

\begin{proof}
    Clearly, the kernel of $\varrho_\Sigma\at{V_\cov}$ is given by
    $\ker F_M$ by Lemma~\ref{lemma:Casimirs}. This implies $\ker
    \varrho_\Sigma = \langle\ker F_M \rangle$ in general. The second
    statement then follows from \eqref{eq:RhoSigmaEvaluation} at once.
\end{proof}
This statement has a very natural physical interpretation: if we view
$\Phi, \Psi \in \Sym^\bullet(V_\cov)$ as observables of the field
theory, their expectation values for a given field configuration are
just the evaluations $\Phi(u), \Psi(u) \in \mathbb{R}$, where $u \in
\Secinftysc(E)$. But since physically only those $u \in
\Secinftysc(E)$ occur which also satisfy the wave equation $D u = 0$,
we have to identify the observables $\Phi$ and $\Psi$ as soon as they
coincide on the solutions. This is the case iff $\Phi - \Psi \in
\langle\ker F_M \rangle$.
\begin{lemma}
    \label{lemma:RhoSigmaSurjective}%
    The homomorphism $\varrho_\Sigma$ is surjective.
\end{lemma}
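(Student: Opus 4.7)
Since $\varrho_\Sigma\colon \Sym^\bullet(V_\cov) \to \Sym^\bullet(V_\Sigma)$ is a unital algebra homomorphism and the target is generated by $V_\Sigma = \Secinfty_0(E^*_\Sigma) \oplus \Secinfty_0(E^*_\Sigma)$, it suffices to show that the restriction
\[
\varrho_\Sigma\colon V_\cov = \Secinfty_0(E^*) \longrightarrow V_\Sigma
\]
is already surjective. So fix $(\varphi_0, \dot\varphi_0) \in V_\Sigma$; we seek $\varphi \in \Secinfty_0(E^*)$ with $\iota^\#(\nabla^{E^*}_{\mathfrak{n}} F_M(\varphi)) = \varphi_0$ and $-\iota^\#(F_M(\varphi)) = \dot\varphi_0$.

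The plan has two steps. First, I use the well-posedness of the Cauchy problem. The operator $D^\Trans$ is again normally hyperbolic on $E^*$ (it has the same principal symbol as $D$), so by the existence and uniqueness result for the Cauchy problem recalled in Subsection~\ref{subsec:WaveEquationGreenOperators}, there is a unique $\chi \in \Secinftysc(E^*)$ satisfying
\[
D^\Trans \chi = 0, \quad \iota^\# \chi = - \dot\varphi_0, \quad \iota^\# \nabla^{E^*}_{\mathfrak{n}} \chi = \varphi_0,
\]
and indeed $\supp \chi \subseteq J_M(\supp \dot\varphi_0 \cup \supp \varphi_0)$, so $\chi$ has spacelike compact support. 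Second, I invoke the exact sequence \eqref{eq:ExactSequence} applied to the operator $D^\Trans$:
\[
\Secinfty_0(E^*) \xrightarrow{D^\Trans} \Secinfty_0(E^*) \xrightarrow{F_M} \Secinftysc(E^*) \xrightarrow{D^\Trans} \Secinftysc(E^*).
\]
Exactness at the third spot says that $\ker D^\Trans|_{\Secinftysc(E^*)} = \image F_M$, so from $D^\Trans \chi = 0$ I obtain some $\varphi \in \Secinfty_0(E^*)$ with $F_M(\varphi) = \chi$.

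Putting these together, $\varrho_\Sigma(\varphi) = (\iota^\#(\nabla^{E^*}_{\mathfrak{n}} \chi), -\iota^\# \chi) = (\varphi_0, \dot\varphi_0)$, which is exactly what we wanted. The only ``real work'' is verifying the two facts used as black boxes here, namely well-posedness of the Cauchy problem for $D^\Trans$ and the exactness of the sequence for $D^\Trans$; both are standard results in the globally hyperbolic setting (cf.\ \cite{baer.ginoux.pfaeffle:2007a}) and they apply because $D^\Trans$ inherits normal hyperbolicity from $D$. Once surjectivity on $V_\cov$ is established, extending to $\Sym^\bullet(V_\cov)$ by the generating property of $V_\Sigma$ under the symmetric tensor product is immediate, since $\varrho_\Sigma$ is a unital algebra homomorphism and any element of $\Sym^\bullet(V_\Sigma)$ is a finite sum of symmetric products of elements of $V_\Sigma$, each of which lies in the image of the already-surjective restriction.
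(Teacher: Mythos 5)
Your proof is correct and follows essentially the same route as the paper's: solve the Cauchy problem for $D^\Trans$ with the prescribed initial data (using that $D^\Trans$ is normally hyperbolic), then invoke the exact sequence \eqref{eq:ExactSequence} for $D^\Trans$ to recognize the resulting solution as $F_M(\varphi)$ for some compactly supported $\varphi$, which hits $(\varphi_0,\dot\varphi_0)$ under $\varrho_\Sigma$. The only difference is that you spell out the (routine) reduction to surjectivity on generators, which the paper leaves implicit.
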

\begin{proof}
    Let $\varphi_0, \dot{\varphi}_0 \in \Secinfty_0(E^*_\Sigma)$ be
    given. Then there is a (unique) solution $\Phi \in
    \Secinftysc(E^*)$ of the wave equation $D^\Trans \Phi = 0$ with
    the initial conditions
    \[
    \iota^\# \Phi = - \dot{\varphi}_0
    \quad
    \textrm{and}
    \quad
    \iota^\# \nabla^{E^*}_{\mathfrak{n}} \Phi = \varphi_0,
    \]
    since $D^\Trans$ is normally hyperbolic as well. By
    \eqref{eq:ExactSequence} for $D^\Trans$ and $F_M$ we know that
    $\Phi = F_M \varphi$ for some $\varphi \in \Secinfty_0(E^*)$. Then
    $\varrho_\Sigma(\varphi) = (\varphi_0, \dot{\varphi}_0)$ follows.
\end{proof}

We can collect now the above results in the following statement
leading to the comparison between the covariant and the canonical
Poisson bracket and their Weyl algebras, see
\cite[Thm.~4.4.22]{waldmann:2012a:pre} for the classical part:
\begin{theorem}
    \label{theorem:CovariantVSCanonical}%
    Fix $R \ge \frac{1}{2}$.  Let $(M, g)$ be a globally hyperbolic
    spacetime and let $E \longrightarrow M$ be a real vector bundle
    with fiber metric $h$ and metric connection $\nabla^E$. Moreover,
    let $D = \dAlembert^\nabla + B$ with $B = B^* \in
    \Secinfty(\End(E))$ be a symmetric normally hyperbolic
    differential operator on $E$ and denote by $F_M$ the propagator of
    its adjoint $D^*$. Finally, let $\iota\colon \Sigma
    \longrightarrow M$ be a smooth spacelike Cauchy surface.
    \begin{theoremlist}
    \item \label{item:SubspacesCoincide} The following subspaces of
        $\Sym^\bullet(V_\cov)$ coincide:
        \begin{compactitem}
        \item The vanishing ideal of the solutions of the wave
            function $Du = 0$.
        \item The Poisson ideal generated by the Casimir elements
            $\varphi \in V_\cov$.
        \item The ideal $\langle\ker F_M\rangle$.
        \item The kernel of the Poisson homomorphism
            $\varrho_\Sigma\colon \Sym^\bullet(V_\cov) \longrightarrow
            \Sym^\bullet(V_\Sigma)$.
        \end{compactitem}
    \item \label{item:PoissonCovQuotient} The locally convex quotient
        Poisson algebra $\Sym^\bullet_R(V_\cov) \big/ \langle\ker
        F_M\rangle$ is canonically isomorphic to the Poisson algebra
        $\Sym^\bullet_R(V_\cov \big/\ker F_M)$, with the Poisson
        bracket coming from \eqref{eq:LambdaCov} defined on classes.
    \item \label{item:CovToCanPoisson} The Poisson homomorphism
        $\varrho_\Sigma$ induces a continuous Poisson isomorphism
        \begin{equation}
            \label{eq:varrhoSigmaPoissonIso}
            \varrho_\Sigma\colon
            \Sym^\bullet_R(V_\cov) \big/ \langle\ker F_M\rangle
            \longrightarrow
            \Sym^\bullet_R(V_\Sigma).
        \end{equation}
    \item \label{item:WeylCovQuotient} The locally convex quotient
        $^*$-algebra $\WeylR(V_\cov \tensor \mathbb{C}, \starcov)
        \big/ (\langle \ker F_M\rangle \tensor \mathbb{C})$ is
        $^*$-isomorphic to the Weyl algebra $\WeylR((V_\cov \big/ \ker
        F_M) \tensor \mathbb{C}, \starcov)$ with $\starcov$ coming
        from the Poisson bracket \eqref{eq:LambdaCov} of $V_\cov \big/
        \ker F_M$.
    \item \label{item:CovToCanWeyl} The $^*$-homomorphism
        $\varrho_\Sigma$ induces a continuous $^*$-isomorphism
        \begin{equation}
            \label{eq:varrhoSigmaStarIso}
            \varrho_\Sigma\colon
            \WeylR(V_\cov \tensor \mathbb{C}, \starcov)
            \big/ (\langle\ker F_M\rangle \tensor \mathbb{C})
            \longrightarrow
            \WeylR(V_\Sigma \tensor \mathbb{C}, \starSigma).
        \end{equation}
    \end{theoremlist}
\end{theorem}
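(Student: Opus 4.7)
The plan is to treat the five parts in order, with the main work concentrated in part~\refitem{item:CovToCanPoisson}; the other parts will then follow either as routine combinations of earlier lemmas or as consequences of the same geometric input.

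For part~\refitem{item:SubspacesCoincide} I would simply combine Lemma~\ref{lemma:Casimirs} (equating, at the generator level in $V_\cov$, the notions of Casimir, vanishing on solutions, and membership in $\ker F_M$) with Lemma~\ref{lemma:kerRhoSigma} (equating $\ker\varrho_\Sigma$ with $\langle\ker F_M\rangle$). Since the relevant derivations and algebra homomorphisms are determined by their restrictions to $V_\cov$, the ideals generated all coincide. For part~\refitem{item:PoissonCovQuotient} I observe that $\Lambda_\cov$ descends to a continuous antisymmetric form on $V_\cov/\ker F_M$ with zero null space, and apply the symmetric-algebra version of Corollary~\ref{corollary:QuotientIotaIsomorphism} to $U = \ker F_M$; compatibility of $P_{\Lambda_\cov}$ with the quotient makes the resulting topological algebra isomorphism a Poisson isomorphism.

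Part~\refitem{item:CovToCanPoisson} is the crux. From \refitem{item:SubspacesCoincide} and Lemmas~\ref{lemma:RhoSigmaContinuousBetweenWeylR} and~\ref{lemma:RhoSigmaSurjective}, $\varrho_\Sigma$ factors through the quotient to a continuous Poisson bijection, so the only remaining content is continuity of its inverse. I would construct the inverse explicitly at the level of $V_\Sigma \to V_\cov/\ker F_M$ via the Cauchy problem for $D^\Trans$: given $(\varphi_0,\dot\varphi_0)\in V_\Sigma$, the well-posedness recalled in Subsection~\ref{subsec:WaveEquationGreenOperators} produces a unique spacelike-compact $\Phi \in \Secinftysc(E^*)$ with $D^\Trans\Phi = 0$ and the prescribed initial data, continuously in the data. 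Then, using the product decomposition $M \cong \mathbb{R}\times\Sigma$, I choose a smooth function $\chi$ depending only on $t$, equal to $0$ on $J_M^-(\Sigma_-)$ and to $1$ on $J_M^+(\Sigma_+)$ for two Cauchy surfaces bracketing $\Sigma$. The element $\varphi := D^\Trans(\chi\Phi)$ is then compactly supported in the slab (because $D^\Trans\Phi = 0$ kills the region where $\chi$ is locally constant), satisfies $F_M\varphi = \Phi$ by the usual splitting argument, and hence $\varrho_\Sigma(\varphi) = (\varphi_0,\dot\varphi_0)$. The composition $(\varphi_0,\dot\varphi_0)\mapsto\Phi\mapsto[\varphi]$ is continuous as a composition of continuous maps, giving a continuous inverse on $V_\Sigma$; Lemma~\ref{lemma:ContinuityA} then lifts the inverse continuously to the symmetric algebras.

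Part~\refitem{item:WeylCovQuotient} would then follow from Lemma~\ref{lemma:StarCovIdeal}, which guarantees that $\langle\ker F_M\rangle\otimes\mathbb{C}$ is a two-sided $^*$-ideal for $\starcov$, combined with the tensor-algebra version of Corollary~\ref{corollary:QuotientIotaIsomorphism}; the product $\starcov$ descends because on a generator $\varphi\in\ker F_M$ the star product reduces to the symmetric product, and reality is preserved since $\ker F_M$ consists of real sections. Finally, part~\refitem{item:CovToCanWeyl} is obtained by composing the isomorphism of \refitem{item:WeylCovQuotient} with the topological Poisson isomorphism of \refitem{item:CovToCanPoisson} and invoking Proposition~\ref{proposition:FunctorialityWeylR}; that $\varrho_\Sigma$ is real on $V_\cov$ ensures the resulting map is a $^*$-isomorphism.

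The main obstacle I anticipate is exactly the continuity of the inverse in part~\refitem{item:CovToCanPoisson}. The $\chi$-splitting trick is standard in globally hyperbolic PDE theory, but one needs to choose $\chi$ in a way that turns the map $\Phi\mapsto D^\Trans(\chi\Phi)$ into a continuous map from $\Secinftysc(E^*)$ into $\Secinfty_0(E^*)$ with good support control; taking $\chi$ to depend only on the temporal coordinate keeps the support of $\varphi$ inside a fixed slab determined by the support of $\Phi$, which is in turn controlled by the support of the initial data, so the continuity reduces to the well-posedness of the Cauchy problem and continuity of multiplication by a fixed smooth function followed by a fixed differential operator.
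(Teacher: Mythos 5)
Your proposal is correct, and for four of the five parts it proceeds exactly as the paper does: parts \refitem{theorem:CovariantVSCanonical} \refitem{item:SubspacesCoincide}, \refitem{item:PoissonCovQuotient}, \refitem{item:WeylCovQuotient}, and \refitem{item:CovToCanWeyl} are assembled from Lemma~\ref{lemma:Casimirs}, Lemma~\ref{lemma:StarCovIdeal}, Lemma~\ref{lemma:kerRhoSigma}, Lemma~\ref{lemma:RhoSigmaSurjective}, and the quotient-topology machinery of Lemma~\ref{lemma:SeminormQuotientIota} and Corollary~\ref{corollary:QuotientIotaIsomorphism}, which is precisely the paper's two-line argument. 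Where you genuinely diverge is in part \refitem{item:CovToCanPoisson}: the paper disposes of the continuity of the inverse by appealing to the ``general situation'' of the quotient seminorms, which strictly speaking only identifies $\Sym^\bullet_R(V_\cov)\big/\langle\ker F_M\rangle$ with $\Sym^\bullet_R(V_\cov\big/\ker F_M)$ and leaves implicit that the induced continuous bijection $V_\cov\big/\ker F_M\longrightarrow V_\Sigma$ is open (one could invoke the open mapping theorem for LF spaces here, but the paper does not say so). You instead build an explicit continuous linear section $V_\Sigma\longrightarrow V_\cov$ by solving the Cauchy problem for $D^\Trans$ and applying the temporal cutoff $\varphi = D^\Trans(\chi\Phi)$, for which the splitting $F_M^+\varphi = \chi\Phi$, $F_M^-\varphi = -(1-\chi)\Phi$ gives $F_M\varphi = \Phi$, and the support control $\supp\varphi\subseteq\supp(\D\chi)\cap J_M(\supp u_0\cup\supp\dot u_0)$ keeps the construction continuous into $\Secinfty_0(E^*)$; Lemma~\ref{lemma:ContinuityA} then lifts it to the symmetric algebras. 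This is a constructive refinement of the paper's Lemma~\ref{lemma:RhoSigmaSurjective} (which only yields existence of a preimage from the exact sequence \eqref{eq:ExactSequence}, with no continuity), and it makes the continuity of \eqref{eq:varrhoSigmaPoissonIso} and \eqref{eq:varrhoSigmaStarIso} in both directions fully explicit; the price is the extra PDE input of the cutoff argument, which the paper avoids by leaning on abstract functional analysis. Both routes are sound.
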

\begin{proof}
    The only things left to prove are the continuity statements with
    respect to the quotient topologies. But these follow from the
    general situation discussed in
    Lemma~\ref{lemma:SeminormQuotientIota} and
    Corollary~\ref{corollary:QuotientIotaIsomorphism}.
\end{proof}
\begin{remark}
    \label{remark:Evaluation}%
    It follows that the evaluation of $\Phi \in \Sym^\bullet(V_\cov)$
    on a solution $u \in \Secinftysc(E)$ of the wave equation depends
    only on the equivalence class of $\Phi$ modulo $\langle \ker
    F_M\rangle$. Hence this evaluation yields a well-defined and still
    continuous linear functional on $\Sym^\bullet_R(V_\cov) \big/
    \langle\ker F_M\rangle$ and on $\WeylR(V_\cov \tensor \mathbb{C},
    \starcov) \big/ (\langle \ker F_M\rangle \tensor
    \mathbb{C})$. This way, the elements in these quotients can be
    thought of as observables of the field theory described by the
    wave equation $Du = 0$.
\end{remark}

%
%

\subsection{Locality and Time-Slice Axiom}
\label{subsec:LocalityTimeSlice}

In this last section we collect some further properties of the
covariant Poisson bracket and its Weyl algebra quantization as
required by the Haag-Kastler approach to (quantum) field theory
\cite{haag:1993a}: locality and the time-slice axiom.

Let $U \subseteq M$ be a non-empty open subset. Then we denote by
$\tilde{\mathcal{A}}_\cl(U) \subseteq \Sym^\bullet(V_\cov)$ the unital
Poisson subalgebra generated by those $\varphi \in \Secinfty_0(E^*)$
with $\supp \varphi \subseteq U$. Analogously, we define
$\tilde{\mathcal{A}}(U) \subseteq \WeylR(V_\cov \tensor \mathbb{C},
\starcov)$ to be the unital $^*$-subalgebra generated by those
$\varphi \in \Secinfty_0(E^*)$ with $\supp \varphi \subseteq U$. For
$U = \emptyset$ we set $\tilde{\mathcal{A}}_\cl(\emptyset) =
\mathbb{C}\Unit = \tilde{\mathcal{A}}(\emptyset)$.  Finally, we set
$\mathcal{A}_\cl(U)$ and $\mathcal{A}(U)$ for the images of
$\tilde{\mathcal{A}}_\cl(U)$ and $\tilde{\mathcal{A}}(U)$ in the
quotients $\Sym^\bullet(V_\cov) \big/ \langle\ker F_M\rangle$ and
$\WeylR(V_\cov \tensor \mathbb{C}, \starcov) \big/ (\langle\ker
F_M\rangle \tensor \mathbb{C})$, respectively. Clearly,
$\mathcal{A}_\cl(M)$ and $\mathcal{A}(M)$ yield again everything.
Then the following properties are obvious from the causal properties
of $F_M$:
\begin{proposition}[Local net of observables]
    \label{proposition:LocalNet}%
    Let $U, U' \subseteq M$ be open subsets of $M$.
    \begin{propositionlist}
    \item \label{item:Locality} We have the locality property
        \begin{equation}
            \label{eq:Locality}
            \Bracketcov{\mathcal{A}_\cl(U), \mathcal{A}_\cl(U')}
            = 0
            \quad
            \textrm{and}
            \quad
            \left[
                \mathcal{A}(U), \mathcal{A}(U')
            \right]_{\starcov}
            = 0
        \end{equation}
        whenever $U$ and $U'$ are spacelike.
    \item \label{item:NetProperty} For $U \subseteq U'$ we have
        \begin{equation}
            \label{eq:NetProperty}
            \mathcal{A}_\cl(U) \subseteq \mathcal{A}_\cl(U')
            \quad
            \textrm{and}
            \quad
            \mathcal{A}(U) \subseteq \mathcal{A}(U')
        \end{equation}
    \item \label{item:GetEverything} We have
        \begin{equation}
            \label{eq:UnionIsEverything}
            \bigcup_{U \subseteq M \textrm{open}}
            \mathcal{A}_\cl(U)
            =
            \mathcal{A}_\cl(M)
            \quad
            \textrm{and}
            \quad
            \bigcup_{U \subseteq M \textrm{open}}
            \mathcal{A}(U)
            =
            \mathcal{A}(M).
        \end{equation}
    \end{propositionlist}
\end{proposition}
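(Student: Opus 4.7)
The plan is to reduce all three claims to the level of generators by exploiting the Leibniz rules for $\Bracketcov{\argument, \argument}$ and for the $\starcov$-commutator, and then to invoke the causal support property of $F_M$. The only step with genuine content is the vanishing of $\Lambda_\cov$ on spacelike-separated supports; the remainder is bookkeeping.

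For part~\refitem{item:Locality} I would first take generators $\varphi, \psi \in \Secinfty_0(E^*)$ with $\supp \varphi \subseteq U$ and $\supp \psi \subseteq U'$, for $U, U'$ spacelike. The causality estimate \eqref{eq:GreenCausal} applied to $F_M = F_M^+ - F_M^-$ yields $\supp F_M(\varphi) \subseteq J_M(\supp \varphi) \subseteq J_M(U)$, which is disjoint from $\supp \psi \subseteq U'$. Hence $\Lambda_\cov(\varphi, \psi) = \int_M h^{-1}(F_M(\varphi), \psi)\, \mu_g = 0$, and so $\Bracketcov{\varphi, \psi} = 0$. Since $\varphi, \psi \in V_\cov$ have tensor degree one, all higher orders in $\starcov$ vanish automatically and $[\varphi, \psi]_\starcov = \I\hbar\, \Lambda_\cov(\varphi, \psi)\Unit = 0$. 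To extend this to the generated subalgebras I would run a double-centralizer argument: for fixed $c \in \tilde{\mathcal{A}}_\cl(U')$, the kernel of the derivation $\Bracketcov{\argument, c}$ is a unital Poisson subalgebra of $\Sym^\bullet(V_\cov)$ that contains all generators of $\tilde{\mathcal{A}}_\cl(U)$, hence all of $\tilde{\mathcal{A}}_\cl(U)$; swapping the roles and using that $\Bracketcov{a, \argument}$ is also a derivation gives $\Bracketcov{\tilde{\mathcal{A}}_\cl(U), \tilde{\mathcal{A}}_\cl(U')} = 0$. The quotient map is a Poisson homomorphism by Theorem~\ref{theorem:CovariantVSCanonical}, transporting the identity to $\mathcal{A}_\cl(U)$ and $\mathcal{A}_\cl(U')$. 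The star-product version is identical via the Leibniz rule $[a \starcov b, c]_\starcov = a \starcov [b, c]_\starcov + [a, c]_\starcov \starcov b$ for the associative product $\starcov$.

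Part~\refitem{item:NetProperty} is immediate from the definitions: a generator of $\tilde{\mathcal{A}}_\cl(U)$ has support in $U \subseteq U'$ and is therefore already a generator of $\tilde{\mathcal{A}}_\cl(U')$, so $\tilde{\mathcal{A}}_\cl(U) \subseteq \tilde{\mathcal{A}}_\cl(U')$ and the inclusion descends to the quotients; the same argument works verbatim for $\tilde{\mathcal{A}}(U) \subseteq \tilde{\mathcal{A}}(U')$. For part~\refitem{item:GetEverything}, any element of $\tilde{\mathcal{A}}_\cl(M)$ is a polynomial in finitely many $\varphi_1, \ldots, \varphi_n \in \Secinfty_0(E^*)$, so $\supp \varphi_1 \cup \cdots \cup \supp \varphi_n$ is a compact subset of $M$; taking any open neighborhood $U$ of it (for instance $U = M$ itself) exhibits the element in $\tilde{\mathcal{A}}_\cl(U)$, proving the inclusion $\mathcal{A}_\cl(M) \subseteq \bigcup_U \mathcal{A}_\cl(U)$. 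The reverse inclusion is just part~\refitem{item:NetProperty} applied to $U \subseteq M$; the Weyl-algebra case is verbatim.

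I do not foresee any real obstacle here beyond the causal support estimate for $F_M$ recalled above; the algebraic lift from generators to the full subalgebras rests solely on the fact that both $\Bracketcov{\argument, \argument}$ and $[\argument, \argument]_\starcov$ are biderivations, which is exactly what the Leibniz rules established in Lemma~\ref{lemma:PLambdaMapProperties} and Corollary~\ref{corollary:ThePoissonBracket} provide.
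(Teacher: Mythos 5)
Your proof is correct and follows essentially the same route as the paper: verify $\Lambda_\cov(\varphi,\psi)=0$ on generators with spacelike-separated support via the causal support property of $F_M$, then propagate to the generated subalgebras using the biderivation property of the bracket (the paper compresses this last step to ``Then the Leibniz rule shows \eqref{eq:Locality} in general''), with \refitem{item:NetProperty} and \refitem{item:GetEverything} immediate from the definitions. The only cosmetic point is that your double-centralizer argument should fix a \emph{generator} $c$ of $\tilde{\mathcal{A}}_\cl(U')$ in the first pass (not an arbitrary element) before swapping roles, but your phrase ``swapping the roles'' indicates you intend exactly this two-step structure.
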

\begin{proof}
    For $\phi, \psi \in \Secinfty_0(E^*)$ we clearly have
    $\Bracketcov{\phi, \psi} = 0 = \left[\phi, \psi\right]_{\starcov}$
    whenever $U$ and $U'$ are spacelike and $\supp \phi \subseteq U$
    and $\supp \psi \subseteq U'$. This follows immediately from
    \eqref{eq:LambdaCov} and the fact that $\supp F_M (\phi) \subseteq
    J_M(\supp \phi) \subseteq J_M(U)$ which does not intersect
    $U'$. Then the Leibniz rule shows \eqref{eq:Locality} in
    general. The remaining statements are clear.
\end{proof}

The time-slice axiom requires that a small neighbourhood of a Cauchy
surface contains already all the information about the observables. In
our framework, this can be formulated as follows:
\begin{proposition}[Time-slice axiom]
    \label{proposition:TimeSliceAxiom}%
    Let $\iota\colon \Sigma \longrightarrow M \cong \mathbb{R} \times
    \Sigma$ be a smooth Cauchy surface and let $\epsilon > 0$. Then
    \begin{equation}
        \label{eq:ASigmaEpsAM}
        \mathcal{A}_\cl(\Sigma_\epsilon) = \mathcal{A}_\cl(M)
        \quad
        \textrm{and}
        \quad
        \mathcal{A}(\Sigma_\epsilon) = \mathcal{A}(M),
    \end{equation}
    where $\Sigma_\epsilon = (-\epsilon, \epsilon) \times \Sigma$ is
    the $\epsilon$-time slice around $\Sigma$.
\end{proposition}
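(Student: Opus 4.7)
The plan is to reduce everything to a statement about generators: I will show that for every $\varphi \in \Secinfty_0(E^*)$ there exists $\varphi' \in \Secinfty_0(E^*)$ with $\supp \varphi' \subseteq \Sigma_\epsilon$ and $\varphi - \varphi' \in \ker F_M$. Once this is established, the classes $[\varphi]$ and $[\varphi']$ coincide in both quotients, so all generators of $\mathcal{A}_\cl(M)$ (respectively $\mathcal{A}(M)$) lie in $\mathcal{A}_\cl(\Sigma_\epsilon)$ (respectively $\mathcal{A}(\Sigma_\epsilon)$); since the algebra operations (symmetric product, $\starcov$, and $^*$-involution) preserve these subspaces, the equalities in \eqref{eq:ASigmaEpsAM} follow.

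The construction uses a cutoff adapted to the slab $\Sigma_\epsilon \cong (-\epsilon,\epsilon) \times \Sigma$. I choose two smooth spacelike Cauchy surfaces $\Sigma_\pm' \subseteq \Sigma_\epsilon$ with $\Sigma_+'$ to the future and $\Sigma_-'$ to the past of $\Sigma$, which exist by the standard foliation provided by the Cauchy temporal function. Then I pick $\chi \in \Cinfty(M)$ with $\chi = 0$ on $J_M^-(\Sigma_-')$ and $\chi = 1$ on $J_M^+(\Sigma_+')$. Given $\varphi$, set $u = F_M(\varphi) \in \Secinftysc(E^*)$ and define
\[
    \varphi' \;:=\; D^\Trans(\chi u).
\]
Because $D^\Trans u = 0$, the operator $D^\Trans$ acts on $\chi u$ only through derivatives of $\chi$, so $\supp \varphi' \subseteq \supp d\chi \cap \supp u$. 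The first factor lies in the closed slab between $\Sigma_-'$ and $\Sigma_+'$, which is contained in $\Sigma_\epsilon$, and intersecting a spacelike compact set with a compact slab gives a compact set by the standard diamond-compactness of globally hyperbolic spacetimes. Hence $\varphi' \in \Secinfty_0(E^*)$ with support in $\Sigma_\epsilon$.

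The main step is to verify $F_M(\varphi') = F_M(\varphi)$. One cannot apply $F_M \circ D^\Trans = 0$ directly, since that identity only holds on $\Secinfty_0(E^*)$ and $\chi u$ is merely spacelike compact. Instead I will identify $\chi u$ and $-(1-\chi) u$ as the retarded and advanced solutions of the inhomogeneous equation $D^\Trans v = \varphi'$. Both $\chi u$ and $F_M^+(\varphi')$ satisfy this equation and vanish on $J_M^-(\Sigma_-')$ (the former because $\chi = 0$ there, the latter because its support lies in $J_M^+(\supp\varphi')$). Uniqueness of the Cauchy problem for $D^\Trans$ with trivial past data therefore gives $\chi u = F_M^+(\varphi')$. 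By the symmetric argument with $1-\chi$ on the future side, $-(1-\chi)u = F_M^-(\varphi')$. Subtracting,
\[
    F_M(\varphi') = F_M^+(\varphi') - F_M^-(\varphi') = \chi u + (1-\chi) u = u = F_M(\varphi),
\]
so $\varphi - \varphi' \in \ker F_M$, as required.

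The delicate point — and the place where I expect most of the care — is this uniqueness argument: one must verify that $\chi u$ and $F_M^+(\varphi')$ really do agree in a neighbourhood of infinity in the past, and that the uniqueness of retarded solutions to $D^\Trans v = \varphi'$ with past-compact source applies to their difference (a solution of the homogeneous equation vanishing on an open set to the past of every Cauchy surface). This is standard once phrased in terms of the Cauchy problem on a globally hyperbolic manifold, but it is where the geometric hypotheses on $(M,g)$ and the careful choice of the cutoff $\chi$ do their work; the rest of the proof is then a formal consequence together with the observation that $\varrho_\Sigma$ and the quotient maps respect the subalgebras $\mathcal{A}_\cl$ and $\mathcal{A}$.
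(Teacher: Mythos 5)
Your proof is correct, but it follows a genuinely different route from the paper's. The paper does not deform individual generators at all: it notes that $\Sigma_\epsilon$ is itself globally hyperbolic and causally compatible in $M$, that by uniqueness the Green operators of $D^\Trans$ on $\Sigma_\epsilon$ are the restrictions of $F_M^\pm$, and that the kernels correspond; it then applies the isomorphism $\varrho_\Sigma$ of Theorem~\ref{theorem:CovariantVSCanonical} twice, once for $M$ and once for $\Sigma_\epsilon$ (both with the same Cauchy surface $\Sigma$), so that $\mathcal{A}(\Sigma_\epsilon)$ and $\mathcal{A}(M)$ are both identified with the canonical algebra $\WeylR(V_\Sigma \tensor \mathbb{C}, \starSigma)$ and hence coincide. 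You instead produce, for each generator $\varphi$, the explicit representative $\varphi' = D^\Trans\bigl(\chi\, F_M(\varphi)\bigr)$ supported in the slab with $\varphi - \varphi' \in \ker F_M$; this is the standard cutoff proof of the time-slice axiom and is self-contained, needing no reference to the canonical algebra or to $\varrho_\Sigma$, only the support properties of $F_M^\pm$ and uniqueness of the Cauchy problem. Your individual steps check out: $\supp\varphi' \subseteq \supp d\chi \cap \supp F_M(\varphi)$ is compact because a spacelike-compact set meets the closed slab between two Cauchy surfaces in a compact set, and the identifications $\chi u = F_M^+(\varphi')$ and $-(1-\chi)u = F_M^-(\varphi')$ follow from uniqueness for spacelike-compact homogeneous solutions with vanishing Cauchy data on a surface in the region where both sides vanish. (A marginally shorter variant of your key step: $\varphi - \varphi' = D^\Trans\bigl(F_M^+(\varphi) - \chi F_M(\varphi)\bigr)$, and the argument of $D^\Trans$ is compactly supported since it is spacelike compact, vanishes in the far past where it equals $F_M^+(\varphi)$ near $\supp d\chi$'s past, and equals $F_M^-(\varphi)$ in the far future; exactness of \eqref{eq:ExactSequence} for $D^\Trans$ then gives $\varphi - \varphi' \in \ker F_M$ directly.) What the paper's route buys is reuse of the machinery already established in Lemmas~\ref{lemma:Casimirs}--\ref{lemma:RhoSigmaSurjective}; what yours buys is an explicit witness of the equivalence and complete independence from the canonical picture.
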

\begin{proof}
    First we note that $\Sigma_\epsilon$ is a globally hyperbolic
    spacetime by its own and the inclusion $\Sigma_\epsilon \subseteq
    M$ is causally compatible, i.e. we have
    $J^\pm_{\Sigma_\epsilon}(p) = J_M^\pm(p) \cap \Sigma_\epsilon$ for
    all $p \in \Sigma_\epsilon$. Restricting $D$ and $D^\Trans$ to
    $\Sigma_\epsilon$ gives globally hyperbolic differential operators
    with Green operators $G^\pm_{\Sigma_\epsilon}$ and
    $F^\pm_{\Sigma_\epsilon}$, respectively. By the uniqueness of the
    Green operators we have for $\varphi \in
    \Secinfty_0(E^*\at{\Sigma_\epsilon})$ the equality
    \[
    F^\pm_{\Sigma_\epsilon}(\varphi)
    =
    F^\pm_M(\varphi) \at{\Sigma_\epsilon}.
    \]
    Thus the covariant Poisson bracket for
    $\Sym^\bullet(\Secinfty_0(E^*\at{\Sigma_\epsilon})$ is the
    restriction of the one from $\Sym^\bullet(\Secinfty_0(E^*))$ to
    the subalgebra
    $\Sym^\bullet(\Secinfty_0(E^*\at{\Sigma_\epsilon})$. Next, let
    $\varphi \in \Secinfty_0(E^*\at{\Sigma_\epsilon})$ be given. Then
    in the condition $\varphi(u) = 0$ only $u\at{\Sigma_\epsilon}$
    enters. This shows that also the kernels of $F_M$ and
    $F_{\Sigma_\epsilon}$ correspond, i.e. we have
    \[
    \ker F_{\Sigma_\epsilon}
    =
    \ker\left(F_M\at{\Secinfty_0(E^*|_{\Sigma_\epsilon})}\right).
    \]
    Putting this together we conclude that the two isomorphisms
    $\varrho_\Sigma$ with respect to the Cauchy surface $\Sigma$, once
    sitting inside $M$ and the other time sitting inside $(-\epsilon,
    \epsilon) \times \Sigma$, give the desired isomorphism needed for
    \eqref{eq:ASigmaEpsAM}.
\end{proof}

For more information on the locality properties and the time-slice
axiom in the context of quantum field theories on globally hyperbolic
spacetimes we refer to the recent works \cite{hollands.wald:2010a,
  brunetti.fredenhagen.ribeiro:2012a:pre,
  brunetti.fredenhagen.verch:2003a} as well as
\cite[Chap.~4]{baer.ginoux.pfaeffle:2007a}, where the $C^*$-algebraic
version and the functorial aspects of the above construction are
discussed in detail.

%
%

\begin{footnotesize}
    \renewcommand{\arraystretch}{0.5}

\end{footnotesize}

%
%

\end{document}